\newtheorem{theorem}{Theorem}[section]
\newtheorem{corollary}{Corollary}[section]
\newtheorem{lemma}{Lemma}[section]
\newtheorem{proposition}{Proposition}[section]
\newtheorem{remark}{Remark}[section]
\newtheorem{example}{Example}[section]
\newtheorem{definition}{Definition}[section]
\newskip\ginf
\newskip\gzro
\newcolumntype{f}{@{\extracolsep{\ginf}}}
\newcolumntype{z}{@{\extracolsep{\gzro}}}
\newcolumntype{n}{@{}}
\newcolumntype{E}{nROLn}
\newcolumntype{L}{>{$\displaystyle\bgroup}l<{\egroup$}}
\newcolumntype{R}{>{$\displaystyle\bgroup}r<{\egroup$}}
\newcolumntype{C}{>{$\displaystyle\bgroup}c<{\egroup$}}
\newcolumntype{I}{>{$\displaystyle\bgroup}r<{{}\egroup$}}
\newcolumntype{o}{@{}>{{}}c<{{}}@{}}
\newcolumntype{O}{@{}>{$\displaystyle\bgroup{}}c<{{}\egroup$}@{}}
\newcolumntype{x}[1]{nlfRz #1 fln}
\newcolumntype{w}{>{\llap\bgroup}r<{\egroup}}
\newcolumntype{K}{LzR}
\def\conv{\mathop{\textup{conv}}}
\def\epi{\mathop{\textup{epi}}}
\def\cl{\mathop{\textup{cl}}}
\def\card{\mathop{\textup{card}}}
\def\diag{\mathop{\textup{diag}}}
\def\trc{\mathop{\textup{trace}}}
\def\tra{^{\intercal}}
\def\st{\mathop{\textup{s.t.}}}
\def\proj{\mathop{\textup{proj}}}
\def\one{\mathbbm{1}}
\def\cnorm{\mathop{N^K_{\|\cdot\|_s}}}
\def\ext{\mathop{\textup{vert}}}
\def\interior{\mathop{\textup{int}}}
\def\Ball#1#2{B_{#1}(#2)}
\def\Halmos{}
\def\ie{{\it i.e.}\/}
\def\tr{^{\intercal}}
\title{Convexification of Permutation-Invariant Sets and an Application to Sparse PCA}
\author{Jinhak Kim\\
College of Business\\ Northern Illinois University\\
\texttt{jkim23@niu.edu}
\And 
Mohit Tawarmalani\\
Krannert School of Management\\ Purdue University\\ \texttt{mtawarma@purdue.edu}
\And
Jean-Philippe P. Richard\\
Industrial and Systems Engineering\\
University of Minnesota\\ \texttt{jrichar@umn.edu}}
\date{August 6, 2021}
\begin{document}
\maketitle





\begin{abstract}
We develop techniques to convexify a set that is invariant under permutation and/or change of sign of variables and discuss applications of these results. First, we convexify the intersection of the unit ball of a permutation and sign-invariant norm with a cardinality constraint.  This gives a nonlinear formulation for the feasible set of sparse principal component analysis (sparse PCA) and an alternative proof of the $K$-support norm.
Second, we characterize the convex hull of sets of matrices defined by constraining their singular values. As a consequence, we generalize an earlier result that characterizes the convex hull of rank-constrained matrices whose spectral norm is below a given threshold. 
Third, we derive convex and concave envelopes of various permutation-invariant nonlinear functions and their level-sets over hypercubes, with congruent bounds on all variables. Finally, we develop new relaxations for the exterior product of sparse vectors. 
Using these relaxations for sparse PCA, we show that our relaxation closes $98\%$ of the gap left by a classical SDP relaxation for instances where the covariance matrices are of dimension up to $50\times 50$.

\end{abstract}%


\keywords{Convexification, permutation-invariant sets, majorization, sparse PCA}

\maketitle

\section{Introduction}\label{sec:intro}

This paper is concerned with convexification of permutation-invariant sets. 
A set $S\subseteq \mathbb{R}^n$ is \emph{permutation-invariant} if $x\in S$ implies that $Px\in S$ for all $n$-dimensional permutation matrices $P$.
Permutation-invariant sets appear in a variety of optimization problems. To support this claim and highlight the relevance of our construction, we next provide a variety of example applications where exploiting permutation invariance proves fruitful. 
Consider first 
sparse principal component analysis, a problem first introduced in \cite{zou2006sparse}, that consists in finding sparse vectors that explain the most variance in a data set. 
Specifically, the problem of finding the first sparse principal component can be formulated as $\max\{x^\intercal\Sigma x\mid x \in S\}$  where $S=\{x \in \Re^n \mid \card(x)\le K, \|x\|\le 1\}$, $\card(x)$ is the number of nonzero components of $x$, and $\Sigma$ is the covariance matrix of the given data; see, for example, \cite{d2007direct}.
The feasible set of this model is permutation-invariant because 
$\card(Px)=\card(x)$ and $\|Px\|=\|x\|$ for any vector $x \in \Re^n$ and  permutation matrix $P$. 
The convex hull of $S$ is the unit ball associated with the $K$-support norm \cite{argyriou2012sparse}, a result that yields a tighter approximation of $S$ compared to the elastic net $\{x\in\mathbb{R}^n\mid \|x\|_1\le \sqrt{K}, \|x\|\le 1\}$ where $\|\cdot\|_1$ is the $l_1$-norm.
Recognizing the set as permutation-invariant allows for a streamlined derivation of these results.
Second, observe that permutation-invariance also arises when studying sets of matrices as the rank of a matrix can be thought of as a permutation-invariant cardinality constraint on the singular values. 
This equivalent formulation suggests that the applicability of this concept extends to sets of matrices whose singular values belong to a permutation-invariant set. 
For example, \cite{hiriart2012}, considers the set $\{M\in\mathcal{M}^{m,n}(\mathbb{R})\mid\textup{rank}(M)\le K, \|M\|_{sp}\le r\}$ where $\mathcal{M}^{m,n}(\mathbb{R})$ is the set of $m$-by-$n$ real matrices and $\|M\|_{sp}$ is the spectral norm of $M$. Since the spectral norm is the largest singular value, it follows easily that the singular values of these matrices belong to a permutation-invariant set. 
Using a permutation-invariant perspective to study such sets helps to generalize the results of \cite{hiriart2012} in various ways. 
Finally, observe that a variety of sets with specific structures have been studied in nonconvex optimization because of their uses in creating convex relaxations of more general problems. 
For example, specialized techniques have been used to identify the convex hull of the multilinear monomial $\prod_{i=1}^n x_i$ over $[0,1]^n$ and $[-1,1]^n$ \cite{rikun1997,lnl2012}. 
We will show that these results can be obtained as special cases of our general approach to convexifying permutation-invariant sets. 
Besides, we will show that we can obtain hitherto unknown polynomial-sized convex hull descriptions for various commonly occurring sets in global optimization.


\def\B{\mathbb{B}}
We now describe why permutation-invariance is a useful property to consider while constructing convex hulls of sets.
To see this, let 
$\Delta_\pi := \{x\mid x_{\pi(1)}\ge\dots \ge x_{\pi(n)}\}$, where $\pi$ is an $n$-dimensional permutation and denote by $\conv(S)$, the convex hull of a given set $S$.
Since $S$ can be expressed as a union of $n!$ sets of the form $S\cap \Delta_\pi$, if there is a (possibly lifted) polynomial-sized representation of $\conv(S\cap \Delta_\pi)$ for each $\pi$, disjunctive programming~\cite{balas2018disjunctive} can be used to represent $\conv(S)$ in a higher-dimensional space. Unfortunately, this representation is exponentially-sized in $n$, because the construction creates copies, one for each $\pi$, for each of the variables $(x_1,\ldots,x_n)$. What is remarkable about the permutation-invariance is that, by exploiting this property, we will show that a much more compact formulation can be derived. To appreciate the significance of this construction, we first argue that, if $S$ is not permutation-invariant, such a compact formulation for $\conv(S)$ may not even exist.
To illustrate this point, we consider the set, $\B$, which is a face of the boolean-quadric polytope, and is defined as $\B :=\{xx^\intercal\mid x\in \{0,1\}^n, x_1=1\}$. If we order all the $\frac{n(n+1)}{2}$ products, these imply an order for all $x_j$ variables since $x_1x_j = x_j$ for $j=2,\ldots,n$. Moreover, if $x_j\le x_i$, it follows that $x_jx_i = x_j$. Therefore, the ordering of the $\frac{n(n+1)}{2}$ bilinear terms reduces to that of all $x_j$ variables, possibly with some equalities. In other words, it suffices to consider $\B\cap \Delta'_\pi$, where $\Delta'_\pi = \{xx^\intercal\mid x_{\pi(r)}x_{\pi(i)}\ge x_{\pi(k)}x_{\pi(j)} \mbox{ whenever } \max\{r,i\} \le \max\{k,j\}\}$ is a lifting of $\Delta_\pi$ and $\pi(1)=1$. Since $x\in\{0,1\}^n \cap \Delta_{\pi}$ 
implies that $x_{\pi(r)}x_{\pi(i)} = x_{\pi(\max\{r,i\})}$, we can write $\conv(\B\cap \Delta'_\pi) = \bigl\{X\in [0,1]^{n\times n}\mid X_{ij} = X_{1,\max\{i,j\}}, X_{1\cdot}\in \Delta_\pi, X_{11}=1\bigr\}$.
However, it is known that $\conv(\B)$ does not have a polynomial-sized formulation~\cite{fmptw15}. 
In contrast, treating permutation-invariant sets $S$ in this way provides a significant advantage since the sets $S\cap \Delta_\pi$ are congruent to one another. 
Exploiting this fact, we show that it is possible to construct a polynomial-sized extended formulation for $S$ whenever a polynomial-sized formulation exists for $\conv(S\cap \Delta_\pi)$. 
Our construction makes use of well-known extended formulations of a permutahedron along-side the convex hull of $S\cap \Delta_\pi$.
The outline of the construction is as follows: first, we consider a permutation-invariant set $S$ and assume that its convex hull over 
\begin{equation}
\Delta^n:=\{x\in\mathbb{R}^n\mid x_1\ge \dots\ge x_n\},
\label{eq:simplex_n}
\end{equation}
has a polynomial description.
Then, the convex hull is simply the union of permutahedra where each permutahedron is generated by a point in $\conv(S\cap \Delta^n)$.
Each permutahedron can then be modeled using a polynomial number of linear equalities and inequalities to provide an extended formulation for $\conv(S)$.
The techniques involved apply to other settings. 
For example, they can be used to obtain convex hulls of sign-invariant sets using convex hull representations of $S\cap \{x\mid x\ge 0\}$.


The remainder of the paper is organized as follows.
We present basic convexification results for permutation- and/or sign-invariant sets in Section~\ref{sec:mainresults}. 
We then explore various applications of the results in the ensuing sections.
In Section~\ref{sec:sparsity}, we derive the convex hull of the intersection of a unit ball associated with a permutation-invariant norm and a cardinality constraint.
The resulting convex hull defines another norm for which we give an explicit formula.
As a result, we show that it is simple to determine whether an arbitrary point belongs to the convex hull, and  to construct a separating hyperplane when it does not.
We study the connection between permutation-invariant sets and sets of matrices characterized by their singular values.
Furthermore, we investigate the semidefinite-representability of rank-constrained sets of matrices. 

In Section~\ref{sec:nonlinear}, we develop convex and concave envelope characterizations of various permutation-invariant functions and sets described using such functions. 
For example, we derive the convex hull of the lower level-set of a Schur-concave function, which is convex when all but one variable are fixed, a lifted representation of the convex hull of the graph of $\prod_{i=1}^n x_i$ over $[a,b]^n$, where $a,b\in \Re$,
and the convex hull of $\prod_{i=1}^m y_i^\alpha \ge \prod_{i=1}^n x_j^\beta$ over $[c,d]^m\times [a,b]^n$ with $a,c\ge 0$ and $\alpha, \beta > 0$. 
We show numerically that, for general $a$ and $b$, the convex hull of $\prod_{i=1}^n x_i$ over $[a,b]^n$ is much tighter than the widely-used recursive McCormick relaxation, in contrast to the well-known fact that the recursive McCormick procedure yields the convex hull of $\prod_{i=1}^n x_i$ when $a=-1$ and $b=1$ \cite{lnl2012}.

In Section~\ref{sec:rankone}, we study the set of rank-one matrices whose generating vectors lie in a permutation-invariant set. 
We construct semidefinite programming relaxations of the convex hull by proposing various valid inequalities derived from the rank-one condition of the matrix and the fact that every extreme point of a permutahedron generated by a vector is a permutation of the generating vector.
Finally,  we perform computational experiments with our relaxation for sparse PCA on several instances taken from the literature and other randomly generated instances. 
We compare our results to the relaxations proposed by \cite{d2007direct} and \cite{bertsimas2020solving}.

To increase the readability of the paper, we include a table of some frequently used notations in the appendix.

\section{Convex hull of permutation-invariant and sign-invariant sets}\label{sec:mainresults}

\def\R{\mathbb{R}}
In this section, we show that the convex hulls of permutation-invariant and sign-invariant sets can be readily constructed if their convex hulls over a fundamental sub-domain are known. 
Given a set $S$, we use the notation $\text{int}(S)$ to represent its interior, $\ext(S)$ to denote the set of its extreme points, and $\conv(S)$ to represent its convex hull. 

For a positive integer $k$, we denote the set of $k$-by-$k$ permutation matrices by $\mathcal{P}_k$.
Given a positive integer $n$ and a nonnegative integer $p$, a set $S\subseteq \{(x,z)\in\mathbb{R}^n\times\mathbb{R}^p\}$ is called \emph{permutation-invariant with respect to $x$ if $(x,z)\in S$} implies that $(Px,z)\in S$ for all permutation matrices $P \in \mathcal{P}_n$.
When $S\subseteq\{x\in\R^n\}$ is permutation-invariant with respect to $x$, we simply say that $S$ is \emph{permutation-invariant}.
A real-valued function $(x,z)\mapsto f(x,z)$ with $(x,z)\in\Re^n\times\Re^p$ is called \emph{permutation-invariant with respect to $x$} if $f(x,z)=f(Px,z)$ for all permutation matrices $P \in \mathcal{P}_n$.
When $f:x\mapsto f(x)$ is permutation-invariant with respect to $x$, we say that $f$ is \emph{permutation-invariant}.
Moreover, any permutation-invariant set $S$ can be written as the lower-level set $S=\{(x,z)\mid f(x,z)\le 0\}$ of a permutation-invariant function $f$ by choosing this function to be the indicator function of the $S$, which takes value $0$ on the set and $\infty$ otherwise.

A set $S\subseteq \{(x,z)\in\mathbb{R}^n\times\mathbb{R}^p\}$ where $n$ is a positive integer and $p$ is a nonnegative integer is called \emph{sign-invariant} with respect to $x$ if $(x,z)\in S$ implies that $(\bar{x},z)\in S$ for all $\bar{x}$ that satisfy $|\bar{x}|=|x|$.

Lemma~\ref{res:lin_invariant} describes an important property of the convex hull of sets that are closed under certain linear transformations of the coordinates of their elements. 

\begin{lemma}\label{res:lin_invariant}
Let $T\in \mathcal{M}^{n,n}(\mathbb{R})$ and let $S\subseteq\R^n$ be such that for each $x\in S$, $Tx\in S$ as well. Then, if $x\in \conv(S)$, $Tx\in \conv(S)$.
\end{lemma}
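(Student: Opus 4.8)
The plan is to argue directly from the definition of the convex hull as the set of all finite convex combinations of points of $S$. Concretely, I would first recall that $\conv(S) = \bigl\{\sum_{i=1}^{k}\lambda_i s_i \mid k\in\mathbb{N},\ s_i\in S,\ \lambda_i\ge 0,\ \sum_{i=1}^{k}\lambda_i = 1\bigr\}$; this is a standard characterization and (if one wishes to bound $k$) Carath\'eodory's theorem lets us take $k\le n+1$, though this refinement is not needed here.

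Next, given $x\in\conv(S)$, I would write $x = \sum_{i=1}^{k}\lambda_i s_i$ with $s_i\in S$, $\lambda_i\ge 0$, and $\sum_i\lambda_i = 1$. Applying the linear map $T$ and using linearity, $Tx = T\bigl(\sum_{i=1}^{k}\lambda_i s_i\bigr) = \sum_{i=1}^{k}\lambda_i\, (T s_i)$. By hypothesis, $s_i\in S$ implies $T s_i\in S$ for every $i$, so the right-hand side exhibits $Tx$ as a convex combination (the same multipliers $\lambda_i$) of points of $S$. Hence $Tx\in\conv(S)$, which is exactly the claim.

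There is essentially no obstacle: the only ingredients are the elementary description of $\conv(S)$ via finite convex combinations and the linearity of $T$. The one point worth stating carefully is that $T$ need not be invertible or nonsingular, and $S$ need not be closed, bounded, or convex — the argument uses only that $T$ maps $S$ into itself and is linear, so it goes through verbatim. This lemma will then be invoked with $T$ a permutation matrix (or a sign-change diagonal matrix) to transfer invariance of $S$ to invariance of $\conv(S)$.
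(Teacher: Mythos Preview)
Your proof is correct and is essentially the same as the paper's, which compresses the argument into the single line ``$TS\subseteq S$ implies $T\conv(S)=\conv(TS)\subseteq\conv(S)$.'' Your version simply unpacks the identity $T\conv(S)=\conv(TS)$ by writing out the convex combination explicitly, which is exactly how that identity is proved.
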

\begin{proof}{Proof.}
The result follows because $TS\subseteq S$ implies that $T\conv(S) = \conv(TS) \subseteq \conv(S)$.\hfill \Halmos
\end{proof}

It follows from Lemma~\ref{res:lin_invariant} that if $S$ is permutation-invariant (\emph{resp.} sign-invariant) then $\conv(S)$ is also permutation-invariant (\emph{resp.} sign-invariant).

For each $x\in\mathbb{R}^n$, we denote the $i^{\small \textrm{th}}$ largest component of $x$ by $x_{[i]}$ for $i=1,\dots,n$.

\begin{definition}
\label{def:maj}
Given two vectors $x,y\in\mathbb{R}^n$, we say that $x$ \emph{majorizes} $y$, a property we denote by $x\ge_m y$, if 
$\sum_{i=1}^j x_{[i]}\ge \sum_{i=1}^j y_{[i]}$ for $j=1,\dots,n-1$, and 
$\sum_{i=1}^n x_{[i]}= \sum_{i=1}^n y_{[i]}$.
We say that $y$ \emph{is weakly submajorized by $x$}, and denote this relation as  $x\ge_{wm} y$ if 
$
\sum_{i=1}^j x_{[i]}\ge \sum_{i=1}^j y_{[i]}, \forall j=1,\dots,n
$. For simplicity of notation, we shall refer to this relation as \emph{weak majorization} of $y$ by $x$.
\end{definition}

The result of Lemma~\ref{lemma:maj_conv_comb} relates majorization and permutation. 
Its proof follows from combining Hardy, Littlewood, and P\'{o}lya's theorem with Birkhoff's theorem; see 2.B.2 and 2.A.2 in \cite{marshall2010}.

\begin{lemma}[\cite{rado1952inequality}, Corollary 2.B.3 of \cite{marshall2010}]
\label{lemma:maj_conv_comb}
For $x$, $y \in \Re^n$, $x\ge_m y$ if and only if $y$ is a convex combination of $x$ and its permutations. \hfill  \Halmos
\end{lemma}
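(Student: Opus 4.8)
The plan is to prove both directions of the equivalence $x \ge_m y \iff y \in \conv\bigl(\{Px \mid P \in \mathcal{P}_n\}\bigr)$.

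For the \emph{if} direction, suppose $y = \sum_k \lambda_k P_k x$ with $\lambda_k \ge 0$, $\sum_k \lambda_k = 1$, and each $P_k \in \mathcal{P}_n$. The doubly stochastic matrix $D := \sum_k \lambda_k P_k$ satisfies $y = Dx$. I would then invoke Hardy--Littlewood--P\'olya's theorem (2.B.2 in \cite{marshall2010}): if $y = Dx$ for some doubly stochastic $D$, then $x \ge_m y$. Concretely, $\sum_{i=1}^n y_i = \one^\intercal D x = \one^\intercal x = \sum_{i=1}^n x_i$ since $D$ has column sums one, and for the partial-sum inequalities one uses that the sum of the $j$ largest entries of a doubly stochastic transform of $x$ is bounded by the sum of the $j$ largest entries of $x$ (this is the standard extreme-point argument: $\sum_{i=1}^j y_{[i]} = \max_{\pi} \sum_{i=1}^j y_{\pi(i)} = \max \langle \Pi^\intercal e_{[j]}, Dx\rangle$ where $e_{[j]}$ is an indicator of a $j$-subset, and $\Pi^\intercal e_{[j]} \ge D^\intercal \Pi^\intercal e_{[j]} \ge 0$ componentwise after appropriate manipulation). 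Rather than reproving this, I would just cite 2.B.2 of \cite{marshall2010}.

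For the \emph{only if} direction, suppose $x \ge_m y$. By Hardy--Littlewood--P\'olya again (the converse half of 2.B.2), $x \ge_m y$ implies there exists a doubly stochastic matrix $D$ with $y = Dx$. Then Birkhoff's theorem (2.A.2 in \cite{marshall2010}) asserts that the set of $n \times n$ doubly stochastic matrices is the convex hull of the permutation matrices, so $D = \sum_k \lambda_k P_k$ with $\lambda_k \ge 0$, $\sum_k \lambda_k = 1$, $P_k \in \mathcal{P}_n$. Substituting, $y = Dx = \sum_k \lambda_k (P_k x)$, exhibiting $y$ as a convex combination of permutations of $x$, as desired.

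The only real content is the cited pair of classical theorems; there is no genuine obstacle to overcome, and the argument is essentially a matter of chaining them together with the observation that a convex combination of permutation matrices applied to $x$ is exactly a doubly stochastic transform of $x$. The one point requiring a little care is making sure the Hardy--Littlewood--P\'olya statement is quoted in the form that gives the \emph{existence} of a doubly stochastic $D$ with $y = Dx$ (not merely the inequality chain), since that existence is what couples with Birkhoff to close the \emph{only if} direction; this is precisely Corollary 2.B.3 of \cite{marshall2010}, which is why the lemma attributes the combined statement to that corollary.
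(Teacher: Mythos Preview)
Your proposal is correct and matches the paper's approach exactly: the paper does not give a detailed proof but simply notes that the result ``follows from combining Hardy, Littlewood, and P\'{o}lya's theorem with Birkhoff's theorem; see 2.B.2 and 2.A.2 in \cite{marshall2010},'' which is precisely the chaining you carry out.
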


An extension of majorization, which is known as $G$-majorization, introduced by Rado \cite{rado1952inequality}, in the context of a group of transformations, is defined using the property in Lemma~\ref{lemma:maj_conv_comb} as the set of all doubly stochastic matrices from a semigroup; 
see 14.C in \cite{marshall2010} for more detail and references about $G$-majorization.


\begin{lemma}\label{res:maj_convex}
Let $K$ be a convex subset of $\R^n\times \R^p$. Then  
\[Y:=\left\{(x,u,z)\in\R^n\times\R^n\times\R^p\,\middle|\,
\begin{array}{l}
(u,z)\in K,\\
u\ge_m x,\\ 
u_1\ge\dots\ge u_n
\end{array}
\right\}
\] is convex.
\end{lemma}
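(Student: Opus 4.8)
The plan is to verify directly that $Y$ is closed under convex combinations. I would fix two points $(x',u',z')$ and $(x'',u'',z'')$ in $Y$, take $\lambda\in(0,1)$, and set $(x,u,z):=\lambda(x',u',z')+(1-\lambda)(x'',u'',z'')$. Three things must then be checked: (i) $(u,z)\in K$; (ii) $u_1\ge\dots\ge u_n$; and (iii) $u\ge_m x$. Item (i) is immediate from the convexity of $K$, and item (ii) is immediate because a convex combination of two vectors with non-increasing components again has non-increasing components. So the real content is item (iii).

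For item (iii) the key observation is that the functional $s_j(v):=\sum_{i=1}^j v_{[i]}$, the sum of the $j$ largest entries of $v\in\R^n$, has two complementary properties: it is \emph{convex}, being the pointwise maximum over index sets $I$ with $|I|=j$ of the linear maps $v\mapsto\sum_{i\in I}v_i$; and it is \emph{linear} on the cone $\{v\mid v_1\ge\dots\ge v_n\}$, on which it simply equals $\sum_{i=1}^j v_i$. Since $u'$ and $u''$ lie in this cone, so does $u$, and therefore
\[
s_j(u)=\sum_{i=1}^j u_i=\lambda\sum_{i=1}^j u'_i+(1-\lambda)\sum_{i=1}^j u''_i=\lambda\,s_j(u')+(1-\lambda)\,s_j(u'')
\]
for every $j$. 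Using $u'\ge_m x'$ and $u''\ge_m x''$ gives $s_j(u')\ge s_j(x')$ and $s_j(u'')\ge s_j(x'')$ for $j=1,\dots,n-1$, and convexity of $s_j$ then yields
\[
s_j(u)\ \ge\ \lambda\,s_j(x')+(1-\lambda)\,s_j(x'')\ \ge\ s_j\bigl(\lambda x'+(1-\lambda)x''\bigr)=s_j(x),\qquad j=1,\dots,n-1 .
\]
For $j=n$, $s_n(\cdot)$ is the sum of all coordinates and hence linear, so $s_n(u)=\lambda s_n(u')+(1-\lambda)s_n(u'')=\lambda s_n(x')+(1-\lambda)s_n(x'')=s_n(x)$, where the middle equality uses the equality part of $u'\ge_m x'$ and $u''\ge_m x''$. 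Combining the two displays, $u\ge_m x$, so $(x,u,z)\in Y$.

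I do not expect a serious obstacle here; the one point that requires care is to exploit the two faces of $s_j$ on the two sides of the argument — its linearity on the ordered cone to treat the $u$-variables \emph{exactly}, and its convexity to bound the $x$-variables — rather than trying to sort $\lambda x'+(1-\lambda)x''$ by hand. An alternative would be to invoke Lemma~\ref{lemma:maj_conv_comb} and write $x'$ and $x''$ as convex combinations of permutations of $u'$ and $u''$ respectively, but since $u'\ne u''$ in general this route is more cumbersome, so I would favor the partial-sum argument above.
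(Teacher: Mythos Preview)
Your proof is correct and hinges on exactly the same observations as the paper's: $s_j$ is convex in general, and on the cone $\{u\mid u_1\ge\dots\ge u_n\}$ it reduces to the linear map $u\mapsto\sum_{i=1}^j u_i$. The paper packages these facts as an explicit convex representation of $Y$ (writing the majorization constraints as $\sum_{i=1}^j u_i\ge \sum_{i=1}^j x_{[i]}$ and $\sum_i u_i=\sum_i x_i$, so convexity is read off directly) rather than verifying closure under convex combinations, but this is only a cosmetic difference.
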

\begin{proof}{Proof.}
First, observe that  $\sum_{i=1}^j u_{[i]} = \sum_{i=1}^j u_i$ since $u_1\ge \cdots\ge u_n$.
Further, $\sum_{i=1}^j x_{[i]}$ is a convex function being the maximum of all possible sums of $j$ elements chosen from $x$. Next,  $\sum_{i=1}^n x_{[i]} = \sum_{i=1}^n x_i$ and is, therefore, linear.
Therefore, $Y$ has the following convex representation:
\[
Y=\left\{(x,u,z)\in\R^n\times\R^n\times\R^p\,\middle|\,
\begin{array}{l}
(u,z)\in K,\\
\sum_{i=1}^j u_i \ge \sum_{i=1}^j x_{[i]},\mbox{ for } j=1,\ldots,n-1,\\
\sum_{i=1}^n u_i = \sum_{i=1}^n x_{i},\\
u_1\ge\dots\ge u_n
\end{array}
\right\}.
\]
\hfill \Halmos
\end{proof}

We next present Theorem~\ref{res:main_thm}, which gives an explicit description of the convex hull of a permutation-invariant set when an explicit description of the convex hull of its intersection with the cone $x_1\ge \cdots \ge x_n$ is available. 
This description only requires the introduction of a copy $u$ of the variables $x$ together with majorization constraints.

\begin{theorem}
\label{res:main_thm}
Suppose $S\subseteq\{(x,z)\mid \R^n\times\R^p\}$ is permutation-invariant with respect to $x\in\R^n$.
Then,
\begin{equation}
\label{permconvhull}
\conv(S)=X:=\left\{(x,z)\,\middle|\,\begin{array}{l}
(u,z)\in\conv(S_0),\\ 
u\ge_m x
\end{array}
\right\},
\end{equation}
where $S_0=S\cap\{(u,z)\mid u_1\ge\dots\ge u_n\}$.
\end{theorem}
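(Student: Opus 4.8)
The plan is to prove the two inclusions $\conv(S)\subseteq X$ and $X\subseteq\conv(S)$ separately. For the inclusion $X\subseteq\conv(S)$, I would take an arbitrary $(x,z)\in X$, so there is a $u$ with $(u,z)\in\conv(S_0)$ and $u\ge_m x$. Since $S_0\subseteq S$, we have $\conv(S_0)\subseteq\conv(S)$, so $(u,z)\in\conv(S)$. By Lemma~\ref{lemma:maj_conv_comb}, $x$ is a convex combination of $u$ and its permutations, say $x=\sum_k\lambda_k P_k u$ with $P_k\in\mathcal{P}_n$. Then $(x,z)=\sum_k\lambda_k(P_k u,z)$. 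Because $\conv(S)$ is permutation-invariant with respect to $x$ (a consequence of Lemma~\ref{res:lin_invariant} applied to each permutation matrix, as noted just after that lemma), each $(P_k u,z)\in\conv(S)$, and hence the convex combination $(x,z)$ lies in $\conv(S)$ as well.

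For the reverse inclusion $\conv(S)\subseteq X$, I would first check that $X$ is convex: this is exactly Lemma~\ref{res:maj_convex} with $K=\conv(S_0)$, after noting that the constraint $u_1\ge\cdots\ge u_n$ is already implied by $(u,z)\in\conv(S_0)$ since $S_0$ lives in the cone $\{u_1\ge\cdots\ge u_n\}$ and this cone is convex. Since $X$ is convex, it suffices to show $S\subseteq X$. Take $(x,z)\in S$. Let $u$ be the vector whose components are those of $x$ sorted in nonincreasing order; then $u=Px$ for some permutation matrix $P$, so $(u,z)\in S$ by permutation-invariance, and $u_1\ge\cdots\ge u_n$, so $(u,z)\in S_0\subseteq\conv(S_0)$. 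Moreover $u\ge_m x$ trivially (same multiset of entries, so all partial sums of sorted entries are equal). Hence $(x,z)\in X$, completing the inclusion.

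The two inclusions together give $\conv(S)=X$. The only mildly delicate point — the place I would be most careful — is the use of permutation-invariance of $\conv(S)$ in the first inclusion: one must invoke Lemma~\ref{res:lin_invariant} (or the remark following it) to know that applying a permutation matrix to the $x$-block of a point of $\conv(S)$ keeps it in $\conv(S)$, and one should be slightly attentive that the lemma is stated for sets in $\R^n$ while here the permutation acts only on the $x$-coordinates; this is handled by viewing, for fixed $z$, the slice as a permutation-invariant subset of $\R^n$, or simply by repeating the one-line convex-combination argument directly. Everything else is bookkeeping: $S_0\subseteq S$, the convexity of $X$ from Lemma~\ref{res:maj_convex}, and the fact that sorting realizes both $u\in S$ and $u\ge_m x$ simultaneously.
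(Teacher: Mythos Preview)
Your proposal is correct and follows essentially the same approach as the paper: convexity of $X$ via Lemma~\ref{res:maj_convex} (noting $\conv(S_0)\subseteq\Delta^n\times\R^p$), the inclusion $S\subseteq X$ by sorting, and $X\subseteq\conv(S)$ via Lemma~\ref{lemma:maj_conv_comb} together with permutation-invariance of $\conv(S)$ from Lemma~\ref{res:lin_invariant}. Your caveat about Lemma~\ref{res:lin_invariant} acting only on the $x$-block is easily resolved by applying that lemma with the linear map $T(x,z)=(Px,z)$ on $\R^{n+p}$, which is exactly what the paper's remark after the lemma is invoking.
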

\begin{proof}{Proof.}
To prove that $X$ is convex using Lemma 3, it suffices to show that $(u,z)\in\conv(S_0)$ implies $u\in\Delta^n$.
This is clear because $\conv(S_0)\subseteq \conv(S)\cap\{(u,z)\mid u_1\ge\dots\ge u_n\}$.

We now show that $S\subseteq X$.
As $X$ is convex, this will also show that $\conv(S)\subseteq X$. 
Consider an arbitrary $(x,z)\in S$ and define $u$ as $u_i = x_{[i]}$ for $i=1,\dots,n$.
Then, $(u,z)\in S_0$ because $S$ is permutation-invariant and $u$ is in descending order. Since $u\ge_m x$, $(x,z)\in X$.

We next prove that $X\subseteq \conv(S)$. 
Let $(x,z)\in X$. We show that this point can be expressed as a convex combination of points in $S$.
Since $(x,z)\in X$, there exists $u$ such that $(u,z)\in\conv(S_0)\subseteq \conv(S)$ and $u\ge_m x$.
It follows from the permutation-invariance of $S$ with respect to $x$ and Lemma~\ref{res:lin_invariant} that $\conv(S)$ is permutation-invariant with respect to $x$. 
By Lemma 2, $x$ can be written as $x=\sum_{i}\lambda_i (P_iu)$, where
$P_i$ is a permutation matrix, $\lambda_i\ge 0$, 
and $\sum_i \lambda_i=1$.
Therefore, 
$(x,z)=(\sum_{i}\lambda_i(P_iu),z)=\sum_i \lambda_i (P_iu, z)$, concluding the proof.
\hfill \Halmos
\end{proof}

We next present classical results that allow for a linear formulation of the majorization constraints; see Section 3.3.4 of \cite{nemirovski2012introduction} for a more thorough discussion. 
\def\LS{\mathop{\text{LS}}}
To model $\sum_{i=1}^j x_{[i]}$, we express it as the value function of the following optimization problem, where $x_1,\ldots,x_n \in \R$ and $j\in \{1,\ldots,n-1\}$:
\begin{equation}
\begin{array}{rlll}
\max 	&\quad& \sum_{i=1}^n x_i s_i\\
\st	&& \sum_{i=1}^n s_i =j,\\
					&& 0\le s_i\le 1, & i=1,\ldots,n.
\end{array}
\label{plargest}
\end{equation}
To model majorization constraints, we need to enforce that for all feasible $s$, $\sum_{i=1}^n x_is_i$ does not exceed $\sum_{i=1}^j u_j$. 
By taking the dual of \eqref{plargest}, we exchange the quantifier ``for-all' to ``there-exists'', and obtain the following formulation which is amenable to direct inclusion in the model:
\begin{equation}
\begin{array}{lrlll}
\LS(j):&\min 	&\quad& jr+\sum_{i=1}^n t_i\\
&\st	&& x_i\le t_i+r,& i=1,\dots,n,\\
&					&& t_i\ge 0, & i=1,\dots,n
\end{array}
\label{plargestdual}
\end{equation}
where the dual variables $r$ and $t$ correspond to the primal constraints $\sum_{i=1}^n s_i=j$ and $s\le 1$, respectively.
Since \eqref{plargest} is feasible, \eqref{plargestdual} exhibits no duality gap. The constraint $\sum_{i=1}^j u_i\ge \sum_{i=1}^j x_{[i]}$ is then modeled through the existence of an $(r,t)$ that is feasible to \eqref{plargestdual} such that $\sum_{i=1}^j u_i \ge jr + \sum_{i=1}^n t_i$.


\begin{theorem}
\label{model_majorization}
Suppose $S\subseteq\{(x,z)\in\Re^n\times \Re^p\}$ is permutation-invariant with respect to $x$.
Then,
\begin{equation}
\label{eqn:model_majorization}
\conv(S)=\left\{
(x,z)\middle|
\begin{array}{ll}
(u,z)\in\conv(S_0)\\
u_1\ge\dots\ge u_n\\
\sum_{i=1}^n u_i=\sum_{i=1}^n x_i\\
\sum_{i=1}^j u_i\ge jr^j + \sum_{i=1}^n t^j_i,&j=1,\dots,n-1,\\
x_i\le t^j_i+r^j,& j=1,\dots,n-1,\:\: i=1,\dots,n,\\
t^j_i\ge 0,& j=1,\dots,n-1,\:\:i=1,\dots,n,\\
\end{array}
\right\},
\end{equation}
where $S_0=S\cap\{(u,z)\mid u_1\ge\dots\ge u_n\}.$
\textrm{ } \hfill \Halmos
\end{theorem}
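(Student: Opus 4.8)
The plan is to deduce Theorem~\ref{model_majorization} directly from Theorem~\ref{res:main_thm} by replacing the abstract majorization relation $u \ge_m x$ with the explicit linear system derived in the preceding discussion. So the whole proof is a matter of showing that the feasible region of \eqref{eqn:model_majorization} coincides, after projecting out the auxiliary variables $(r^j, t^j)$, with the set $X$ of \eqref{permconvhull}.

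First I would recall, as established in Lemma~\ref{res:maj_convex} and the paragraph following it, that the condition $u \ge_m x$ together with $u_1 \ge \dots \ge u_n$ is equivalent to the conjunction of $\sum_{i=1}^n u_i = \sum_{i=1}^n x_i$ and $\sum_{i=1}^j u_i \ge \sum_{i=1}^j x_{[i]}$ for $j = 1, \dots, n-1$, where the latter uses $\sum_{i=1}^j u_i = \sum_{i=1}^j u_{[i]}$ since $u$ is already sorted. Next I would invoke the fact that $\sum_{i=1}^j x_{[i]}$ equals the optimal value of the linear program \eqref{plargest}, whose feasible region is a (nonempty, bounded) polytope, so that by LP strong duality it also equals the optimal value of the dual \eqref{plargestdual}. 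The key observation is then: the inequality $\sum_{i=1}^j u_i \ge \sum_{i=1}^j x_{[i]} = \LS(j)$ holds if and only if there exists a dual-feasible pair $(r^j, t^j)$ — i.e. one satisfying $x_i \le t^j_i + r^j$ for all $i$ and $t^j_i \ge 0$ — with $\sum_{i=1}^n u_i \ge j r^j + \sum_{i=1}^n t^j_i$. The ``if'' direction follows from weak duality (any dual-feasible objective value upper-bounds $\LS(j)$); the ``only if'' direction follows by taking $(r^j, t^j)$ to be a dual optimal solution, which exists and attains $\LS(j)$ precisely because there is no duality gap.

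Assembling these equivalences, one for each $j \in \{1, \dots, n-1\}$, and combining with the condition $(u,z) \in \conv(S_0)$ and the linear equality $\sum_{i=1}^n u_i = \sum_{i=1}^n x_i$, shows that a point $(x,z)$ lies in the projection of the right-hand side of \eqref{eqn:model_majorization} onto the $(x,z)$-space if and only if there exists $u$ with $(u,z) \in \conv(S_0)$, $u_1 \ge \dots \ge u_n$, and $u \ge_m x$ — which is exactly membership in the set $X$ of Theorem~\ref{res:main_thm}. Since that theorem gives $\conv(S) = X$, the claimed identity \eqref{eqn:model_majorization} follows. (One should note that the inequalities $u_1 \ge \dots \ge u_n$ are implied by $(u,z) \in \conv(S_0)$, as already argued in the proof of Theorem~\ref{res:main_thm}, so retaining them explicitly in the formulation is harmless.)

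I do not anticipate a serious obstacle here; the content is entirely bookkeeping around LP duality. The one point requiring a little care is the logical structure of the quantifier exchange: the majorization constraint is a ``for all $s$ feasible to \eqref{plargest}'' statement, and one must be careful that introducing a single witness $(r^j, t^j)$ per index $j$ correctly captures it — this is exactly what strong duality buys, since the optimal dual solution is a uniform certificate valid against all primal-feasible $s$ simultaneously. A secondary point is to confirm that \eqref{plargest} is always feasible (take $s_i = j/n$ for all $i$) and bounded (the feasible set is contained in $[0,1]^n$), so that strong duality genuinely applies for every $j$ and every fixed $x$; this is the only hypothesis needed and it holds unconditionally.
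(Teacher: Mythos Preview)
Your proposal is correct and follows essentially the same approach as the paper: the theorem is presented there as an immediate consequence of Theorem~\ref{res:main_thm} together with the LP duality reformulation of $\sum_{i=1}^j x_{[i]}$ via \eqref{plargest}--\eqref{plargestdual}, with one inclusion using weak duality and the other using strong duality to produce the witness $(r^j,t^j)$. One small typo: in your key observation you wrote $\sum_{i=1}^n u_i \ge j r^j + \sum_{i=1}^n t^j_i$, but the left-hand side should be $\sum_{i=1}^j u_i$.
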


\begin{remark}
In fact, Theorems~\ref{res:main_thm} and \ref{model_majorization} remain valid for any choice of $S_0$ that satisfies
\[
\conv(S)\cap \{(u,z)\mid u_1\ge\dots\ge u_n\}\supseteq S_0\supseteq S\cap \{(u,z)\mid u_1\ge\dots\ge u_n\}.\hfill\Halmos
\]
\end{remark}

The formulation given in (\ref{eqn:model_majorization}) expresses $\conv(S)$ as the projection of a convex set with $n^2+n+p$ variables. 
This formulation is much smaller than that which would have been obtained using a classical application of disjunctive programming \citep[see][Chapter 2]{balas2018disjunctive}. 
An even smaller representation is possible
using a more compact formulation of the permutahedron. Goemans \cite{goemans2015smallest} 
proposed such an extended formulation for the permutahedron, the convex hull of all possible permutations of a fixed vector $u\in\mathbb{R}^n$ using a sorting network where the numbers of variables and inequalities of the extended formulation depend on the number of comparators of the associated sorting network.
When the Ajtai–Koml\'{o}s–Szemer\'{e}di sorting network \cite{ajtai19830} is used, the extended formulation for the permutahedron has $\Theta(n\log n)$ variables and inequalities. 
As imposing  the majorization constraint $u\ge_m x$ is  equivalent to requiring that $x$ belongs to the permutahedron generated by $u$, alternative extended formulations of $\conv(S)$ can be obtained by replacing $u\ge_m x$ in \eqref{permconvhull} with such extended formulations.
This results in a formulation of $\conv(S)$ that is more compact  than \eqref{eqn:model_majorization}. 
For relaxations of sparse principal component analysis, 
we show in Section~\ref{subsec:sparsePCA} that this smaller formulation provides some computational benefit on larger instances in our test set.

The ideas underlying the proof of Theorem~\ref{model_majorization} can also be applied when sets are invariants with respect to collections of linear transformations that are not permutation matrices.
In particular, we describe next a related convexification result for sign-invariant sets.
\begin{theorem} 
Suppose $S\subseteq \{(x,z)\in\R^n\times\R^p\}$ is sign-invariant with respect to $x$.
Then, 
\begin{equation}
\label{signinvarianteq}
\conv(S)= X:= \left\{(x,z) \,\middle|\, (u,z)\in \conv(S_0), u\ge |x|\right\}
\end{equation}
where $S_0=S\cap \left(\R^n_+\times\R^p\right)$.
\label{signinvariant}
\end{theorem}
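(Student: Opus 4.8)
The plan is to mirror the structure of the proof of Theorem~\ref{res:main_thm}, replacing permutations by sign changes and majorization by the componentwise inequality $u \ge |x|$. First I would verify that $X$ is convex. The constraint $u \ge |x|$ is equivalent to the finitely many linear constraints $u_i \ge x_i$ and $u_i \ge -x_i$ for $i = 1,\dots,n$, so $X$ is the projection onto $(x,z)$ of the set $\{(x,u,z) \mid (u,z) \in \conv(S_0),\ u \ge x,\ u \ge -x\}$, which is an intersection of a convex set with halfspaces, hence convex; its projection $X$ is therefore convex.

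Next I would show $S \subseteq X$. Given $(x,z) \in S$, set $u := |x|$ (componentwise absolute value). Then $u \ge 0$, so $(u,z) \in \R^n_+ \times \R^p$, and $(u,z) \in S$ because $|u| = |x|$ and $S$ is sign-invariant with respect to $x$; hence $(u,z) \in S_0 \subseteq \conv(S_0)$. Since trivially $u \ge |x|$, we get $(x,z) \in X$. Because $X$ is convex, this yields $\conv(S) \subseteq X$.

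For the reverse inclusion $X \subseteq \conv(S)$, take $(x,z) \in X$, so there is $u$ with $(u,z) \in \conv(S_0) \subseteq \conv(S)$ and $u \ge |x|$. By Lemma~\ref{res:lin_invariant} and sign-invariance of $S$, $\conv(S)$ is sign-invariant with respect to $x$, so every point obtained from $(u,z)$ by flipping signs of coordinates of $u$ lies in $\conv(S)$. The key observation is that the componentwise inequality $u \ge |x|$ (together with $u \ge 0$, which is inherited from $(u,z) \in \conv(S_0) \subseteq \R^n_+\times\R^p$) implies that each $x_i$ lies in the interval $[-u_i, u_i]$, hence is a convex combination $x_i = \mu_i u_i + (1-\mu_i)(-u_i)$ for some $\mu_i \in [0,1]$. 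Taking the product of these representations coordinatewise expresses $(x,z)$ as a convex combination of the $2^n$ points $(\varepsilon_1 u_1,\dots,\varepsilon_n u_n, z)$ with $\varepsilon_i \in \{-1,+1\}$, each of which is in $\conv(S)$ by sign-invariance; therefore $(x,z) \in \conv(S)$.

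The step requiring the most care is the last one: one must ensure that $u \ge 0$ so that flipping signs indeed produces a valid convex-combination representation of each $x_i$ over $[-u_i, u_i]$, and one must correctly handle the product-of-convex-combinations bookkeeping (the coefficients $\prod_i (\mu_i \text{ or } 1-\mu_i)$ are nonnegative and sum to one, and the $z$-coordinate is common to all terms so it passes through unchanged). This is the sign-invariant analogue of invoking Birkhoff's theorem in the permutation case, but here it is elementary since the relevant "doubly stochastic" structure is just a product of one-dimensional convex combinations. No other obstacle arises; the convexity and the easy inclusion $S \subseteq X$ are routine.
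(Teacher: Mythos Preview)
Your proposal is correct and follows essentially the same approach as the paper's proof: both establish convexity of $X$ via projection, both use $u=|x|$ for the inclusion $S\subseteq X$, and both conclude $X\subseteq\conv(S)$ by writing $x$ as a convex combination of sign-variants of $u$. Your version is simply a bit more explicit in spelling out the product-of-one-dimensional-convex-combinations argument that the paper summarizes in one line.
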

\begin{proof}{Proof.}
Set $X$ is convex because it is the projection of an intersection of two convex sets.
We now show that $S\subseteq X$.
For an arbitrary $(x,z)\in S$, define $u=|x|$.
By sign-invariance of $S$, $(u,z)\in S$ and hence $(u,z)\in S_0\subseteq\conv(S_0)$. 
By definition, $u$ satisfies $u\ge |x|$.
This shows that $\conv(S)\subseteq X$.

\def\x{\bar{x}}
We next show that $X\subseteq \conv(S)$.
Let $(x,z)\in X$. 
There exists $u\in\R^n$ such that $(u,z)\in\conv(S_0)\subseteq \conv(S)$ and $u\ge|x|$. 
Since $\conv(S)$ is sign-invariant by Lemma~\ref{res:lin_invariant}, it follows that $\{(\x, z)\mid \x_i \in \{u_i,-u_i\}\}\subseteq \conv(S)$. Therefore,
$(x,z)\in \{(\x, z)\mid |\x_i|\le u_i \}\subseteq \conv(S)$, where the containment follows from the convexity of $\conv(S)$.
\hfill \Halmos
\end{proof}

Next, we consider the case where the target set $S$ is both sign-invariant and permutation-invariant.
While two separate representations \eqref{permconvhull} and \eqref{signinvarianteq} for the convex hull already exist, 
we can derive a unified representation. 
Inequality $u\ge_m|x|$ would not be a useful description because $\sum_{i=1}^n u_i = \sum_{i=1}^n |x_i|$ is a non-convex constraint. In fact, such a set would not even be convex, as can be seen from $S=\{-1,1\}$. Instead, we prove that the convex hull can be represented using weak majorization.
More precisely, suppose that $S\subseteq \{(x,z)\mid\R^n\times\R^p\}$ is sign- and permutation-invariant with respect to $x\in\R^n$. Then, $\conv(S)=X:=\left\{(x,z)\,\middle|\,
(u,z)\in\conv(S_0), u\ge_{wm} |x|\right\}$,
where $S_0=S\cap\{(u,z)\mid u_1\ge\dots\ge u_n\ge 0\}$.
We first prove that $S\subseteq X$. Consider an arbitrary $(x,z)\in S$, and define $u$ as $u_i=|x|_{[i]}$ for $i=1,\dots,n$. Then, $(u,z)\in S_0\subseteq \conv(S_0)$ and $u\ge_{wm} |x|$, showing the inclusion.
We next prove $X\subseteq \conv(S)$. Consider an arbitrary $(x,z)\in X$. Then, there exists $u$ such that $(u,z)\in\conv(S_0)\subseteq \conv(S)$ and $u\ge_{wm}|x|$.
It follows that there exists $u'$ such that $u\ge_m u'$ and $u'\ge |x|$; see 5.A.9. of \cite{marshall2010}, for example.
Then, it can be shown that $(u',z)\in\conv(S)$ using the same arguments that were used in the proof of Theorem~\ref{res:main_thm}. Since $\conv(S)$ is sign-invariant, this implies that all sign variants of $(u',z) \in \conv(S)$. Then, by the last part of the proof of Theorem~\ref{signinvariant}, this implies that $(x,z)\in \conv(S)$. 
(This convex hull description can also be derived by first constructing $\conv\bigl(S\cap (\Re^n_+\times \Re^p)\bigr)$ using Theorem~\ref{res:main_thm}. Then, $\conv(S)$ is obtained by Theorem~\ref{signinvariant}. The variables $u'$ introduced by Theorem~\ref{signinvariant} can finally be projected using 5.A.9. from \cite{marshall2010}.) 


The above convexification results can be easily extended to the sets which are permutation-invariant or/and sign-invariant with respect to multiple subsets of independent variables.

\begin{theorem}
\label{permconvset}
Let $S\subseteq \{(x^1,\dots,x^m,z)\in\mathbb{R}^{n_1}\times\dots\times\mathbb{R}^{n_m}\times\mathbb{R}^p\}$.
\begin{enumerate}
\item Suppose $S$ is a permutation-invariant set with respect to $x^k$ for $k=1,\dots,m$.
Then,
\begin{equation}
\conv(S)
=\left\{(x^1,\dots,x^m,z)\,\middle|\,\begin{array}{l}(u^1,\dots,u^m,z)\in \conv(S_0),\\u^k\ge_m x^k, k=1,\dots,m\end{array}\right\}
\label{permconveq}
\end{equation}
where $S_0=S\cap \left(\Delta^{n_1}\times \dots\times\Delta^{n_m}\times \mathbb{R}^p\right)$.
\item Suppose $S$ is sign-invariant with respect to $x^k$ for $k=1,\dots,m$.
Then,
\begin{equation}
\conv(S)
=\left\{(x^1,\dots,x^m,z)\,\middle|\,\begin{array}{l}(u^1,\dots,u^m,z)\in \conv(S_0),\\u^k\ge |x^k|, k=1,\dots,m\end{array}\right\}
\label{signconv}
\end{equation}
where $S_0=S\cap\left(\R^{n_1}_+\times\dots\times\R^{n_m}_+\times\R^p\right)$.

\item Suppose $S$ is permutation-invariant and sign-invariant with respect to $x^k$ for $k=1,\dots,m$.
Then,
\begin{equation}
\conv(S)
=\left\{(x^1,\dots,x^m,z)\,\middle|\,\begin{array}{l}(u^1,\dots,u^m,z)\in \textup{conv}(S_0),\\u^k\ge_{wm} |x^k|, k=1,\dots,m\end{array}\right\}
\label{signpermconv}
\end{equation}
where 
\[
S_0=S\cap\left\{(u^1,\dots,u^m,z)\mid u^k_1\ge\dots\ge u^k_{n_k}\ge 0, \:\: k=1,\dots,m\right\}.
\tag*{\Halmos}\]
\end{enumerate}
\end{theorem}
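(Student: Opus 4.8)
The plan is to prove Theorem~\ref{permconvset} by reducing each of the three parts to the single-block results already established, namely Theorems~\ref{res:main_thm} and~\ref{signinvariant} together with the sign- and permutation-invariant description stated just before the theorem. The main observation is that the three convexification constructions are ``separable'' across the independent blocks $x^1,\dots,x^m$: each construction only introduces auxiliary variables for the block it acts on and leaves the remaining coordinates untouched, so the operations for distinct blocks commute. First I would handle part (1). Define intermediate sets $S_k$ for $k=0,1,\dots,m$ by successively ``lifting'' block $x^k$: set $S_m=S$, and having defined $S_{k}$, let $S_{k-1}$ be the set obtained by applying the construction of Theorem~\ref{res:main_thm} to $S_k$ viewed as permutation-invariant with respect to $x^k$ only (with the $(z,x^1,\dots,x^{k-1},x^{k+1},\dots,x^m)$ coordinates playing the role of the passive variable $z$). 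The key point is that $S$ is permutation-invariant with respect to $x^k$ \emph{simultaneously for all $k$}, and this property is preserved under each lifting step (since the lifting with respect to $x^j$ does not disturb the coordinates $x^k$ for $k\ne j$ and commutes with permutations of those coordinates), so Theorem~\ref{res:main_thm} applies at every stage. Iterating $m$ times and tracking which constraints are produced yields exactly the description \eqref{permconveq} with $S_0=S\cap(\Delta^{n_1}\times\dots\times\Delta^{n_m}\times\R^p)$.

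For part (2) I would argue identically, replacing each invocation of Theorem~\ref{res:main_thm} by Theorem~\ref{signinvariant}: sign-invariance with respect to $x^k$ for all $k$ is again preserved under each single-block lifting, and the construction for block $k$ introduces a copy $u^k$ with the constraint $u^k\ge|x^k|$ and restricts that block to the nonnegative orthant, again leaving the other blocks alone. Peeling off the blocks one at a time produces \eqref{signconv} with $S_0=S\cap(\R^{n_1}_+\times\dots\times\R^{n_m}_+\times\R^p)$. For part (3) I would use the single-block sign- and permutation-invariant result stated in the excerpt (the weak-majorization description) as the per-block building block, or alternatively compose the constructions of parts (1) and (2): first restrict each block to $\R^{n_k}_+$ via part (2), then restrict each block to the descending cone via part (1), and finally use 5.A.9 of \cite{marshall2010} to project out the intermediate variables block by block, exactly as in the single-block argument sketched before Theorem~\ref{permconvset}. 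This gives \eqref{signpermconv} with $S_0=S\cap\{(u^1,\dots,u^m,z)\mid u^k_1\ge\dots\ge u^k_{n_k}\ge 0,\ k=1,\dots,m\}$.

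The routine parts are the verifications that (a) the relevant invariance with respect to $x^k$ survives a lifting performed on a different block $x^j$, which is immediate since that lifting is a construction in the variables $(x^j,u^j,\text{rest})$ and acts as the identity on $x^k$; and (b) the constraint systems accumulate correctly, which is just bookkeeping. I expect the one genuine subtlety — the ``main obstacle'' — to be making rigorous the claim that the per-block constructions \emph{commute}, i.e.\ that applying Theorem~\ref{res:main_thm} to block $1$ and then to block $2$ gives the same convex set as doing it in the opposite order and, more importantly, that either order in fact equals $\conv(S)$ rather than merely some relaxation of it. The cleanest way to settle this is not to appeal to commutativity at all but to mimic the proof of Theorem~\ref{res:main_thm} directly in the multi-block setting: show $S\subseteq X$ by taking $u^k_i=(x^k)_{[i]}$ for every $k$ at once; show $X$ is convex by the same argument as in Lemma~\ref{res:maj_convex} applied blockwise (the defining constraints on distinct blocks do not interact); and show $X\subseteq\conv(S)$ by writing each $x^k=\sum_{i_k}\lambda^k_{i_k}P^k_{i_k}u^k$ via Lemma~\ref{lemma:maj_conv_comb} and then forming the product convex combination $\sum_{i_1,\dots,i_m}(\prod_k\lambda^k_{i_k})(P^1_{i_1}u^1,\dots,P^m_{i_m}u^m,z)$, each term of which lies in $\conv(S)$ by applying Lemma~\ref{res:lin_invariant} once per block. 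The analogous direct arguments for parts (2) and (3) replace Lemma~\ref{lemma:maj_conv_comb} by the convexity/sign-symmetrization step used in Theorem~\ref{signinvariant} and, for (3), by 5.A.9 of \cite{marshall2010}.
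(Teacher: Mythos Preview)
Your proposal is correct. The paper states Theorem~\ref{permconvset} without proof, presenting it as a routine extension of Theorems~\ref{res:main_thm} and~\ref{signinvariant} and the single-block sign- and permutation-invariant argument. The direct multi-block argument you sketch in your final paragraph --- sorting all blocks at once to show $S\subseteq X$, applying Lemma~\ref{res:maj_convex} blockwise for convexity, and using the product convex combination $\sum_{i_1,\dots,i_m}\bigl(\prod_k\lambda^k_{i_k}\bigr)(P^1_{i_1}u^1,\dots,P^m_{i_m}u^m,z)$ for $X\subseteq\conv(S)$ --- is exactly the intended proof and is the cleanest route; your iterative ``peel one block at a time'' approach also works but, as you note, requires the extra check that invariance in the remaining blocks survives each step.
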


We remark that our convexification result on sign-invariant sets can also be used to convexify reflections on a hyperplane \cite{kp11}. In particular, consider a set $S$ and assume that the set is closed under reflection on $\langle a, \cdot\rangle = 0$ for some $a$ with $\|a\|_2 = 1$. Assume further that $S_0 = S \cap \{x\mid \langle a, x\rangle \ge 0\}$ is available. Consider any orthogonal matrix $U$ that aligns $a$ along the first principal direction, so that $Ua=e_1$. Observe that $US$ is sign-invariant with respect to variable $x_1$. Then, $\conv(US) = \bigl\{z\bigm| (t,z_2,\ldots, z_n)\in U\conv(S_0), t\ge |z_1|\bigr\}$ and so, $\conv(S) = \bigl\{x\bigm| x + (t-z_1)a \in \conv(S_0), t \ge |z_1|\bigr\}$, where we have used the fact that $U^{\tra} e_1 = a$.



\section{Sparsity theorem}\label{sec:sparsity}

In this section, we first study the convex hull of the set
\begin{equation}
\label{eq:sparsity}
\cnorm=\{x\in\mathbb{R}^n\mid \|x\|_s\le1, \card(x)\le K\},
\end{equation}
where $\|\cdot\|_s$ is a sign- and permutation-invariant norm (also known as a {\it symmetric gauge function}) and introduce the \emph{Sparsity Theorem} (Theorem~\ref{thm:sparsity}), which shows that the optimal value of
\begin{equation}\label{eq:firstsparsitythm}
\begin{array}{rl}
\min & \|u\|_s\\
\st & u_1\ge\dots\ge u_K\ge 0,\\
&u_{K+1}=\dots=u_n=0,\\
&u\ge_{wm} |x|
\end{array}
\end{equation}
is no more than $1$ if and only if $x\in \conv(\cnorm)$ and that an optimal solution to \eqref{eq:firstsparsitythm} can be obtained in closed-form. 
In other words, given $x\in\mathbb{R}^n$, Theorem~\ref{thm:sparsity} gives a closed-form expression for a sparse vector $u$, in terms of $x$, in the representative disjunct $\Delta^n\cap\mathbb{R}^n_+$ that weakly majorizes $|x|$ and minimizes $\|u\|_s$.
In fact, this $u$ majorizes $|x|$. This is not surprising. If the optimal $u$ to \eqref{eq:firstsparsitythm} did not majorize $|x|$, there exists $u'$ such that $u\ge u'\ge_m |x|$; see 5.A.9. in \cite{marshall2010}. Then, since $u'$ is in the convex hull of $u$ and its sign-variants, it follows by sign-invariance of $\|\cdot\|_s$ that $\|u'\|_s\le \|u\|_s$, and so $u'$ is also optimal and majorizes $|x|$.

We denote the set $\{x \in \Re^n \mid \|x\|_s\le r\}$ by $\Ball{s}{r}$. 
When $K=1$, the convex hull is an $\ell_1$-norm ball.
The set is trivial when $K=n$. 
Therefore, we assume $1< K< n$. When the associated norm $\|\cdot\|_s$ is the $\ell_2$-norm, $N^K_{\|\cdot\|}$ is the feasible set of the sparse principal component analysis problem (sparse PCA); see \cite{d2007direct}.

We define 
\begin{equation}
\label{eq:delta_n_+}
\Delta^n_+:=\Delta^n\cap\Re^n_+
\end{equation}
and, for any vector $x\in\Re^n$, define 
\[
(x_{\Delta^n})_i=x_{[i]} \hbox{ and }  (x_{\Delta^n_+})_i=|x|_{[i]}, \quad \hbox{ for  } i=1,\dots,n.
\]
When the dimension $n$ of the set and the associated vector is clear in the context, we use the simpler notations $\Delta, \Delta_+, x_\Delta$, and $x_{\Delta_+}$.

By sign- and permutation-invariance of the norm $\|\cdot\|_s$ and that of the cardinality requirement, $\cnorm$ is sign- and permutation-invariant and hence we can apply Theorem~\ref{permconvset} to obtain its convex hull as a projection of a higher dimensional set
\begin{equation}
\label{cnorm_conv}
\conv \left(\cnorm\right)
=\left\{x\in\mathbb{R}^n\middle\vert
\begin{array}{l}
u\in \cnorm\cap\Delta_+,\\
u\ge_{wm} |x|
\end{array}
\right\}
=\left\{x\in\mathbb{R}^n\middle\vert
\begin{array}{l}
\|u\|_s\le 1,\\
u_1\ge\dots\ge u_K\ge 0,\\
u_{K+1}=\dots=u_n=0,\\
u\ge_{wm} |x|
\end{array}
\right\}.
\end{equation}

The extended formulation \eqref{cnorm_conv} can be written in closed-form with $O(nK)$ additional variables and constraints based on the modeling technique  described in Section~\ref{sec:mainresults}.
Other extended formulations are proposed in \cite{limnote} and \cite{lim2017k} for the case where $\|\cdot\|_s$ is an $\ell_p$-norm. 
In these papers, the formulations are obtained either through dynamic programming concepts or Goemans' extended formulation of the permutahedron using a sorting network \cite{goemans2015smallest}.

In this section, we describe the convex hull as a norm ball in the original variable space.
The induced norm is easily calculable if the associated norm $\|\cdot\|_s$ is calculable.
Moreover, given an arbitrary point in $\Re^n$ not in the convex hull, we devise an algorithm to construct a separating hyperplane.

We first present the following lemma introduced in \cite{li1995permutation}.
\begin{lemma}
\label{lemma:majnormineq}
Suppose $x\ge_m y$. Then, for any permutation-invariant norm $f(\cdot)$, $f(x)\ge f(y)$.
\hfill\Halmos
\end{lemma}

A set in $\mathbb{R}^n$ is called a \emph{convex body} if it is a compact convex set with non-empty interior.
In the next proposition, we show that $\conv(\cnorm)$ is a convex body.

\begin{proposition}
\label{prop:cnorm_compact}
The set $\conv(\cnorm)$ is a convex body.
\end{proposition}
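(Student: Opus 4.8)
The plan is to verify the three defining properties of a convex body for $\conv(\cnorm)$ in turn: convexity, compactness, and nonempty interior. Convexity is immediate by definition of the convex hull, so the work concentrates on the latter two. The key structural fact I would rely on is the closed-form extended representation \eqref{cnorm_conv}, which exhibits $\conv(\cnorm)$ as the projection onto the $x$-space of a set in $(x,u)$-space cut out by the norm bound $\|u\|_s\le 1$, the ordering and support constraints on $u$, and the weak-majorization constraint $u\ge_{wm}|x|$.

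\emph{Compactness.} Since $\conv(\cnorm)$ is convex, it suffices to show it is closed and bounded. For boundedness, note that $\cnorm\subseteq \Ball{s}{1}$, and $\Ball{s}{1}$ is a norm ball, hence bounded; its convex hull therefore lies in the (convex) ball $\Ball{s}{1}$ and is bounded. For closedness, I would argue that $\cnorm$ is itself compact — it is the union of the $\binom{n}{K}$ compact sets obtained by intersecting $\Ball{s}{1}$ with each coordinate subspace of dimension $K$ — and the convex hull of a compact set in $\R^n$ is compact (Carathéodory plus compactness of the simplex of coefficients). Alternatively, one can invoke that $\conv(\cnorm)$ equals the unit ball of the induced norm discussed in this section, which is automatically closed. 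Either route gives compactness cleanly.

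\emph{Nonempty interior.} This is the step I expect to require the most care, since one must produce an explicit open ball inside $\conv(\cnorm)$. The natural candidate center is the origin. Because $\|\cdot\|_s$ is a norm on $\R^n$, there is a constant $c>0$ with $\|y\|_s\le c\|y\|_\infty$ for all $y$; hence any $y$ with $\|y\|_\infty$ small enough has $\|y\|_s\le 1$. Now take any $y\in\R^n$ with $\|y\|_\infty\le \varepsilon$ for a suitably small $\varepsilon>0$. I would exhibit $y$ as a convex combination of points of $\cnorm$: write $y$ as an average of its "restrictions to $K$-subsets of coordinates" — more precisely, since $K\ge 2$, averaging the $\binom{n}{K}$ vectors obtained by zeroing out all but one $K$-subset of coordinates of $\big(\tfrac{n}{K}\big)y$ recovers $y$ exactly, and each such vector is $K$-sparse with $\ell_\infty$-norm at most $\tfrac{n}{K}\varepsilon$, hence has $\|\cdot\|_s\le c\tfrac{n}{K}\varepsilon\le 1$ for $\varepsilon$ small. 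Thus a full-dimensional $\ell_\infty$-ball around $0$ lies in $\conv(\cnorm)$, so the interior is nonempty. Combining the three parts completes the proof that $\conv(\cnorm)$ is a convex body. The main obstacle, as flagged, is the interior argument: one must be a little careful that the combinatorial averaging identity genuinely reproduces $y$ and that each summand stays inside the symmetric-gauge unit ball, which is where the comparison $\|\cdot\|_s\le c\|\cdot\|_\infty$ and the standing assumption $K\ge 2$ are used.
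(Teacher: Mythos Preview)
Your argument is correct, but the nonempty-interior step takes a more elaborate route than the paper's. The paper simply observes that, by equivalence of norms, some $\ell_1$-ball $\Ball{1}{\epsilon}$ sits inside $\Ball{s}{1}$; since the extreme points of $\Ball{1}{\epsilon}$ are the $1$-sparse vectors $\pm\epsilon e_i$, they all lie in $\cnorm$, and hence $\Ball{1}{\epsilon}\subseteq\conv(\cnorm)$. Your combinatorial averaging over all $K$-subsets is valid (the identity $\tfrac{1}{\binom{n}{K}}\sum_{|S|=K}(\tfrac{n}{K}y)\big|_S=y$ does hold, since each coordinate is hit by $\binom{n-1}{K-1}$ subsets), but it is heavier machinery than needed: $1$-sparse vectors already suffice, so there is no need to build $K$-sparse witnesses, and the assumption $K\ge 2$ plays no real role. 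Two minor remarks: you announce the extended representation \eqref{cnorm_conv} as the ``key structural fact'' but never actually use it in the proof; and for compactness, your argument that $\cnorm$ is a finite union of compact coordinate-slices, hence compact, hence has compact convex hull, is essentially the paper's reasoning as well.
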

\begin{proof}{Proof.}
Since $\cnorm$ is a compact set, it follows that $\conv(\cnorm)$ is a compact convex set \cite[Corollary I.2.4]{b02}.
To see that $\conv(\cnorm)$ has a non-empty interior, observe that there exists $\epsilon>0$ such that $\Ball{1}{\epsilon}\subseteq \Ball{s}{1}$ 
where $\Ball{1}{\epsilon}$ represents the $\ell_1$-norm ball with radius $\epsilon$.
This follows from the equivalence of norms in a finite vector space.
Notice that $\ext\left(B_1(\epsilon)\right)=\{\pm\epsilon e_i\mid i=1,\dots,n\}$ where $e_i$ is $i$th canonical vector.
Since, for any $x\in \ext\bigl(\Ball{1}{\epsilon}\bigr)$, $\card(x) = 1$, it follows that $\ext\bigl(\Ball{1}{\epsilon}\bigr)\subseteq \cnorm$, and, so
$\Ball{1}{\epsilon}\subseteq \conv(\cnorm)$. 
The result follows because $0\in \interior\bigl(\Ball{1}{\epsilon}\bigr)$.
\hfill \Halmos
\end{proof}



It is well-known that there exists a one-to-one correspondence between norms in $\mathbb{R}^n$ and convex bodies symmetric about 0 and containing 0 in their interior; see Section 14.4 of \cite{matouvsek2002lectures} for instance.
Given an arbitrary norm $\|\cdot\|$, we can construct its unit ball $\{x \mid \|x\| \le 1\}$, which is a convex body of the desired type. 
Conversely, given any compact convex body $C$ that is symmetric about 0 and contains 0 in its interior, we can define the function
\begin{equation}
    \label{eq:convbdnorm}
f_C(x):=\min\left\{t>0 \,\middle|\, \frac{x}{t}\in C\right\}
\end{equation}
for $x\in\mathbb{R}^n$.
It is known that the function $f_C$ satisfies the properties of norms; see Section 14.4 of \cite{matouvsek2002lectures}, for example.
Further, the convex body $C$ is a lower-level set of this norm, that is, $C=\{x\mid f_C(x)\le 1\}$.

Since $\conv(\cnorm)$ is a compact convex body that is symmetric about 0 and contains 0 in its interior, a norm associated with $\conv(\cnorm)$ can be defined as in \eqref{eq:convbdnorm}.
We denote the corresponding norm by $\|\cdot\|_c$.
Since $\|\cdot\|_c$ is sign- and permutation-invariant, the following result holds.

\begin{proposition}
\label{prop:c-normset}
The set $\conv(\cnorm)$ is the unit ball associated with a sign- and permutation-invariant norm, that is, $\conv(\cnorm)=\Ball{c}{1}$.
\hfill \Halmos
\end{proposition}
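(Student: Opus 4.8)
The plan is to invoke the standard dictionary between norms on $\mathbb{R}^n$ and compact convex bodies that are symmetric about the origin and contain the origin in their interior, which was recalled just above the statement. Concretely, I would check that $\conv(\cnorm)$ is exactly a body of this type, conclude from the dictionary that it coincides with the unit sublevel set of the gauge $f_C$ with $C=\conv(\cnorm)$ — which is by definition $\|\cdot\|_c$ — and then separately verify that this gauge inherits sign- and permutation-invariance from the set.

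First I would assemble the three required properties of $\conv(\cnorm)$. Compactness, convexity, and nonemptiness of the interior are precisely the content of Proposition~\ref{prop:cnorm_compact}, so $\conv(\cnorm)$ is a convex body. Central symmetry follows from sign-invariance: since $\cnorm$ is sign-invariant, so is $\conv(\cnorm)$ by Lemma~\ref{res:lin_invariant}, and in particular $-x\in\conv(\cnorm)$ whenever $x\in\conv(\cnorm)$ because $|-x|=|x|$; hence $\conv(\cnorm)=-\conv(\cnorm)$. Finally, the proof of Proposition~\ref{prop:cnorm_compact} already exhibits an $\epsilon>0$ with $\Ball{1}{\epsilon}\subseteq\conv(\cnorm)$ and $0\in\interior\bigl(\Ball{1}{\epsilon}\bigr)$, so $0$ lies in the interior of $\conv(\cnorm)$. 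With these in hand, the cited correspondence (Section~14.4 of \cite{matouvsek2002lectures}) gives that $f_C$ for $C=\conv(\cnorm)$ is a norm and that $C=\{x\mid f_C(x)\le 1\}$; since $\|\cdot\|_c$ was defined to be this $f_C$, this is exactly $\conv(\cnorm)=\Ball{c}{1}$.

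It then remains to check that $\|\cdot\|_c$ is sign- and permutation-invariant. For this I would argue directly from $\|x\|_c=\min\{t>0\mid x/t\in\conv(\cnorm)\}$: for any $\bar x$ with $|\bar x|=|x|$ and any permutation matrix $P\in\mathcal{P}_n$, the sign- and permutation-invariance of $\conv(\cnorm)$ give, for every $t>0$, that $x/t\in\conv(\cnorm)\iff \bar x/t\in\conv(\cnorm)\iff Px/t\in\conv(\cnorm)$ (using $|\bar x/t|=|x/t|$ in the first equivalence). Hence the three defining minima agree and $\|x\|_c=\|\bar x\|_c=\|Px\|_c$.

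I do not expect a genuine obstacle here: the argument is essentially bookkeeping that stitches together Proposition~\ref{prop:cnorm_compact}, the sign-invariance of $\conv(\cnorm)$, and the norm/convex-body dictionary. The only points requiring minor care will be making all hypotheses of that dictionary — compactness, central symmetry, and $0$ in the interior — explicit, and observing that invariance of the \emph{set} transfers to invariance of the \emph{gauge} through the level-by-level description $x/t\in C$.
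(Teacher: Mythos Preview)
Your proposal is correct and follows essentially the same approach as the paper: the paper states the result without a formal proof, relying on the preceding discussion that $\conv(\cnorm)$ is a convex body symmetric about $0$ with $0$ in its interior (Proposition~\ref{prop:cnorm_compact} plus sign-invariance via Lemma~\ref{res:lin_invariant}) and on the norm/convex-body correspondence, then simply asserts that $\|\cdot\|_c$ is sign- and permutation-invariant. Your write-up makes explicit exactly the steps the paper leaves implicit, including the clean verification that invariance of the set transfers to invariance of the gauge.
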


We next show that the values of the norms $\|\cdot\|_c$ and $\|\cdot\|_s$ are the same for vectors that satisfy the cardinality constraint.

\begin{proposition}
\label{prop:cardKcnorm}
If $\card(x)\le K$, $\|x\|_c=\|x\|_s$.
\end{proposition}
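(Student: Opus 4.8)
The plan is to prove the two inequalities $\|x\|_c \le \|x\|_s$ and $\|x\|_s \le \|x\|_c$ separately, under the hypothesis $\card(x)\le K$.

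For $\|x\|_c \le \|x\|_s$: I would argue that the unit ball $\Ball{s}{1}$, when restricted to vectors with $\card(\cdot)\le K$, is contained in $\conv(\cnorm) = \Ball{c}{1}$. Indeed, if $\card(x)\le K$ and $\|x\|_s \le 1$, then $x \in \cnorm$ by the very definition \eqref{eq:sparsity}, hence $x\in \conv(\cnorm) = \Ball{c}{1}$. To turn this into the norm inequality, scale: for arbitrary $x$ with $\card(x)\le K$ and $x\neq 0$, set $t = \|x\|_s$; then $\card(x/t) = \card(x)\le K$ and $\|x/t\|_s = 1$, so $x/t \in \cnorm \subseteq \Ball{c}{1}$, which gives $\|x\|_c \le t = \|x\|_s$ by \eqref{eq:convbdnorm}. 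The case $x=0$ is trivial.

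For the reverse inequality $\|x\|_s \le \|x\|_c$: here I would use the extended formulation \eqref{cnorm_conv} of $\conv(\cnorm)$ together with Lemma~\ref{lemma:majnormineq}. Again it suffices to show $\Ball{c}{1}\cap\{x\mid \card(x)\le K\}\subseteq \Ball{s}{1}$, and then scale as above. So take $x$ with $\card(x)\le K$ and $x \in \conv(\cnorm)$. By \eqref{cnorm_conv}, there is a $u$ with $\|u\|_s\le 1$, $u_1\ge\cdots\ge u_K\ge 0$, $u_{K+1}=\cdots=u_n=0$, and $u\ge_{wm}|x|$. As noted in the paragraph following \eqref{eq:firstsparsitythm}, weak majorization by such a sparse $u$ can be upgraded: there exists $u'$ with $u\ge u'\ge_m |x|$ (using 5.A.9 of \cite{marshall2010}), and since $u'$ lies in the convex hull of $u$ and its sign-variants, $\|u'\|_s \le \|u\|_s\le 1$ by sign-invariance and convexity of $\|\cdot\|_s$. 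Now $u'\ge_m |x|$, so by Lemma~\ref{lemma:majnormineq} applied to the permutation-invariant norm $\|\cdot\|_s$, $\|x\|_s = \||x|\|_s \le \|u'\|_s \le 1$ (using sign-invariance for the first equality). Hence $x\in\Ball{s}{1}$.

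I do not anticipate a serious obstacle; the only subtlety is being careful with the scaling argument so that the cardinality constraint is preserved (it is, since scaling by a positive scalar does not change the support), and invoking the already-established fact that weak majorization by the sparse optimal $u$ can be replaced by full majorization by some $u'$ with the same or smaller $\|\cdot\|_s$-norm. If one prefers to avoid the $u'$ step, an alternative is to observe directly that $u\ge_{wm}|x|$ together with $\card(|x|)\le K \le \card(\text{support of }u)$-padding lets one conclude $\|x\|_s\le\|u\|_s$ from the fact that every symmetric gauge function is monotone on the nonnegative orthant with respect to weak majorization; but routing through Lemma~\ref{lemma:majnormineq} as stated is cleanest.
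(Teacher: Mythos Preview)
Your proof is correct. The direction $\|x\|_c\le\|x\|_s$ when $\card(x)\le K$ is argued exactly as in the paper: scale so that $x/\|x\|_s\in\cnorm\subseteq\Ball{c}{1}$.

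For the reverse inequality $\|x\|_s\le\|x\|_c$, however, you take a considerably longer route than the paper. You invoke the extended formulation \eqref{cnorm_conv}, pull out a witness $u$, pass to an intermediate $u'$ via 5.A.9 of \cite{marshall2010}, and then appeal to Lemma~\ref{lemma:majnormineq} and sign-invariance. The paper instead observes that this inequality holds for \emph{every} $x$ (sparse or not) by a one-line containment: since $\cnorm\subseteq\Ball{s}{1}$ and $\Ball{s}{1}$ is convex, $\Ball{c}{1}=\conv(\cnorm)\subseteq\Ball{s}{1}$, whence $\|x\|_s\le\|x\|_c$. Your argument works and has the virtue of rehearsing the machinery that is reused later (in Theorem~\ref{thm:sparsity}), but it obscures that the inequality $\|x\|_s\le\|x\|_c$ needs no hypothesis on $\card(x)$ at all; only the other direction does.
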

\begin{proof}{Proof.}
We first show that $\|x\|_c\ge \|x\|_s$. Since $\cnorm \subseteq \Ball{s}{1}$, it follows that $\Ball{c}{1}=\conv(\cnorm)\subseteq \Ball{s}{1}$. 
This implies that $\Ball{c}{r}\subseteq \Ball{s}{r}$ for any $r$. In particular, when $r=\|x\|_c$, we have $x\in \Ball{c}{r}\subseteq \Ball{s}{r}$, which implies that $\|x\|_c\ge \|x\|_s$. 
We now show $\|x\|_c \le \|x\|_s$ when $\card(x)\le K$. Let $r=\|x\|_s$ and observe that $\frac{x}{r} \in \cnorm \subseteq \Ball{c}{1}$. 
Therefore, $x\in \Ball{c}{r}$ or $\|x\|_c\le \|x\|_s$.
\hfill \Halmos
\end{proof}

We present an explicit formula to evaluate $\|\cdot\|_c$.
For an arbitrary $x\in\mathbb{R}^n$, define $s(x)\in\mathbb{R}^{K+2}$ as \[s(x)_i=\frac{\sum_{j=i}^n|x|_{[j]}}{K-i+1},\: i=1,\dots,K;\; s(x)_0=s(x)_{K+1} = \infty.\]
Let $i_x$ be the minimum among those indices that minimize $s(x)_i$, and let $\delta(x)=s(x)_{i_x}$.
Now, define $u(x)\in\mathbb{R}^n$ as 
\begin{equation}
\label{eq:ux}
u(x)_i=\left\{\begin{array}{ll}
|x|_{[i]}, & i\in\{1,\dots,i_x-1\}\\
\delta(x), & i\in\{i_x, \dots,K\}\\
0, & \textup{otherwise}.
\end{array}\right.
\end{equation}

In the following proposition, we show that, for arbitrary $x\in\mathbb{R}^n$, we can construct a vector $u(x)\in\Delta_+$ that satisfies the cardinality constraint and majorizes $|x|$.

\begin{proposition}
\label{prop:umajx}
Let $s(x), i_x, \delta(x)$, and $u(x)$ be defined as above. 
Then,
\begin{enumerate}
\item\label{proppart:sssx} $s(x)_{i+1}-s(x)_i=\frac{1}{K-i+1}(s(x)_{i+1}-|x|_{[i]})=\frac{1}{K-i}(s(x)_i-|x|_{[i]})$ for $i=1,\dots,K-1$
\item\label{proppart:sv} $s(x)_1\ge\dots\ge s(x)_{i_x}$ and $s(x)_{i_x}\le\dots\le s(x)_K$
\item\label{proppart:umajx} $u(x)\ge_m |x|$
\item\label{proppart:u1} $u(x)_1=\max\{|x|_{[1]},s(x)_1\}$
\end{enumerate}
\end{proposition}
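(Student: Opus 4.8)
The plan is to abbreviate $a_i:=|x|_{[i]}$ and $T_i:=\sum_{j=i}^n a_j$, so that $s(x)_i=T_i/(K-i+1)$ and $T_i=T_{i+1}+a_i$; throughout I use $K\ge 2$, valid under the standing assumption $1<K<n$. Part~\ref{proppart:sssx} is then pure algebra: substituting $T_i=T_{i+1}+a_i$ and clearing the common denominator $(K-i)(K-i+1)$, each of $s(x)_{i+1}-s(x)_i$, $\tfrac{1}{K-i+1}\bigl(s(x)_{i+1}-a_i\bigr)$, and $\tfrac{1}{K-i}\bigl(s(x)_i-a_i\bigr)$ reduces to $\bigl(T_{i+1}-(K-i)a_i\bigr)\big/\bigl((K-i)(K-i+1)\bigr)$, so I would simply display that reduction.

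For Part~\ref{proppart:sv} I would read off from Part~\ref{proppart:sssx} that, since $K-i>0$ for $i\le K-1$, the increment $s(x)_{i+1}-s(x)_i$ has the same sign as $s(x)_i-a_i$. The engine is a \emph{forward propagation}: if $s(x)_i\ge a_i$ for some $i\le K-1$, then $s(x)_{i+1}\ge s(x)_i\ge a_i\ge a_{i+1}$, so the inequality $s(x)_i\ge a_i$ is inherited by $i+1$. Let $k^\ast$ be the smallest index of $\{1,\dots,K-1\}$ with $s(x)_{k^\ast}\ge a_{k^\ast}$, with $k^\ast:=K$ if there is none. Then $s(x)_{i+1}<s(x)_i$ for every $i<k^\ast$ and $s(x)_{i+1}\ge s(x)_i$ for every $k^\ast\le i\le K-1$, so $s(x)$ is strictly decreasing on $\{1,\dots,k^\ast\}$ and non-decreasing on $\{k^\ast,\dots,K\}$. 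This identifies $i_x=k^\ast$ and is exactly the pair of monotone chains asserted in Part~\ref{proppart:sv}.

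Part~\ref{proppart:u1} I would settle by splitting on whether $i_x=1$. If $i_x\ge 2$, Part~\ref{proppart:sv} gives $s(x)_1\ge s(x)_2$, so $s(x)_2-s(x)_1\le 0$, whence Part~\ref{proppart:sssx} at $i=1$ forces $s(x)_1\le a_1$; thus $u(x)_1=a_1=\max\{a_1,s(x)_1\}$. If $i_x=1$, minimality of $i_x$ gives $s(x)_1\le s(x)_2$, so $s(x)_2-s(x)_1\ge 0$ and Part~\ref{proppart:sssx} at $i=1$ gives $s(x)_1\ge a_1$; thus $u(x)_1=\delta(x)=s(x)_1=\max\{a_1,s(x)_1\}$. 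For Part~\ref{proppart:umajx}, I would first observe that $u(x)$ is non-increasing: the entries $a_i$ for $i<i_x$ are already sorted, $\delta(x)\ge 0$, and the only non-obvious comparison $a_{i_x-1}\ge\delta(x)$ (relevant when $i_x\ge 2$) follows from Part~\ref{proppart:sssx} at $i=i_x-1$ combined with the strict inequality $s(x)_{i_x-1}>s(x)_{i_x}$, which holds because $i_x$ is the smallest minimizer. Consequently $\sum_{i=1}^j u(x)_{[i]}=\sum_{i=1}^j u(x)_i$ and $\sum_{i=1}^j|x|_{[i]}=\sum_{i=1}^j a_i$, so it suffices to compare these two partial sums for each $j$. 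I would do this through complementary tail sums: since $\sum_{i=1}^n u(x)_i=\sum_{i=1}^{i_x-1}a_i+(K-i_x+1)\delta(x)=\sum_{i=1}^n a_i$, for $j\le i_x-1$ the partial sums coincide; for $i_x\le j\le K-1$, $\sum_{i>j}u(x)_i=(K-j)\delta(x)=(K-j)s(x)_{i_x}\le(K-j)s(x)_{j+1}=\sum_{i>j}a_i$ because $s(x)_{i_x}=\min_i s(x)_i$; for $K\le j<n$, $\sum_{i>j}u(x)_i=0\le\sum_{i>j}a_i$; and at $j=n$ the totals are equal. Subtracting the tail sums from the common grand total gives $\sum_{i=1}^j u(x)_i\ge\sum_{i=1}^j a_i$ for $j<n$ with equality at $j=n$, i.e.\ $u(x)\ge_m|x|$.

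All the substantive computation lives in Part~\ref{proppart:sssx}; after that the proof is bookkeeping. I expect the step most worth getting right to be the tail-sum rewriting in Part~\ref{proppart:umajx}: the identity $\sum_{i>j}|x|_{[i]}=(K-j)\,s(x)_{j+1}$ for $i_x\le j\le K-1$ is precisely what collapses every nontrivial majorization inequality to the single statement that $\delta(x)$ is the minimum of the numbers $s(x)_i$. The \emph{forward propagation} in Part~\ref{proppart:sv} is the other place needing a touch of care, but once Part~\ref{proppart:sssx} is established there is essentially no further calculation.
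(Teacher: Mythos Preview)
Your proof is correct and follows essentially the same approach as the paper's. The algebra in Part~\ref{proppart:sssx}, the sign-propagation argument in Part~\ref{proppart:sv}, and the case split in Part~\ref{proppart:u1} match the paper almost step for step. Your tail-sum organization in Part~\ref{proppart:umajx} --- rewriting $\sum_{i>j}|x|_{[i]}=(K-j)\,s(x)_{j+1}$ and comparing against $(K-j)\,\delta(x)$ --- is a minor but slightly cleaner variant of the paper's direct comparison $\sum_{i=i_x}^j(\delta(x)-|x|_{[i]})\ge 0$; it handles the range $j>K$ more uniformly and makes the role of $\delta(x)=\min_i s(x)_i$ more transparent.
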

\begin{proof}{Proof.}
By definition of $s(x)$, 
\[\begin{array}{rl}
(K-i)s(x)_{i+1} - (K-i+1)s(x)_i &=(K-i)\frac{\sum_{j=i+1}^n|x|_{[j]}}{K-i} - (K-i+1)\frac{\sum_{j=i}^n|x|_{[j]}}{K-i+1}\\
&=\sum_{j=i+1}^n|x|_{[j]}- \sum_{j=i}^n|x|_{[j]}=-|x|_{[i]}.
\end{array}\]
By adding $s(x)_{i+1}$ on both sides, we have $(K-i+1)(s(x)_{i+1}-s(x)_i)=s(x)_{i+1}-|x|_{[i]}$, implying the first equality of Part~\ref{proppart:sssx}.
The remainder of the part can be shown similarly by adding $s(x)_i$ on both sides.

To see Part~\ref{proppart:sv}, if there is no index $i$ in $\{1,\ldots,K-1\}$ so that $s(x)_{i} < s(x)_{i+1}$, the result holds trivially. Assume now that $i'$ is the smallest such index. Then, 
$s(x)_{i'+1} > s(x)_{i'} > |x|_{[i']} \ge |x|_{[i'+1]}$, where the second inequality follows because
$s(x)_{i'+1}-s(x)_{i'}>0$ implies $s(x)_{i'}>|x|_{[i']}>0$ from Part~\ref{proppart:sssx}. 
This in turn shows, using Part~\ref{proppart:sssx} that $s(x)_{i'+2} > s(x)_{i'+1}$ as long as $i' < K-1$. By induction, $s(x)_{i'} < \cdots < s(x)_K$ and, by the definition of $i'$, $s(x)_1 \ge \cdots \ge s(x)_{i'}$. Then, Part~\ref{proppart:sv} follows by defining $i_x$ as the first index such that $s(x)_{i_x} = s(x)_{i'}$. 

We next prove Part~\ref{proppart:umajx}.
We first show that $u(x)$ is nonincreasing.
If $i_x=1$ then all components of $u(x)$ equal $\delta(x)$ and hence it is nonincreasing.
Now, assume that $i_x\ge 2$. By definition of $u(x)$, it suffices to show that  $|x|_{[i_x-1]}\ge \delta(x)$.
Then, $s(x)_{i_x}-|x|_{[i_x-1]}=(K-i_x)(s(x)_{i_x}-s(x)_{i_x-1})\le 0$ by plugging in $i=i_x-1$ in the first equality of Part~\ref{proppart:sssx}.
Therefore, $|x|_{[i_x-1]}\ge s(x)_{i_x}=\delta(x)$, completing the proof that $u(x)$ is nonincreasing.
Thus, $u(x)_{[i]}=u(x)_i$ for all $i=1,\dots,n$. 
Next, observe that $\sum_{i=i_x}^n u(x)_i=\sum_{i=i_x}^n|x|_{[i]}$
by definition of $\delta(x)$. 
This implies in turn that $\sum_{i=1}^n u(x)_i=\sum_{i=1}^n |x|_{[i]}$.
We next show that $\sum_{i=1}^j u(x)_i\ge \sum_{i=1}^j |x|_{[i]}$ for all $j=1,\dots,n-1$.
When $j=1,\dots,i_x-1$, the inequality holds with equality by definition of $u(x)$.
We next consider the case $j\ge i_x$.
If $i_x=K$, the inequality holds because $\sum_{i=1}^j u(x)_i=\sum_{i=1}^K u(x)_i=\sum_{i=1}^n u(x)_i= \sum_{i=1}^n |x|_{[i]}\ge\sum_{i=1}^j |x|_{[i]}$ where the first and the second equalities follow because $j\ge K$ and $u(x)_{K+1}=\dots=u(x)_n=0$. 
Now assume that $i_x<K$.
Since $s(x)_{i_x+1}\ge s(x)_{i_x}$ and $s(x)_{i_x+1}-s(x)_{i_x}=\frac{1}{K-i_x}(s(x)_{i_x}-|x|_{[i_x]})$ by Part~\ref{proppart:sssx}, $s(x)_{i_x}\ge |x|_{[i_x]}$ and hence $\delta(x)=s(x)_{i_x}\ge |x|_{[i]}$ for all $i\ge i_x$.
Therefore, $\sum_{i=1}^j u(x)_i - \sum_{i=1}^j |x|_{[i]}
=\sum_{i=i_x}^j u(x)_i - \sum_{i=i_x}^j |x|_{[i]}
=\sum_{i=i_x}^j (\delta(x) - |x|_{[i]})\ge 0$.

For Part~\ref{proppart:u1}, first assume $i_x=1$. 
Then, $u(x)_1=s(x)_1$. By Part 1, $s(x)_1\ge|x|_{[1]}$ and hence 
$u(x)_1=\max\{|x|_{[1]},s(x)_1\}$.
Next, assume that $i_x\ge 2$. Then, $u(x)_1=|x|_{[1]}$. By Part~\ref{proppart:sv}, $s(x)_2\le s(x)_1$ and hence, by Part~\ref{proppart:sssx}, $s(x)_1\le |x|_{[1]}$.
Therefore, $u(x)_1=\max\{|x|_{[1]}, s(x)_1\}$.
\hfill \Halmos
\end{proof}

\def\betaopt{\hat{\beta}}
\def\bbeta{\bar{\beta}}
\def\cbeta{\check{\beta}}

We next show in the following theorem that $u(x)$ can be used to compute $\|x\|_c$ if $\|\cdot\|_s$ is calculable. 
To this end, we introduce the following notations.
Let $\betaopt$ be an optimal solution to 
\begin{equation}
\label{eq:supp_coeff}
\max\bigl\{\beta\tra u(x)\bigm|\|\beta\|_{s*}\le 1\bigr\},
\end{equation}
where $\|\cdot\|_{s*}$ is the dual norm of $\|\cdot\|_s$.
We now define $\theta$ and $\chi\in \R^n$ as follows:
\begin{equation}
\theta_i=
\left\{\begin{array}{cl}
\betaopt_i,  & i=1,\dots,i_x-1\\ 
\frac{\sum_{j=i_x}^K \betaopt_j}{K-i_x+1}, & i=i_x,\dots,K\\
0, & \textup{otherwise}
\end{array}\right.,\quad
\chi_i=\left\{\begin{array}{cl}\betaopt_i & \textup{for $i=1,\dots,i_x-1$}\\
\frac{\sum_{i=i_x}^K \betaopt_i}{K-i_x+1}&\textup{otherwise}
\end{array}\right..
\label{eq:theta_chi}
\end{equation}

\begin{theorem}
\label{thm:cnorm_snorm}
For an arbitrary $x\in\mathbb{R}^n$, let $u(x)$ be defined as in (\ref{eq:ux}). 
Then, $\|x\|_c=\|u(x)\|_s$.
\end{theorem}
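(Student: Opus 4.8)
The plan is to recast the claim as an optimization identity and then certify the optimum. By the convex-hull description \eqref{cnorm_conv} together with the fact that $\|\cdot\|_c$ is the gauge of $\conv(\cnorm)$, a scaling argument (substitute $v=tu$ in the condition $x/t\in\conv(\cnorm)$ and use that multiplying by $t>0$ preserves sorted order and weak majorization) yields
\[
\|x\|_c=\min\bigl\{\|v\|_s \bigm|\ v\ge_{wm}|x|,\ v_1\ge\cdots\ge v_K\ge 0,\ v_{K+1}=\cdots=v_n=0\bigr\};
\]
the minimum is attained since the feasible set is closed and nonempty and $\|\cdot\|_s$ is coercive. It therefore suffices to prove: (i) $u(x)$ is feasible for this program, giving $\|x\|_c\le\|u(x)\|_s$; and (ii) $\|v\|_s\ge\|u(x)\|_s$ for every feasible $v$, giving the reverse inequality after minimizing. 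Part (i) is immediate from Proposition~\ref{prop:umajx}: $u(x)\in\Delta^n_+$, it vanishes past coordinate $K$ by construction, and $u(x)\ge_m|x|$, hence $u(x)\ge_{wm}|x|$.

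For (ii) I would use $\theta$ from \eqref{eq:theta_chi} as a dual certificate. First, I may assume $\betaopt$ is nonnegative and nonincreasing: replacing it by the decreasing rearrangement of $(|\betaopt_1|,\dots,|\betaopt_n|)$ leaves $\|\cdot\|_{s*}$ unchanged and, since $u(x)$ is already sorted and nonnegative, does not decrease $\betaopt\tra u(x)$, so the replacement is still optimal for \eqref{eq:supp_coeff} (and the statement to be proved does not involve $\betaopt$). Then $\theta$ is again nonnegative and nonincreasing, and $\theta$ is the average of the finitely many vectors obtained by permuting coordinates $i_x,\dots,K$ of $\betaopt$; by permutation-invariance and convexity of $\|\cdot\|_{s*}$, $\|\theta\|_{s*}\le\|\betaopt\|_{s*}\le 1$. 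Because $u(x)$ equals $\delta(x)$ on coordinates $i_x,\dots,K$ and $0$ afterwards, collapsing the average gives $\theta\tra u(x)=\betaopt\tra u(x)=\|u(x)\|_s$, the last step by norm/dual-norm duality and optimality of $\betaopt$. Finally, I would compare $\theta\tra v$ with $\theta\tra u(x)$ by summation by parts, expressed through the partial sums $\sum_{i=1}^j v_i$ and $\sum_{i=1}^j u(x)_i$ with the nonnegative weights $\theta_j-\theta_{j+1}$ and $\theta_K$: the weights $\theta_j-\theta_{j+1}$ vanish on the flat block $i_x\le j\le K-1$; for $j\le i_x-1$ one has $\sum_{i=1}^j u(x)_i=\sum_{i=1}^j |x|_{[i]}\le\sum_{i=1}^j v_i$ since $u(x)$ agrees with the sorted $|x|$ there and $v\ge_{wm}|x|$; and the total-mass term uses $\sum_{i=1}^K u(x)_i=\sum_{i=1}^n|x|_{[i]}\le\sum_{i=1}^K v_i$. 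Hence $\theta\tra v\ge\theta\tra u(x)$, and $\|v\|_s\ge\theta\tra v\ge\|u(x)\|_s$ because $\|\theta\|_{s*}\le1$.

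The main obstacle is (ii), specifically the inequality $\theta\tra v\ge\theta\tra u(x)$: it works precisely because the certificate $\theta$ is constant on the block $i_x,\dots,K$ while $u(x)$ meets the weak-majorization constraints with equality on the initial coordinates $1,\dots,i_x-1$ and saturates the total-mass constraint — so whatever extra mass a feasible $v$ carries over $|x|$ is charged only against the flat part of $\theta$, where it cannot help. A certificate-free alternative for (ii) is to show directly that every feasible $v$ satisfies $v\ge_{wm}u(x)$ (using $v_1\ge\cdots\ge v_K\ge0$, $\sum_{i=1}^{i_x-1}v_i\ge\sum_{i=1}^{i_x-1}|x|_{[i]}$, $\sum_{i=1}^K v_i\ge\sum_{i=1}^n|x|_{[i]}$, and that the average of the top $j-i_x+1$ entries of $v_{i_x},\dots,v_K$ dominates their overall average), and then to invoke the standard fact (5.A.9 of \cite{marshall2010}) that $v\ge_{wm}u(x)\ge 0$ produces $w$ with $v\ge_m w\ge u(x)$, so $\|v\|_s\ge\|w\|_s\ge\|u(x)\|_s$ by Lemma~\ref{lemma:majnormineq} and monotonicity of the symmetric gauge $\|\cdot\|_s$ on $\R^n_+$.
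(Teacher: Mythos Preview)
Your argument is correct and takes a somewhat different route from the paper's. Both proofs rely on the dual element $\theta$ from \eqref{eq:theta_chi}, but the paper extends $\theta$ to $\chi$ and shows, via a rearrangement/contradiction argument, that $\chi^\intercal u\le 1$ is valid for $\cnorm$, hence $\chi\in B_{c^*}(1)$, which then lower-bounds $\|x\|_c$ directly through $\|x\|_c\ge \chi^\intercal x_{\Delta_+}$. You instead recast $\|x\|_c$ as the minimum of $\|v\|_s$ over the feasible set of \eqref{cnorm_conv} and use Abel summation against the nonincreasing weights $\theta$ to prove $\theta^\intercal v\ge\theta^\intercal u(x)$ for every feasible $v$; this is arguably more elementary and entirely sidesteps $\chi$ and the $c^*$-norm. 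Your certificate-free alternative (showing $v\ge_{wm}u(x)$ via the convex-combination-of-partial-sums computation you sketch) also goes through and is independently useful. One small slip: averaging the permutations of $\hat\beta$ over coordinates $i_x,\dots,K$ leaves entries $K{+}1,\dots,n$ equal to $\hat\beta_i$, not zero, so the averaged vector is not literally the $\theta$ of \eqref{eq:theta_chi} unless $\hat\beta$ already vanishes there. This costs you nothing---those coordinates pair against $v_i=u(x)_i=0$ in your inner-product comparison, and the averaged vector still lies in $B_{s^*}(1)$---but the paper makes the step explicit by first truncating $\hat\beta$ past $K$ via the sign-invariance identity $\bar\beta=\tfrac12(\hat\beta+\check\beta)$ before averaging to obtain $\theta$.
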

\begin{proof}{Proof.}
Recall the definitions of $s(x)$, $i_x$, and $\delta(x)$.
We have $\|x\|_c\le \|u(x)\|_c = \|u(x)\|_s$, where the first inequality is because of Part~\ref{proppart:umajx} of Proposition~\ref{prop:umajx} and Lemma~\ref{lemma:majnormineq}, while the equality is due to Proposition~\ref{prop:cardKcnorm}. We next show $\|x\|_c\ge \|u(x)\|_s$. 
Let $r=\|u(x)\|_s$ so that $u(x)\in B_s(r)$.
By the definition of $\|\cdot\|_s$ and $\hat{\beta}$, we have $\betaopt\tra u(x) = r$. Moreover, $\betaopt\tra u \le \|u\|_s \le r$ for all $u\in \Ball{s}{r}$, where the first inequality is because $\betaopt\in \Ball{s^*}{1}$ and the second inequality is because $u\in \Ball{s}{r}$. Because $u(x)\in \Delta_+$, it follows from rearrangement inequality that, without loss of generality, we may assume that $\betaopt\in \Delta_+$. 

We next show that $\theta\tra u\le r$ for all $u\in \Ball{s}{r}$ where $\theta$ is as defined in \eqref{eq:theta_chi}. 
Define $\bbeta:=(\betaopt_1,\dots,\betaopt_K,0,\dots,0)$ and
$\cbeta:=(\betaopt_1,\ldots,\betaopt_K,-\betaopt_{K+1},\ldots,-\betaopt_n)$ and observe that $\bbeta = \frac{\betaopt+\cbeta}{2}$. By sign-invariance of $\|\cdot\|_s$ and thus of $\|\cdot\|_{s^*}$, $\cbeta\in \Ball{s^*}{1}$, and, so $\bbeta\in \Ball{s^*}{1}$. However, since $\bbeta\ge_m \theta$, Lemma~\ref{lemma:majnormineq} shows that $\theta\in \Ball{s^*}{1}$. This in turn shows that $\theta\tra u\le r$ is valid for $\Ball{s}{r}$.

We next claim that $\chi\tra u\le 1$ is valid for $\cnorm$ where $\chi$ is as defined in \eqref{eq:theta_chi}. Assume that, on the contrary, there exists $\hat{u}\in \cnorm$ such that $\chi\tra \hat{u} > 1$. 
Because of the rearrangement inequality and the fact that $\chi\in \Delta_+$, we may assume that $\hat{u}\in \Delta_+$. This yields the contradiction $1\ge \theta\tra\hat{u} = \chi\tra\hat{u} > 1$, where the first inequality is by the validity of $\theta\tra u\le 1$ for $\Ball{s}{1}$ which outer-approximates $\cnorm$ and the equality can be arranged by choosing $\hat{u}$ with support at most $K$. It follows that $\chi\tra u\le r$ is valid for $\Ball{c}{r}$ or, in other words, that $\chi \in \Ball{c^*}{1}$, where $\|\cdot\|_{c^*}$ is the dual norm of $\|\cdot\|_c$. Therefore, 
\begin{equation}\label{eq:xc-bound}
\|x\|_c = \|x_{\Delta_+}\|_c = \max\{\beta\tra x_{\Delta_+}\mid \|\beta\|_{c^*}\le 1\} \ge \chi\tra x_{\Delta_+},
\end{equation}
where the inequality is because $\|\chi\|_{c*}\le 1$. However, the following calculation shows that \begin{equation}
\label{eq:chi_sep}
\begin{array}{ll}
\chi^\intercal x_{\Delta_+} &= \displaystyle\sum_{i=1}^{i_x-1} \betaopt_i |x|_{[i]}+\frac{\sum_{i=i_x}^K \betaopt_i}{K-i_x+1} \sum_{i=i_x}^n |x|_{[i]}
=\displaystyle\sum_{i=1}^{i_x-1} \betaopt_i |x|_{[i]}+\sum_{i=i_x}^K \betaopt_i  \frac{\sum_{i=i_x}^n|x|_{[i]}}{K-i_x+1}=\displaystyle\betaopt\tra u(x) = r.
\end{array}
\end{equation}
Combining \eqref{eq:xc-bound} and \eqref{eq:chi_sep}, we conclude that $\|x\|_c\ge \|u(x)\|_s$, where we have used that $r$ was defined to be $\|u(x)\|_s$.
\hfill \Halmos
\end{proof}

\begin{corollary}
\label{cor:separation}
For a fixed $x\in \R^n$, let $\chi$ be as defined in \eqref{eq:theta_chi}. Then, $\chi\tra u \le 1$ is valid for $\cnorm$, $\|\chi\|_{c^*}=1$, and $\chi\tra x_{\Delta_+} = \|x\|_c$. In particular, $\chi\tra u\le 1$ separates $x_{\Delta_+}$ if $x\not\in \conv(\cnorm)$.
\end{corollary}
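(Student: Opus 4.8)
The plan is to observe that essentially all of the work has already been done inside the proof of Theorem~\ref{thm:cnorm_snorm}, and that only one small upgrade — from an inequality to an equality — remains. Recall that in that proof it is shown that $\chi\tra u\le 1$ is valid for $\cnorm$, that $\chi\in\Ball{c^*}{1}$ (equivalently $\|\chi\|_{c^*}\le 1$), and, via the chain \eqref{eq:chi_sep} combined with the conclusion $\|x\|_c=\|u(x)\|_s$ of Theorem~\ref{thm:cnorm_snorm}, that $\chi\tra x_{\Delta_+}=\betaopt\tra u(x)=\|u(x)\|_s=\|x\|_c$. This already gives the validity claim and the identity $\chi\tra x_{\Delta_+}=\|x\|_c$ verbatim, and supplies one half of $\|\chi\|_{c^*}=1$.

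To finish, I would deduce $\|\chi\|_{c^*}\ge 1$ (for $x\ne 0$) from the duality pairing between $\|\cdot\|_c$ and $\|\cdot\|_{c^*}$: we have $\|x\|_c=\chi\tra x_{\Delta_+}\le\|\chi\|_{c^*}\,\|x_{\Delta_+}\|_c=\|\chi\|_{c^*}\,\|x\|_c$, where the last equality uses the sign- and permutation-invariance of $\|\cdot\|_c$ recorded in Proposition~\ref{prop:c-normset}; dividing by $\|x\|_c>0$ yields $\|\chi\|_{c^*}\ge 1$, hence equality. For the separation statement, note that $\conv(\cnorm)=\Ball{c}{1}$, so $x\notin\conv(\cnorm)$ forces $\|x\|_c>1$; since $\chi\tra x_{\Delta_+}=\|x\|_c>1$, while the linear inequality $\chi\tra u\le 1$, being valid on $\cnorm$, is valid on $\conv(\cnorm)$, the hyperplane $\{u\mid\chi\tra u=1\}$ separates $x_{\Delta_+}$ from $\conv(\cnorm)$.

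I do not expect a genuine obstacle here — the result is a repackaging of Theorem~\ref{thm:cnorm_snorm} — so the main point of care is the degenerate case $x=0$: then $u(x)=0$, the maximizer $\betaopt$ in \eqref{eq:supp_coeff} is not unique, and $\chi$ (hence $\|\chi\|_{c^*}$) need not equal $1$. This is harmless, since the separation conclusion is vacuous when $x=0\in\conv(\cnorm)$; one simply reads the corollary for $x\ne 0$, or fixes the convention that $\betaopt$ is chosen to be an extreme point of $\Ball{s^*}{1}$. The remainder is bookkeeping: identifying precisely which steps of the proof of Theorem~\ref{thm:cnorm_snorm} are being invoked, and recalling that a linear inequality valid for a set is valid for its convex hull.
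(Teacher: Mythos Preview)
Your proposal is correct and follows essentially the same route as the paper: both invoke the validity of $\chi\tra u\le 1$ and the identity $\chi\tra x_{\Delta_+}=\|x\|_c$ from the proof of Theorem~\ref{thm:cnorm_snorm}, then use the dual-norm (H\"older-type) inequality $\chi\tra x_{\Delta_+}\le\|\chi\|_{c^*}\|x_{\Delta_+}\|_c$ together with $\|\chi\|_{c^*}\le 1$ to force $\|\chi\|_{c^*}=1$, and conclude separation from $\|x\|_c>1$. Your additional remark on the degenerate case $x=0$ is a nice bit of care that the paper leaves implicit.
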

\begin{proof}{Proof.}
It was already shown in the proof of Theorem~\ref{thm:cnorm_snorm} that $\chi\tra u\le 1$ is valid for $\cnorm$. Let $u(x)$ be as defined in \eqref{eq:ux}. Then, $\chi\tra x_{\Delta_+} = \|u(x)\|_s =\|x\|_c$, where the first equality is from \eqref{eq:chi_sep} and the second equality is from Theorem~\ref{thm:cnorm_snorm}. Therefore, $\|x_{\Delta_+}\|_c = \chi\tra x_{\Delta_+}\le \|\chi\|_{c^*}\|x_{\Delta_+}\|_{c}\le \|x_{\Delta_+}\|_c$, where the first inequality is due to Cauchy-Schwarz inequality and the second inequality is because the proof of Theorem~\ref{thm:cnorm_snorm} shows that $\|\chi\|_{c^*}\le 1$. Therefore, equality holds throughout and, in particular, $\|\chi\|_{c^*} = 1$. If $x\not\in \conv(\cnorm)=\Ball{c}{1}$ then $\chi\tra x_{\Delta_+} = \|x\|_c > 1$,  where the inequality is because $x\not\in \Ball{c}{1}$. 
\hfill \Halmos
\end{proof}


\begin{remark}
In the proof of Corollary~\ref{cor:separation}, let $T$ be the transformation (a composition of sign-conversions and permutations) that maps $x$ to $x_{\Delta_+}$.
Then, the hyperplane that separates $x$ and $\cnorm$ is $T^{-1}(\chi)\tra u\le 1$.
\end{remark}



\begin{theorem}[Sparsity Theorem]
\label{thm:sparsity}
For an arbitrary $x\in\mathbb{R}^n$, $u(x)$ as defined in \eqref{eq:ux} is 
an optimal solution to
\[\begin{array}{rl}
\min & \|u\|_s\\
\st & u_1\ge\dots\ge u_K\ge 0,\\
&u_{K+1}=\dots=u_n=0,\\
&u\ge_{wm} |x|.
\end{array}\]
\end{theorem}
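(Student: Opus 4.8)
The plan is to verify that $u(x)$ is feasible for the displayed problem and that the problem's optimal value equals $\|u(x)\|_s$; since these two facts together force $u(x)$ to be optimal, almost all the work has already been done in Proposition~\ref{prop:umajx}, Theorem~\ref{thm:cnorm_snorm}, and the extended-formulation identity \eqref{cnorm_conv}.

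\textbf{Feasibility.} First I would check the constraints directly from the definition \eqref{eq:ux}. By construction $u(x)_{K+1}=\dots=u(x)_n=0$. The proof of Part~\ref{proppart:umajx} of Proposition~\ref{prop:umajx} shows that $u(x)$ is nonincreasing, and since each of its entries equals either $|x|_{[i]}\ge 0$ or $\delta(x)=s(x)_{i_x}\ge 0$, we get $u(x)_1\ge\dots\ge u(x)_K\ge 0$. Finally, Part~\ref{proppart:umajx} gives $u(x)\ge_m|x|$, which in particular implies $u(x)\ge_{wm}|x|$. Hence $u(x)$ is a feasible point.

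\textbf{The optimal value is $\|x\|_c$.} Let $v(x)$ denote the optimal value. The feasible region depends on $x$ only through $|x|_{[1]}\ge\dots\ge|x|_{[n]}$, and it scales: for $t>0$, $u$ is feasible for $x$ if and only if $u/t$ is feasible for $x/t$, whence $v(x/t)=v(x)/t$. On the other hand, \eqref{cnorm_conv} states exactly that $x\in\conv(\cnorm)$ if and only if there is a feasible $u$ with $\|u\|_s\le 1$, i.e. if and only if $v(x)\le 1$; and $\conv(\cnorm)=\Ball{c}{1}$ by Proposition~\ref{prop:c-normset}, so $v(x)\le 1$ if and only if $\|x\|_c\le 1$. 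Combining this with the scaling identity, $v(x)\le t\iff\|x\|_c\le t$ for every $t>0$ (and $v(0)=0=\|0\|_c$), so $v(x)=\|x\|_c$.

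\textbf{Conclusion and the delicate point.} By Theorem~\ref{thm:cnorm_snorm}, the objective value of the feasible point $u(x)$ is $\|u(x)\|_s=\|x\|_c=v(x)$, so $u(x)$ attains the optimum. I do not anticipate a real obstacle here, since the statement is essentially a repackaging of Theorem~\ref{thm:cnorm_snorm} and \eqref{cnorm_conv}. The one step that requires care is the lower bound $\|u\|_s\ge\|x\|_c$ for an arbitrary feasible $u$: a direct comparison of $u$ against $u(x)$ is not obviously available, so one must route through the convex-hull description, using positive homogeneity of $v$ to reduce a general $x$ to the normalized case $\|x\|_c\le 1$ handled by \eqref{cnorm_conv}.
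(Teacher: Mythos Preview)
Your proof is correct. Feasibility and the identity $\|u(x)\|_s=\|x\|_c$ are obtained exactly as in the paper, but the lower bound $\|u\|_s\ge\|x\|_c$ for an arbitrary feasible $u$ is argued along a different route. The paper works pointwise: it decomposes $u\ge_{wm}|x|$ as $u\ge_m u'\ge|x|$ (via 5.A.9 of \cite{marshall2010}) and then chains $\|u\|_s=\|u\|_c\ge\|u'\|_c\ge\|x\|_c$ using Proposition~\ref{prop:cardKcnorm}, Lemma~\ref{lemma:majnormineq}, and sign-invariance of $\|\cdot\|_c$. You instead characterize the whole value function at once, observing that positive homogeneity together with \eqref{cnorm_conv} and Proposition~\ref{prop:c-normset} force $v(\cdot)=\|\cdot\|_c$. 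Your approach is tidier in that it treats the extended formulation \eqref{cnorm_conv} as a black box and never unpacks the weak-majorization relation; the paper's approach is more hands-on and avoids the scaling reduction, appealing only to already-proved norm inequalities. One small remark: the biconditional ``$v(x)\le 1\iff\|x\|_c\le 1$'' tacitly assumes attainment of the infimum in the $\Rightarrow$ direction, but this is immediate from compactness of the sublevel sets of $\|\cdot\|_s$ intersected with the (closed) feasible region, and in any case the strict-inequality version already suffices to conclude $v(x)=\|x\|_c$.
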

\begin{proof}{Proof.}
First, $u(x)$ is feasible because of its definition and Part~\ref{proppart:umajx} of Proposition~\ref{prop:umajx}.
Now, it suffices to show that $\|u(x)\|_s\le \|u\|_s$ for any feasible solution $u$.
Since $u\ge_{wm}|x|$, there exists $u'$ such that $u\ge_m u'$ and $u'\ge |x|$; see 5.A.9. of \cite{marshall2010}, for example.
Then, 
$\|u\|_s=\|u\|_c\ge\|u'\|_c\ge \|x\|_c=\|u(x)\|_s$ where the first equality follows from Proposition~\ref{prop:cardKcnorm}, the first inequality follows from Lemma~\ref{lemma:majnormineq}, and the last equality follows from Theorem~\ref{thm:cnorm_snorm}.
Finally, to prove the second inequality, observe that $|x|$ can be written as a convex combination of $u'$ and its sign-invariants because $u'\ge |x|$. 
By the triangle inequality, positive homogeneity, and the sign-invariance of $\|\cdot\|_c$, $\|x\|_c\le \|u'\|_c$.
\hfill \Halmos
\end{proof}

\begin{example}
Consider the set $N^3_{\|\cdot\|_2}$ where $n=6$. Let $x:=\left(\frac{27}{28}, \frac{5}{28}, \frac{4}{28}, \frac{3}{28}, \frac{2}{28},\frac{1}{28}\right)$.
Note that $\|x\|_2=1$ and $x\in{\Delta_+}$.
For illustration, we establish that $x\notin\conv(N^3_{\|\cdot\|_2})$ by constructing an explicit separating hyperplane described in \eqref{eq:supp_coeff} and \eqref{eq:theta_chi}.
First, we construct the vector $s(x)\in\mathbb{R}^3$ as follows:
\[\begin{array}{llrl}
s(x)_1&=\frac{\sum_{j=1}^6 x_{j}}{3-1+1}=&\frac{1}{3}\left(\frac{27}{28}+\frac{5}{28}+\frac{4}{28}+\frac{3}{28}+\frac{2}{28}+\frac{1}{28}\right)&=\frac{28}{56}\\
s(x)_2&=\frac{\sum_{j=2}^6 x_{j}}{3-2+1}=&\frac{1}{2}\left(\frac{5}{28}+\frac{4}{28}+\frac{3}{28}+\frac{2}{28}+\frac{1}{28}\right)&=\frac{15}{56}\\
s(x)_3&=\frac{\sum_{j=3}^6 x_{j}}{3-3+1}=&\frac{1}{1}\left(\frac{4}{28}+\frac{3}{28}+\frac{2}{28}+\frac{1}{28}\right)&=\frac{20}{56}.
\end{array}\]
Observe that $s(x)_2=\min\bigl\{s(x)_1,s(x)_2,s(x)_3\bigr\}$.
Next, we compute that
\[
u(x)_1 = x_1 = \tfrac{27}{28},\quad
u(x)_2 = u(x)_3 = s(x)_2  = \tfrac{15}{56},\quad u(x)_4 =u(x)_5=u(x)_6= 0.
\]
Since $\|u(x)\|_2=1.036\dots>1$, we conclude from Theorem~\ref{thm:cnorm_snorm} that $x\notin \conv\bigl(N^3_{\|\cdot\|_2}\bigr)$.
We now derive the separating hyperplane.
We first separate $u(x)$ from $\Ball{2}{1}$.
Since $\|\cdot\|_2$ is self-dual, $\betaopt = u(x)/\|u(x)\|_2$ is an optimal solution to \eqref{eq:supp_coeff}.
Then, the inequality $\betaopt\tra u\le 1$ (or equivalently $u(x)\tra u\le \|u(x)\|_2$) is valid for $\Ball{2}{1}$.
Furthermore, it separates $u(x)$ because $u(x)\tra u(x) =\|u(x)\|_2^2>\|u(x)\|_2$.
We next construct a hyperplane that separates $x$ from $N^3_{\|\cdot\|_2}$.
Define $\theta$ and $\chi$ as follows:
\[\begin{array}{ll}
\theta_1=\beta_1=\frac{1}{\|u(x)\|_2}u(x)_1=\frac{1}{\|u(x)\|_2}\frac{27}{28},\quad\theta_2=\theta_3=\frac{1}{3-2+1}(\beta_2+\beta_3)=\frac{1}{\|u(x)\|_2}\frac{15}{56},\quad \theta_4=\dots=\theta_6=0\\
\chi_1=\beta_1=\frac{1}{\|u(x)\|_2}u(x)_1=\frac{1}{\|u(x)\|_2}\frac{27}{28},\quad\chi_2=\dots=\chi_6=\frac{1}{3-2+1}(\beta_2+\beta_3)=\frac{1}{\|u(x)\|_2}\frac{15}{56}.\\
\end{array}\]
Observe that $\|\beta\|_{2} =\|\theta\|_{2}=1$, but $\|\chi\|_2 > 1$.
Now consider the inequality $\chi^\intercal y\le 1$. It is valid for $N^3_{\|\cdot\|_2}$ because for any $u\in N^3_{\|\cdot\|_2}$, 
$\chi\tra u\le \chi\tra u_{\Delta_+} =\theta\tra u_{\Delta_+}\le \|\theta\|_2\|u_{\Delta_+}\|_2\le 1$.
Moreover, it separates $x$ because $\chi^\intercal x =\frac{1}{\|u(x)\|_2}\left(\frac{27}{28}, \frac{15}{56}, \frac{15}{56}, \frac{15}{56}, \frac{15}{56}, \frac{15}{56}\right)^\intercal \left(\frac{27}{28}, \frac{5}{28}, \frac{4}{28}, \frac{3}{28}, \frac{2}{28},\frac{1}{28}\right)= 1.036\dots>1$.
\hfill \Halmos
\end{example}

Next, we consider some special cases of the set $\cnorm$ defined in \eqref{eq:sparsity} and provide explicit convex hull descriptions.
\begin{proposition}
\label{prop:hiriartx}
Let $S = \{x\in\mathbb{R}^q\mid \textup{card}(x)\le K, \|x\|_{\infty}\le r\}$ where $\|x\|_\infty=|x|_{[1]}$. Then, 
\begin{equation}
\label{eq:huvec}
\conv(S)=\left\{x\in\mathbb{R}^q\,\middle|\,\|x\|_1\le rK,  \|x\|_{\infty}\le r\right\}.
\end{equation}
\end{proposition}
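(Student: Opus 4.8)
The plan is to apply the general machinery developed in Section~\ref{sec:mainresults}, specifically Theorem~\ref{permconvset}(3), since $S$ is both permutation-invariant (cardinality and $\|\cdot\|_\infty$ are permutation-invariant) and sign-invariant. First I would write $S_0 = S\cap\{u\mid u_1\ge\dots\ge u_q\ge 0\}$. Since $\|\cdot\|_\infty$ is just the largest component and the cardinality constraint in $\Delta^q_+$ forces $u_{K+1}=\dots=u_q=0$, we get $S_0 = \{u\mid r\ge u_1\ge\dots\ge u_K\ge 0,\ u_{K+1}=\dots=u_q=0\}$, which is already a polytope, hence convex. By \eqref{signpermconv}, $\conv(S) = \{x\mid u\in S_0,\ u\ge_{wm}|x|\}$.

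The core of the argument is then to project out $u$ from this lifted description and show the projection equals the right-hand side of \eqref{eq:huvec}. For the forward inclusion ($\conv(S)\subseteq$ RHS): given $x$ with a witness $u\in S_0$ and $u\ge_{wm}|x|$, the inequality $\sum_{i=1}^q u_i\ge \sum_{i=1}^q |x|_{[i]} = \|x\|_1$ combined with $\sum_{i=1}^q u_i \le rK$ (since at most $K$ components, each $\le r$) gives $\|x\|_1\le rK$; and $\|x\|_\infty = |x|_{[1]} \le u_1 \le r$ from the first weak-majorization inequality. For the reverse inclusion: given $x$ with $\|x\|_1\le rK$ and $\|x\|_\infty\le r$, I need to construct a witness $u\in S_0$ with $u\ge_{wm}|x|$. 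The natural choice is to "water-fill'': set as many leading components of $u$ equal to $r$ as the budget $\|x\|_1$ and the majorization constraints allow, then place the remainder. Concretely, one can take $u$ to be the vector in $\Delta^q_+$ supported on its first $K$ coordinates with $\sum u_i = \max\{\|x\|_1,\ \text{something}\}$ — more carefully, $u_i = \min\{r,\ \max\{|x|_{[i]},\ (\|x\|_1 - \sum_{j<i}u_j)/\cdot\}\}$ arranged so that $u$ is nonincreasing, bounded by $r$, has support $\le K$, and dominates the partial sums of $|x|$. The cleanest concrete witness: let $u_i = r$ for $i\le \lfloor \|x\|_1/r\rfloor$, one fractional coordinate, and zeros after — this lies in $\Delta^q_+\cap S_0$ since $\|x\|_1\le rK$, and it weakly majorizes $|x|$ because at each prefix it is as large as possible subject to the $\le r$ cap, which dominates $|x|_{[i]}\le r$.

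The main obstacle I anticipate is verifying the weak-majorization inequality $\sum_{i=1}^j u_i \ge \sum_{i=1}^j|x|_{[i]}$ for the constructed $u$ at every prefix length $j$, not just at $j=q$. This requires an exchange/greedy argument: because $u$ puts the maximal feasible mass ($r$ each) in the earliest slots, any nonincreasing $|x|$ with $|x|_{[i]}\le r$ and total mass $\le rq$ must have smaller prefix sums. One should split into the range $j \le$ (number of saturated coordinates of $u$), where $\sum_{i\le j}u_i = jr \ge \sum_{i\le j}|x|_{[i]}$ since each $|x|_{[i]}\le r$; and the range beyond saturation, where $\sum_{i\le j}u_i$ has already absorbed all of $\|x\|_1 \ge \sum_{i\le j}|x|_{[i]}$ if $j$ reaches the support end, with an intermediate case handled by noting the fractional coordinate still caps the relevant $|x|$ tail. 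Alternatively, and perhaps more slickly, one can invoke Lemma~\ref{lemma:maj_conv_comb}-style reasoning or directly cite 5.A.9 of \cite{marshall2010} to reduce weak majorization to ordinary majorization plus domination, then use Proposition~\ref{prop:umajx} with the norm taken to be $\|\cdot\|_\infty$ — indeed $u(x)$ from \eqref{eq:ux} specialized here, capped at $r$, gives exactly such a witness, so the bulk of the work may already be available from the Sparsity Theorem apparatus. I would also remark that RHS is exactly $\conv(\cnorm)$ for $\|\cdot\|_s = \|\cdot\|_\infty$ scaled by $r$, recovering the Hiriart-Urruty–Le matrix result of \cite{hiriart2012} as the singular-value analogue.
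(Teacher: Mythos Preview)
Your approach is correct and the water-filling witness works cleanly: for $j\le m:=\lfloor\|x\|_1/r\rfloor$ one has $\sum_{i\le j}u_i=jr\ge\sum_{i\le j}|x|_{[i]}$ since each $|x|_{[i]}\le r$, and for $j>m$ one has $\sum_{i\le j}u_i=\|x\|_1\ge\sum_{i\le j}|x|_{[i]}$; the support bound $m+1\le K$ follows from $\|x\|_1\le rK$ (with $m=K$ forcing $\rho=0$). So no genuine obstacle remains.

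However, this is a different route from the paper's. The paper does not project out $u$ by hand. Instead, after scaling to $r=1$, it invokes the Sparsity-Theorem machinery already in place: by Proposition~\ref{prop:c-normset} and Theorem~\ref{thm:cnorm_snorm}, $\conv(S/r)=\{x\mid\|u(x)\|_\infty\le 1\}$, and then Part~\ref{proppart:u1} of Proposition~\ref{prop:umajx} gives $\|u(x)\|_\infty=u(x)_1=\max\{|x|_{[1]},\,s(x)_1\}$ with $s(x)_1=\|x\|_1/K$, from which the two inequalities $\|x\|_\infty\le r$ and $\|x\|_1\le rK$ drop out immediately. Your direct projection via Theorem~\ref{permconvset}(3) is more elementary and self-contained---it does not need the $\|\cdot\|_c=\|u(\cdot)\|_s$ identity or the structural analysis of $u(x)$---whereas the paper's argument is a three-line corollary of that general identity, illustrating its reach. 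You do mention this alternative at the end; in fact that \emph{is} the paper's proof, and no capping of $u(x)$ at $r$ is needed---one simply tests whether $u(x)_1\le r$.
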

\begin{proof}{Proof.}
Observe that $S/r=\{x\in\mathbb{R}^q\mid \textup{card}(x)\le K, \|x\|_{\infty}\le 1\}$. 
Then, 
\[
\conv(S)=r\conv(S/r)=\{y\in\mathbb{R}^q\mid \|u(y)\|_\infty\le r\}
=\{y\in\mathbb{R}^q\mid \max\{|y|_{[1]},s(y)_{1}\}\le r\},
\] 
where the second equality follows from Proposition~\ref{prop:c-normset} and Theorem~\ref{thm:cnorm_snorm} and the third equality from 
Part~\ref{proppart:u1} of Proposition~\ref{prop:umajx}.
Since $s(y)_1=\frac{1}{K}\sum_{j=1}^q |y|_{[j]}$ by definition of $s(y)$, the result follows.
\hfill  \Halmos
\end{proof}


When $\|\cdot\|_s$ is the $\ell_2$-norm, the norm $\|\cdot\|_c$ associated with $\conv(N^K_{\|\cdot\|_2})$ is known to be the \emph{$K$-support norm} (or \emph{$K$-overlap norm}).  
An explicit formula for this norm is introduced in \cite{argyriou2012sparse}.
We next provide an alternate derivation of this formula using our arguments.
For consistency with literature, we denote the $K$-support norm by $\|\cdot\|^{sp}_{K}$.

\begin{lemma}
\label{lemma:uniquer}
The unique integer $r\in \{0,\dots,K-1\}$ that satisfies
\begin{equation}
\label{randix}
|x|_{[K-r-1]}> s(x)_{K-r}\ge |x|_{[K-r]}.
\end{equation}
is $r=K-i_x$ where $|x|_{[0]}=\infty$ by convention.
\end{lemma}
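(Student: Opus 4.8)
The plan is to prove uniqueness and the explicit formula $r = K - i_x$ separately, leaning heavily on Proposition~\ref{prop:umajx}. First I would establish \emph{existence and uniqueness} of an integer $r \in \{0,\dots,K-1\}$ satisfying \eqref{randix} by a monotonicity argument on the index $i \mapsto s(x)_i$. The key structural fact is Part~\ref{proppart:sv} of Proposition~\ref{prop:umajx}: the sequence $s(x)_1 \ge \cdots \ge s(x)_{i_x}$ is nonincreasing up to $i_x$ and $s(x)_{i_x} \le \cdots \le s(x)_K$ is nondecreasing afterward, so $s(x)$ is "valley-shaped" with minimum at $i_x$. Combined with Part~\ref{proppart:sssx}, which ties the sign of $s(x)_{i+1} - s(x)_i$ to the sign of $s(x)_i - |x|_{[i]}$ (and of $s(x)_{i+1} - |x|_{[i]}$), I can read off where the crossover between $s(x)_i$ and $|x|_{[i]}$ happens. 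Setting $i = K - r$, condition \eqref{randix} asks for the index $i$ where $|x|_{[i-1]} > s(x)_i \ge |x|_{[i]}$; I expect this to pin down $i = i_x$ exactly.

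Concretely, I would argue as follows. Take $i = i_x$ (so $r = K - i_x$, which lies in $\{0,\dots,K-1\}$ since $1 \le i_x \le K$). For the right inequality $s(x)_{i_x} \ge |x|_{[i_x]}$: if $i_x < K$, then $s(x)_{i_x+1} \ge s(x)_{i_x}$ by Part~\ref{proppart:sv}, and by Part~\ref{proppart:sssx} this forces $s(x)_{i_x} \ge |x|_{[i_x]}$; if $i_x = K$, then $s(x)_K = |x|_{[K]} + \cdots$ wait — more carefully, $s(x)_K = \sum_{j=K}^n |x|_{[j]} \ge |x|_{[K]}$ directly from the definition. For the left (strict) inequality $|x|_{[i_x-1]} > s(x)_{i_x}$ when $i_x \ge 2$: by minimality of $i_x$ among the minimizers of $s(x)$, we have $s(x)_{i_x - 1} > s(x)_{i_x}$, and Part~\ref{proppart:sssx} (with $i = i_x - 1$) converts this into $|x|_{[i_x-1]} > s(x)_{i_x}$; when $i_x = 1$ the left inequality holds trivially since $|x|_{[0]} = \infty$. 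This shows $r = K - i_x$ satisfies \eqref{randix}.

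For uniqueness, I would show that at most one $i \in \{1,\dots,K\}$ can satisfy $|x|_{[i-1]} > s(x)_i \ge |x|_{[i]}$. Suppose $i < i'$ both satisfy it. From $s(x)_i \ge |x|_{[i]}$ and Part~\ref{proppart:sssx}, $s(x)_{i+1} \le s(x)_i$, and inductively (again via Part~\ref{proppart:sssx}, since $s(x)_{j} \ge |x|_{[j]}$ propagates as long as the sequence is nonincreasing — this is essentially the mechanism behind Part~\ref{proppart:sv}) the sequence $s(x)_i, s(x)_{i+1}, \dots$ stays nonincreasing; but the strict inequality $|x|_{[i'-1]} > s(x)_{i'}$ combined with Part~\ref{proppart:sssx} at index $i'-1$ gives $s(x)_{i'-1} > s(x)_{i'}$, which is consistent, yet I then need $s(x)_{i'-1} \ge |x|_{[i'-1]}$ to fail — and indeed the strict inequality $s(x)_{i'} < |x|_{[i'-1]} \le |x|_{[i]} \le s(x)_i$ together with the established nonincreasingness of $s(x)$ past $i$ is fine, so uniqueness needs the valley shape: once $s(x)$ starts increasing (at $i_x$) it keeps increasing, and before that it is decreasing, so the "descent-then-at-or-above $|x|_{[i]}$" pattern pins $i$ to $i_x$. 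I would phrase this cleanly using Part~\ref{proppart:sv} directly: for $i < i_x$, $s(x)_i > s(x)_{i_x} \ge \cdots$ — actually for $i<i_x$ we need to rule out \eqref{randix}, and for $i>i_x$ likewise.

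The main obstacle I anticipate is handling the strict versus weak inequalities carefully, particularly the boundary cases $i_x = 1$ and $i_x = K$, and making the "valley" monotonicity rigorous enough that it cleanly excludes every $i \ne i_x$ rather than just showing $i_x$ works. The cleanest route is probably: (i) show any $i$ with $s(x)_i \ge |x|_{[i]}$ must have $i \ge i_x$ (because for $i < i_x$, Part~\ref{proppart:sv} gives $s(x)_i > s(x)_{i+1}$, and Part~\ref{proppart:sssx} then forces $s(x)_i < |x|_{[i]}$); and (ii) show any $i$ with $|x|_{[i-1]} > s(x)_i$ must have $i \le i_x$ (because for $i > i_x$, Part~\ref{proppart:sv} gives $s(x)_{i-1} \le s(x)_i$, and Part~\ref{proppart:sssx} at index $i-1$ forces $|x|_{[i-1]} \le s(x)_i$). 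Together (i) and (ii) squeeze $i = i_x$, giving both uniqueness and the identification $r = K - i_x$ in one stroke.
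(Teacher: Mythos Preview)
Your proof is correct and follows the same squeeze $i \ge i_x$ and $i \le i_x$ as the paper, with the bonus that you verify existence explicitly (the paper's printed proof argues only that any $i$ satisfying \eqref{randix} must equal $i_x$). The one place to tighten is your route~(i): Part~\ref{proppart:sv} gives only $s(x)_i \ge s(x)_{i+1}$ for $i < i_x$, and you need the minimality of $i_x$ as an \emph{index} to upgrade this to strict (otherwise a tie would make $i$ a minimizer too, contradicting the choice of $i_x$) --- precisely the strict-versus-weak care you anticipated.
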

\begin{proof}{Proof.}
This result follows from Proposition~\ref{prop:umajx} and we refer to its parts directly in the proof.
Let $|x|_{[i]} \le s(x)_{i} < |x|_{[i-1]}$ for some $i\in \{1,\ldots,K\}$. We show that $i=i_x$. First, we show that $|x|_{[j]} \le s(x)_j$ for all $j \ge i$. Let $j+1$ be the first index no less than $i$ so that $|x|_{[j+1]} > s(x)_{j+1}$ and observe that, in fact, $j+1 > i$. Then, we obtain the contradiction $|x|_{[j+1]} \le |x|_{[j]} \le s(x)_j \le s(x)_{j+1} < |x|_{[j+1]}$, where the third inequality is by Part~\ref{proppart:sssx}.
Therefore, $|x|_{[j]}\le s(x)_j$ for $j\ge i$ which implies by Part~\ref{proppart:sssx} that $s(x)_i\le \cdots \le s(x)_K$ and by Part~\ref{proppart:sv} that $i\ge i_x$. Since $|x|_{[i-1]} > s(x)_i$, it follows by Part~\ref{proppart:sssx} that either $i=1$ or $s(x)_{i-1} > s(x)_{i}$. In either case, it follows that $i\le i_x$. Therefore, $i=i_x=K-r$.
\hfill \Halmos
\end{proof}

\begin{proposition}[Proposition 2.1 of \cite{argyriou2012sparse}]
\label{prop:ksupport}
\begin{equation}
\label{eq:ksupport}
\|x\|^{sp}_K = \left(\sum_{i=1}^{K-r-1}x_{[i]}^2+\frac{1}{r+1}\left(\sum_{i=K-r}^n |x|_{[i]}\right)^2\right)^{\frac{1}{2}}.
\end{equation}
where $r$ is the unique integer in $\{0,\dots,K-1\}$ satisfying \eqref{randix}.
\end{proposition}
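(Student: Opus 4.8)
The plan is to derive \eqref{eq:ksupport} as an immediate specialization of Theorem~\ref{thm:cnorm_snorm}. Here the symmetric gauge is $\|\cdot\|_s=\|\cdot\|_2$, and by Proposition~\ref{prop:c-normset} the norm $\|\cdot\|_c$ whose unit ball is $\conv\bigl(N^K_{\|\cdot\|_2}\bigr)$ is exactly the $K$-support norm $\|\cdot\|^{sp}_K$. Theorem~\ref{thm:cnorm_snorm} then gives $\|x\|^{sp}_K=\|u(x)\|_2$ for every $x\in\R^n$, so it remains only to evaluate $\|u(x)\|_2$ in closed form from the explicit formula \eqref{eq:ux} for $u(x)$, and then to rewrite the answer in terms of the integer $r$ of \eqref{randix} rather than $i_x$.

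First I would compute $\|u(x)\|_2^2$ by splitting the sum according to the three cases in \eqref{eq:ux}. Each index $i\in\{i_x,\dots,K\}$ contributes $\delta(x)^2$, for a total of $(K-i_x+1)\,\delta(x)^2$; since $\delta(x)=s(x)_{i_x}=\frac{1}{K-i_x+1}\sum_{j=i_x}^n|x|_{[j]}$ by definition, this equals $\frac{1}{K-i_x+1}\bigl(\sum_{j=i_x}^n|x|_{[j]}\bigr)^2$. The indices $i\in\{1,\dots,i_x-1\}$ contribute $\sum_{i=1}^{i_x-1}|x|_{[i]}^2$, while all remaining indices contribute $0$. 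Hence
\[
\|u(x)\|_2^2=\sum_{i=1}^{i_x-1}|x|_{[i]}^2+\frac{1}{K-i_x+1}\Bigl(\sum_{j=i_x}^n|x|_{[j]}\Bigr)^2.
\]

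Next I would pass from $i_x$ to $r$. By Lemma~\ref{lemma:uniquer}, the unique integer $r\in\{0,\dots,K-1\}$ satisfying \eqref{randix} is $r=K-i_x$, so $i_x-1=K-r-1$ and $K-i_x+1=r+1$. Substituting and taking square roots yields $\|u(x)\|_2=\bigl(\sum_{i=1}^{K-r-1}|x|_{[i]}^2+\frac{1}{r+1}(\sum_{i=K-r}^n|x|_{[i]})^2\bigr)^{1/2}$, which together with $\|x\|^{sp}_K=\|u(x)\|_2$ is the claimed identity. To match the statement verbatim one observes that, since $\|\cdot\|^{sp}_K$ is sign- and permutation-invariant, one may replace $x$ by $x_{\Delta_+}$ throughout — this changes neither $\|x\|^{sp}_K$, nor $s(x)$, nor $r$ — and for $x_{\Delta_+}$ one has $x_{[i]}=|x|_{[i]}$.

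I do not anticipate any real obstacle: the result is essentially bookkeeping built on top of Theorem~\ref{thm:cnorm_snorm} and Lemma~\ref{lemma:uniquer}. The only point that calls for a word of care is reconciling the $x_{[i]}$ appearing in \eqref{eq:ksupport} with the sorted absolute values $|x|_{[i]}$ produced by the direct computation, and that is handled by the sign-invariance reduction just described.
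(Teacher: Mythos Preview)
Your proposal is correct and follows essentially the same approach as the paper: invoke Theorem~\ref{thm:cnorm_snorm} to get $\|x\|^{sp}_K=\|u(x)\|_2$, expand $\|u(x)\|_2^2$ directly from \eqref{eq:ux}, and then apply Lemma~\ref{lemma:uniquer} to substitute $r=K-i_x$. The paper's proof is just a one-line version of exactly this computation.
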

\begin{proof}{Proof.}
By Theorem~\ref{thm:cnorm_snorm},
\[\begin{array}{ll}
\|x\|^{sp}_K = \|u(x)\|_2&\displaystyle=\left(\sum_{i=1}^{i_x-1} |x|_{[i]}^2+(K-i_x+1)\delta(x)^2\right)^{\frac{1}{2}}=\left(\sum_{i=1}^{i_x-1} |x|_{[i]}^2+\frac{1}{K-i_x+1}\left(\sum_{i=i_x}^{n}|x|_{[i]}\right)^2\right)^{\frac{1}{2}}.
\end{array}\]
The result then follows since Lemma~\ref{lemma:uniquer} establishes that $r=K-i_x$. 
\hfill \Halmos
\end{proof}

\def\ranl{\textup{ran}\lambda}
\def\S{\bar{S}}

\subsection{Convexification of sets of matrices characterized by their singular values}\label{subsec:convexifysingular}

\def\sk{s^k}
Let $\mathcal{M}^{m,n}(\mathbb{R})$ be the set of $m\times n$ real-valued matrices.
For $M\in\mathcal{M}^{m,n}(\mathbb{R})$, 
let $\sigma_1(M)\ge \dots \ge \sigma_q(M)$ denote the singular values of $M$ where $q=\min\{m,n\}$
and let $\sigma:\mathcal{M}^{m,n}(\mathbb{R})\rightarrow \mathbb{R}^q$ be defined as $\sigma(M)=(\sigma_1(M),\dots,\sigma_q(M))$.
Let $\|M\|_{sp}=\sigma_1(M)$ and $\|M\|_*=\sum_{i=1}^q\sigma_i(M)$ be the \emph{spectral norm} and the \emph{nuclear norm} of $M$, respectively. In this subsection, we consider sets of matrices that are characterized by their singular values. More specifically, we are interested in sets of the form $\bar{S}=\{M\in\mathcal{M}^{m,n}(\mathbb{R})\mid f_i\bigl(\sigma(M)\bigr)\le 1, i=1,\dots,r\}$ and their convex hulls where each $f_i$ is a sign- and permutation-invariant function.
Define $S=\{x\in\mathbb{R}^q\mid f_i(x)\le 1, i=1,\dots,r\}$ where $q=\min\{m,n\}$.
It is clear that $M\in\bar{S}$ if and only if $\sigma(M)\in S$. As we show next, Theorem~\ref{permconvset} implies that convex hulls of sets of the form $\S$ can be obtained by studying $S$ instead. Similar results, although dealing with closed convex hulls, can be derived using conjugacy results of \cite{lewis1995}. We include a direct proof based on Theorem~\ref{permconvset}.

\begin{theorem}
\label{thm:vectomat}
For $p\in\mathbb{Z}_{++}$ and $q\in\mathbb{Z}_+$ and each $i\in \{1,\ldots,r\}$, let $f_i: (x,z)\mapsto \R$, where $(x,z)\in \R^p\times\R^q$, be sign- and permutation-invariant functions with respect to $x\in\R^{p}$. Let $m, n\in \mathbb{Z}$, such that $\min\{m,n\} = p$, and define   $\bar{S}=\bigl\{(M,z)\in\mathcal{M}^{m,n}(\R)\times\R^q\bigm| f_i\bigl(\sigma(M), z\bigl)\le 1, i=1,\dots,r\bigr\}$. Further, define
$S=\bigl\{(x,z)\in \R^p\times \R^q\mid f_i(x,z)\le 1, i=1,\dots,r\bigr\}$.
Then,
\[
\conv\bigl(\bar{S}\bigr)= X:=\bigl\{(M,z)\in\mathcal{M}^{m,n}(\R)\times \R^q\bigm| (\sigma(M), z)\in\conv(S)\bigr\}.
\]
\end{theorem}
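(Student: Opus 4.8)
The plan is to prove the two inclusions $\conv(\bar S)\subseteq X$ and $X\subseteq\conv(\bar S)$ separately, using the singular value decomposition to move between matrices and their singular value vectors, and invoking Theorem~\ref{permconvset} to handle the sign- and permutation-invariance of $S$ in $\R^p$. The core structural fact I would rely on is the classical result (Ky Fan / von Neumann) that for $M,N\in\mathcal{M}^{m,n}(\R)$ the partial sums $\sum_{i=1}^k\sigma_i(\cdot)$ are convex functions on $\mathcal{M}^{m,n}(\R)$ (indeed, $\sum_{i=1}^k\sigma_i(M)=\max\{\trc(U^\intercal M V)\mid U^\intercal U=V^\intercal V=I_k\}$ is a supremum of linear functions), while $\sum_{i=1}^q\sigma_i(M)=\|M\|_*$ is itself a norm and hence convex. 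First I would observe that $X$ is convex: it is the inverse image under the map $M\mapsto$ (partial sums of $\sigma(M)$) — a finite collection of convex functions in the sense just described, together with the linear map $M\mapsto\|M\|_*$ for the equality — of the convex set describing $\conv(S)$ in majorization-extended form. More carefully, one writes $\conv(S)$ via Theorem~\ref{permconvset}(3) with an auxiliary vector $u$ satisfying $u_1\ge\cdots\ge u_p\ge 0$, $(u,z)\in\conv(S_0)$, and $u\ge_{wm}|x|$; substituting $x=\sigma(M)$ (which is already nonnegative and nonincreasing) the weak majorization constraints $\sum_{i=1}^k u_i\ge\sum_{i=1}^k\sigma_i(M)$ are convex in $(M,u)$ by Ky Fan, so $X$ has a convex extended representation.

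For $\conv(\bar S)\subseteq X$: take $(M,z)\in\conv(\bar S)$, write it as $\sum_j\gamma_j(M^j,z)$ with $(M^j,z)\in\bar S$, $\gamma_j\ge 0$, $\sum_j\gamma_j=1$. By Ky Fan convexity of partial sums, $\sum_{i=1}^k\sigma_i(M)\le\sum_j\gamma_j\sum_{i=1}^k\sigma_i(M^j)$ for each $k<p$, and by linearity-along-the-full-sum (nuclear norm is not linear, so here I would instead use that $\sum_{i=1}^p\sigma_i$ need not be linear — this is a subtlety) — actually the cleaner route is: set $u:=\sum_j\gamma_j\,\sigma(M^j)$, which is a convex combination of nonincreasing nonnegative vectors hence nonincreasing and nonnegative; each $\sigma(M^j)\in S$ since $(M^j,z)\in\bar S$ means $f_i(\sigma(M^j),z)\le 1$; so $(u,z)\in\conv(S)$; and Ky Fan gives $\sum_{i=1}^k u_i\ge\sum_{i=1}^k\sigma_i(M)$ for all $k$, i.e.\ $u\ge_{wm}\sigma(M)=|\sigma(M)|$. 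By Theorem~\ref{permconvset}(3) this exactly certifies $\sigma(M)\in\conv(S)$, so $(M,z)\in X$.

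For $X\subseteq\conv(\bar S)$: take $(M,z)\in X$, so $(\sigma(M),z)\in\conv(S)$. By Theorem~\ref{permconvset}(3) there exist $u$ with $u_1\ge\cdots\ge u_p\ge 0$, $(u,z)\in\conv(S_0)$, $u\ge_{wm}\sigma(M)$, and then (as in the discussion following Theorem~\ref{signinvariant}, via 5.A.9 of \cite{marshall2010}) a $u'$ with $u\ge_m u'$ and $u'\ge\sigma(M)\ge 0$. Write $(u,z)=\sum_j\gamma_j(u^j,z)$ with $(u^j,z)\in S_0$, so each $f_i(u^j,z)\le 1$ and $u^j$ nonincreasing nonnegative. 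Now fix an SVD $M=U\diag(\sigma(M))V^\intercal$ with $U,V$ orthogonal (padding with zeros as needed); the matrices $M^j:=U\diag(u^j)V^\intercal$ have $\sigma(M^j)=u^j$ (nonincreasing nonnegative, so these are genuinely the singular values), hence $(M^j,z)\in\bar S$, and $\sum_j\gamma_j M^j=U\diag(u)V^\intercal=:\tilde M$ has $\sigma(\tilde M)=u$. It remains to realize $M$ itself as a convex combination of matrices in $\bar S$. Since $u\ge_m u'\ge\sigma(M)$: first, $u'$ is a convex combination of permutations of $u$ (Lemma~\ref{lemma:maj_conv_comb}), and permuting the diagonal entries of $\diag(u)$ corresponds to $U\mapsto U\Pi$, $V\mapsto V\Pi$ which keeps the matrix in $\bar S$ (its singular values are just a reordering of $u$, same multiset, still satisfying the invariant $f_i$), so $U\diag(u')V^\intercal\in\conv(\bar S)$; second, $u'\ge\sigma(M)\ge 0$ componentwise means $\sigma(M)$ is a convex combination of the $2^p$ sign-flips of $u'$, and $U\diag(\varepsilon\circ u')V^\intercal$ for a sign pattern $\varepsilon\in\{\pm1\}^p$ has singular values $|u'|=u'$, hence lies in $\bar S$; combining, $M=U\diag(\sigma(M))V^\intercal\in\conv(\bar S)$.

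The main obstacle I anticipate is the bookkeeping around the "full-sum" coordinate of majorization versus weak majorization: singular values are automatically nonnegative, so $\bar S$ inherits the sign-invariance of $S$ "for free" and the natural relation between $\sigma(M)$ and the auxiliary vector is weak majorization, which is precisely what Theorem~\ref{permconvset}(3) uses — so one must be careful to invoke part (3) (weak majorization, with $S_0$ over $u_1\ge\cdots\ge u_p\ge 0$) and not part (1). The other delicate point is justifying that convex combinations of the fixed-$U,V$ matrices $U\diag(w)V^\intercal$ capture $M$: this works because a single pair $(U,V)$ simultaneously diagonalizes $M$ and all the auxiliary matrices we build, so everything reduces to the vector-level statement "$u\ge_{wm}\sigma(M)\Rightarrow\sigma(M)$ is a convex combination of sign-and-permutation variants of (vectors majorized by) $u$", which is exactly Theorem~\ref{permconvset}(3) transported through the linear (in the diagonal) map $w\mapsto U\diag(w)V^\intercal$; invoking Lemma~\ref{res:lin_invariant} or arguing directly keeps this clean.
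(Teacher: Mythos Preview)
Your proof is correct and follows essentially the same route as the paper's: Ky Fan partial-sum sublinearity to get $\sum_j\gamma_j\,\sigma(M^j)\ge_{wm}\sigma(M)$ for the inclusion $\conv(\bar S)\subseteq X$, and linearity of $w\mapsto U\diag(w)V^\intercal$ through a fixed SVD frame together with sign/permutation invariance for $X\subseteq\conv(\bar S)$.

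One recurring slip to fix: you write the convex combinations as $\sum_j\gamma_j(M^j,z)$ and $\sum_j\gamma_j(u^j,z)$ with a \emph{common} $z$, but a general point of $\conv(\bar S)$ (or of $\conv(S_0)$) has varying $z^j$ with $z=\sum_j\gamma_j z^j$. Taken literally your decomposition is not available; once you carry $z^j$ along, every step goes through unchanged. Relatedly, when you invoke Theorem~\ref{permconvset}(3) you need $(u,z)\in\conv(S_0)$, not merely $\conv(S)$ --- but this is automatic since each $(\sigma(M^j),z^j)$ already lies in $S_0$. The paper's reverse inclusion is organized slightly more directly: it expands $(M,z)=\sum_{j,k}\chi_k\gamma_j\bigl(U\diag(T_k\sigma^j)V^\intercal,\,z^j\bigr)$ in a single double sum, rather than first landing at $(\tilde M,z)\in\conv(\bar S)$ and then appealing to orthogonal invariance of $\conv(\bar S)$ to pass from $\tilde M$ to $M$; the two arguments are equivalent.
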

\begin{proof}{Proof.}
We first show that $\conv(\S)\subseteq X$.
Consider $(M,z)\in \conv\bigl(\S\bigr)$, so that $(M,z)=\sum_{j}\gamma_j (M^j,z^j)$, where $(M^j,z^j)\in \S$ and $\gamma_j$ are convex multipliers. For $k\in \{1,\ldots,p\}$ and any $Y\in \mathcal{M}^{m,n}(\R)$, define $\sk(Y) := \sum_{i=1}^k \sigma(Y)_{[i]}$ to be the $k^{\text{th}}$ Ky Fan norm. Then, it follows by sublinearity and positive-homogeneity of norms that $\sk(M) \le \sum_j \gamma_j\sk(M^j)$. In other words, $\sum_j \gamma_j\sigma(M^j) \ge_{wm} \sigma(M)$. Let $\sigma^j = \sigma(M^j)$. Since $(M^j,z^j)\in \S$, it follows that $f_i(\sigma^j,z^j)\le 1$. Therefore, $\bigl(\sum_j\gamma_j\sigma^j, z\bigr)\in \conv(S_0)$ where $S_0 = S\cap \{(\sigma, z)\mid \sigma_1\ge \cdots \ge \sigma_p \ge 0\}$. Then, it follows by Part 3 of Theorem~\ref{permconvset} that $\bigl(\sigma(M), z\bigr)\in \conv(S)$ and $(M,z)\in X$.

We now show that $\conv(\S) \supseteq X$. Let $(M, z) \in X$ and let $U\diag(\sigma) V^\intercal$ be the singular value decomposition of $M$, where $\diag(\sigma)\in \mathcal{M}^{m,n}(\mathbb{R})$ is the diagonal matrix, whose diagonal is the vector $\sigma$ and $\sigma_1\ge \ldots\ge \sigma_p \ge 0$. Since $(\sigma,z)\in \conv(S)$, it follows by Part 3 of Theorem~\ref{permconvset} that there exist $\sigma'\in \R^p$, $(\sigma^j, z^j)\in S_0$ and convex multipliers $\gamma_j$ so that $(\sigma',z) = \sum_j \gamma_j(\sigma^j,z^j)$ and $\sigma' \ge_{wm} \sigma$. Now, if $\theta^j$ is obtained by permuting $\sigma^j$ or changing the sign of some of its entries, it follows readily that $(U\diag(\theta^j)V^T,z^j) \in \S$, because these operations do not alter the singular values of the matrix. Since $\sigma' \ge_{wm} \sigma$,  $\sigma = \sum_{k}\chi_k T_k \sigma'$, where $\chi_k$ are convex multipliers and each $T_k$ permutes the entries of $\sigma'$ and possibly changes the sign of a few of the entries. Then, it follows $(\sigma,z) = \sum_{k}\chi_k (T_k\sigma', z) = \sum_{k}\sum_j \chi_k\gamma_j (T_k\sigma^j, z^j)$. Since $U\diag(\theta)V^T$ is a linear operator of $\theta$, $\sum_{j}\sum_{k}\chi_k\gamma_j = 1$, and we have already shown that $(U\diag(T_k\sigma^j)V^T,z^j) \in \S$, it follows that $(M,z)\in \conv(\S)$.
\hfill\Halmos
\end{proof}

\def\mS{\mathcal{S}}
In the following, we denote the set of $p\times p$ real symmetric matrices as $\mS^{p}$ and, for any $M\in \mS^p$, we denote the eigenvalues as $\lambda(M)$. In this context, a similar result can be shown using eigenvalues instead of singular values.

\begin{theorem}
\label{thm:vectomateigen}
For $p\in\mathbb{Z}_{++}$ and $q\in \mathbb{Z}_+$ and each $i\in \{1,\ldots,r\}$, let $f_i: (x,z)\mapsto \R$, where $(x,z)\in \R^p\times \R^q$ be permutation-invariant functions with respect to $x\in\R^{p}$. Define   $\bar{S}=\bigl\{(M,z)\in\mS^p\times\R^q\bigm| f_i\bigl(\lambda(M), z\bigl)\le 1, i=1,\dots,r\bigr\}$. Further, define
$S=\bigl\{(x,z)\in \R^p\times \R^q\mid f_i(x,z)\le 1, i=1,\dots,r\bigr\}$.
Then,
\[
\conv\bigl(\bar{S}\bigr)= X:=\bigl\{(M,z)\in\mS^p\times \R^q\bigm| \bigl(\lambda(M), z\bigr)\in\conv(S)\bigr\}.
\]
\end{theorem}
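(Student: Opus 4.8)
The plan is to follow the proof of Theorem~\ref{thm:vectomat} almost verbatim, with two changes: the singular value decomposition is replaced by the spectral decomposition $M=U\diag(\lambda)U^{\intercal}$ with $U$ orthogonal, and the relevant partial-sum functional for the largest index, $\sum_{i=1}^p\lambda_i(M)=\trc(M)$, is now \emph{linear} in $M$ rather than merely sublinear. Consequently one obtains full majorization in place of weak majorization, and the appropriate auxiliary set is $S_0=S\cap\{(x,z)\mid x_1\ge\dots\ge x_p\}$, to which Part~1 of Theorem~\ref{permconvset} applies; no sign-invariance is used or needed. Throughout I take $\lambda(\cdot)$ to be the eigenvalue vector sorted in nonincreasing order, so that a convex combination of such vectors is again nonincreasing.

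For the inclusion $\conv(\bar S)\subseteq X$, write $(M,z)=\sum_j\gamma_j(M^j,z^j)$ with $(M^j,z^j)\in\bar S$ and $\gamma_j$ convex multipliers. For each $k\in\{1,\dots,p\}$ the map $M\mapsto\sum_{i=1}^k\lambda_i(M)$ is sublinear on $\mathcal{S}^p$ (Ky Fan's maximum principle; see, e.g., \cite{marshall2010}), so $\sum_{i=1}^k\lambda_i(M)\le\sum_j\gamma_j\sum_{i=1}^k\lambda_i(M^j)$, and for $k=p$ equality holds because both sides equal $\trc(M)=\sum_j\gamma_j\trc(M^j)$. Hence $\sum_j\gamma_j\lambda(M^j)\ge_m\lambda(M)$. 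Since each $\lambda(M^j)$ is already sorted and $f_i(\lambda(M^j),z^j)\le1$, we have $(\lambda(M^j),z^j)\in S_0$, so $\bigl(\sum_j\gamma_j\lambda(M^j),z\bigr)\in\conv(S_0)$ (using $z=\sum_j\gamma_jz^j$); combining this with the majorization relation and Part~1 of Theorem~\ref{permconvset} gives $(\lambda(M),z)\in\conv(S)$, i.e.\ $(M,z)\in X$.

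For the reverse inclusion $X\subseteq\conv(\bar S)$, let $(M,z)\in X$ and write $M=U\diag(\lambda)U^{\intercal}$ with $U$ orthogonal and $\lambda=\lambda(M)$. By Part~1 of Theorem~\ref{permconvset} there are $(\lambda^j,z^j)\in S_0$ and convex multipliers $\gamma_j$ with $\lambda':=\sum_j\gamma_j\lambda^j$, $z=\sum_j\gamma_jz^j$, and $\lambda'\ge_m\lambda$. By Lemma~\ref{lemma:maj_conv_comb}, $\lambda=\sum_k\chi_kP_k\lambda'$ for permutation matrices $P_k$ and convex multipliers $\chi_k$; using $\diag(P_k\lambda')=P_k\diag(\lambda')P_k^{\intercal}$ and $\diag(\lambda')=\sum_j\gamma_j\diag(\lambda^j)$ we obtain
\[
M=\sum_{k,j}\chi_k\gamma_j\,(UP_k)\diag(\lambda^j)(UP_k)^{\intercal}.
\]
Each $(UP_k)\diag(\lambda^j)(UP_k)^{\intercal}$ is an orthogonal conjugate of $\diag(\lambda^j)$, hence symmetric with sorted eigenvalue vector $\lambda^j$, so $f_i$ evaluated at this matrix with $z^j$ equals $f_i(\lambda^j,z^j)\le1$ and the matrix lies in $\bar S$. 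Since the coefficients $\chi_k\gamma_j\ge0$ sum to $1$ and $\sum_{k,j}\chi_k\gamma_jz^j=\sum_k\chi_k z=z$, this exhibits $(M,z)$ as a convex combination of points of $\bar S$.

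The only non-elementary ingredient is Ky Fan's maximum principle that partial sums of the ordered eigenvalues are sublinear; together with the linearity of the trace it is precisely what upgrades the weak-majorization step in Theorem~\ref{thm:vectomat} to ordinary majorization here. Everything else is the same linear-algebra bookkeeping: keeping track of the double convex combination (indexed by $k$ for the permutations realizing the majorization $\lambda'\ge_m\lambda$ and by $j$ for the decomposition inside $S_0$) and noting that conjugation by an orthogonal matrix, and in particular by a permutation matrix, leaves the eigenvalues unchanged. I therefore expect no substantial obstacle beyond invoking the Ky Fan inequality correctly.
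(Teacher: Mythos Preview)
Your proof is correct and follows essentially the same approach as the paper's own (sketched) proof: the paper likewise replaces the singular-value decomposition by the spectral decomposition, invokes the majorization $\sum_j\gamma_j\lambda(M^j)\ge_m\lambda(M)$ (citing Horn--Johnson's Theorem~4.3.27, which is the same content as Ky Fan's principle plus linearity of the trace), and then applies Part~1 of Theorem~\ref{permconvset} in both directions. Your write-up is in fact more detailed than the paper's sketch, spelling out the double convex combination and the identity $\diag(P_k\lambda')=P_k\diag(\lambda')P_k^{\intercal}$ explicitly.
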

\begin{proof}{Proof.}
We only provide a proof sketch since the proof is similar to that of Theorem~\ref{thm:vectomat}. To show that $\conv(\S)\subseteq X$, we consider $(M,z) = \sum_j \gamma_j (M^j,z^j)\in\conv(\S)$ and use the fact that $\sum_j \gamma_j \lambda(M^j)\ge_m\lambda(M)$ \cite[Theorem 4.3.27]{hj85}. Then, the result follows from Part 1 of Theorem~\ref{permconvset}. To show that $\conv(\S)\supseteq X$, we consider $(M,z)\in X$ and express $\sum_j \gamma_j (\lambda^j, z^j)\ge_m (\lambda(M),z)$ for some $(\lambda^j,z^j)\in S$. Then, we observe that this implies that for any orthogonal matrix $U$ and a permutation $\pi$, $(U\diag(\pi(\lambda^j))U^\intercal, z)\in \S$. The result is then derived in a manner similar to that in the proof of Theorem~\ref{permconvset} except that instead of using the singular value decomposition $U\diag(\sigma)V^\intercal$ of $M$, we use the eigenvalue decomposition $M=U\lambda(M) U^\intercal$. 
\hfill\Halmos
\end{proof}


The rank of a matrix can be represented as the cardinality of the vector of singular values.
Since cardinality is a sign- and permutation-invariant function, we obtain the following result as a special case of Theorem~\ref{thm:vectomat}.
\begin{corollary}
\label{cor:matconvhull}
Let $\bar{S}=\{M\in\mathcal{M}^{m,n}(\mathbb{R})\mid\textup{rank}(M)\le K, \|\sigma(M)\|_s\le r\}$.
Then, 
\begin{equation*}
\conv(\bar{S})=\left\{M\in\mathcal{M}^{m,n}(\mathbb{R})\mid \|\sigma(M)\|_c\le r\right\}.
\tag*{\Halmos}
\end{equation*}
\end{corollary}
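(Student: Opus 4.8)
The plan is to realize $\bar S$, up to a scaling by $r$, as one of the matrix sets covered by Theorem~\ref{thm:vectomat}, so that the corollary reduces to the already-known convex hull of $\cnorm$ from Section~\ref{sec:sparsity}.

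First I would dispose of the scaling. Since $\sigma(rM)=r\,\sigma(M)$ for $r>0$ and taking convex hulls commutes with positive scalings, $\conv(\bar S)=r\,\conv(\bar S_1)$, where $\bar S_1=\{M\in\mathcal{M}^{m,n}(\mathbb{R})\mid \textup{rank}(M)\le K,\ \|\sigma(M)\|_s\le 1\}$; likewise, because $\|\cdot\|_c$ is positively homogeneous, $\{M\mid \|\sigma(M)\|_c\le r\}=r\{M\mid \|\sigma(M)\|_c\le 1\}$. (When $r=0$ both sides are $\{0\}$, so assume $r>0$.) It therefore suffices to prove $\conv(\bar S_1)=\{M\mid \|\sigma(M)\|_c\le 1\}$.

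Next I would put $\bar S_1$ in the format of Theorem~\ref{thm:vectomat}. Let $p:=\min\{m,n\}$ be the number of singular values and take two constraint functions on $\R^p$, namely $f_1(x):=\|x\|_s$ and $f_2(x):=\tfrac1K\,\card(x)$, with no auxiliary variables. Both are sign- and permutation-invariant: $f_1$ because $\|\cdot\|_s$ is a symmetric gauge function by hypothesis, and $f_2$ because $\card(\bar x)=\card(x)$ whenever $|\bar x|=|x|$ and $\card$ is obviously permutation-invariant. Since $\card(\sigma(M))=\textup{rank}(M)$, we have $\bar S_1=\{M\mid f_i(\sigma(M))\le 1,\ i=1,2\}$, and the associated vector set is $S=\{x\in\R^p\mid \|x\|_s\le 1,\ \card(x)\le K\}=\cnorm$. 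Theorem~\ref{thm:vectomat} then yields $\conv(\bar S_1)=\{M\mid \sigma(M)\in\conv(\cnorm)\}$.

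Finally, Proposition~\ref{prop:c-normset} identifies $\conv(\cnorm)=\Ball{c}{1}=\{x\mid \|x\|_c\le 1\}$, so $\conv(\bar S_1)=\{M\mid \|\sigma(M)\|_c\le 1\}$, and undoing the scaling from the first step gives the claim. There is no genuinely hard step here; the corollary is bookkeeping on top of Theorem~\ref{thm:vectomat} and the earlier computation of $\conv(\cnorm)$. The only points needing care are checking that $\card$ fits the sign-invariance hypothesis of Theorem~\ref{thm:vectomat} (even though singular values are nonnegative, so sign-invariance is never exercised on them) and handling the homogeneity bookkeeping needed to reintroduce $r$.
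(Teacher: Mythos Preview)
Your proposal is correct and follows essentially the same approach as the paper: the paper simply notes that rank equals the cardinality of the singular-value vector, that cardinality is sign- and permutation-invariant, and then invokes Theorem~\ref{thm:vectomat} together with the identification $\conv(\cnorm)=\Ball{c}{1}$. Your version spells out the bookkeeping (the choice $f_1=\|\cdot\|_s$, $f_2=\tfrac{1}{K}\card$, and the scaling by $r$) more explicitly, but the argument is the same.
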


Consider $\bar{S}$ in Corollary~\ref{cor:matconvhull}. Recall that determining if an arbitrary matrix $M\in\mathcal{M}^{m,n}(\mathbb{R})$ is in the convex hull $\conv(\bar{S})$ can be easily done when the norm $\|\cdot\|_s$ is calculable. In particular, when $\|\cdot\|_s$ is the Euclidean norm, a given matrix $M$ is in $\conv(\bar{S})$ if $\|\sigma(M)\|^{sp}_K\le r$; 
see \eqref{eq:ksupport} for an explicit formula for $\|\cdot\|^{sp}_K$.
Semidefinite representability of the convex hull will be discussed in Section~\ref{subsec:sd_rep}.

Next, we consider the special case where $\|\cdot\|_s$ is the $l_\infty$ norm. Proposition~\ref{prop:hiriartx} and Theorem~\ref{thm:vectomat} together give an alternative proof for the following result.

\begin{proposition}[Theorem 1 of \cite{hiriart2012}]
\label{hiriart}
Let $\bar{S}=\{M\in\mathcal{M}^{m,n}(\mathbb{R})\mid \textup{rank}(M)\le K, \|M\|_{sp}\le r\}$. Then, $\textup{conv}(\bar{S})
=\{M\in\mathcal{M}^{m,n}(\mathbb{R})\mid \|M\|_*\le rK, \|M\|_{sp}\le r\}$.\hfill\Halmos
\end{proposition}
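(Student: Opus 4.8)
The plan is to recognize $\bar S$ as one of the singular-value-constrained sets handled by Theorem~\ref{thm:vectomat} and then to read off the vector-level answer from Proposition~\ref{prop:hiriartx}. Put $q=\min\{m,n\}$. Since $\textup{rank}(M)=\card(\sigma(M))$ and $\|M\|_{sp}=\|\sigma(M)\|_\infty$, and since both $x\mapsto\card(x)$ and $x\mapsto\|x\|_\infty$ are sign- and permutation-invariant functions on $\R^q$, I would set $f_1(x)=\card(x)/K$ and $f_2(x)=\|x\|_\infty/r$, so that $\bar S=\{M\in\mathcal{M}^{m,n}(\R)\mid f_i(\sigma(M))\le 1,\ i=1,2\}$; here $f_1(x)\le 1\iff\card(x)\le K$ because $\card$ is integer-valued and $K\in\mathbb{Z}_{++}$. (Alternatively, one can rescale $\bar S$ by $1/r$ exactly as in the proof of Proposition~\ref{prop:hiriartx} and work with $\|\cdot\|_\infty\le 1$.) Applying Theorem~\ref{thm:vectomat} with $p=q$ and an empty $z$-block then gives $\conv(\bar S)=\{M\mid\sigma(M)\in\conv(S)\}$, where $S=\{x\in\R^q\mid\card(x)\le K,\ \|x\|_\infty\le r\}$.

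Next I would substitute the explicit description furnished by Proposition~\ref{prop:hiriartx}, namely $\conv(S)=\{x\in\R^q\mid\|x\|_1\le rK,\ \|x\|_\infty\le r\}$. Hence $M\in\conv(\bar S)$ if and only if $\|\sigma(M)\|_1\le rK$ and $\|\sigma(M)\|_\infty\le r$. The argument then closes using the identities $\|\sigma(M)\|_1=\sum_{i=1}^q\sigma_i(M)=\|M\|_*$ (the singular values being nonnegative) and $\|\sigma(M)\|_\infty=\sigma_1(M)=\|M\|_{sp}$, which turn these two inequalities into $\|M\|_*\le rK$ and $\|M\|_{sp}\le r$ — exactly the claimed set.

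There is no genuinely hard step here; the care goes into the bookkeeping. The things worth verifying explicitly are: that the two functions chosen are indeed sign- and permutation-invariant so that Theorem~\ref{thm:vectomat} applies, including the degenerate case where its ``$z$''-block is absent; that membership $\sigma(M)\in\conv(S)$ is correctly decided by the two norm inequalities even though $\sigma(M)$ is a priori restricted to $\R^q_+$ with nonincreasing entries — which loses nothing, since $\conv(S)$ is cut out by constraints invariant under coordinate permutations and sign flips; and that the norm identities above are the correct ones. If one preferred to avoid citing Theorem~\ref{thm:vectomat}, the same conclusion can be reached directly via the convexity of the Ky Fan $k$-norms $M\mapsto\sum_{i\le k}\sigma_i(M)$ together with lifting convex combinations of singular-value vectors through a singular value decomposition, as in Subsection~\ref{subsec:convexifysingular}; but routing through Theorem~\ref{thm:vectomat} reduces the proof to a couple of lines.
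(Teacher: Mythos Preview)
Your proposal is correct and follows precisely the route the paper indicates: it states that Proposition~\ref{prop:hiriartx} and Theorem~\ref{thm:vectomat} together yield the result, and you have spelled out exactly that combination—applying Theorem~\ref{thm:vectomat} to reduce to the vector set $S=\{x\in\R^q\mid\card(x)\le K,\ \|x\|_\infty\le r\}$, invoking Proposition~\ref{prop:hiriartx} for $\conv(S)$, and translating back via $\|\sigma(M)\|_1=\|M\|_*$ and $\|\sigma(M)\|_\infty=\|M\|_{sp}$. The only cosmetic point is a notational clash: in Theorem~\ref{thm:vectomat} the symbol $q$ denotes the dimension of the $z$-block (here zero), while you use $q=\min\{m,n\}$ as elsewhere in the paper, so ``$p=q$'' should be read as the theorem's $p$ equaling $\min\{m,n\}$ with an empty $z$-part.
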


\subsection{Semidefinite-representability of sets of matrices characterized by their singular values}
\label{subsec:sd_rep}
We presented in Corollary~\ref{cor:matconvhull} a convex hull result for a set of matrices $\bar{S}$ that is described using their singular values.
The resulting convex hull is written in a norm $\|\cdot\|_c$ induced by the defining norm $\|\cdot\|_s$ of $\bar{S}$.
In this subsection, we discuss the representability of this set as the feasible set of a semidefinite programming (SDP) problem.
A set is called \emph{semidefinite-representable} if it is a projection of a set expressed by a linear matrix inequality.
We remark the following well-known results about semidefinite-representability; see Section 4.2 of \cite{ben2001}. 
\begin{lemma}
\label{lemma:sdprep}
The following sets are semidefinite-representable:
\begin{enumerate}
\item The epigraph of the sum of $p$ largest singular values of a rectangular matrix.
\item The epigraph of the sum of $p$ largest eigenvalues of a symmetric matrix. 
\item The graph of the sum of all eigenvalues of a symmetric matrix. 
\item The set $A\cap B$, where $A$ and $B$ are semidefinite-representable.\hfill\Halmos
\end{enumerate}
\end{lemma}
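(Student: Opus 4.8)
The plan is to verify each of the four claims in Lemma~\ref{lemma:sdprep} by exhibiting the relevant linear matrix inequality (LMI) representations, all of which are standard but worth recording since they are the building blocks for the semidefinite representation of $\conv(\bar S)$ in the sense of Corollary~\ref{cor:matconvhull}. First I would handle Part~2, the epigraph of $\sum_{i=1}^p \lambda_{[i]}(X)$ for $X\in\mathcal S^p$, since this is the cleanest and the others reduce to it. The key identity is the variational formula $\sum_{i=1}^p \lambda_{[i]}(X) = \max\{\trc(XP) \mid 0\preceq P\preceq I,\ \trc(P)=p\}$, whose value function can be dualized: $\sum_{i=1}^p \lambda_{[i]}(X)\le t$ holds if and only if there exist $s\in\R$ and $Z\in\mathcal S^p$ with $Z\succeq 0$, $Z - X + sI\succeq 0$, and $ps + \trc(Z)\le t$. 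This gives the epigraph as the projection of an LMI-defined set, establishing Part~2.

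Next I would derive Part~1 from Part~2 by the standard dilation trick: for $M\in\mathcal M^{m,n}(\R)$, the eigenvalues of the symmetric matrix $\begin{bmatrix}0 & M\\ M^\intercal & 0\end{bmatrix}\in\mathcal S^{m+n}$ are exactly $\pm\sigma_i(M)$ together with zeros, so the $p$ largest eigenvalues of this dilation equal $\sum_{i=1}^p\sigma_i(M)$ (for $p\le\min\{m,n\}$). Composing this linear map with the representation from Part~2 yields an SDP representation of the epigraph of the sum of the $p$ largest singular values. For Part~3, the graph of $\sum_{i=1}^p\lambda_i(X)$ over all $X\in\mathcal S^p$ is simply $\{(X,t)\mid \trc(X)=t\}$, since the sum of all eigenvalues is the trace; this is a linear equation and hence trivially semidefinite-representable (one may also note it is the intersection of the epigraph of $\trc(\cdot)$ with the epigraph of $-\trc(\cdot)$, invoking Part~4). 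Finally, Part~4 follows because if $A = \{x\mid \exists y,\ \mathcal A(x,y)\succeq 0\}$ and $B = \{x\mid \exists w,\ \mathcal B(x,w)\succeq 0\}$ for affine maps $\mathcal A,\mathcal B$ into symmetric matrices, then $A\cap B = \{x\mid \exists (y,w),\ \diag(\mathcal A(x,y),\mathcal B(x,w))\succeq 0\}$, using that a block-diagonal matrix is PSD iff each block is.

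I do not expect any serious obstacle here; the content is entirely classical and the only care needed is in getting the dual of the ``sum of $k$ largest eigenvalues'' value function correct, which is the one genuine computation (it parallels the linear-programming dual of \eqref{plargest}--\eqref{plargestdual} already used in the paper for the vector case, with the box constraint $0\le s_i\le 1$ replaced by the operator inequality $0\preceq P\preceq I$). If anything is mildly delicate, it is making sure the dilation argument in Part~1 is applied with the correct $p$ relative to $\min\{m,n\}$ so that no spurious eigenvalues enter the top-$p$ sum, but this is handled by noting the nonzero eigenvalues of the dilation come in $\pm$ pairs and the $p$ largest are precisely $\sigma_1\ge\cdots\ge\sigma_p$ when $p\le\min\{m,n\}$. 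Since the lemma is explicitly flagged as ``well-known'' with a pointer to Section~4.2 of \cite{ben2001}, I would keep the proof brief, citing that reference for the detailed constructions and including only the dilation identity and the dual characterization in Part~2 as the substantive content.
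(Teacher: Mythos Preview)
Your proposal is correct, but note that the paper does not actually prove this lemma: it is stated with the end-of-proof symbol immediately following the statement and is introduced as ``well-known results'' with a pointer to Section~4.2 of \cite{ben2001}. So there is no proof in the paper to compare against; you are supplying a (correct and standard) argument where the paper simply cites the literature. If you want to match the paper's treatment, you would omit the proof entirely and just give the reference; your instinct in the final paragraph to ``keep the proof brief, citing that reference'' is exactly right, and in fact the paper goes further and keeps it to zero.
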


In particular, we consider the set: $\mathcal{S}=\{M\in\mathcal{M}^{m,n}(\mathbb{R})\mid \textup{rank}(M)\le K, f(\sigma(M))\le r\}$
where $q=\min\{m,n\}$ and $f:\Delta_+\rightarrow\mathbb{R}$ is a quasiconvex function. 
A function $f$ is said to be \emph{quasiconvex} on $\Delta_+$ if, for $\lambda\in [0,1]$ and $x,y\in \Delta_+$, we have $f(\lambda x + (1-\lambda)y) \le \max\{f(x),f(y)\}$. We assume this function has semidefinite-representable lower-level sets. Then, we show that the convex hull of $\mathcal{S}$, the set of rank-constrained matrices whose singular values belong to a lower-level set of $f$ are semidefinite-representable.
\begin{theorem}
\label{thm:sdprep}
Let $q=\min\{m,n\}$ and $\bar{S}=\{M\in\mathcal{M}^{m,n}(\mathbb{R})\mid \textup{rank}(M)\le K, f(\sigma(M))\le r\}$, where $f:\Delta_+\rightarrow\mathbb{R}$ has semidefinite-representable lower-level sets.
Then, $\textup{conv}(\bar{S})$ is semidefinite-representable.
\end{theorem}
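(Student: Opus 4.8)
The plan is to push the matrix convexification through the vector machinery of Section~\ref{sec:mainresults} and then to recognize the resulting description as a projection of an intersection of the semidefinite-representable sets listed in Lemma~\ref{lemma:sdprep}. First I would write $\bar S = \{M \mid \sigma(M) \in S\}$, where $S := \{x \in \R^q \mid \card(x) \le K,\ f(x_{\Delta_+}) \le r\}$; here $x \mapsto f(x_{\Delta_+})$ is a sign- and permutation-invariant extension of $f$ to $\R^q$, and $\card(\cdot)$ is sign- and permutation-invariant, so (after a harmless shift and scaling of the two defining inequalities to the normalization used there) Theorem~\ref{thm:vectomat} gives $\conv(\bar S) = \{M \mid \sigma(M) \in \conv(S)\}$. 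I would then apply Part~3 of Theorem~\ref{permconvset} to the sign- and permutation-invariant set $S$, obtaining $\conv(S) = \{x \mid u \in \conv(S_0),\ u \ge_{wm} |x|\}$ with
\[
S_0 = S \cap \{u \mid u_1 \ge \cdots \ge u_q \ge 0\} = \bigl\{u \in \R^q \,\bigm|\, u_1 \ge \cdots \ge u_K \ge 0,\ u_{K+1} = \cdots = u_q = 0,\ f(u) \le r\bigr\},
\]
the second equality holding because on $\Delta_+$ the bound $\card(u) \le K$ is exactly $u_{K+1} = \cdots = u_q = 0$, and $f(u_{\Delta_+}) = f(u)$ there. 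Substituting $x = \sigma(M)$ and using that $|\sigma(M)| = \sigma(M)$ since singular values are nonnegative, $\conv(\bar S)$ is the image under $(M,u)\mapsto M$ of $\{(M,u) \mid u \in \conv(S_0),\ u \ge_{wm} \sigma(M)\}$.

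The second step is to observe that $\conv(S_0) = S_0$ and that $S_0$ is semidefinite-representable. The lower-level set $\{u \in \Delta_+ \mid f(u) \le r\}$ is convex and semidefinite-representable by the quasiconvexity/representability hypothesis on $f$, and $S_0$ is its intersection with the linear subspace $\{u_{K+1} = \cdots = u_q = 0\}$ and the polyhedral cone $\{u_1 \ge \cdots \ge u_q \ge 0\}$; hence $S_0$ is convex, so $\conv(S_0) = S_0$, and it is semidefinite-representable by Part~4 of Lemma~\ref{lemma:sdprep}.

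The third step is to translate the weak-majorization constraint. Since $u \in \conv(S_0) = S_0 \subseteq \Delta_+$ we have $u_{[i]} = u_i$, and since $\sigma(M) \in \Delta_+$, the relation $u \ge_{wm} \sigma(M)$ is exactly the $q$ inequalities $\sum_{i=1}^j u_i \ge s^j(M)$, $j = 1,\dots,q$, where $s^j(M) = \sum_{i=1}^j \sigma_i(M)$ is the $j$-th Ky Fan norm. For each $j$, the set $\{(M,u) \mid s^j(M) \le \sum_{i=1}^j u_i\}$ is obtained from the semidefinite-representable epigraph of $s^j$ (Part~1 of Lemma~\ref{lemma:sdprep}) by substituting the linear functional $\sum_{i=1}^j u_i$ for the epigraph variable, hence it is semidefinite-representable. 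Intersecting these $q$ sets with $\{(M,u) \mid u \in S_0\}$ (Part~4 of Lemma~\ref{lemma:sdprep}) and projecting onto the $M$-coordinates (an operation under which semidefinite-representability is closed, directly from its definition as a projection of a linear-matrix-inequality set) yields $\conv(\bar S)$, which is therefore semidefinite-representable.

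I expect the only real care to be needed in the first step: correctly combining Theorems~\ref{thm:vectomat} and~\ref{permconvset} so that the rank constraint and the $f$-constraint are handled simultaneously, and ensuring that the convexity of $S_0$ (hence $\conv(S_0) = S_0$) is derived from quasiconvexity of $f$ on the convex set $\Delta_+$, and not from any property of $f$ off $\Delta_+$, where $f$ need not even be defined. Once the weak-majorization description of $\conv(\bar S)$ over $(M,u)$ is in place, the conclusion is a mechanical application of the closure properties in Lemma~\ref{lemma:sdprep}.
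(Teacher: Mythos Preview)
Your proposal is correct and follows essentially the same route as the paper: pass from matrices to vectors via Theorem~\ref{thm:vectomat}, convexify the resulting sign- and permutation-invariant vector set via Part~3 of Theorem~\ref{permconvset}, and then read off semidefinite representability from Lemma~\ref{lemma:sdprep}. The only differences are cosmetic: you apply the two convexification theorems in the opposite order and make explicit the step $\conv(S_0)=S_0$ (which the paper uses tacitly), and you correctly write the weak-majorization constraint as all $q$ Ky-Fan inequalities rather than stopping at $K$.
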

\begin{proof}{Proof.}
Define $g:\R^q\mapsto \R$ so that $g(x) = f(x_{\Delta_+})$. Observe further that $g$ is sign- and permutation-invariant. Then, let $S = \{x\in\mathbb{R}^q\mid \textup{card}(x)\le K, g(x)\le r\}$.
By sign- and permutation-invariance of $S$ and Theorem~\ref{permconvset},
\[
\textup{conv}(S)=\left\{x\in\mathbb{R}^q\,\middle|\ \begin{array}{l}
f(u)\le r,\\
u_1\ge \dots\ge u_K\ge 0,\\
u_{K+1}=\dots=u_n=0,\\
u\ge_{wm} |x|
\end{array}
\right\}.
\]
Therefore, by Theorem~\ref{thm:vectomat},
\[
\textup{conv}(\bar{S})=\left\{M\in\mathcal{M}^{m,n}(\mathbb{R})\,\middle|\ \begin{array}{l}
f(u)\le r,\\
u_1\ge \dots\ge u_K\ge 0,\\
u_{K+1}=\dots=u_n=0,\\
u\ge_{wm} \sigma(M)
\end{array}
\right\}.
\]
By the definition of weak majorization, the convex hull has the following representation:
\begin{equation}
\left\{\begin{array}{l}
f(u)\le r,\\
u_1\ge \dots \ge u_K\ge 0,\\
u_{K+1}=\dots=u_n=0,\\
\sum_{i=1}^j u_i\ge \sum_{i=1}^j \sigma_j(M),\quad j=1,\dots,K.\\
\end{array}\right.
\label{sdrresult_svd}
\end{equation}
The semidefinite-representability of \eqref{sdrresult_svd} follows from Lemma~\ref{lemma:sdprep} and the semidefinite-representability of the level set $\{u\mid f(u)\le r\}$ and the introduced linear inequalities.
\hfill\Halmos
\end{proof}

Although Theorem~\ref{thm:sdprep} is similar to Proposition 4.2.2 in \cite{ben2001}, we discuss next how these results differ. 
First, we introduce a rank constraint and thus treat a nonconvex set.
Second, we discuss the representation of the convex hull rather than the set itself. Third, we do not require monotonicity of $f(x)$ and require semidefinite representability only over $\Delta_+ (=\Delta^q_+)$ instead of $\R^q_+$. We briefly describe why the added assumptions are required in Proposition 4.2.2 in \cite{ben2001}, but not in our result. This is because, when $f(x)$ is not monotone but is quasiconvex over $\Delta_+$, its extension to $\Re^q_+$ defined using $g(x) := f(x_{\Delta_+})$ is not necessarily quasiconvex. As such, the lower-level sets of $g(x)$ are not always semidefinite representable. 
To see this, consider $f(x) = 1-x$, where $x\in \Re$. Then, $\{x\mid g(x)\le 0\}$ is not a convex set because $g(1) = g(-1) = 0$, while $g(0)=1$. 
On the contrary, consider an $f(x)$ that is monotone, permutation-invariant and quasiconvex over $\R^q_+$. Let $C=\{x\mid g(x)\le r\}$. We argue that $C$ is convex and can be expressed as $X=\{x\mid f(u)\le r, u\in \Delta_+, u\ge_{wm} |x|\}$. First, observe that Theorem~\ref{permconvset} shows that $X=\conv(C)\supseteq C$. Now, we argue that $X\subseteq C$. To see this, assume $x\in X$. Then, $x_{\Delta_+}\in X$ because $X$, being $\conv(C)$ inherits the sign and permutation-invariance of $C$. Then, there exist $u$ and $u'$ such that $u\ge_m u'\ge x_{\Delta_+}$ and $f(u)\le r$.
Therefore, we have $g(x) = f(x_{\Delta_+}) \le f(u')\le f(u)\le r$, where the first inequality is from monotonicity of $f$. The second inequality is because $u$ and $u'$ are non-negative, $u'$ is in the convex hull of $u$ and its permutation-variants, and $f$ is quasiconvex and permutation-invariant. The third inequality follows by definition of $u$. Therefore, it follows that $x\in C$.


\begin{corollary}
\label{cor:snorm_sdprep}
Let $\mathcal{S}=\{M\in\mathcal{M}^{m,n}(\mathbb{R})\mid\textup{rank}(M)\le K, \|\sigma(M)\|_{s}\le r\}$ where $\|\cdot\|_s$ is a permutation-invariant monotone norm.
Then, $\conv(\mathcal{S})$ is semidefinite-representable.
In particular, when $\|\sigma(\cdot)\|_s$ is a Ky Fan $p$-norm (the sum of $p$ largest singular values) for some $p=1,\dots,\min\{m,n\}$, the convex hull is semidefinite-representable.
\hfill\Halmos
\end{corollary}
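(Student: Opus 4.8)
The plan is to obtain Corollary~\ref{cor:snorm_sdprep} as an immediate instance of Theorem~\ref{thm:sdprep}. Setting $f := \|\cdot\|_s$ restricted to $\Delta_+$, observe that $f$ is convex (as the restriction of a norm) and therefore quasiconvex on $\Delta_+$; moreover, since $\sigma(M)\in\Delta_+$ for every $M\in\mathcal{M}^{m,n}(\mathbb{R})$, the set $\mathcal{S}$ coincides exactly with the set $\bar{S}=\{M\mid \textup{rank}(M)\le K,\ f(\sigma(M))\le r\}$ appearing in Theorem~\ref{thm:sdprep}. Hence, as soon as the sublevel sets $\{u\in\Delta_+\mid \|u\|_s\le r\}$ are semidefinite-representable, Theorem~\ref{thm:sdprep} yields that $\conv(\mathcal{S})$ is semidefinite-representable.

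Thus the only ingredient that is not automatic is the semidefinite-representability of the sublevel sets of $\|\cdot\|_s$, and I would make this an explicit standing hypothesis of the general statement: it is genuinely needed, since for $K=\min\{m,n\}$ one has $\conv(\mathcal{S})=\{M\mid \|\sigma(M)\|_s\le r\}$ by Corollary~\ref{cor:matconvhull}, which is semidefinite-representable precisely when $\|\cdot\|_s$ is. For the ``in particular'' claim I would specialize to the Ky Fan $p$-norm and supply this ingredient in either of two equivalent ways. Directly, the epigraph of the sum of the $p$ largest singular values of a rectangular matrix is semidefinite-representable by Part~1 of Lemma~\ref{lemma:sdprep}, which covers the sublevel sets of the Ky Fan $p$-norm. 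Alternatively, in the explicit representation \eqref{sdrresult_svd} produced inside the proof of Theorem~\ref{thm:sdprep}, on the region carved out by $u_1\ge\cdots\ge u_K\ge 0$ and $u_{K+1}=\cdots=u_n=0$ the constraint $f(u)\le r$ reduces to the single linear inequality $\sum_{i=1}^{\min\{p,K\}} u_i\le r$, while each constraint $\sum_{i=1}^{j}u_i\ge\sum_{i=1}^{j}\sigma_i(M)$ has semidefinite-representable feasible region because its right-hand side is the $j$-th Ky Fan norm of $M$; intersecting these via Part~4 of Lemma~\ref{lemma:sdprep} and projecting out $u$ gives the claim.

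I do not anticipate a substantive obstacle — the real work was already carried out in Theorem~\ref{thm:sdprep} — but two points warrant care. First, one must confirm that the machinery inside the proof of Theorem~\ref{thm:sdprep} (the passage through $g(x):=f(x_{\Delta_+})$ and the appeals to Theorems~\ref{permconvset} and \ref{thm:vectomat}) applies here: it does, because $g$ is sign- and permutation-invariant by construction, irrespective of whether the norm $\|\cdot\|_s$ is itself sign-invariant. Second, monotonicity of $\|\cdot\|_s$, although assumed in the statement, is not actually invoked in this particular derivation; it is what makes $\|\cdot\|_s$ a symmetric gauge function (so that $M\mapsto\|\sigma(M)\|_s$ is a genuine matrix norm, as used in the necessity remark above) and is relevant to the separate question, discussed after Theorem~\ref{thm:sdprep}, of when such a set is already convex — so the corollary's hypothesis is slightly stronger than what the proof strictly requires.
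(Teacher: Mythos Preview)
Your approach is correct and matches the paper's: the corollary carries no explicit proof in the paper (just \Halmos), so the intended argument is precisely the direct application of Theorem~\ref{thm:sdprep} with $f=\|\cdot\|_s$ restricted to $\Delta_+$, exactly as you outline. Your additional observations --- that semidefinite-representability of the norm ball must be assumed for the general statement, and that monotonicity is not actually used in the derivation --- are well taken and go somewhat beyond what the paper makes explicit.
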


Similarly, we can prove the following result, where $\mS^q_+\in \mathcal{M}^{q\times q}(\mathbb{R})$ is the set of positive semidefinite symmetric matrices. 
\begin{theorem}
\label{thm:sdprepa}
Let $\bar{S}=\bigl\{M\in\mS^q_+\bigm| \textup{rank}(M)\le K, f\bigl(\lambda(M)\bigr)\le r\bigr\}$, where $f:\Delta_+\rightarrow\mathbb{R}$ has semidefinite-representable lower-level sets.
Then, $\textup{conv}(\bar{S})$ is semidefinite-representable.
\end{theorem}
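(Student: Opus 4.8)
The plan is to follow the argument of Theorem~\ref{thm:sdprep}, substituting eigenvalues for singular values, Theorem~\ref{thm:vectomateigen} for Theorem~\ref{thm:vectomat}, and majorization $\ge_m$ for weak majorization $\ge_{wm}$, while adding one device to handle the positive-semidefiniteness requirement. First I would extend $f$ to all of $\R^q$ by $g(x):=f(x_{\Delta_+})$, which is permutation-invariant, and record that a symmetric matrix $M$ is positive semidefinite precisely when $\min_j\lambda_j(M)\ge 0$, that $\card(\lambda(M))=\textup{rank}(M)$ when $M\succeq 0$, and that $g(\lambda(M))=f(\lambda(M))$ in that case. Since $x\mapsto 1-\min_j x_j$, $x\mapsto\card(x)$, and $g$ are all permutation-invariant, $\bar S$ can be written as $\{M\in\mS^q\mid h_i(\lambda(M))\le 1,\ i=1,2,3\}$ for suitable permutation-invariant $h_i$ (for instance $h_1(x)=1-\min_j x_j$, $h_2(x)=\card(x)-K+1$, $h_3(x)=g(x)-r+1$). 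Theorem~\ref{thm:vectomateigen} then yields $\conv(\bar S)=\{M\in\mS^q\mid\lambda(M)\in\conv(S)\}$, where $S=\{x\in\R^q\mid x\ge 0,\ \card(x)\le K,\ g(x)\le r\}$.

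Next I would evaluate $\conv(S)$. The set $S$ is permutation-invariant but not sign-invariant (because of the constraint $x\ge 0$), so Part~1 of Theorem~\ref{permconvset} gives $\conv(S)=\{x\mid u\in\conv(S_0),\ u\ge_m x\}$ with $S_0=S\cap\Delta^q=\{u\mid u_1\ge\cdots\ge u_K\ge 0,\ u_{K+1}=\cdots=u_q=0,\ f(u)\le r\}$. The crucial observation is that $S_0$ is already convex, so that $\conv(S_0)=S_0$: it is the intersection of the convex cone $\Delta_+$, a coordinate subspace, and the lower-level set $\{u\in\Delta_+\mid f(u)\le r\}$, the last being convex because it is semidefinite-representable by hypothesis. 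Substituting back, and writing $u\ge_m\lambda(M)$ as $\sum_{i=1}^j u_i\ge\sum_{i=1}^j\lambda(M)_{[i]}$ for $j=1,\dots,q-1$ together with $\sum_{i=1}^q u_i=\trc(M)$, one obtains
\[
\conv(\bar S)=\left\{M\in\mS^q\,\middle|\,
\begin{array}{l}
f(u)\le r,\quad u_1\ge\cdots\ge u_K\ge 0,\quad u_{K+1}=\cdots=u_q=0,\\[2pt]
\sum_{i=1}^j u_i\ge\sum_{i=1}^j\lambda(M)_{[i]},\ j=1,\dots,q-1,\quad \sum_{i=1}^q u_i=\trc(M)
\end{array}\right\},
\]
where $u$ is an auxiliary vector that is existentially quantified.

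Finally, semidefinite-representability follows constraint by constraint: $\{u\in\Delta_+\mid f(u)\le r\}$ is semidefinite-representable by assumption; the ordering and vanishing constraints on $u$ are linear; each $\sum_{i=1}^j u_i\ge\sum_{i=1}^j\lambda(M)_{[i]}$ is semidefinite-representable because the sum of the $j$ largest eigenvalues of a symmetric matrix has a semidefinite-representable epigraph (Part~2 of Lemma~\ref{lemma:sdprep}); and $\sum_{i=1}^q u_i=\trc(M)$ is linear in $(u,M)$ (or invoke Part~3 of Lemma~\ref{lemma:sdprep}). By Part~4 of Lemma~\ref{lemma:sdprep} the intersection of these sets is semidefinite-representable, and $\conv(\bar S)$, being its projection onto the $M$-coordinates, is semidefinite-representable as well.

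I expect the only genuine subtlety to be the positive-semidefiniteness constraint, since Theorem~\ref{thm:vectomateigen} is phrased over the whole space $\mS^q$ of symmetric matrices rather than over $\mS^q_+$; encoding $M\succeq 0$ through the permutation-invariant function $x\mapsto 1-\min_j x_j$ takes care of this, and it is worth noting that, after $u$ is projected out, $M\succeq 0$ is in fact automatically enforced, because $u\ge_m\lambda(M)$ with $u\ge 0$ forces $\lambda(M)=\Pi u\ge 0$ for some doubly stochastic $\Pi$ by Lemma~\ref{lemma:maj_conv_comb} and Birkhoff's theorem. A second, minor point is that the extension $g$ need not have convex lower-level sets (this is exactly the phenomenon discussed after Theorem~\ref{thm:sdprep}) — but this is harmless, since only the convexity of $S_0$ is used, and $S_0$ depends on $f$ solely through its restriction to $\Delta_+$.
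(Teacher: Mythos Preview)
Your proposal is correct and follows essentially the same route as the paper's proof: reduce from matrices to eigenvalue vectors via Theorem~\ref{thm:vectomateigen}, convexify the resulting permutation-invariant vector set $S$ via Theorem~\ref{permconvset}, observe that $S_0$ is already convex, and then invoke Lemma~\ref{lemma:sdprep} for the majorization constraints on $\lambda(M)$. Your explicit device of encoding $M\succeq 0$ through the permutation-invariant function $1-\min_j x_j$ (so that Theorem~\ref{thm:vectomateigen} applies over all of $\mS^q$) is a point the paper handles only implicitly by restricting $S$ to $\R^q_+$, and your retention of all $q-1$ majorization inequalities rather than only the first $K-1$ is in fact the cleaner way to ensure the description excludes matrices with negative eigenvalues.
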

\begin{proof}{Proof.}
Define $g:\R^q_+\mapsto \R$ so that $g(x) = f(x_\Delta)$. Then, let $S = \{x\in\mathbb{R}^q_+\mid \textup{card}(x)\le K, g(x)\le r\}$.
By permutation-invariance of $S$ and Theorem~\ref{permconvset},
\[
\textup{conv}(S)=\left\{x\in\mathbb{R}^q_+\,\middle|\ \begin{array}{l}
f(u)\le r,\\
u_1\ge \dots\ge u_K\ge 0,\\
u_{K+1}=\dots=u_n=0,\\
u\ge_{m} x
\end{array}
\right\}.
\]
Therefore, by Theorem~\ref{thm:vectomateigen},
\[
\textup{conv}(\bar{S})=\left\{M\in\mS^q\,\middle|\ \begin{array}{l}
f(u)\le r,\\
u_1\ge \dots\ge u_K\ge 0,\\
u_{K+1}=\dots=u_n=0,\\
u\ge_{m} \lambda(M)
\end{array}
\right\}.
\]
By the definition of majorization, the convex hull has the following representation:
\begin{equation}
\left\{\begin{array}{l}
f(u)\le r,\\
u_1\ge \dots \ge u_K\ge 0,\\
u_{K+1}=\dots=u_n=0,\\
\sum_{i=1}^j u_i\ge \sum_{i=1}^j \lambda_j(M),\quad j=1,\dots,K-1,\\
\sum_{i=1}^K u_j = \sum_{i=1}^q \lambda_i(M).
\end{array}\right.
\label{sdrresult_evd}
\end{equation}
The semidefinite-representability of \eqref{sdrresult_evd} follows from Lemma~\ref{lemma:sdprep} and the semidefinite-representability of the level set $\{u\mid f(u)\le r\}$ and the introduced linear inequalities.
\hfill\Halmos
\end{proof}
The ideas in the above proof can be extended, using disjunctive programming techniques, to symmetric matrices, that are not necessarily positive semidefinite. Since the eigenvalues are no longer non-negative, we cannot impose the restriction that $u\ge 0$ and, thus, assume that $u_{K+1},\ldots,u_n=0$. Instead, we express the rank constraint as a union of sets each of which satisfies $u_{a+1}=\cdots = u_{a+n-K} = 0$ for some $a\in \{0,\ldots,K\}$. Then, we obtain $\conv(S)$ as the convex hull of a union of semidefinite representable sets. Using the disjunctive programming argument of Proposition~3.3.5 in \cite{ben2001}, this yields a lifted representation of a set that outer-approximates $\conv(S)$ and is contained in $\cl\conv(S)$.

\section{Convex envelopes of nonlinear functions}\label{sec:nonlinear}

The problem of finding convex envelopes of nonlinear functions is central to the global solution of factorable problems through branch-and-bound.
When the domain over which the envelope is constructed is a polytope $P$, it is often the case that the envelope is completely determined by the values that the function takes on a subset of the faces of $P$, or more generally, a subset of its feasible points.
If the above property holds, disjunctive programming techniques can often be employed to provide an explicit (although typically large) description of the envelope, through the introduction of new variables for each of the important subsets of $P$.
In this section, using Theorem~\ref{permconvset} as a foundation,  
we show that for certain functions defined over permutation-invariant polytopes ($i$) envelopes can be built without recourse to disjunctive programming (Proposition~\ref{prop:schurhypercube}), and ($ii$) polynomially-size disjunctive programming formulations can be constructed even when the number of faces of $P$ important in the construction of the envelope is exponential (Theorem~\ref{res:limitfaceshypercube}).
These results yield compact envelopes descriptions for specific families of functions (Propositions~\ref{prop_env_nonlin1} and \ref{prop_env_nonlin2}).
We also provide numerical evidence that the use of these techniques produces relaxations of multilinear functions over permutation-invariant hypercubes that are significantly stronger than those obtained using a recursive application of McCormick's procedure.   
The techniques can be extended to handle epigraphs of singular values/eigenvalues of matrices using the ideas presented in Sections~\ref{subsec:convexifysingular} and \ref{subsec:sd_rep}.




\begin{definition}
\label{def:shurconcave}
A function $\phi:C\mapsto \R$ is said to be \emph{Schur-concave} on $C$, if for every $x,y\in C$,  $x\ge_m y$ implies that $\phi(x)\le \phi(y)$. \hfill\Halmos
\end{definition}

Various functions have been shown to be Schur-concave including the Shannon entropy $\sum_{i=1}^n x_i \log(\frac{1}{x_i})$ and  elementary symmetric functions $\sum_{J \subseteq \{1,\ldots,n\} : |J|=k}  \prod_{i \in J} x_i$.  Symmetric concave functions are also Schur-concave. More complex Schur-concave functions can be constructed from known Schur-concave functions using some compositions or operations that preserve Schur-concavity; see Chapter 3 of \cite{marshall2010} for further detail.

\def\gphi{\underaccent{\check}{\phi}}
\def\epi{\mathop{\text{epi}}\nolimits}
\def\vertices{\mathop{\text{vert}}}
In this section, for any function $\phi:C\mapsto\R$ we denote 
the convex envelope of $\phi$ over $C$ by $\conv_C(\phi)$. 
A common tool in the construction of convex envelopes is to restrict the domain of the function to a smaller subset. 
We say that a function $\phi:C\mapsto \R$ can be \emph{restricted to $X$}, where $X\subseteq C$, for the purpose of obtaining $\conv_C(\phi)$ if $\conv_C(\phi|_X) = \conv_C(\phi)$, where $\phi|_X(x)$ is defined as $\phi(x)$ for any $x\in X$ and $+\infty$ otherwise. 

First, we establish in Lemma~\ref{lem:restricttonou} that, when deriving the envelope of a Schur-concave function over a permutation-invariant polytope, it is sufficient to restrict attention to those points in the domain that are not majorized by other feasible points. 
When coupled with a simple domain structure, this result permits a description of convexification results without the use of majorization inequalities, in a smaller dimensional space.

\begin{lemma}\label{lem:restricttonou}
Let $\phi:P\mapsto\R$ be a Schur-concave function, where $P\subseteq \R^n$ is a permutation-invariant polytope. 
Let $M:=\{x \in P \mid \not\exists u\in P  \mbox{ with } u\ge_m x \mbox{ and }  u_\Delta\ne x_\Delta\}$. 
Let $S:=\{(x,\phi)\mid \phi(x)\le \phi\le \alpha, x\in P\}$ and 
$X:=\{(x,\phi)\mid \phi(x)\le \phi\le \alpha, x\in M\}$.
 Then, $\conv(S) = \conv(X)$.
\end{lemma}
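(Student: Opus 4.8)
The plan is to show two inclusions: $\conv(X)\subseteq\conv(S)$ is trivial since $X\subseteq S$, so the real content is $\conv(S)\subseteq\conv(X)$. For this it suffices to show that $S\subseteq\conv(X)$, because $\conv(X)$ is convex. So I would fix an arbitrary $(x,\phi)\in S$, i.e.\ $x\in P$ and $\phi(x)\le\phi\le\alpha$, and exhibit $(x,\phi)$ as a convex combination of points of $X$.

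First I would handle the $x$-coordinate. Since $P$ is a permutation-invariant polytope, the set $\{u\in P\mid u\ge_m x\}$ is nonempty (it contains $x_\Delta$, which lies in $P$ by permutation-invariance) and compact. I would pick $u^\ast$ to be a point in this set whose partial sums $\sum_{i=1}^j u_{[i]}$ are lexicographically maximal — equivalently, a point that is majorization-maximal in $P$ among points majorizing $x$; such a point exists by compactness and the fact that the partial-sum functionals are continuous. By construction $u^\ast\in M$: there is no $u\in P$ with $u\ge_m u^\ast$ and $u_\Delta\ne u^\ast_\Delta$, since such a $u$ would also majorize $x$ and beat $u^\ast$ lexicographically. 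Also $u^\ast\ge_m x$. By Lemma~\ref{lemma:maj_conv_comb}, $x=\sum_i\lambda_i P_i u^\ast$ for permutation matrices $P_i$ and convex multipliers $\lambda_i$. Since $P$ is permutation-invariant, each $P_i u^\ast\in P$; since $\phi$ is Schur-concave and $u^\ast\ge_m P_iu^\ast$ trivially (they are permutations of each other, so each majorizes the other), $\phi(P_iu^\ast)=\phi(u^\ast)$ — indeed $\phi$ is permutation-invariant as a Schur-concave function. Moreover $P_iu^\ast\in M$ because $M$ is permutation-invariant (its defining condition depends only on $x_\Delta$).

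Next I would handle the $\phi$-coordinate. Because $u^\ast\ge_m x$ and $\phi$ is Schur-concave, $\phi(u^\ast)\le\phi(x)\le\phi\le\alpha$. Hence $(u^\ast,\phi)\in X$ and also $(u^\ast,\phi(u^\ast))\in X$; in fact, since $\phi(P_iu^\ast)=\phi(u^\ast)\le\phi$, each pair $(P_iu^\ast,\phi)$ satisfies $\phi(P_iu^\ast)\le\phi\le\alpha$ with $P_iu^\ast\in M$, so $(P_iu^\ast,\phi)\in X$. Therefore
\[
(x,\phi)=\sum_i\lambda_i\,(P_iu^\ast,\phi)\in\conv(X),
\]
which establishes $S\subseteq\conv(X)$ and hence $\conv(S)\subseteq\conv(X)$. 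Combined with the trivial reverse inclusion, $\conv(S)=\conv(X)$.

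The main obstacle I anticipate is the existence and correct choice of the majorization-maximal point $u^\ast\in M$ with $u^\ast\ge_m x$: one must argue carefully that a lexicographically-maximal-partial-sums element of the compact set $\{u\in P\mid u\ge_m x\}$ exists and that it genuinely lands in $M$ (the subtlety being that $M$'s condition rules out \emph{any} $u\in P$ majorizing it with a different sorted vector, not just ones majorizing $x$ — but any such $u$ would transitively majorize $x$, so this is fine). A secondary point to verify cleanly is that $M$ inherits permutation-invariance and that $\phi$, being Schur-concave, is constant on permutation orbits, both of which are immediate from the definitions. Everything else is bookkeeping with convex combinations.
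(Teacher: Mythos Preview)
Your proof is correct and follows essentially the same architecture as the paper's: show $S\subseteq\conv(X)$ by producing, for each $x\in P$, a majorization-maximal $u^\ast\in M$ with $u^\ast\ge_m x$, then express $(x,\phi)$ as a convex combination of the points $(P_iu^\ast,\phi)\in X$. The only difference is the device used to select $u^\ast$: the paper maximizes the strictly convex, permutation-invariant function $\|u-x''\|$ (with $x''$ the centroid of $x$) over $\{u\in P:u\ge_m x\}$ and uses strict convexity to force $u^\ast\in M$, whereas you take a lexicographic maximum of the partial sums $\bigl(\sum_{i\le j}u_{[i]}\bigr)_j$ and argue directly via transitivity of majorization---both arguments are short and equally valid.
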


\proof{Proof.}
Since $M\subseteq P$ it follows that $X\subseteq S$ and, therefore, $\conv(X)\subseteq \conv(S)$. Now, consider $(x',\phi')\in S\backslash X$. 
Therefore, $\phi(x')\le \phi'\le \alpha$ and $x'\in P\backslash M$. 
Let $x''_i = \frac{1}{n}\sum_{i'=1}^n x'_{i'}$ for all $i\in \{1,\ldots,n\}$. That is, all components of $x''$ are identical.
Let $u':=\arg\max\bigl\{\|u-x''\|\mid u\ge_m x', u\in P\bigr\}$. 
The maximum is achieved because the feasible set is compact and because the objective is upper-semicontinuous. 
Assume by contradiction that there exists $y'\in P$ such that $y'\ge_m u'$ and $y'_{\Delta}\ne u'_{\Delta}$. 
Since $u'$ can be written as a convex combination of at least two permutations of $y'$ and the objective of the problem defining $u'$ is permutation-invariant and strictly convex, it follows that $\|y'-x''\| > \|u'-x''\|$ violating the optimality of $u'$. 
Therefore, there does not exist $y'\in P$ such that $y'\ge_m u'$
and $y'_{\Delta}\ne u'_{\Delta}$. 
In other words, $u'\in M$.
It follows that, for any permutation matrix $Q\in\mathcal{P}_n$, $Qu'\in M$. 
Since $\phi$ is Schur-concave, then $\phi(Qu')=\phi(u')\le \phi(x')\le \phi'\le \alpha$. 
Therefore, $(Qu',\phi')\in X$. 
Finally, since $x'\le_m u'$, $x'$ can be written as a convex combination of permutations of $u'$. 
Therefore, $(x',\phi')\in\conv(X)$.
We conclude that $S\subseteq \conv(X)$.
\hfill\Halmos
\endproof


Lemma~\ref{lem:restricttonou} requires the identification of the set $M$, which consists of points in $P$ which are not expressible as a convex combination of another point in $P$ and its permutations. In Lemma~\ref{lem:nou}, we characterize such points as those which have a supporting hyperplane of a specific form. Later, we discuss how these results can be combined to obtain, in closed-form, convex envelopes of various Schur-concave functions. 

\begin{lemma}\label{lem:nou}
Let $x'\in P$, where $P$ is a permutation-invariant polytope. Let $\pi$ be a permutation of $\{1,\ldots,n\}$ such that for each $i\in \{1,\ldots,n-1\}$, $x'_{\pi(i)} \ge x'_{\pi(i+1)}$. Then, there exists $u' \in P$ with $u'\ge_m x'$ and $u'_{\Delta} \neq x'_{\Delta}$ if and only if there does not exist $a\in \R^n$ such that $a_{\pi(i)} > a_{\pi(i+1)}$ for all $i\in\{1,\ldots,n-1\}$ and $\sum_{i=1}^n a_i(x_i-x'_i) \le 0$ is valid for $P$. 
\end{lemma}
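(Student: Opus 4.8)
The plan is to recast both sides of the equivalence in terms of cones. Write $O_\pi := \{a\in\R^n\mid a_{\pi(i)} > a_{\pi(i+1)}\text{ for }i=1,\dots,n-1\}$, a nonempty open convex cone, and note that ``$\sum_i a_i(x_i-x_i')\le 0$ is valid for $P$'' means precisely that $a$ lies in the normal cone $N_P(x'):=\{a\mid \langle a,x-x'\rangle\le 0\text{ for all }x\in P\}$. So the lemma asserts that $N_P(x')\cap O_\pi=\emptyset$ if and only if there is $u'\in P$ with $u'\ge_m x'$ and $u'_\Delta\ne x'_\Delta$ (i.e. $x'\notin M$ in the notation of Lemma~\ref{lem:restricttonou}). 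Throughout I would use that $\pi$ co-sorts $a$ and $x'$: since $a_{\pi(1)}>\dots>a_{\pi(n)}$ we have $a_{[i]}=a_{\pi(i)}$ and likewise $x'_{[i]}=x'_{\pi(i)}$, so $\langle a,x'\rangle=\sum_i a_{[i]}x'_{[i]}$.

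For the forward implication I would prove its contrapositive: if some $a\in N_P(x')\cap O_\pi$ exists, then every $u'\in P$ with $u'\ge_m x'$ has $u'_\Delta=x'_\Delta$, so there is no forbidden $u'$. Given such a $u'$, pick the permutation $Q$ that sorts $u'$ along $\pi$; permutation-invariance of $P$ gives $Qu'\in P$, hence $\sum_i a_{[i]}u'_{[i]}=\langle a,Qu'\rangle\le\langle a,x'\rangle=\sum_i a_{[i]}x'_{[i]}$. On the other hand, Abel summation gives $\sum_i a_{[i]}(u'_{[i]}-x'_{[i]})=\sum_{j=1}^{n-1}(a_{[j]}-a_{[j+1]})\bigl(\sum_{i=1}^j(u'_{[i]}-x'_{[i]})\bigr)$, a sum of nonnegative terms because $u'\ge_m x'$ makes each partial-sum difference $\ge 0$ and $a\in O_\pi$ makes each coefficient $a_{[j]}-a_{[j+1]}>0$; it vanishes only when all partial-sum differences vanish, that is, when $u'_\Delta=x'_\Delta$. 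The two bounds pinch this quantity to $0$, forcing $u'_\Delta=x'_\Delta$.

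For the converse I would argue by polarity and separation. Since $P$ is a polytope, $N_P(x')$ is a closed convex cone whose polar (taken in the $\langle\cdot,\cdot\rangle\le 0$ sense) is the finitely generated cone $\textup{cone}(P-x')$. On the other side, $\overline{O_\pi}=\{a\mid a_{\pi(i)}\ge a_{\pi(i+1)}\}$ is generated by $\pm\mathbf{1}$ together with the partial-sum vectors $e_{\pi(1)}+\dots+e_{\pi(j)}$, $j=1,\dots,n-1$, so its polar is $C_\pi:=\{d\mid \sum_{i=1}^j d_{\pi(i)}\ge 0\text{ for }j<n,\ \sum_i d_i=0\}$, i.e. the set of $d$ whose $\pi$-reordering lies in the majorization cone $\textup{cone}\{e_i-e_{i+1}\}$. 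Assuming $N_P(x')\cap O_\pi=\emptyset$, separating the closed cone $N_P(x')$ from the open cone $O_\pi$ and using that both are cones yields a nonzero $v\in\textup{cone}(P-x')\cap C_\pi$. Writing $v=\lambda(u'-x')$ with $\lambda>0$, $u'\in P$, $u'\ne x'$, the membership $u'-x'\in C_\pi$ gives $\sum_{i=1}^j u'_{\pi(i)}\ge\sum_{i=1}^j x'_{\pi(i)}=\sum_{i=1}^j x'_{[i]}$ for $j<n$ and $\sum_i u'_i=\sum_i x'_i$; since $\sum_{i=1}^j u'_{[i]}\ge\sum_{i=1}^j u'_{\pi(i)}$ always, this gives $u'\ge_m x'$. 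Finally, if $u'$ were merely a permutation of $x'$ then $\sum_{i=1}^j u'_{\pi(i)}\le\sum_{i=1}^j u'_{[i]}=\sum_{i=1}^j x'_{[i]}$ for every $j$, which with the reverse inequality forces $u'_{\pi(j)}=x'_{\pi(j)}$ for all $j$, hence $u'=x'$, contradicting $u'\ne x'$; so $u'_\Delta\ne x'_\Delta$, producing the required point.

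The routine pieces are the Abel-summation estimate and the dictionary among validity, normal cones, and tangent cones. The step I expect to require the most care is the converse: computing the polar of $\overline{O_\pi}$ correctly (equivalently, identifying the extreme rays of the ``$\pi$-ordered'' cone as $\pm\mathbf{1}$ together with the partial-sum vectors), and then verifying the final subtlety that the point $u'$ extracted from the separation is genuinely not a permutation of $x'$ — without that last observation the separation only delivers a point of $P$ majorizing $x'$, which alone does not certify $x'\notin M$.
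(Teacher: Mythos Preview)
Your proof is correct and takes a genuinely different route from the paper's. For the ``easy'' direction, the paper invokes the pinching lemma (Lemma~2.B.1 in \cite{marshall2010}) to produce a single two-coordinate transfer $y'$ and derive a contradiction from the valid inequality at $y'$; your Abel-summation argument is more self-contained and in fact sharper, since it forces all partial-sum differences to vanish simultaneously. For the ``hard'' direction, the paper constructs the polyhedron $C=P-K-\{x'\}$ (where $K$ is exactly your cone $C_\pi$), takes the minimal face of $C$ at the origin, and verifies by hand that its defining normal $a'$ has strict decrease along $\pi$ via a perturbation argument; your approach packages the same geometry through the duality $(N_P(x'))^\circ=\mathrm{cone}(P-x')$ and a single application of the separating-hyperplane theorem between $N_P(x')$ and the open cone $O_\pi$. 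The paper's route is more constructive (it names the supporting inequality explicitly as a minimal-face normal), while yours is shorter and highlights the normal-cone/tangent-cone picture. One terminological quibble: under the $\langle\cdot,\cdot\rangle\le 0$ convention the polar of $\overline{O_\pi}$ has $\sum_{i\le j}d_{\pi(i)}\le 0$, not $\ge 0$; your $C_\pi$ is really $-(\overline{O_\pi})^\circ$, i.e.\ the dual cone in the $\ge 0$ sense. This does not affect the argument, since the separation delivers $v$ with $\langle v,\cdot\rangle\le 0$ on $N_P(x')$ and $\langle v,\cdot\rangle\ge 0$ on $\overline{O_\pi}$, which is precisely $v\in\mathrm{cone}(P-x')\cap C_\pi$ as you use it.
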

\begin{proof}{Proof.}
We first show that if such $a$ exists, there does not exist $u'\in P$ such that $u'\ge_m x'$ and $u'_{\Delta} \neq x'_{\Delta}$. 
Assume by contradiction that such a $u'$ exists.
Because $P$ is permutation-invariant we may assume that $u'_{\pi(i)} \ge u'_{\pi(i+1)}$ for all $i\in \{1,\ldots,n-1\}$ by sorting $u'$ if necessary. 
Since $u'\ge_m x'$ and $u'\ne x'$, there exists $y'$, $\theta>0$, $k\in \{1,\ldots,n-1\}$, and $r\in\{1,\ldots,n-k\}$ such that $u'\ge_m y'\ge_m x'$, $y'_{\pi(k)} = x'_{\pi(k)} + \theta$, $y'_{\pi(k+r)} = x'_{\pi(k+r)} - \theta$ and $y'_i = x'_i$ otherwise; see Lemma 2.B.1 in \cite{marshall2010}.
Since $u'\in P$, $P$ is convex and permutation-invariant, and since $y'$ can be written as a convex combination of $u'$ and its permutations, it is clear that $y'\in P$. 
Therefore, $\sum_{i=1}^n a_i(y'_i - x'_i) \le 0$ or $a_{\pi(k)} - a_{\pi(k+r)} \le 0$. 
This is a contradiction to the assumed ordering of $a$.

Now, we show that if there does not exist $u'\in P$ such that $u'\ge_m x'$ and $u'_{\Delta} \neq x'_{\Delta}$, we can construct such a vector $a$. 
Define a polyhedral cone $K:= \{ \sum_{i=1}^{n-1}\alpha_{\pi(i)} \bigl(e_{\pi(i)} - e_{\pi(i+1)}\bigr)\mid \alpha\ge 0\}$ where $e_j$ represents the $j$th standard basis. Observe that $\nu\in K$  (equivalently, $\nu\ge_K 0$) implies that $\nu\ge_m 0$. More specifically, given $\nu\in \Re^n$, let $S^\pi_k[\nu]$ be the partial sums so that $S^\pi_k[\nu] = \sum_{i=1}^k \nu_{\pi(i)}$. Then, $\nu\in K$ if and only if $S^\pi_k[\nu]$ is non-negative for every $k$ and $S^\pi_n[\nu] = 0$. 
To verify that the latter set is contained in $K$, write any $\nu$ in that set as $\nu  = \sum_{k=1}^{n-1} S^\pi_k[\nu] \bigl(e_{\pi(k)} - e_{\pi(k+1)}\bigr)$ to see that $\nu\in K$. To show the reverse inclusion, verify that $S^\pi_k\bigl[e_{\pi(i)} - e_{\pi(i+1)}\bigr]\ge 0$, where the last inequality is satisfied at equality.
Since $\sum_{i=1}^k \nu_{[i]}\ge S^\pi_k[\nu]$ for all $k$ and $\sum_{i=1}^n \nu_{[i]}= S^\pi_n[\nu]$, $\nu\in K$ implies that $\nu\ge_m 0$.
We next construct the polyhedron $C:=P-K-\{x'\}$, where the difference is the Minkowski difference. 
Since $x'\in P\cap (K+\{x'\})$, it follows that $0\in C$. 
Let $\langle a',x\rangle \le 0$ define the minimal (possibly trivial) face $F$ of $C$ containing $0$. 
We show next that $a$ can be chosen to be $a'$. 
First, note that for $x\in P$, $x-x'\in C$. Therefore, as claimed, $\langle a', x-x'\rangle \le 0$.
Since $e_{\pi(i+1)} - e_{\pi(i)}\in C$ for all $i\in \{1,\ldots,n-1\}$, we have that 
$a'_{\pi(i+1)} - a'_{\pi(i)} \le 0$. 
We now show that the inequalities are strict otherwise there exists a point in $w\in C$ so that $w\ge_K 0$ such that there is a $k$ for which $S^\pi_k[w] > 0$. This suffices because, if there exists $w=u-v\in C\cap K$, where  $u\in P$ and $v-x'\in K$. Then, we have $u\ge_K v\ge_K x'$,
which in turn proves the existence of $u\in P$ that majorizes $x'$. 
Furthermore, by showing $S^\pi_k[w] > 0$ for some $k$, we have $u_\Delta\ne x'_\Delta$, which contradicts our assumption that such a $u$ does not exist.
Assume that the inequality is not strict for some $k\in \{1,\ldots,n-1\}$ so that $\langle a', e_{\pi(k+1)} - e_{\pi(k)}\rangle = 0$. 
Then, for $\epsilon > 0$, we define $w_\epsilon=\epsilon \bigl(e_{\pi(k)} - e_{\pi(k+1)}\bigr)\in K$ and show that, for a sufficiently small $\epsilon$, $w_\epsilon\in C$. Since $-w_\epsilon\in F$ and $0$ is in the relative interior of $F$, it follows that there exists an $\epsilon$ such that $w_\epsilon\in C$. Moreover, $S^\pi_k[w_\epsilon] = \epsilon > 0$.
As we argued above, the existence of such a $w_\epsilon$ contradicts our assumption and, so, we conclude that $a'_{\pi(i+1)} - a'_{\pi(i)} < 0$ for all $i\in\{1,\ldots,n-1\}$.
\hfill\Halmos
\end{proof}

Lemmas~\ref{lem:restricttonou} and \ref{lem:nou} can be combined to develop convex envelopes of Schur-concave functions. This is because, taken together, they prove that it suffices to restrict attention to a subset $M$ of $P$ in order to construct the convex envelope. To better understand the structure of $M$ and to illustrate potential applications, we derive in Proposition~\ref{prop:schurhypercube}  the closed-form description of the convex envelope of a Schur-concave function over $[a,b]^n$. In this case, $M$ is contained in the one-dimensional faces of $[a,b]^n$, the key insight that allows for the derivation of the closed-form. Although we provide a self-contained argument for this fact in the proof of Proposition~\ref{prop:schurhypercube}, this inclusion can be seen as a special case of Lemma~\ref{lem:nou}, a visualization which serves to illustrate the use of Lemma~\ref{lem:nou} in characterizing $M$. To see this special case, observe that $x'\in M\cap \Delta^n$ only if there is an inequality $\langle\beta,x-x'\rangle\le 0$ valid for $[a,b]^n$ such that $\beta_1 >\cdots > \beta_n$. Since such an inequality is tight at $x'$, it can be derived as a conic combination of facet-defining inequalities tight at $x'$. The facet-defining inequalities of $[a,b]^n$ are $-x_i\le -a$ and $x_i\le b$ for $i=1,\ldots,n$. Let $F(x')$ be the set of facet-defining inequalities tight at $x'$, and for any facet-defining inequality in this set, say $\langle \alpha,x-x'\rangle\le 0$, let $L_\alpha =\{(t,t+1)\mid \alpha_t > \alpha_{t+1}\}$. It is easy to see that, for $x'\in [a,b]^n$, $|L_\alpha| \le 1$ for each $\alpha\in F(x')$, \ie{} each $L_\alpha$ contains at most one pair. Then, for $\beta$ to be derived as a conic combination of inequalities in $F(x')$, it must be that $\{(i,i+1)\}_{i=1}^{n-1}\subseteq \bigcup_{\alpha \in F(x')} L_\alpha$. Since $|L_\alpha| \le 1$, it follows that $|F(x')| \ge n-1$. Therefore, $M$ is a subset of one-dimensional faces of the hypercube. More generally, a similar argument shows that if there exists a $k\in \{1,\ldots,n-1\}$ such that $|L_\alpha|\le k$, then $|F(x')| \ge \bigl\lceil\frac{n-1}{k}\bigr\rceil$, and, consequently, $M$ is a subset of $n-\bigl\lceil\frac{n-1}{k}\bigr\rceil$ faces of $P$.

\begin{proposition}\label{prop:schurhypercube}
Consider a  function $\phi(x):\R^n\mapsto\R$ that is Schur-concave over $[a,b]^n$ and let $S^\alpha:=\{(x,\phi)\mid \phi(x)\le\phi\le \alpha, x\in [a,b]^n\}$. 
For any $x\in [a,b]^n$, define $S(x) = \sum_{i=1}
^n (x_i-a)$. 
For any $s\in [0,n(b-a)]$, let $i^{s}=\max\bigl\{i\mid i(b-a) < s\bigr\}$ 
and
\begin{equation}\label{eq:defnux}
  u^{s}_i = \begin{cases}
b & \text{if } i \le i^s\\
a + s - (b-a)i^s & \text{if } i = i^s + 1\\
a & \text{otherwise.}
\end{cases}
\end{equation}
Let 
$\Theta^\alpha := \bigl\{(x,\phi)\mid \phi\bigl(u^{S(x)}\bigr)\le \phi\le \alpha, x\in [a,b]^n\bigr\}$.
Then, 
$\conv\bigl(S^\alpha\bigr) = \conv(\Theta^\alpha)$. Moreover, if $\phi$ is  component-wise convex then
$\Theta^\alpha$ is convex.
\end{proposition}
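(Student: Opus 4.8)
The plan is to prove the identity $\conv(S^\alpha)=\conv(\Theta^\alpha)$ via the sandwich $S^\alpha\subseteq\Theta^\alpha\subseteq\conv(S^\alpha)$, and then to treat the ``moreover'' clause separately. The structural fact I would establish first, and on which everything rests, is that \emph{for every $x\in[a,b]^n$ the vector $u^{S(x)}$ majorizes $x$}. This is a short computation: $u^{S(x)}$ has coordinate sum $na+S(x)=\sum_i x_i$, its entries are already sorted nonincreasingly, and its $k$-th partial sum equals $\min\{kb,\,ka+S(x)\}$ (from \eqref{eq:defnux}, since the first $i^{S(x)}$ entries are $b$ and all entries are at least $a$), which dominates $\sum_{i=1}^k x_{[i]}$ because each coordinate of $x$ lies in $[a,b]$. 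In other words, $u^{S(x)}$ is the majorization-maximal element of $[a,b]^n$ having coordinate sum $\sum_i x_i$. I would also record that a Schur-concave function is permutation-invariant (a permutation and its argument majorize each other, so Definition~\ref{def:shurconcave} forces equality of the values).

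With $u^{S(x)}\ge_m x$ in hand, $S^\alpha\subseteq\Theta^\alpha$ is immediate: if $(x,\phi)\in S^\alpha$ then $\phi\ge\phi(x)\ge\phi(u^{S(x)})$, where the second inequality is Schur-concavity applied to $u^{S(x)}\ge_m x$; together with $\phi\le\alpha$ and $x\in[a,b]^n$ this gives $(x,\phi)\in\Theta^\alpha$, hence $\conv(S^\alpha)\subseteq\conv(\Theta^\alpha)$. For the reverse it suffices, since $\conv(S^\alpha)$ is convex, to show $\Theta^\alpha\subseteq\conv(S^\alpha)$. Given $(x,\phi)\in\Theta^\alpha$, Lemma~\ref{lemma:maj_conv_comb} applied to $u^{S(x)}\ge_m x$ writes $x=\sum_k\lambda_k P_k u^{S(x)}$ as a convex combination of permutations of $u^{S(x)}$. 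Each $P_k u^{S(x)}$ lies in $[a,b]^n$ (the box is permutation-invariant) and satisfies $\phi(P_k u^{S(x)})=\phi(u^{S(x)})\le\phi\le\alpha$ by permutation-invariance of $\phi$, so $(P_k u^{S(x)},\phi)\in S^\alpha$ and therefore $(x,\phi)=\sum_k\lambda_k(P_k u^{S(x)},\phi)\in\conv(S^\alpha)$. This proves the equality. (Equivalently, one could identify the set $M$ of Lemma~\ref{lem:restricttonou} with the permutations of the vectors $u^s$, $s\in[0,n(b-a)]$, using Lemma~\ref{lem:nou} and the fact that $M$ lies on the one-dimensional faces of $[a,b]^n$; but the direct argument above is self-contained.)

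For the ``moreover'' clause, note $S(x)=\sum_i(x_i-a)$ is affine and $\Theta^\alpha=\{(x,\phi)\mid x\in[a,b]^n\}\cap\{(x,\phi)\mid \phi(u^{S(x)})\le\phi\}\cap\{(x,\phi)\mid\phi\le\alpha\}$, so convexity of $\Theta^\alpha$ reduces to convexity of $x\mapsto\phi(u^{S(x)})$ on $[a,b]^n$, and since $S$ is affine, to convexity of the one-variable function $s\mapsto\phi(u^s)$ on $[0,n(b-a)]$. On each interval $[i(b-a),(i+1)(b-a)]$ the vector $u^s$ moves along a line in its $(i+1)$-st coordinate only, so $\phi(u^s)$ is convex there by component-wise convexity of $\phi$. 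The remaining issue is the breakpoints $s=i(b-a)$, where $p:=u^{i(b-a)}$ has its first $i$ coordinates equal to $b$ and the rest equal to $a$: the left derivative of $s\mapsto\phi(u^s)$ there equals the left partial derivative of $\phi$ in coordinate $i$ at $p$, the right derivative equals the right partial derivative in coordinate $i+1$ at $p$, and since $p_i=b\ge a=p_{i+1}$, Schur-concavity forces the former not to exceed the latter. This breakpoint inequality is the main obstacle: in the differentiable case it is exactly the Schur--Ostrowski condition $(p_i-p_{i+1})\bigl(\partial_i\phi(p)-\partial_{i+1}\phi(p)\bigr)\le 0$, and in general I would derive it by combining the one-sided-derivative bounds coming from component-wise convexity of $\phi$ in coordinates $i$ and $i+1$ with the inequality $\phi\bigl(p+t(e_{i+1}-e_i)\bigr)\ge\phi(p)$, which holds for small $t>0$ because $p\ge_m p+t(e_{i+1}-e_i)$, and letting $t\downarrow 0$. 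Once this is in place, $s\mapsto\phi(u^s)$ is piecewise convex with nondecreasing one-sided slopes across breakpoints, hence convex, completing the proof.
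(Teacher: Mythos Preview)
Your proposal is correct and follows essentially the same architecture as the paper: establish $u^{S(x)}\ge_m x$, deduce $S^\alpha\subseteq\Theta^\alpha$ from Schur-concavity, get $\Theta^\alpha\subseteq\conv(S^\alpha)$ by expressing $x$ as a convex combination of permutations of $u^{S(x)}$, and reduce convexity of $\Theta^\alpha$ to convexity of the one-variable map $s\mapsto\phi(u^s)$, handled piecewise with a breakpoint check.

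The one place where your route differs from the paper is the breakpoint argument. You invoke the $T$-transform majorization $p\ge_m p+t(e_{i+1}-e_i)$ and plan to extract $\partial_i^-\phi(p)\le\partial_{i+1}^+\phi(p)$ from $\phi\bigl(p+t(e_{i+1}-e_i)\bigr)\ge\phi(p)$ combined with one-sided bounds from component-wise convexity. This is correct in the differentiable case (Schur--Ostrowski), but in the general case your sketch is a bit thin: the inequality involves a diagonal perturbation, and component-wise convexity only gives subgradient-type \emph{lower} bounds along coordinate directions, so turning this into the desired one-sided derivative comparison needs more care than you indicate. The paper sidesteps this by using instead the majorization $p+\epsilon e_i\ge_m p+\epsilon e_{i+1}$, which yields $\phi(p+\epsilon e_i)\le\phi(p+\epsilon e_{i+1})=\varphi(\bar s+\epsilon)$ directly; then a single application of component-wise convexity in coordinate $i$ (left derivative $\le$ right derivative) completes the chain
\[
\lim_{\epsilon\downarrow 0}\frac{\phi(p)-\phi(p-\epsilon e_i)}{\epsilon}\;\le\;\lim_{\epsilon\downarrow 0}\frac{\phi(p+\epsilon e_i)-\phi(p)}{\epsilon}\;\le\;\lim_{\epsilon\downarrow 0}\frac{\varphi(\bar s+\epsilon)-\varphi(\bar s)}{\epsilon}.
\]
This decoupled comparison is cleaner. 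On the other hand, your perturbation $p+t(e_{i+1}-e_i)$ has the virtue of staying inside $[a,b]^n$, whereas the paper's point $p+\epsilon e_i$ has $i$th coordinate $b+\epsilon$ and thus leaves the box, so the paper is tacitly using Schur-concavity on a slightly larger domain.
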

\proof{Proof.}
We first show that $u^{S(x')}\ge_m x'$ for each $x' \in [a,b]^n$.
This follows because $u^{S(x')}$ simultaneously maximizes the continuous knapsack problems $\max\bigl\{\sum_{i=1}^j x_i\mid \sum_{i=1}^n x_i = S(x') + na, x\in [a,b]^n\bigr\}$ for all $j$ because the ratio of objective and knapsack coefficient of $x_i$ reduces with increasing $i$, and $x'$ 
is a feasible solution to these knapsack problems.

We now show that $S^\alpha\subseteq \Theta^\alpha$. Let $(x',\phi')\in S^\alpha$. Therefore, $\phi(u^{S(x')})\le \phi(x')\le \phi\le \alpha$, where the first inequality follows from Schur-concavity of $\phi$ and $u^{S(x')}\ge_m x'$, and the remaining inequalities follow because $(x',\phi')$ is feasible to $S^{\alpha}$. Therefore, $(x',\phi')\in \Theta^\alpha$.

Now, we show that $\Theta^\alpha\subseteq \conv\bigl(S^\alpha\bigr)$. Let $(x',\phi')\in \Theta^\alpha$. Since $u^{S(x')}\in [a,b]^n$ and $\phi(u^{S(x')})\le \phi'\le \alpha$, we conclude that $(u^{S(x')},\phi')\in S^\alpha$. 
Then it follows that $(x',\phi')\in \conv\bigl(S^\alpha\bigr)$ since $u^{S(x')}\ge_m x'$ implies that $x'$ can be written as a convex combination of permutations of $u^{S(x')}$ and $S^\alpha$ is permutation-invariant in $x$. 

\def\x{\bar{x}}
\def\phibar{\bar{\phi}}
\def\s{\bar{s}}
\def\i{k}
To show that $\Theta^{\alpha}$ is convex when $\phi$ is component-wise convex, we write $\Theta^\alpha$ as $\proj_{(x,\phi)} \Xi^\alpha$, where $\Xi^\alpha = \bigl\{(x,s,\phi)\mid \varphi(s)\le \phi\le \alpha, x\in [a,b]^n, s=\sum_{i=1}^n (x_i-a)\bigr\}$ and $\varphi(s)=\phi(u^s)$. 
The result follows if $\varphi(s)$ is convex over $[0,n(b-a)]$ since $\Theta^\alpha$ is expressed as the projection of the convex set $\Xi^\alpha$. 
First, observe that, for $s\in \bigl(i(b-a), (i+1)(b-a)\bigr)$, the convexity of $\varphi(s)$ follows from the assumed convexity of $\phi(u^s)$ since, in this interval, $u^s$ varies only along the $i^{\text{th}}$ coordinate. 
Now, choose $\i\in \{0,\ldots,n-1\}$ and let $\s=\i(b-a)$. 
To prove the result, it suffices to verify that the left derivative of $\varphi(s)$ at $\s$ is no more than the corresponding right derivative. 
For any $\epsilon > 0$, observe that $u^{\s}+\epsilon e_{\i} \ge_m u^{\s}+\epsilon e_{\i+1}$. This follows because $(1-\lambda)(u^{\s}_k+\epsilon,u^{\s}_{k+1}) + \lambda(u^{\s}_{k+1},u^{\s}_{k}+\epsilon) = (u^{\s}_k,u^{\s}_{k+1}+\epsilon)$, where $0 < \lambda=\frac{\epsilon}{u^{\s}_k-u^{\s}_{k+1}+\epsilon}\le 1$ showing that $u^{\s}+\epsilon e_{\i+1}$ can be expressed as a convex combination of 
$u^{\s}+\epsilon e_{\i}$ and its permutations.
Since $\phi(\cdot)$ is Schur-concave, it follows that $\phi(u^{\s}+\epsilon e_{\i}) \le \phi(u^{\s} + \epsilon e_{\i+1}) = \varphi(\s+\epsilon)$.
Then, the following chain of inequalities follows
\begin{equation*}
  \lim_{\epsilon \downarrow 0}\frac{\varphi(\s) - \varphi(\s-\epsilon)}{\epsilon}= \lim_{\epsilon \downarrow 0}\frac{\phi(u^{\s})-\phi(u^{\s}-\epsilon e_{\i})}{\epsilon} \le \lim_{\epsilon \downarrow 0}\frac{\phi(u^{\s}+\epsilon e_{\i})-\phi(u^{\s})}{\epsilon} \le \lim_{\epsilon\downarrow 0} \frac{\varphi(\s+\epsilon) - \varphi(\s)}{\epsilon},
\end{equation*}
where the first equality is by the definition of $\varphi(\cdot)$ and $u^{\s}$, the first inequality is from the assumed convexity of $\phi(\cdot)$ when the argument is perturbated only along the ${\i}^{\text{th}}$ coordinate, and the second inequality is because $\phi(u^{\s}+\epsilon e_{\i}) \le \varphi(\s+\epsilon)$ and $\phi(u^{\s})=\varphi(\s)$.
\hfill\Halmos
\endproof

In essence, Proposition~\ref{prop:schurhypercube} shows that we can reduce our attention to the edges of the hypercube belonging to $\Delta^n$ in our construction of $\conv(S^\alpha)$;  
see Figure~\ref{fig:setofmajpoints} for an illustration when $a=2$ and $b=5$.
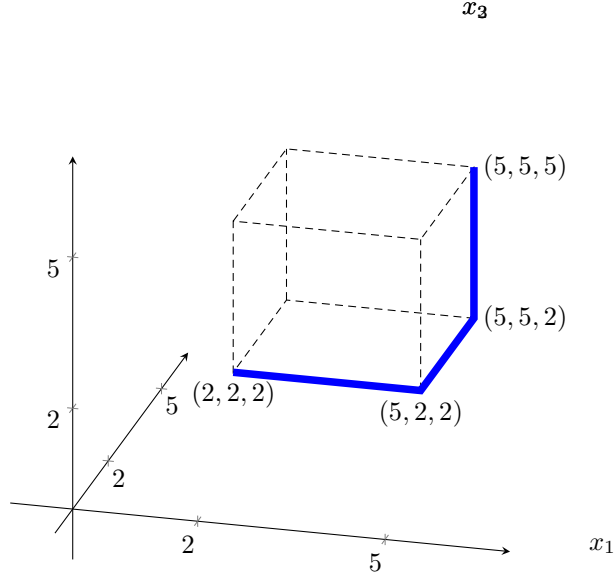
\begin{figure}
\centering
\begin{tikzpicture}
\begin{axis}[
  view={15}{25},
  axis lines=center,
  width=10cm,height=10cm,
  xtick={0,2,5},ytick={0,2,5},ztick={0,2,5},
  xmin=-1,xmax=7,ymin=-1,ymax=6.5,zmin=-1,zmax=7,
  xlabel={$x_1$},ylabel={$x_2$},zlabel={$x_3$},
]


\addplot3 [no marks, densely dashed] coordinates {(2,2,2) (5,2,2) (5,5,2) (2,5,2) (2,2,2) (2,2,5) (5,2,5) (5,5,5) (2,5,5) (2,2,5)};
\addplot3 [no marks, densely dashed] coordinates {(2,5,2) (2,5,5)};
\addplot3 [no marks, densely dashed] coordinates {(5,5,2) (5,5,5)};
\addplot3 [no marks, densely dashed] coordinates {(5,2,2) (5,2,5)};
\addplot3 [no marks, line width=0.1cm, color=blue] coordinates {(2,2,2) (5,2,2) (5,5,2) (5,5,5)};

\node [below] at (axis cs:2,2,2) {$(2,2,2)$};
\node [below] at (axis cs:5,2,2) {$(5,2,2)$};
\node [right] at (axis cs:5,5,2) {$(5,5,2)$};
\node [right] at (axis cs:5,5,5) {$(5,5,5)$};
\end{axis}
\end{tikzpicture}
\caption{The blue (thicker) segments indicate $\{u^{S(x)}\mid x\in[2,5]^3\}$, a set of points in $\Delta^3$ that are not majorized by other points in $\mathbb{R}^3$ (up to permutation).}
\label{fig:setofmajpoints}
\end{figure}
A similar result can be shown for upper level sets of quasiconcave functions over general polytopes \cite{tr17}. Symmetric quasiconcave functions are a subclass of Schur-concave functions. In other words, both the results show that for symmetric quasiconcave functions over permutation-invariant polytopes it suffices to consider the edges of the polytope to construct the convex hull. 
However, the result in \cite{tr17} applies to general quasiconcave functions over arbitrary polytopes while Proposition~\ref{prop:schurhypercube} applies to Schur-concave functions over a hypercube.
Perhaps more importantly, the result in Proposition~\ref{prop:schurhypercube} also applies to level sets of the functions while the result in \cite{tr17} only applies to convex envelope construction.

\def\F{{\cal{F}}}

Permutation-invariance also helps with constructing compact extended formulations of certain nonlinear sets. 
To motivate this statement and introduce the following result, consider the set $Z' = \bigl\{(x,z) \in [a,b]^n \times \Re \mid z=\prod_{i=1}^n x_i\bigr\}$.
It is well-known that, in order to generate $\conv(Z')$, it suffices to restrict $x$ to the vertices, $\F'=\{a,b\}^n$, of the hypercube $[a,b]^n$.
More precisely, $\conv(Z')=\conv\left(\bigcup_{x \in \F'} (x,\prod_{i=1}^n x_i)\right)$, where each disjunct in the union is a polytope with compact description (a single point.) 
A higher-dimensional description of the convex hull of this union can be obtained using classical disjunctive programming results.
Because the dimension of this formulation depends linearly on the number of disjuncts, $|\F'|$, such an approach has found limited practical use for the given example, as $|\F'|$ is exponential in $n$.
Surprisingly, taking advantage of the permutation-invariance of $Z'$ through Theorem~\ref{res:main_thm} allows for a much more economical use of disjunctive programming. 
Intuitively, this is because the number of elements of $\F'$ required to compute $\conv(S_0)$ in Theorem~\ref{res:main_thm}, is polynomial in $n$.
As a result, disjunctive programming provides a polynomial formulation for $\conv(S_0)$ which can then be integrated with the result of Theorem~\ref{res:main_thm} to obtain a polynomially-sized higher-dimensional formulation of $\conv(Z')$. 
The settings where such polynomially-sized formulations can be obtained extend far beyond the example presented above as we describe next in Theorem~\ref{res:limitfaceshypercube}.

Consider now the much more general setup of sets $S(Z,a,b):=\{(x,z)\in [a,b]^n\times\R^m\mid (x,z)\in Z\}$,
where $Z$ is compact and permutation-invariant in $x$. 
Further, for $\F=\{F_1,\ldots,F_r\}$, where $F_i$ are faces of $[a,b]^n$, define 
$X(Z,a,b,\F):=\left\{(x,z)\in [a,b]^n\times\R^m \mid (x,z)\in Z, x\in \bigcup_{i=1}^r F_i\right\}$. 
Using Theorem~\ref{res:main_thm}, we show next that a polynomial-size extended formulation can be constructed for any set $S(Z,a,b)$ for which there exists a collection of faces $\F'$ that completely determines the convex hull, \textit{i.e.},  $\conv\bigl(S(Z,a,b)\bigr) = \conv\bigl(X(Z,a,b,\F')\bigr)$ and for which a polynomial (possibly extended) formulation of the set on each of these faces $F_i \in \F'$, \textit{i.e.}, $\conv\bigl(X(Z,a,b,\{F_i\})\bigr)$, can be obtained. 
The strength of this result is that we make no assumption on $|\F'|$ and that this collection may have exponentially many faces.

\begin{theorem}\label{res:limitfaceshypercube}
Let $a,b\in \R$, $Z\subseteq \{(x,z)\mid x\in \R^n, z\in\R^m\}$ be a compact permutation-invariant set with respect to $x$,
and $\F=\{F_1,\ldots,F_r\}$ be a collection of faces of $[a,b]^n$ such that $\conv\bigl(S(Z,a,b)\bigr) = \conv\bigl(X(Z,a,b,\F)\bigr)$. 
Moreover, assume that $\conv(X(Z,a,b,\{F_i\}))$ has a polynomial-sized compact extended formulation for each $F_i \in \F$.
Then, $\conv\bigl(S(Z,a,b)\bigr)$ has a polynomial-sized extended formulation.
\end{theorem}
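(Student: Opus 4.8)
The plan is to apply Theorem~\ref{res:main_thm}, together with the Remark following Theorem~\ref{model_majorization}, not to $S(Z,a,b)$ directly but to an auxiliary permutation-invariant set that has the same convex hull yet a much smaller intersection with $\Delta^n$. The issue this circumvents is that $\mathcal{F}$ may be exponentially large, so a direct disjunctive-programming formulation over the disjuncts $\conv(X(Z,a,b,\{F_i\}))$, $i=1,\dots,r$, would be of exponential size; after sorting coordinates, however, only polynomially many faces matter. We may assume $a<b$, since otherwise $[a,b]^n$ is a point or empty and the claim is immediate. Each face $F_i$ of $[a,b]^n$ fixes some $k_i$ coordinates to $b$, some $l_i$ coordinates to $a$ (with $k_i+l_i\le n$), and leaves the rest free; let $G_i$ be the sorted face that fixes the \emph{first} $k_i$ coordinates to $b$, the \emph{last} $l_i$ coordinates to $a$, and leaves coordinates $k_i+1,\dots,n-l_i$ free. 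Then $G_i$ is a coordinate permutation of $F_i$, and, since $G_i$ depends only on $(k_i,l_i)$, the set $\mathcal{G}$ of distinct sorted faces satisfies $|\mathcal{G}|\le\binom{n+2}{2}=O(n^2)$, regardless of $r$.

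Next I would introduce $\mathcal{G}^{\ast}$, the closure of $\mathcal{G}$ under coordinate permutations (equivalently, the permutation-closure of $\mathcal{F}$), and set $\tilde{S}:=X(Z,a,b,\mathcal{G}^{\ast})$. Since $Z$ and $[a,b]^n$ are permutation-invariant in $x$ and $\mathcal{G}^{\ast}$ is permutation-closed, $\tilde{S}$ is compact and permutation-invariant in $x$. I would then verify two facts. First, $\conv(\tilde{S})=\conv\bigl(S(Z,a,b)\bigr)$: each $F_i$ is a permutation of $G_i$, hence $F_i\in\mathcal{G}^{\ast}$, so $X(Z,a,b,\mathcal{F})\subseteq\tilde{S}\subseteq S(Z,a,b)$; taking convex hulls and using the hypothesis $\conv(S(Z,a,b))=\conv(X(Z,a,b,\mathcal{F}))$ forces equality throughout. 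Second, $\tilde{S}\cap(\Delta^n\times\R^m)=X(Z,a,b,\mathcal{G})\cap(\Delta^n\times\R^m)$: the inclusion $\supseteq$ is clear from $\mathcal{G}\subseteq\mathcal{G}^{\ast}$; for $\subseteq$, if $u\in\Delta^n$ belongs to a permuted face $\sigma(F_i)$ then $u$ has at least $k_i$ entries equal to $b$ and at least $l_i$ entries equal to $a$, and since $u$ is in descending order and $a,b$ are the extreme values of $[a,b]$, this forces $u_1=\dots=u_{k_i}=b$ and $u_{n-l_i+1}=\dots=u_n=a$, that is, $u\in G_i$.

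With these in hand I would apply Theorem~\ref{res:main_thm} and the Remark to $\tilde{S}$ with the choice $S_0:=\conv\bigl(X(Z,a,b,\mathcal{G})\bigr)\cap(\Delta^n\times\R^m)$, which is convex, so $\conv(S_0)=S_0$. The sandwich condition of the Remark is met: $\tilde{S}\cap(\Delta^n\times\R^m)=X(Z,a,b,\mathcal{G})\cap(\Delta^n\times\R^m)\subseteq S_0$ by the second fact above, while $X(Z,a,b,\mathcal{G})\subseteq\tilde{S}$ gives $S_0\subseteq\conv(\tilde{S})\cap(\Delta^n\times\R^m)$. Theorem~\ref{res:main_thm} then yields
\[
\conv\bigl(S(Z,a,b)\bigr)=\conv(\tilde{S})=\Bigl\{(x,z)\ \Big|\ (u,z)\in\conv\bigl(X(Z,a,b,\mathcal{G})\bigr),\ u_1\ge\dots\ge u_n,\ u\ge_m x\Bigr\}.
\]
For the size bound, $\conv\bigl(X(Z,a,b,\mathcal{G})\bigr)=\conv\bigl(\bigcup_{G\in\mathcal{G}}\conv(X(Z,a,b,\{G\}))\bigr)$, and for each $G\in\mathcal{G}$ I pick $F\in\mathcal{F}$ whose sorted face is $G$, with permutation $\pi$ satisfying $\pi(F)=G$; permutation-invariance of $Z$ makes $(x,z)\mapsto(\pi x,z)$ a linear bijection of $X(Z,a,b,\{F\})$ onto $X(Z,a,b,\{G\})$, so $\conv(X(Z,a,b,\{G\}))$ inherits a polynomial-sized compact extended formulation from the one assumed for $\conv(X(Z,a,b,\{F\}))$. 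Since $|\mathcal{G}|=O(n^2)$ and all these disjuncts are compact convex sets, disjunctive programming~\cite{balas2018disjunctive} produces a polynomial-sized extended formulation of $\conv\bigl(X(Z,a,b,\mathcal{G})\bigr)$; appending the $n-1$ inequalities $u_1\ge\dots\ge u_n$ and the majorization system $u\ge_m x$ (either the $O(n^2)$-sized formulation of Theorem~\ref{model_majorization} or the $O(n\log n)$ permutahedron formulation of \cite{goemans2015smallest,ajtai19830}) then gives the desired polynomial-sized extended formulation of $\conv\bigl(S(Z,a,b)\bigr)$.

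I expect the crux to be the second fact in the second paragraph, and more conceptually the realization that Theorem~\ref{res:main_thm} must be applied to $\tilde{S}$ rather than to $S(Z,a,b)$: the direct route fails because $S(Z,a,b)\cap(\Delta^n\times\R^m)$ need not lie in $\conv(X(Z,a,b,\mathcal{G}))$, as sorting the atoms of a convex combination only produces a vector that majorizes, rather than equals, the original sorted point. The Remark after Theorem~\ref{model_majorization} is precisely what lets us substitute the tractable ``convexify-then-intersect'' set $S_0$ for the a priori opaque ``intersect-then-convexify'' set $\conv\bigl(X(Z,a,b,\mathcal{G})\cap(\Delta^n\times\R^m)\bigr)$, and the combinatorial fact that makes everything polynomial is that the number of \emph{distinct sorted faces} is $O(n^2)$ no matter how large $\mathcal{F}$ is.
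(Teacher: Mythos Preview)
Your proof is correct and follows essentially the same approach as the paper: both reduce to the $O(n^2)$ ``sorted'' (hole-free) faces by noting that any point of a permuted face that lies in $\Delta^n$ already satisfies the sorted face's constraints, then apply disjunctive programming over these few faces and finish with Theorem~\ref{res:main_thm}. Your invocation of the Remark following Theorem~\ref{model_majorization} to justify the choice $S_0=\conv(X(Z,a,b,\mathcal{G}))\cap(\Delta^n\times\R^m)$ is in fact more explicit than the paper's own argument, and your direct verification of Step~6 is a clean substitute for the paper's minimal-counterexample reasoning, but the substance is the same.
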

\begin{proof}{Proof.}
For brevity of notation, in this proof, we shall write $S(Z,a,b)$ as $S$ and $X(Z,a,b,\F)$ as $X(\F)$. 
We construct $\conv(S)$ using its equivalence to $\conv\bigl(X(\F)\bigr)$. 
We may assume for computing $\conv\bigl(X(\F)\bigr)$, by taking the union of all permutations of $X(\F)$ with respect to $x$ if necessary, that $X(\F)$ is permutation-invariant in $x$. 
This is because a permutation of $X(\F)$ with respect to $x$, say $X_\pi(\F):=\bigl\{(x,z)\mid (\pi(x),z)\in X(\F)\bigr\}$, is contained in $\conv\bigl(X(\F)\bigr)$ as is seen from $X_\pi(\F)\subseteq S\subseteq\conv\bigl(S\bigr) = \conv\bigl(X(\F)\bigr)$, where the first inclusion is by permutation-invariance of $S$ and the equality is by the assumed hypothesis. 
Since $S$ is permutation-invariant with respect to $x$, by Lemma~\ref{res:lin_invariant}, $\conv(S)$ is also permutation-invariant. 
We shall use Theorem~\ref{res:main_thm} to construct $\conv\bigl(X(\F)\bigr)$. 
We first show that we can limit the faces of $[a,b]^n$ that need to be considered in the construction of $S_0$; see Theorem~\ref{res:main_thm}.
Consider an arbitrary face $F_i$ of $[a,b]^n$, which is determined by setting a set of variables with indices in $B_i\subseteq\{1,\ldots,n\}$ to their upper bound $b$ and a disjoint set of variables $A_i\subseteq\{1,\ldots,n\}$ to their lower bound $a$. 
We will show that the only faces, $F_i$, $i=1,\ldots,r$ that need to be considered are such that $B_i$ and $A_i$ are \emph{hole-free}, {\it i.e.}\/, $B_i$ is of the form $\{1,\ldots,p\}$ and $A_i$ is of the form $\{q,\ldots,n\}$. 
To see this, let $j(i) = \max\{j\mid j\in B_i\}$ and $X_{i} = X(\{F_{i}\}) \cap \{(x,z)\mid x_1\ge \cdots\ge x_n\}$.
It follows from Theorem~\ref{res:main_thm} and permutation-invariance of $X(\F)$ that it suffices to consider points in $\bigcup_{i=1}^r X_i$ to construct $\conv(X(\F))$. We will further argue that it suffices to restrict attention to points in $\bigcup_{i:j(i) = |B_i|} X_i$.
Notice that $B_i$ is hole-free if and only if $j(i)=|B_i|$.
Assume by contradiction that $i'$ is the index of a face such that $j(i') > |B_{i'}|$ and that $X_{i'}$ contains a point which is not in $\bigcup_{i: j(i) = |B_{i}|} X_i$.
Among all such faces, we choose $i'$ to minimize $j(i) - |B_{i}|$. 
Since $B_{i'}$ is not hole-free, there exists $j\notin B_{i'}$ such that $j < j(i')$. Any point that belongs to $X_{i'}$ must satisfy $b \ge x_j \ge x_{j(i')} = b$. 
Therefore, $x_{j} = b$. Since $X_{i'}\ne\emptyset$, $j\not\in A_i$. Consider now $i''$ such that $B_{i''} = B_i\cup\{j\}\backslash \{j(i')\}$ and $A_{i''} = A_{i'}$. 
Such a face exists in $\F$ since we assumed that for every face $F_i \in \F$, $\F$ contains all faces obtained by permuting the variables, and $F_{i''}$ is obtained from $F_{i'}$ by exchanging the variables $x_j$ and $x_{j(i')}$. 
We next show that $X_{i''}\supseteq X_{i'}$. For arbitrary $(x,z)\in X_{i'}$, $x_i=b$ for all $i\in B_{i'}$. Then, it follows that $x_i=b$ for all $i\le j(i')$ as $x_1\ge\dots\ge x_n$. Therefore, $x_i=b$ for all $i\in B_{i''}$. Since $A_{i'}=A_{i''}$ and $x_j$ is not restricted for other indices $j$, $(x,z)\in X_{i''}$. Therefore, $X_{i''}$ contains a point not in $\bigcup_{i: j(i) = |B_{i}|} X_i$, establishing that $j(i'') > |B_{i''}|$.
However, since $j(i'') - |B_{i''}| < j(i') - |B_{i'}|$, this contradicts our choice of $i'$. A similar argument can be used to show that we only need to consider faces $F_i$ such that $A_i$ is hole-free.

There are at most $\binom{n+2}{2}$ such faces, one for each choice of $(p,q)$, where $0\le p\le q-1 \le n$. 
Since each $X(\{F_i\})$ is assumed to have a polynomial-sized compact extended formulation, it follows, by disjunctive programming \citep[Proposition 2.3.5 in][]{ben2001}, that $\conv(S_0)$ has a polynomial-sized compact extended formulation. 
The result then follows directly from Theorem~\ref{res:main_thm}.
\hfill\Halmos
\end{proof}

We record and summarize the extended formulation of $\conv\bigl(S(Z,a,b)\bigr)$ for later use in Corollary~\ref{res:polyfacerepresent}. 
We also observe that our construction applies even when $Z$ is not compact, as long as the convex hull of $X(Z,a,b,\{F_i\})$ for each $F_i$ of interest is available.
For a face $F$ of a polyhedron described by $a^\intercal x\le b$, we refer to another face $\bar{F}$ as a \emph{permutation} of $F$ if $\bar{F}$ is described by $\bar{a}^\intercal \le b$ where $\bar{a}$ is a permutation of $a$.
We say that a collection of faces $\mathcal{F}$ of a polyhedron is \emph{permutation-invariant}, if for a face in $\mathcal{F}$ described by an inequality, all its permutations are included in $\mathcal{F}$.
For a face $F$, we define $l(F) = \{ j \in \{1,\ldots,n\} \,|\, \hat{x}_j=a, \forall \hat{x} \in F \}$
and $u(F) = \{ j \in \{1,\ldots,n\} \,|\, \hat{x}_j=b, \forall \hat{x} \in F \}$.

\begin{corollary}\label{res:polyfacerepresent}
Let $a,b\in \R$, $Z\subseteq \{(x,z)\mid x\in \R^n, z\in\R^m\}$ be a permutation-invariant set with respect to $x$, and $\F=\{F_1,\ldots,F_r\}$ be a permutation-invariant collection of faces of $[a,b]^n$.
Let $I = \bigl\{i\in\{1,\dots,r\}\mid \exists p, q, p<q \text{ s.t. } u(F_i) = \{1,\ldots,p\} \textrm{ and } l(F_i) = \{ q,\ldots,n\} \bigr\}$. Then,

\begin{equation*}
\conv\bigl(X(Z,a,b,\F)\bigr) = \Bigl\{(x,z)\Bigm| (u,z) \in \conv\Bigl(\bigcup_{i\in I} X\bigl(Z,a,b,\{F_i\}\bigr)\Bigr), u_1\ge \cdots\ge u_n, u\ge_m x\Bigr\}.
			\tag*{\Halmos}
\end{equation*}
\end{corollary}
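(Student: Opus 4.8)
The plan is to apply Theorem~\ref{res:main_thm} to the set $S := X(Z,a,b,\F)$, exploiting the freedom in the choice of $S_0$ granted by the remark following Theorem~\ref{model_majorization}, and to reuse the ``hole-free'' face reduction already established inside the proof of Theorem~\ref{res:limitfaceshypercube}; indeed, this corollary is essentially a restatement of the extended formulation built there, specialized to the case where $\F$ is permutation-invariant. First I would check that $S$ is permutation-invariant with respect to $x$: for $(x,z)\in S$ and $P\in\mathcal{P}_n$, permutation-invariance of $Z$ gives $(Px,z)\in Z$, and if $x\in F_j$ then $Px$ lies in the face $PF_j$, which is a permutation of $F_j$ and hence belongs to $\F$ by the assumed permutation-invariance of $\F$; thus $(Px,z)\in S$. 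By Lemma~\ref{res:lin_invariant}, $\conv(S)$ is permutation-invariant as well. Note that $S=\bigcup_{i=1}^r X(Z,a,b,\{F_i\})$, so no compactness of $Z$ is needed, since only a set identity is being asserted.

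Next, writing $X_i := X(Z,a,b,\{F_i\})\cap\{(u,z)\mid u_1\ge\cdots\ge u_n\}$, one has $S\cap\{(u,z)\mid u_1\ge\cdots\ge u_n\} = \bigcup_{i=1}^r X_i$. The heart of the proof is the identity $\bigcup_{i=1}^r X_i = \bigcup_{i\in I} X_i$, which is precisely the argument in the proof of Theorem~\ref{res:limitfaceshypercube}: describing each face $F_i$ through the index set $B_i = u(F_i)$ of coordinates fixed to $b$ and $A_i = l(F_i)$ of coordinates fixed to $a$, an induction on $j(i) - |B_i|$ with $j(i) = \max B_i$ --- using that exchanging the coordinates $x_j$ and $x_{j(i)}$ produces another face in $\F$ --- shows that a point of $X_{i'}$ whose $B_{i'}$ has a ``hole'' already lies in some $X_{i''}$ with $B_{i''}$ closer to an initial segment $\{1,\ldots,p\}$, and symmetrically for $A_i$ being a final segment $\{q,\ldots,n\}$; hence only the faces indexed by $I$ are needed.

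Finally I would set $S_0' := \conv\bigl(\bigcup_{i\in I} X(Z,a,b,\{F_i\})\bigr)\cap\{(u,z)\mid u_1\ge\cdots\ge u_n\}$, which is convex as an intersection of two convex sets, and verify the sandwich required by the remark following Theorem~\ref{model_majorization}, namely $S\cap\{u_1\ge\cdots\ge u_n\}\subseteq S_0'\subseteq\conv(S)\cap\{u_1\ge\cdots\ge u_n\}$: the right inclusion holds because $\bigcup_{i\in I}X(Z,a,b,\{F_i\})\subseteq S$, and the left inclusion follows from the previous paragraph, since $S\cap\{u_1\ge\cdots\ge u_n\}=\bigcup_{i\in I}X_i\subseteq\bigcup_{i\in I}X(Z,a,b,\{F_i\})$ and $\bigcup_{i\in I}X_i\subseteq\{u_1\ge\cdots\ge u_n\}$. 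Theorem~\ref{res:main_thm}, in the form licensed by that remark with $S_0$ replaced by $S_0'$ (and $\conv(S_0')=S_0'$ by convexity), then yields $\conv(S)=\{(x,z)\mid(u,z)\in S_0',\ u\ge_m x\}$, which is exactly the claimed description once $S_0'$ is written out. The only genuinely substantive step is the hole-free reduction $\bigcup_{i=1}^r X_i = \bigcup_{i\in I}X_i$; since it has already been carried out in the proof of Theorem~\ref{res:limitfaceshypercube}, the remaining work is the bookkeeping around permutation-invariance of $X(Z,a,b,\F)$ and the two-sided containment that permits substituting $S_0'$ into Theorem~\ref{res:main_thm}.
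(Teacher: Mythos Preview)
Your proposal is correct and follows essentially the same approach as the paper, which presents this corollary as a direct record of the extended formulation constructed inside the proof of Theorem~\ref{res:limitfaceshypercube} (the paper gives no separate proof). You have correctly identified that the assumption of a permutation-invariant $\F$ lets you skip the step in Theorem~\ref{res:limitfaceshypercube} where $\F$ is first closed under permutations, that the hole-free reduction $\bigcup_{i=1}^r X_i=\bigcup_{i\in I}X_i$ is the only substantive ingredient, and that the remark after Theorem~\ref{model_majorization} licenses the convex choice $S_0'=\conv\bigl(\bigcup_{i\in I}X(Z,a,b,\{F_i\})\bigr)\cap\{u_1\ge\cdots\ge u_n\}$ via the sandwich you verify.
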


We remark that $\conv\bigl(\bigcup_{i\in I} X(Z,a,b,\{F_i\})\bigr)$ can be constructed using disjunctive programming techniques if the recession cone of  $X(Z,a,b,\{F_i\})$ does not depend on $i$ \cite[Corollary 9.8.1]{rf70}. Theorem~\ref{res:limitfaceshypercube} shows that even though the number of faces in $\F$ may be exponentially large, we can exploit the permutation-invariance of the set to consider only a polynomial number of faces in the construction. 
More explicitly, there are $2^{n-d}\binom{n}{d}$ $d$-dimensional faces of $[a,b]^n$ and $\binom{n+1}{d+1}$ $d$-dimensional faces of the simplex $b\ge x_1\ge\cdots\ge x_n\ge a$. 
But, there are only $n-d+1$
of the $d$-dimensional faces of the hypercube, namely, $F_l$ for $l\in \{0,\ldots,n-d\}$, that are defined by the hole-free sets $B_l = \{1,\ldots,l\}$ and $A_l=\{l+d+1,\ldots,n\}$ with the convention that $B_0=A_{n-d}=\emptyset$.

Applications of Theorem~\ref{res:limitfaceshypercube} extend beyond Schur-concave functions. 
For example, consider the convex hull of $\{(x,\alpha)\in [a,b]^n\times\R\mid \prod_{i=1}^nx_i \le \alpha\}$, where $a$ is not necessarily positive. 
The product function is not Schur-concave when some of the variables take negative values; for example consider $x_1x_2x_3$ and observe that although $(1,-1,-3)\ge_m (0,0,-3)$, the function value is higher at $(1,-1,-3)$ than at $(0,0,-3)$.

There are many functions besides the multilinear monomial that are permutation-invariant and whose envelopes are determined by their values on certain low-dimensional faces -- the postulates required for the construction in Theorem~\ref{res:limitfaceshypercube}. We discuss some examples next. Observe that all elementary symmetric polynomials satisfy these postulates because, being multilinear, their convex envelope is generated by vertices of $[a,b]^n$. Since Theorem~\ref{res:limitfaceshypercube} allows $z$ to be multi-dimensional, this result in fact yields the simultaneous convexification of the elementary symmetric monomials over $[a,b]^n$ and thus the convex envelope of positive linear combinations of elementary symmetric polynomials. 
Next, consider the function $x_1^t\cdots x_n^t$ over $[a,b]^n$, where $a,t > 0$. If $t \le 1$, the function is concave when all but one variable is fixed and therefore the convex envelope is generated by vertices of $[a,b]^n$. If $t\ge 1$, then by restricting attention to any face of the hypercube, where two or more variables are not fixed, say $x_1$ and $x_2$, we see that the determinant of the Hessian of $x_1^tx_2^t$ is $-x_1^{2t-2}x_2^{2t-2}t^2(2t-1)$, which is strictly negative. Therefore, there is a direction in which the function is concave and the convex envelope is determined by $1$-dimensional faces of the hypercube. Since the function is convex on these faces, the convex envelope can be developed using Corollary~\ref{res:polyfacerepresent}. In fact, when $t \ge 1$,  Proposition~\ref{prop:schurhypercube} provides a closed-form expression for the convex envelope. This is because the function $f(x) = x_1^t\cdots x_n^t$ is Schur-concave because $(x_i-x_j)\left(\frac{\partial f}{\partial x_j} - \frac{\partial  f}{\partial x_i}\right) = t(x_i-x_j)^2\frac{x_1^t\cdots x_n^t}{x_ix_j} > 0$.

We next expand on the characterization we gave in Corollary~\ref{res:polyfacerepresent} to obtain a more streamlined description of the result for the particular cases of permutation-invariant functions whose convex envelopes are completely determined by the extreme points of their domain, which is assumed to be a hypercube.



\begin{proposition}\label{prop_env_nonlin1}
Consider a function $\phi(x):[a,b]^n\mapsto\R$, that is permutation-invariant in $x$ and whose convex envelope remains the same even if its domain is restricted to $\{a,b\}^n$. For $i=1,\ldots,n$ and $j=0,\ldots,n$, let $p_{ij} = a$ if $i > j$ and $b$ otherwise and let $p_{\cdot j}$ denote the $j^{\text{th}}$ column of this matrix. Define $f(x) := \phi(p_{\cdot 0}) + \sum_{i=1}^n \frac{x_i-a}{b-a} \bigl(\phi(p_{\cdot i})-\phi(p_{\cdot i-1})\bigr)$. Then, the convex envelope of $\phi(x)$ over $[a,b]^n$ can be expressed as the value function of the following problem:
\begin{equation}
{\conv}_{[a,b]^n}(\phi) (x) = \min \bigl\{f(u)\,\bigm|\, u\ge_m x, b\ge u_1\ge\cdots\ge u_n\ge a\}.
\label{nonlinearabconvhull}
\end{equation}
\end{proposition}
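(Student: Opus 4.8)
The plan is to move to the epigraphical setting of Section~\ref{sec:mainresults} and apply Theorem~\ref{res:main_thm}, after using the hypothesis to replace $[a,b]^n$ by its vertex set. Assume $a<b$, the case $a=b$ being trivial. Introduce $S:=\{(x,\tau)\in[a,b]^n\times\R\mid \phi(x)\le\tau\}$ and $S':=\{(x,\tau)\in\{a,b\}^n\times\R\mid \phi(x)\le\tau\}$, i.e. the epigraph of $\phi$ over $[a,b]^n$ and over its vertex set. Both are permutation-invariant in $x$, since $\phi$ and $\{a,b\}^n$ are. By the standard correspondence between convex hulls of epigraphs and convex envelopes, $\conv(S)$ is the epigraph of $\conv_{[a,b]^n}(\phi)$ over $[a,b]^n$ and $\conv(S')$ is the epigraph of $\conv_{[a,b]^n}(\phi|_{\{a,b\}^n})$; the hypothesis therefore reads $\conv(S)=\conv(S')$, so it suffices to compute $\conv(S')$ and then read off $\conv_{[a,b]^n}(\phi)(x)=\min\{\tau\mid (x,\tau)\in\conv(S')\}$.

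Applying Theorem~\ref{res:main_thm} to $S'$, with $\tau$ in the role of $z$, gives $\conv(S')=\{(x,\tau)\mid (u,\tau)\in\conv(S'_0),\ u\ge_m x\}$, where $S'_0=S'\cap\{(u,\tau)\mid u_1\ge\dots\ge u_n\}$. The vertices of $[a,b]^n$ whose coordinates are nonincreasing are exactly the columns $p_{\cdot 0},\dots,p_{\cdot n}$ of the matrix in the statement (the vertex whose first $k$ coordinates equal $b$ and whose remaining coordinates equal $a$), so $S'_0=\bigcup_{j=0}^n \bigl(\{p_{\cdot j}\}\times[\phi(p_{\cdot j}),\infty)\bigr)$, a union of $n+1$ parallel upward half-lines.

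The crux is to show $\conv(S'_0)=\bigl\{(u,\tau)\bigm| b\ge u_1\ge\dots\ge u_n\ge a,\ f(u)\le\tau\bigr\}$. This rests on three points: (i) $p_{\cdot 0},\dots,p_{\cdot n}$ are precisely the $n+1$ vertices of the order simplex $\{u\mid b\ge u_1\ge\dots\ge u_n\ge a\}$, and they are affinely independent because $p_{\cdot j}-p_{\cdot 0}=(b-a)(e_1+\dots+e_j)$; (ii) the affine map $f$ interpolates $\phi$ at these points, $f(p_{\cdot j})=\phi(p_{\cdot j})$ for $j=0,\dots,n$, which follows by telescoping since $(p_{\cdot j})_i=b$ exactly when $i\le j$; (iii) hence $\conv\{(p_{\cdot j},\phi(p_{\cdot j}))\}_{j=0}^n$ is exactly the graph of $f$ over the order simplex, because every $u$ in that simplex has a unique barycentric representation $u=\sum_j\lambda_j p_{\cdot j}$, and then $\sum_j\lambda_j\phi(p_{\cdot j})=\sum_j\lambda_j f(p_{\cdot j})=f(u)$. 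Adjoining the common recession direction $(0,1)$ of the half-lines turns this graph into the claimed set, which is automatically convex because $f$ is affine. Substituting back and minimizing over $\tau$ — the feasible set $\{u\mid u\ge_m x,\ b\ge u_1\ge\dots\ge u_n\ge a\}$ is nonempty, since it contains the nonincreasing rearrangement $x_\Delta$ of $x$, and it is compact — yields $\conv_{[a,b]^n}(\phi)(x)=\min\{f(u)\mid u\ge_m x,\ b\ge u_1\ge\dots\ge u_n\ge a\}$, as desired. (Alternatively, one can invoke Corollary~\ref{res:polyfacerepresent} with $\F$ the collection of all vertices of $[a,b]^n$: its hole-free members are exactly $p_{\cdot 0},\dots,p_{\cdot n}$, and the corollary's disjunctive term is the union of half-lines above them.)

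The conceptual obstacle is recognizing which set to convexify. Taking $S'_0$ to be the epigraph of $\phi$ over the entire order simplex would be a mistake: its convex hull is $\conv_{\Delta^n\cap[a,b]^n}(\phi)$, which may lie strictly below $f$ — already visible for $n=2$ when $\phi(b,a)<\tfrac12(\phi(a,a)+\phi(b,b))$ — so $f$ would not be the correct function. It is precisely the preliminary reduction to the vertex set, legitimate by hypothesis, that makes the pertinent $S'_0$ the finite union of half-lines, whose convex hull is cleanly the epigraph of the affine interpolant. The remaining routine points are the verifications in (i)--(iii) and the standard fact that the (here closed) set $\conv(S')$ really is the epigraph of $\conv_{[a,b]^n}(\phi)$.
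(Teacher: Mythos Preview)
Your proposal is correct and follows essentially the same route as the paper. The paper invokes Corollary~\ref{res:polyfacerepresent} (which you mention as an alternative) rather than applying Theorem~\ref{res:main_thm} directly to $S'$, and it phrases the identification of $f$ as showing $f=\conv_{\conv(\Gamma)}(\phi|_\Gamma)$ via a two-sided Jensen argument rather than your barycentric computation, but the substance---restrict to $\{a,b\}^n$ by hypothesis, intersect with $\Delta^n$ to isolate the $n+1$ ordered vertices $p_{\cdot j}$, recognize the affine interpolant $f$ over the resulting simplex, and wrap with the majorization machinery---is the same.
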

\proof{Proof.}
Observe that the points in $\{a,b\}^n$ that intersect with $x_1\ge \cdots \ge x_n$ are precisely the columns $p_{\cdot j}$ described in the statement of the result. 
Consider the column $p_{\cdot j}$ and observe that $f\bigl(p_{\cdot j}\bigr) = \phi(p_{\cdot j})$. 
Moreover, $f$ is affine. Let $\Gamma = \{x\in \{a,b\}^n \mid x_1\ge \cdots \ge x_n\}$. 
Then, we show that $f = \conv_{\conv(\Gamma)}(\phi|_{\Gamma})$, where $\phi|_\Gamma$ denotes the restriction of $\phi$ to $\Gamma$. Clearly, $f(x)\le \conv_{\conv(\Gamma)}(\phi|_\Gamma)(x)$ because it matches $\phi|_\Gamma$ at all the points in the domain and is a convex underestimator. 
Also, $f(x) \ge \conv_{\conv(\Gamma)}(\phi|_\Gamma)(x)$ because of Jensen's inequality applied to $\conv_{\conv(\Gamma)}(\phi)$, observing that $f(x)$ is exact at the extreme points of $\Gamma$, and because $f(x)$ is affine. 
Now, consider Corollary~\ref{res:polyfacerepresent}. Let $\F$ be the set of extreme points of $[a,b]^n$ and $Z= \{(x,z)\mid z\ge \phi(x)\}$.
By assumption, $\conv\bigl(S(Z,a,b)\bigr) = \conv\bigl(X(Z,a,b,\F)\bigr)$.  
By Corollary~\ref{res:polyfacerepresent}, it follows that $\conv(X(Z,a,b,\F))=\left\{(x,z)\,\middle|\, (u,z)\in\conv\Bigl(\bigcup_{j=0}^n X(Z,a,b,\{p_{\cdot j}\})\Bigr), u\ge_m x\right\}
= \{(x,z)\mid z\ge f(u), b\ge u_1\ge \cdots \ge u_n\ge a, u\ge_m x\}$. 
\hfill\Halmos
\endproof


In the next result, we show how the convex hull of $\bigl\{(x,y)\in [a,b]^n \times [c,d]^m\bigm| \prod_{i=1}^m y_i^\alpha = \prod_{j=1}^n x_j^\beta\bigr\}$ can be constructed. To do so, it suffices to construct the convex hull of $S=\left\{(x,y)\in [a,b]^n\times [c,d]^m\;\middle|\; \prod_{i=1}^m y_i^\alpha \ge \prod_{j=1}^n x_j^\beta\right\}$, since the convex hull for the reverse inequality can be developed by switching the $x$ and $y$ variables, and the convex hull for the equality can then be obtained as the intersection of these two convex hulls; see \cite{nguyen2018deriving}. Such a set occurs in polynomial optimization problems where linearized variables are introduced to relax $uv=y$, $u^2=x_1$, and $v^2=x_2$ in the form of $y^2=x_1x_2$. Similar higher dimensional equality constraints also occur and their relaxations can be employed in polynomial optimization problems.

\begin{proposition}\label{prop_env_nonlin2}
 Consider 
$S=\left\{(x,y)\in H\;\middle|\; \prod_{i=1}^m y_i^\alpha \ge \prod_{j=1}^n x_j^\beta\right\}$,
where $H=[a,b]^n\times [c,d]^m$, with $a\ge 0$, $c\ge 0$, $\alpha > 0$, and $\beta > 0$. 
Let $k = \min\{m, \bigl\lfloor\frac{\beta}{\alpha}\bigr\rfloor\}$. Define the convex sets: 
\[
\begin{array}{ll}
S_{ij} &= S\cap \left\{(x,y)\in H\,\middle|\,
\begin{array}{ll}(y_r)_{r=1}^i=d, (y_r)_{r=i+k+1}^m=c,
&y\in\Delta^m,\\
(x_s)_{s=1}^j=b, (x_s)_{s=j+2}^n=a, 
\end{array}
\right\}\\
C_{j} &= 
S \cap \left\{
(x,y)\in H
\,\middle|\, 
(x_s)_{s=1}^j= b, (x_s)_{s=j+1}^n=a, y\in\Delta^m
\right\}
\end{array}
\]
where $S_{ij}$ is defined for $i=0,\ldots,m-k$ and $j=0,\ldots,n-1$ and $C_j$ is defined for $j=0,\ldots,n$. Let $T = \bigcup_{i,j} S_{ij} \cup \bigcup_j C_j$. Then
\[
\conv(S) = X :=\{(x,y)\mid v\ge_m y, u\ge_m x, (u,v)\in \conv(T)\}.
\]
In particular, if $m\alpha \le \beta$, then 
\begin{equation}\label{monomialhullnolift}
\conv(S) = X' := \left\{(x,y)\in H\;\middle|\; \prod_{i=1}^m y_i^\frac{1}{m} \ge \prod_{j=1}^n {u(x)_j}^\frac{\beta}{m\alpha}\right\},
\end{equation}
where $u(x):=u^{S(x)}$, defined as in Proposition~\ref{prop:schurhypercube}.
\end{proposition}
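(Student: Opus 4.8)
The plan is to prove the general identity $\conv(S)=X$ using Theorem~\ref{permconvset} together with a two-stage ``restrict to essential faces'' argument, and then to obtain the closed form $\conv(S)=X'$ when $m\alpha\le\beta$ directly from Proposition~\ref{prop:schurhypercube}. First, since $\prod_{j=1}^n x_j^\beta$ and $\prod_{i=1}^m y_i^\alpha$ are symmetric, $S$ is permutation-invariant with respect to $x$ and with respect to $y$, so Theorem~\ref{permconvset}(1) gives $\conv(S)=\{(x,y)\mid (u,v)\in\conv(S_0),\ u\ge_m x,\ v\ge_m y\}$ with $S_0=S\cap(\Delta^n\times\Delta^m)$. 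Each set $S_{ij}$ and $C_j$ is contained in $S_0$, because the imposed conditions place all coordinates equal to $b$ (resp.\ $d$) before those equal to $a$ (resp.\ $c$), so the $x$-part automatically lies in $\Delta^n$ and the $y$-part is forced into $\Delta^m$. Hence $T\subseteq S_0$, $\conv(T)\subseteq\conv(S_0)$, and $X\subseteq\conv(S)$. For the reverse inclusion I would observe that the set $\{(x,y)\mid \exists(u',v')\in\conv(T),\ u'\ge_m x,\ v'\ge_m y\}$ is convex --- two applications of Lemma~\ref{res:maj_convex}, one per block, using $\conv(T)\subseteq\Delta^n\times\Delta^m$ --- so $\conv(S)\subseteq X$ will follow (via transitivity of $\ge_m$) once I show the domination statement: \emph{every $(x,y)\in S_0$ admits $(u',v')\in\conv(T)$ with $u'\ge_m x$ and $v'\ge_m y$}.

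To establish the domination statement, fix $(x,y)\in S_0$ and set $u':=u^{S(x)}$ from Proposition~\ref{prop:schurhypercube}; then $u'\ge_m x$, $u'$ lies on a vertex or an edge of the simplex $\Delta^n\cap[a,b]^n$, and $\prod_j(u'_j)^\beta\le\prod_j x_j^\beta\le\prod_i y_i^\alpha$ since $x\mapsto\prod_j x_j^\beta$ is Schur-concave (verified just before the proposition). Thus $(u',y)\in S_0$. If $u'$ is the vertex with its first $\ell$ entries equal to $b$ then $(u',y)\in C_\ell\subseteq T$ and $v':=y$ works. Otherwise $u'$ has its first $j$ entries at $b$, one free entry $\xi\in(a,b)$, and $n-j-1$ trailing entries at $a$, that is, $u'$ sits on the $x$-face used by the family $S_{ij}$, and it remains to produce $v'\ge_m y$ with $(u',v')\in\conv\bigl(\bigcup_i S_{ij}\bigr)$. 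Writing $t:=\prod_j(u'_j)^\beta$, this reduces to the following claim: for every $y\in\Delta^m\cap[c,d]^m$ with $\prod_i y_i^\alpha\ge t$ there are points $v^{(i)}\in\Delta^m\cap[c,d]^m$, $i=0,\dots,m-k$, each lying on the face $\{v_1=\dots=v_i=d,\ v_{i+k+1}=\dots=v_m=c\}$ with $\prod_l(v^{(i)}_l)^\alpha\ge t$, and convex weights $\mu_i$ with $\sum_i\mu_i v^{(i)}\ge_m y$. I would prove this by the same mechanism as Lemma~\ref{lem:restricttonou}, Proposition~\ref{prop:schurhypercube} and Corollary~\ref{res:polyfacerepresent}, now applied to the $y$-block: raising $y$ in majorization order does not increase $\prod_i y_i^\alpha$ (Schur-concavity of $y\mapsto\prod_i y_i^\alpha$), so the majorization-maximal feasible $y$'s compatible with a fixed edge-value of $x$ sit on $y$-faces of small dimension, while the strict curvature of the level surface $\{\prod_i y_i^\alpha=t\}$ makes all $y$-faces with more than $k$ free coordinates redundant. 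The threshold is exactly $k=\min\{m,\lfloor\beta/\alpha\rfloor\}$ because, after fixing all but $k$ of the $y$-coordinates, the constraint reads $\bigl(\prod_{l=1}^{k}\eta_l\bigr)^{\alpha/\beta}\ge(\text{const})\,\chi$, whose left side is a geometric mean raised to the power $k\alpha/\beta\le 1$ and hence concave; this is precisely what makes each $S_{ij}$ the intersection of a box, a simplex and a concave superlevel set, so that $\conv\bigl(\bigcup_i S_{ij}\bigr)$ can be assembled by ordinary disjunctive programming. I expect the heart of the difficulty to be exactly here --- verifying that $k$ is the correct threshold and that larger $y$-faces are genuinely unnecessary; the rest is bookkeeping with Theorem~\ref{permconvset} and majorization.

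For the closed form, assume $m\alpha\le\beta$, so $k=m$ and no $y$-face restriction is active; I would prove $\conv(S)=X'$ directly rather than specializing $X$. Put $t:=\beta/(m\alpha)\ge 1$ and $u(x):=u^{S(x)}$. First, $S\subseteq X'$: for $(x,y)\in S$, Schur-concavity of $\prod_j(\cdot)_j^\beta$ together with $u(x)\ge_m x$ give $\prod_i y_i^\alpha\ge\prod_j x_j^\beta\ge\prod_j u(x)_j^\beta$, and taking $(m\alpha)$-th roots yields the defining inequality of $X'$. Next, $X'$ is convex: $\phi(x):=(\prod_j x_j)^{t}$ is Schur-concave and, since $t\ge 1$, componentwise convex, so by Proposition~\ref{prop:schurhypercube} the set $\{(x,w)\mid\phi(u^{S(x)})\le w,\ x\in[a,b]^n\}$ is convex; intersecting it with the convex set $\{(y,w)\mid w\le(\prod_i y_i)^{1/m}\}$ (the geometric mean is concave) and projecting out $w$ shows $X'=\{(x,y)\in H\mid\exists w,\ \phi(u(x))\le w\le(\prod_i y_i)^{1/m}\}$ is convex, so $\conv(S)\subseteq X'$. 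Finally, $X'\subseteq\conv(S)$: if $(x,y)\in X'$, raising its defining inequality to the $(m\alpha)$-th power gives $\prod_i y_i^\alpha\ge\prod_j u(x)_j^\beta$, hence $(u(x),y)\in S$; since $u(x)\ge_m x$, Lemma~\ref{lemma:maj_conv_comb} writes $x$ as a convex combination of permutations of $u(x)$, and permutation-invariance of $S$ in $x$ then gives $(x,y)\in\conv(S)$, proving \eqref{monomialhullnolift}. (As a consistency check, specializing the general identity to $k=m$ collapses $T$ onto the edge-skeleton of $\Delta^n\cap[a,b]^n$ in the $x$-block and recovers $X=X'$.)
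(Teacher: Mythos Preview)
Your treatment of the closed-form case $m\alpha\le\beta$ is correct and essentially identical to the paper's: Schur-concavity gives $S\subseteq X'$, Proposition~\ref{prop:schurhypercube} gives convexity of $X'$ (using that $x\mapsto x^{\beta/(m\alpha)}$ is componentwise convex when $\beta/(m\alpha)\ge 1$), and $X'\subseteq\conv(S)$ follows by pulling back along $u(x)\ge_m x$.

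The general argument, however, has a genuine gap in the $y$-block step. You propose to handle the $y$-variables ``by the same mechanism as Lemma~\ref{lem:restricttonou}, Proposition~\ref{prop:schurhypercube} and Corollary~\ref{res:polyfacerepresent},'' but those results are about \emph{lower}-level sets of Schur-concave functions: they work because majorizing the argument pushes the function value down and hence keeps you inside the sublevel set. Here the $y$-constraint is a \emph{super}-level set $\prod_i y_i^\alpha\ge t$ of a Schur-concave function, so majorizing $y$ makes the constraint \emph{harder} to satisfy. In particular, the majorization-maximal feasible $y$'s for a fixed $t$ do not sit on low-dimensional box faces; they sit on the curved surface $\prod_i y_i^\alpha=t$ (together with parts of the box boundary). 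Your ``strict curvature'' remark does not rescue this: on any $y$-face with more than $k$ free coordinates, the superlevel set is convex, but its extreme points include interior points of the face lying on that curved surface, so the face is \emph{not} redundant by a pure $y$-argument.

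What actually forces the reduction to $k$ free $y$-coordinates is a coupled $(x,y)$ extreme-point argument, and this is where the paper's proof differs from yours. After restricting $x$ to a one-dimensional face (your $u'$ with one free entry $\xi\in(a,b)$), the paper parametrizes $y$ along the ray $\lambda y'$ and observes that the resulting two-variable constraint reads $\lambda\theta\ge\delta'\,\xi^{\beta/(m\alpha)}$. When $m>\beta/\alpha$, the right side is \emph{concave} in $\xi$, so the point $(\xi,1)$ lies on an interior segment of this two-dimensional slice, contradicting extremality in $S$. Hence any extreme point with $\xi\in(a,b)$ must have some $y_i$ at a bound; iterating drives the number of free $y$'s down to $k=\min\{m,\lfloor\beta/\alpha\rfloor\}$, at which point the defining inequality becomes convex and the face belongs to some $S_{ij}$. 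This perturbation uses the free $x$-coordinate in an essential way; it is not available once you freeze $u'$ and try to argue on $y$ alone, which is why your separated ``domination'' reduction stalls.
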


Before providing the proof, we discuss its architecture. This proof will write the convex hull of $S$ as a disjunctive hull of convex subsets of $S$. Since these convex subsets will be obtained by fixing variables at their bounds, even though they are exponentially many, Corollary~\ref{res:polyfacerepresent} will allow the construction of the convex hull. In the first part of the proof, we consider a slice of $S$ at a fixed $y$ and show that it suffices to restrict $x$ to one-dimensional faces of $[a,b]^n$ for constructing the convex hull of the slice.  Then, we show that there are two cases. If the remaining $x_j$ variable is also fixed to the bounds, the set is already convex. If not, then we show that any point in such a set cannot be extremal in $S$ unless at least a certain number of $y$ variables, specifically $\max\bigl\{0, m-\lceil\frac{\beta}{\alpha}\rceil\bigr\}$ of them, are fixed to their bounds. The sets obtained by fixing these variables are once again convex. Then, it will follow by Theorem~\ref{res:limitfaceshypercube}, that we may restrict our attention to the faces in $T$ and, as a result, $\conv(S) = X$.

\proof{Proof.}

Let $\phi(x) := \prod_{j=1}^n x_j^\beta$ and consider the set $\Upsilon(\gamma) = \{x \in [a,b]^n\mid \phi(x) \le \gamma\}$. By Theorem~ 3.A.3 in \cite{marshall2010}, $\phi(x)$ is Schur-concave over $[a,b]^n$ because it is permutation-invariant and $\frac{\partial \phi}{\partial x_1} \le\dots\le \frac{\partial \phi}{\partial x_n}$ at any point with $x_1\ge\cdots\ge x_n$. 
Let $\Upsilon_i(\gamma) = \bigl\{x\in \{b\}^{i-1}\times[a,b]\times\{a\}^{n-i}\,\bigm|\, x_i^\beta b^{(i-1)\beta}a^{(n-i)\beta} \le \gamma\bigr\}$. 
Then, it follows by Proposition~\ref{prop:schurhypercube} and Corollary~\ref{res:polyfacerepresent} that $\conv(\Upsilon(\gamma)) = \bigl\{x\,\bigm|\, u\ge_m x, u_1\ge\cdots\ge u_n, u\in \conv\bigl(\bigcup_{i=1}^n \Upsilon_i(\gamma)\bigr)\bigr\}$. 

\def\y{\bar{y}}

Since $n-1$ of the $x_j$ variables can be fixed to bounds in the construction of the convex hull of each slice, this restriction can also be imposed in the construction of $\conv(S)$. 
Now, let $\psi(y) := \prod_{i=1}^m y_i$ and consider the slightly more general set $\Theta$ which will appear when we fix some of the $y$ variables at their bounds. This set is defined as $\Theta = \bigl\{(x,y)\in [a,b]\times [c,d]^m \,\bigm|\, \zeta \psi(y) \ge \delta x^\frac{\beta}{\alpha}\bigr\}$, where $\delta,\zeta\ge 0$. 
Consider a point $(x',y')\in \Theta$. 
Then, by restricting attention to $\y=\lambda y'$, where $\lambda \in \Re$, we obtain an affine transform of a subset 
$\Lambda$ of $\Theta$ such that $\Lambda = \bigl\{(x,\lambda)\in [a,b]\times [c',d']\,\bigm|\, \lambda\theta \ge \delta' x^\frac{\beta}{m\alpha} \bigr\}$, where $\theta = \zeta^{\frac{1}{m}}\psi(y')^{\frac{1}{m}}$, $\delta' = \delta^{\frac{1}{m}}$, $c' = \max\{\lambda\mid \lambda y'_i \le c \mbox{ for some } i\}$, and $d'=\min\{\lambda \mid \lambda y'_i \ge d \mbox{ for some } i\}$. 

Assume $x'\in (a,b)$ and $y'\in (c,d)^m$. We will first show that, if $m > \frac{\beta}{\alpha}$, such a point is not an extreme point of $S$.
By definition, $c' < 1$, $d' > 1$, and $(x',1)\in \Lambda$. 
Assume $m > \frac{\beta}{\alpha}$ and $(x',y')$ is an extreme point of $S$. Define $s = \delta'\frac{\beta}{m\alpha} (x')^{\frac{\beta}{m\alpha} - 1}$. 
If $x'\in (a,b)$, then, for sufficiently small $\epsilon > 0$, 
we show that $(x',1)$ can be written as a convex combination of $(x'-\epsilon\theta, 1-s\epsilon)$ and $(x'+\epsilon\theta, 1+s\epsilon)$. 
The latter points are feasible in $\Lambda$ because
\begin{equation*}
				\delta'(x' \pm \epsilon\theta)^{\frac{\beta}{m\alpha}}\le \delta'x'^{\frac{\beta}{m\alpha}} + s(x'\pm \epsilon\theta-x')\le \theta(1\pm  s\epsilon),
\end{equation*}
where the first inequality is by concavity of $x^{\frac{\beta}{m\alpha}}$ for $m \ge \frac{\beta}{\alpha}$ and the second inequality is because $\delta' x'^{\frac{\beta}{m\alpha}} \le \theta$ as $(x',1)$ belongs to $\Lambda$. 
Since $\Lambda$ is an affine transform of a subset of $\Theta$, we have expressed $(x',y')$ as a convex combination of $\bigl(x'-\epsilon\theta,(1-s\epsilon)y'\bigr)$ and $\bigl(x'+\epsilon\theta,(1+s\epsilon)y'\bigr)$, each of which is feasible to $\Theta$. 
Since $\epsilon > 0$ and $x' > a\ge 0$ implies $s > 0$, it follows that these points are distinct thus contradicting the extremality of $(x',y')$.

Since, $S$ is compact, in order to construct $\conv(S)$, we may restrict our attention to the extreme points of $S$. Therefore, either $x'\in \{a,b\}$ or there exists an $i$ such that $y'_i \in \{c,d\}$.
If $x'\in \{a,b\}$, the point belongs to the convex subset of $\Theta$ obtained by fixing $x'$ at its current value because the defining inequality can be written as $\zeta^{\frac{1}{m}}\psi(y)^{\frac{1}{m}} \ge \delta x'^{\frac{\beta}{\alpha}}$, a convex inequality. 
A permutation of such an $(x',y')$ is included in one of the $C_j$s.
On the other hand, if $y'_i\in \{c,d\}$, we can reduce the dimension of the set by fixing $y_i$ at $y'_i$ and, thus, effectively reduce $m$. 
Therefore, we may assume without loss of generality that $m \le \frac{\beta}{\alpha}$. 
Then, we rewrite the defining inequality of $\Theta$ as $\zeta^{\frac{1}{m}}\psi(y)^{\frac{1}{m}} \ge \delta^\frac{1}{m} x^{\frac{\beta}{m\alpha}}$ and observe that this is a convex inequality since $\psi(y)^{\frac{1}{m}}$ is a concave function and $x^{\frac{\beta}{m\alpha}}$ is a convex function. 
Therefore, we need to consider faces where either all $x_j$ are fixed at their bounds or where we fix all $y_i$ except for a subset of cardinality $\min\bigl\{m,\bigl\lfloor\frac{\beta}{\alpha}\bigr\rfloor\bigr\}$ and fix all $x_j$ except for one; all such sets are, upto permutation, included in one of the $C_j$s or $S_{ij}$s.  

Since $S_{ij}\subseteq S$, $C_j\subseteq S$, and $S$ is permutation-invariant, it follows that $\conv(S)\supseteq X$. Since $X$ is convex and $S$ is compact, we only need to show that the extreme points of $S$ are contained in $X$. However, we have shown that the extreme points of $S$ belong to $T$ or a set obtained from $T$ by permuting the $x$ and/or $y$ variables. Since $X$ is permutation invariant and contains $T$, it follows that every extreme point belongs to $X$. Therefore, $X=\conv(S)$.

Now, consider the case where $m\alpha \le \beta$. Clearly, $k=m$. If we fix $y$ at $\y$, it follows from the Schur-concavity of $\prod_{j=1}^n x_i^{\frac{\beta}{m\alpha}}$ over $[a,b]^n$, the convexity of $x_i^{\frac{\beta}{m\alpha}}$ over $[a,b]$, and Proposition~\ref{prop:schurhypercube} that the convex hull of this slice is defined by $\prod_{i=1}^m \y_i^{\frac{1}{m}} \ge \prod_{j=1}^n u(x)_j^{\frac{\beta}{m\alpha}}$. This shows that $X'\subseteq \conv(S)$. By Schur-concavity of $\prod_{j=1}^n x_j^{\frac{\beta}{m\alpha}}$, it follows that $\prod_{j=1}^n x_j^{\frac{\beta}{m\alpha}} \ge \prod_{j=1}^n u(x)_j^{\frac{\beta}{m\alpha}}$, or $S\subseteq X'$. This implies that $S\subseteq X'\subseteq \conv(S)$. To show that $X'=\conv(S)$, we only need to show that $X'$ is convex.
As in the proof of Proposition~\ref{prop:schurhypercube}, we let $\varphi(s) = \prod_{j=1}^n u(x)_j^{\frac{\beta}{m\alpha}}$, where $s=\sum_{i=1}^n (x_i-a)$, and rewrite the above inequality as $\varphi(s) - \prod_{i=1}^m y_i^{\frac{1}{m}}\le 0$. Since the left-hand-side is jointly convex in $(s,y)$ and $s$ is a linear function of $x$, this proves that $X'$ is convex in $(x,y)$. 
\hfill\Halmos
\endproof

So far in this section, we have given various results where we describe the convex hull of a set in an extended space by introducing variables $u$. 
We now discuss how inequalities in the original space can be obtained by solving a separation problem. 

Usually, given a set $X$ and an extended space representation of its convex hull, $C$, we separate a given point $\x$ from $X$ by  solving the problem $\inf_{(x,u)\in C} \|x - \x\|$. 
By duality, the optimal value matches $\max_{\|a\|_* \le 1} \bigl\{\langle \x, a\rangle - h(a)\bigr\}$, where $h(\cdot)$ is the support-function of $C$ and $\|\cdot\|_*$ is the dual norm. 
Then, if the optimal value, $z^*$ is strictly larger than zero and the optimal solution to the dual problem is $a^*$, we have $\langle \x, a^*\rangle - z^*\ge \langle x, a^*\rangle$ for all $x\in \proj_x C$ and this inequality separates $\x$ from $\proj_x C$. 

However, such an inequality is typically not facet-defining for $\conv(X)$ even when the latter set is polyhedral. 
We now discuss a separation procedure that often generates facet-defining inequalities. 
The structure of permutation-invariant sets and their extended space representation allow for this alternate approach. 
Assume we are interested in developing the convex envelope of a permutation-invariant function $\phi$, such as $\prod_{i=1}^n x_i$, over $[a,b]^n$. 
As in Theorem~\ref{res:main_thm}, the convex envelope of $\phi$ at $x$ is obtained by expressing $x$ as a convex combination of $u$ and its permutations, where $u\ge_m x$. Moreover, assume that the convex envelope at $u$ is obtained as a convex combination of the extreme points of the simplex $a\le x_1\le \cdots \le x_n\le b$ with convex multipliers $\gamma$. Since $x=Su$ for some doubly stochastic matrix $S$ and $u = V\gamma$, where the columns of $V$ correspond to vertices of the simplex, it follows that $x=SV\gamma$.
Therefore, we can find a representation of $x$ as a convex combination of vertices in $V$ and their permutations.

We implement the above procedure for multilinear sets over $[a,b]^n$ to evaluate its impact on the quality of the relaxation. 
For the purpose of illustration, we consider the special case of $\prod_{i=1}^n x_i$ over $[a,b]^n$.  
In this case, (\ref{nonlinearabconvhull}) reduces to
\begin{eqnarray}
&\min & a^n + \sum_{i=1}^n b^{i-1}a^{n-i}(u_i-a) \nonumber\\
&\mbox{s.t.}&u\ge_m x \label{LPFORPRODX}\\
&&b\ge u_1\ge\dots\ge u_n\ge a. \nonumber
\end{eqnarray}
Given $x\in\R^n$ in general position inside $[a,b]^n$, assume that the optimal solution to \eqref{LPFORPRODX} is $u$. 
Then, we express $x=Su$, where $S\in \mathcal{M}^{n,n}(\mathbb{R})$ is a doubly-stochastic matrix. 
Given $x$ and $u$, this can be done through the solution of a linear program, $\min\{0\mid x=Su, S\mathbbm{1}=\mathbbm{1}, \mathbbm{1}^\intercal S=\mathbbm{1}^\intercal, S\ge 0\}$.
Although $S$ can also be derived as a product of $T$-transforms, see proof of Lemma~2 in Section 2.19 of \cite{hardy1952inequalities}, we use the LP approach in our numerical experiments, given its simplicity of implementation.
Then, we express $S$ as a convex combination of permutation matrices. 
Such a representation exists due to Birkhoff Theorem. 
We obtain it using a straightforward algorithm, which we describe next. 
Observe first that the desired representation is such that all permutation matrices with non-zero convex multipliers have a support that is contained within the support of $S$. 
This implies that the bipartite graph, we describe next, has a perfect matching. 
The bipartite graph is constructed with nodes labeled $\{1,\ldots,n\}$ in each partition and edges that connect a node $i$ in the first partition to $j$ in the second partition if and only if $S_{ij} > 0$.
Given a perfect matching, we construct a permutation matrix $P$ so that $P_{ij} = 1$ if node $i$ in the first partition is matched to node $j$ in the second partition. 
Then, we associate $P$ with a convex multiplier $\pi$ which is chosen to be $\min_{ij}\{ S_{ij} | P_{ij}=1\}$. If $\pi=1$, we have a representation of $S$ as a convex combination of permutation matrices.  Otherwise, observe that $\frac{1}{1-\pi}(S-\pi P)$ is again a doubly-stochastic matrix with one less non-zero entry. 
Therefore, by recursively using the above approach we obtain $S$ as a convex combination of at most $n^2$ permutation matrices. 
Then, we permute $u$ according to these permutation matrices. 
For each such $u$, the convex envelope is given by the optimal function value of \eqref{LPFORPRODX}. 
Each permuted $u$ can be expressed as a convex combination of the corner points of the permuted simplex $\{b\ge u_1\ge \cdots \ge u_n \ge a\}$. We claim that an affine underestimator $\breve{\phi}$ of $\conv(\phi)$ that is such that $\breve{\phi}(x) = \conv(\phi)(x)$ must also satisfy $\breve{\phi}(v^i) = \conv(\phi)(v^i)$ for every vertex $v^i$ of $[a,b]^n$ that has a non-zero multiplier in the representation of $x$ computed above. If not, we have convex multipliers $\lambda_i$, where $x=\sum_{i}\lambda_i v^i$, such that 
\[\breve{\phi}(x) = \conv(\phi)(x) =  \sum_i\lambda_i \conv(\phi)(v^i) = \sum_i \lambda_i \phi(v^i) 
 > \sum_i \lambda_i \breve{\phi}(v^i) = 
\breve{\phi}(x),  
\]
which yields a contradiction. Here, the first equality is by the definition of $\breve{\phi}$, second equality is because our construction obtains $\conv(\phi)(x)$ as a convex combination of $\conv(\phi)(v^i)$ using multipliers $\lambda^i$, the third equality is because $v^i$ are extreme points, the first inequality is because there exists an $i$ such that $\phi(v^i) > \breve{\phi}(v^i)$, and the final equality is because $\breve{\phi}$ is affine. Therefore, it follows that $\breve{\phi}(v^i) = \phi(v^i)$ for all $i$. Since $x$ was assumed to be in general position, these equality constraints uniquely identify $\breve{\phi}$.

We conclude this section by presenting the results of a numerical experiment that suggests that the bounds obtained when building convex relaxations of $\psi_n(x)=\prod_{i=1}^n x_i$ over $[a,b]^n$ using the 
procedure described above are significantly stronger than those obtained using factorable relaxations.
To this end, we consider functions $\psi_n(x)$ where $n=10$ over two permutation-invariant hyper-rectangles.
The first one, $B_1=[2,4]^{10}$, is contained in the positive orthant, while the second, $B_2=[-2,3]^{10}$, contains $0$ in its interior. 
We generate nine sample points uniformly at random inside of $B_1$ and $B_2$. 
At each point, we compare the value $z_r$ of the relaxation obtained using a recursive application of McCormick's procedure with the value $z_e$ of the convex envelope, obtained using the results described in this section. 
We then compute the existing gap (Gap) and relative gap (\%Gap), using the formulas $z_e-z_r$ and $\frac{z_e-z_r}{z_e}$, respectively. 
Results are presented in Tables~\ref{TBLMC1} and ~\ref{TBLMC2} where it can be observed that the proposed approach leads to substantial improvements in bounds, especially when variables $x$ take both positive and negative values. 

\begin{table}[htbp]
\[
\begin{tabular}{nrrrrrn}\hline
Sample & $z_e$ & $z_r$ & Gap & \%Gap\\\hline
1    & 18943.5 & 7584.8 & 11358.6 & 60.0\%\\
2    & 52904.9 & 21933.9 & 30970.9 & 58.5\%\\
3    & 22754.2 & 8622.1 & 14132.1 & 62.1\%\\
4    & 26299.0 & 8526.7 & 17772.3 & 67.6\%\\
5    & 13817.1 & 5750.7 & 8066.3 & 58.4\%\\
6    & 25028.6 & 8906.2 & 16122.4 & 64.4\%\\
7    & 13852.4 & 5694.1 & 8158.3& 58.9\%\\
8    & 16059.4 & 8069.1 & 7990.2 & 49.8\%\\
9    & 10122.1 & 4812.2 & 5309.9 & 52.5\%\\\hline
Average & & & & 59.1\%\\\hline
\end{tabular}
\]
\caption{Gap at a randomly chosen point for $\prod_{i=1}^{10}x_i$ on $[2,4]^{10}$}\label{TBLMC1}
\end{table}

\begin{table}[htbp]
\[
\begin{tabular}{nrrrrrn}\hline
Sample & $z_e$ & $z_r$ & Gap & \%Gap\\\hline
1    & -12314.6 & -25655.4 & 13340.9 & 108.3\%\\
2    & -16221.2 & -29559.4 & 13338.2 & 82.2\%\\
3    & -13247.0   &-29405.9  & 16158.9 & 122.0\%\\
4    & -14069.4 & -28248.4 & 14179.0 & 100.8\%\\
5    & -10660.9 & -23134.2 & 12463.3 & 116.9\%\\
6    & -10979.5 & -21263.1 & 10283.7 & 93.7\%\\
7    & -9367.8 &  -21327.4 &  11959.6 & 127.7\%\\
8    & -10245.9 & -24782.6 & 14536.8 & 141.9\%\\
9    & -9182.8 & -21137.0 & 11954.2 & 130.2\%\\\hline
Average & & & & 113.7\%\\\hline
\end{tabular}
\]
\caption{Gap at a randomly chosen point for $\prod_{i=1}^{10}x_i$ on $[-2,3]^{10}$
}\label{TBLMC2}
\end{table}

\section{Set of rank-one matrices associated with permutation-invariant sets}
\label{sec:rankone}

For a positive integer $n$ and a given set $S\in\mathbb{R}^n$, define $M_S:=\left\{ (x,X)\in\mathbb{R}^n\times\mathcal{M}^n\mid X=xx^\intercal,  x\in S\right\}$ where $\mathcal{M}^n=\mathcal{M}^{n,n}(\mathbb{R})$.
For each element $(x,X)\in M_S$, it is clear that $\textup{rank}(X)=1$.
Studying $M_S$ is particularly important when constructing valid inequalities for semidefinite relaxations of non-convex optimization problems.
In this section, we study the case where the base set $S$ is permutation-invariant.

For an arbitrary positive integer $n$, we denote by $\mathbbm{1}_n$ the $n$-dimensional vector of ones. When $n$ is clear in the context, we simply denote it by $\mathbbm{1}$. Similarly, we denote by $\mathbbm{1}_{n\times n}$ the $n$-by-$n$ matrix of ones.

As a motivating example, consider sparse PCA, which, for a given covariance matrix $\Sigma$, finds a sparse vector $x$ that maximizes the variance $x^\intercal\Sigma x$.
A semidefinite relaxation of sparse PCA therefore aims to approximate
\begin{equation}
\label{spca}
M:=\{(x,X)\in\mathbb{R}^n\times \mathcal{M}^n\mid X=xx^\intercal,  \|x\|\le 1, \card(x)\le K\} 
\end{equation}
for a positive integer $K\in\{1,\dots,n-1\}$.
The set $M$ can be seen as $M_S$ by choosing $S$ to be the permutation-invariant set $S=\{x\in\mathbb{R}^n\mid \|x\|\le 1, \card(x)\le K\}$. 
The separation problem associated with $M$ is known to be NP-hard \cite{tillmann2013computational}.
Hence semidefinite relaxations have been considered that relax the non-convex constraint $X=xx^\intercal$ with $X\succeq xx^\intercal$, which is equivalent to the convex constraint 
$\begin{bmatrix}X&x\\
x^\intercal &1\end{bmatrix}
\succeq 0$.
Linear valid inequalities in $(x,X)$ are then developed by exploiting the property that $X=xx^\intercal$.
For example, the authors of \cite{d2007direct} introduce the valid inequality $\mathbbm{1}^\intercal X\mathbbm{1}\le K$ 
for \eqref{spca}, which is implied by valid inequality $\sum_{i=1}^n x_i\le \sqrt{K}$ and the condition $X=xx^\intercal$.

We next show that additional valid inequalities can be constructed in a higher dimensional space by using the permutation-invariance of the base set $S$.
To this end, we prove the following result.

\begin{proposition}
\label{prop:onlypermutation}
Suppose $S\subseteq\mathbb{R}^n$ is a permutation-invariant set. Let 
\[
\mathcal{N}= \left\{(x,u,X,U)\in\mathbb{R}^n\times\mathbb{R}^n\times\mathcal{M}^n\times\mathcal{M}^n
\middle\vert
\begin{array}{l}
X=xx^\intercal, U=uu^\intercal,\\ 
\textup{$x=Pu$ for some $P\in \mathcal{P}_n$}\\
u\in S\cap\Delta
\end{array}
\right\}.
\]
Then, $M_S=\textup{proj}_{(x,X)}\mathcal{N}$.
\end{proposition}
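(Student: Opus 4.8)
The plan is to prove the two inclusions $M_S \subseteq \proj_{(x,X)}\mathcal{N}$ and $\proj_{(x,X)}\mathcal{N} \subseteq M_S$ separately, the first being the substantive direction and the second essentially immediate.

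For the inclusion $\proj_{(x,X)}\mathcal{N} \subseteq M_S$, I would take $(x,X)\in \proj_{(x,X)}\mathcal{N}$, so there exist $u,U,P$ with $X=xx^\intercal$, $U=uu^\intercal$, $x=Pu$ for some permutation matrix $P\in\mathcal{P}_n$, and $u\in S\cap\Delta$. Since $S$ is permutation-invariant and $u\in S$, we get $x=Pu\in S$. Together with $X=xx^\intercal$, this shows $(x,X)\in M_S$ by definition.

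For the reverse inclusion $M_S \subseteq \proj_{(x,X)}\mathcal{N}$, I would start from $(x,X)\in M_S$, so $X=xx^\intercal$ and $x\in S$. The natural candidate for the lifted point is $u := x_{\Delta}$, i.e.\ the vector whose $i^{\text{th}}$ entry is $x_{[i]}$, the $i^{\text{th}}$ largest component of $x$; then there is a permutation matrix $P\in\mathcal{P}_n$ with $x=Pu$ (namely the permutation that sorts $x$). Set $U := uu^\intercal$. We must check each defining condition of $\mathcal{N}$: clearly $X=xx^\intercal$ and $U=uu^\intercal$ hold by construction, and $x=Pu$ holds by choice of $P$. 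It remains to verify $u\in S\cap\Delta$: by construction $u$ has non-increasing entries so $u\in\Delta$, and since $S$ is permutation-invariant and $x\in S$, the sorted vector $u=P^{-1}x$ (a permutation of $x$) also lies in $S$. Hence $(x,u,X,U)\in\mathcal{N}$ and $(x,X)=\proj_{(x,X)}(x,u,X,U)\in\proj_{(x,X)}\mathcal{N}$.

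This argument is short and I do not anticipate a genuine obstacle; the only point requiring a little care is making explicit that the sorting permutation $P$ satisfies $x=Pu$ (as opposed to $u=Px$), which is just a matter of fixing the convention, and that permutation-invariance of $S$ is used in both directions — once to push a sorted representative into $S$ and once to pull a permuted point out. The proof is purely set-theoretic and uses no result beyond the definition of permutation-invariance.
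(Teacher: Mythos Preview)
Your proof is correct and follows essentially the same approach as the paper's own argument: both directions are handled by sorting $x$ to obtain $u\in\Delta$ (for $M_S\subseteq\proj_{(x,X)}\mathcal{N}$) and by invoking permutation-invariance of $S$ to transfer membership from $u$ to $x=Pu$ (for the reverse inclusion). If anything, your write-up is slightly more explicit than the paper's in verifying that $u\in S$ in the forward direction.
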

\begin{proof}{Proof.}
To prove $M_S \subseteq \textup{proj}_{x,X} \mathcal{N}$, consider
$(x,X)\in M_S$. Let $P\in \mathcal{P}_n$ be such that  $u:=P^{-1}x\in \Delta$ and let $U=uu^\intercal$. 
Then, $(x,u,X,U)\in\mathcal{N}$. 
To prove $M_S \supseteq \textup{proj}_{x,X} \mathcal{N}$,
consider  $(x,u,X,U)\in\mathcal{N}$
and let $P\in\mathcal{P}_n$ be such that $x=Pu$.
By permutation-invariance of $S$, $x\in S$, showing  $(x,X) \in M_S$.
\Halmos
\end{proof}

Now we develop linear inequalities implied by the conditions in $\mathcal{N}$.
For any $(x,u,X,U)\in\mathcal{N}$, observe that $
X = xx^\intercal = (Pu)(Pu)^\intercal = PUP^\intercal$
for a permutation matrix $P$.
Therefore, consider linear inequalities implied by the facts that $x$ is a permutation of $u$, and $X$ is obtained by permuting some columns and rows of $U$ symmetrically.
Perhaps, the most straightforward such inequalities are
\begin{subequations}
\label{basicvalid}
\begin{align}
\mathbbm{1}^\intercal u&=\mathbbm{1}^\intercal x\\
\trc(X)&=\trc(U),\label{basicvalid_a}\\
\mathbbm{1}^\intercal X\mathbbm{1}&=\mathbbm{1}^\intercal U\mathbbm{1},\label{basicvalid_b}
\end{align}
\end{subequations}

More generally, consider any  function $\phi:\mathbb{R}^n\times\mathcal{M}^n\rightarrow\mathbb{R}$ such that $\phi(x,xx^\intercal)$ is permutation-invariant with respect to $x$.
Then, we can impose the equality $\phi(x,X)=\phi(u,U)$ if $\phi$ is linear in $(x,X)$.
In fact, if $\phi$ is linear in $(x,X)$ then we argue that this identity is implied by \eqref{basicvalid}.
To see this, observe that $\phi(x,xx^\intercal)$ is a quadratic function in $x$.
Let $\psi(x)=\phi(x,xx^\intercal)=x^\intercal C x+d^\intercal x+e$ for $C\in\mathbb{S}^n, d\in\mathbb{R}^n$, and $e\in\mathbb{R}$.
By permutation-invariance of $\psi$, 
the function $\psi(x)-\psi(Px)$ is the zero function in $x$ for every $P\in\mathcal{P}_n$.
Observe that 
\[
\psi(x)-\psi(Px)=x^\intercal(C-P^\intercal CP)x+(d-P^\intercal d)^\intercal x \equiv 0.
\]
Therefore, $d=P^\intercal d$ and $C=P^\intercal CP$.
For $i\ne j\in\{1,\dots,n\}$, consider the permutation matrix $P$ such that $(Px)_i=x_j, (Px)_j=x_i$, and $(Px)_k=x_k$ for all $k\ne i, j$.
Then, $d_i=d_j, C_{ii}=C_{jj}$, and $C_{ij}=C_{ji}$.
Since the choices for $i$ and $j$ are arbitrary, it holds that $d=\rho\mathbbm{1}$ for some $\rho\in\R^n$, the diagonal entries of $C$ are identical, and $C$ is symmetric.
We next claim that all off-diagonal entries of $C$ are identical.
We assume $n\ge 3$ since it is clear otherwise.
For any $q\in\{1,\dots,n\}\setminus \{i,j\}$,
$[C-P^\intercal CP]_{iq}=C_{iq}-C_{jq}$.
That is, all entries of $q$th column of $C$ except for $C_{qq}$ are equal. By symmetry of $C$, all entries of $q$th row of $C$ except for $C_{qq}$ are equal.
Since $q$ is arbitrary, all off-diagonal entries of $C$ are identical because for any $i<j$ and $p<q$ with $q<j$, 
$C_{ij}=C_{pj}=C_{pq}$.
Therefore, 
\[\begin{array}{ll}
\psi(x)&=x^\intercal (c_1\diag(\mathbbm{1})+c_2\mathbbm{1}_{n\times n})x+(\rho\mathbbm{1})^\intercal x+e\\
&=c_1 \trc(xx^\intercal) + c_2 \mathbbm{1}^\intercal (xx^\intercal)\mathbbm{1}+\rho(\mathbbm{1}^\intercal x)+e.
\end{array}\]
Now, the desired equality $\phi(x,X)=\phi(u,U)$ is
\[
c_1 \trc(X) + c_2 \mathbbm{1}^\intercal X\mathbbm{1}+\rho(\mathbbm{1}^\intercal x)
=c_1 \trc(U) + c_2 \mathbbm{1}^\intercal U\mathbbm{1}+\rho(\mathbbm{1}^\intercal u),
\]
which is implied by equalities in \eqref{basicvalid}.

Another type of constraints can be obtained when $S\subseteq\mathbb{R}^n_+$.
That is, $u\in S$ is chosen to be nonnegative and in descending order.
Then, entries in each row of $uu^\intercal$ are also in descending order, yielding the inequalities.
\begin{equation}
\label{basicdescending}
U_{i,j}\ge U_{i,j+1},\quad  1\le i\le n,\:\: 1\le j< n-1.
\end{equation}
Similar arguments can be made for column entries.
These inequalities, however, are redundant because of the symmetry of $U$.

We next introduce a general framework that constructs tighter linear relaxations by exploring the conceptual relationship that $x$ is a permutation of $u$.
This allows us to model or relax identities of the form $\phi(x,xx^\intercal)=\phi(u,uu^\intercal)$ where $\phi$ is a certain real-valued nonlinear permutation-invariant function.
To this end, for fixed integers $p, q\in\{1,\dots,n\}$, $r\in\{1,\dots,\min\{pn,qn\}\}$, and $W\in\mathcal{M}^n$, consider the optimization problem
\begin{subequations}
\label{eq:transpqr}
\begin{align}
\max & \textstyle\sum_{i=1}^n\sum_{j=1}^n W_{ij}  t_{ij}  \nonumber\\
\st & \textstyle\sum_{j=1}^n t_{ij}\le q,&i\in\{1,\dots,n\}\label{eq:transpqrrow}\\
	& \textstyle\sum_{i=1}^n t_{ij}\le p,&j\in\{1,\dots,n\}\label{eq:transpqrcol}\\
    & \textstyle\sum_{i=1}^n\sum_{j=1}^n t_{ij}\le r\label{eq:transwhole}\\
    &0\le  t_{ij}\le 1,& i, j\in\{1,\dots,n\}.\label{eq:transpqrbounds}
\end{align}
\end{subequations}
Its dual is
\begin{subequations}
\label{eq:dualtransportationpqr}
\begin{align}
\min \quad	& \displaystyle q\sum_{i=1}^n \alpha_i + p\sum_{j=1}^n\beta_j + r\gamma +\sum_{i=1}^n\sum_{j=1}^n \delta_{ij}\nonumber\\
\st\quad	& \displaystyle \alpha_i+\beta_j+\gamma+\delta_{ij} =  W_{ij} + \theta_{ij} & i,j\in\{1,\dots,n\}\label{dualmainconstpqr}\\
			& \alpha_i\ge 0, \beta_j\ge 0, \gamma\ge 0, \delta_{ij}\ge 0, \theta_{ij} \ge 0& i,j\in\{1,\dots,n\},\label{dualnonnegativitypqr}
\end{align}
\end{subequations}
where dual variables $\alpha$, $\beta$, $\gamma$, and $\delta$ correspond to primal constraints \eqref{eq:transpqrrow}, \eqref{eq:transpqrcol},
\eqref{eq:transwhole}, and
the upper-bound constraints of \eqref{eq:transpqrbounds}, respectively.
We denote these optimization problems by $\max\{f^W(t)\mid t\in\Phi\}$ and $\min\{g(z)\mid z\in\Omega(W)\}$. 
Strong duality holds since both \eqref{eq:transpqr} and \eqref{eq:dualtransportationpqr} are feasible.
We denote $h(W):=\max\{f^W(t)\mid t\in\Phi\}=\min\{g(z)\mid z\in \Omega(W)\}$.
Observe that $h(ww^\intercal)$, as a function of $w$ is permutation-invariant with respect to $w$. 
Therefore, if $(x,u,X,U)\in\mathcal{N}$, it holds that $h(U)=h(X)$.
Since the linearity of the identity is not guaranteed, we construct linear inequalities in $(x,u,X,U)$ by taking the identity and the conditions in the set description of $\mathcal{N}$ into account. 
In the following discussion, we assume that $(x,u,X,U)\in\mathcal{N}$.

We first consider the case where a closed-form description of the optimal value $h(ww^\intercal)$ is known, $h(U)$ is linear in $U$, and $h(X)$ is not linear in $X$.
Since $h(X)$ and $f^X(t)$ are both nonlinear in $(X,W)$ and $(t,W)$, respectively, we use the dual objective formulation to reformulate the identity $h(U)=h(X)$ because the dual objective function and the constraints are linear in $(z, W)$.
We obtain a reformulation by replacing $h(X)$ with $g(z)$ and adding the conditions in the feasible set $\Omega(X)$ into the formulation.

We next consider the case where either a closed-form of $h(ww^\intercal)$ is unknown or $h(U)$ is nonlinear in $U$.
Then, we construct the relaxation by replacing both $h(U)$ and $h(X)$ with linear functions $g(z^U)$ and $g(z^X)$ with distinct variables $z^U=(\alpha^U, \beta^U,\gamma^U,\delta^U, \theta^U)$ and $z^X=(\alpha^X,\beta^X,\gamma^X,\delta^X, \theta^X)$ and add the conditions in $\Omega(U)$ and $\Omega(X)$.
Then, we tighten the relaxation by exploring the permutation relationship between $u$ and $x$ and the rank-one conditions.
Assume that $x=Pu$ and that $z^U=(\alpha^U, \beta^U,\gamma^U,\delta^U, \theta^U)$ is an optimal solution to the dual with $W=uu^\intercal$.
Then, $(P\alpha^U, P\beta^U,\gamma^U,P\delta^UP^\intercal, P\theta^U P^\intercal)$ is an optimal solution to the dual with $W=xx^\intercal$. 
Therefore, we can tighten the relaxation by considering conditions $\alpha^X=P\alpha^U, \beta^X=P\beta^X$,  $\gamma^U=\gamma^X$, and
$\delta^X=P\delta^UP^\intercal$ for some $P\in\mathcal{P}_n$.
For example, we can add the baseline linear conditions $\mathbbm{1}^\intercal \alpha^U=\mathbbm{1}^\intercal \alpha^X$, $\mathbbm{1}^\intercal \beta^U=\mathbbm{1}^\intercal \beta^X$, $\gamma^U=\gamma^X$, $\trc(\delta^U)=\trc(\delta^X)$, $\trc(\theta^U)=\trc(\theta^X)$, $\mathbbm{1}^\intercal \delta^U\mathbbm{1}=\mathbbm{1}^\intercal \delta^X\mathbbm{1}$, and $\mathbbm{1}^\intercal \theta^U\mathbbm{1}=\mathbbm{1}^\intercal \theta^X\mathbbm{1}$. If $\alpha^U\in\Delta$ (resp. $\beta^U\in\Delta$) then we can add the linear reformulation for $\alpha^U\ge_m \alpha^X$ (resp. $\beta^U\ge_m \beta^X$).
Furthermore, we can take advantage of a ``good" feasible solution of the dual.
Let $z_0^U=(\alpha_0^U,\beta_0^U,\gamma_0^U,\delta_0^U)$ be a feasible solution to the dual with $W=uu^\intercal$.
Then, we can replace the left-hand side with $g(z_0^U)$ and the equality with the inequality $\ge$.
We may add inequalities that capture the permutation relationships.
Obviously, we can add
$\gamma_0^U=\gamma^X$.
In addition, we can impose the linear reformulations of $\alpha_0^U\ge_m \alpha^X$ and $\beta_0^U\ge_m \beta^X$
because $\alpha_0^U$ and $\beta_0^U$ are constants.
In addition, any linear inequalities implied by the relationship $\delta_0^U=P^\intercal\delta^X P$ for some $P\in\mathcal{P}_n$, such as $\trc(\delta_0^U)=\trc(\delta^X)$ and $\mathbbm{1}^\intercal\delta_0^U\mathbbm{1}=\mathbbm{1}^\intercal\delta^X\mathbbm{1}$, can be considered. Similar relations can also be introduced for $\theta$. More generally, arbitrary linear functions that are permutation-invariant in $\alpha$, $\beta$, $\gamma$, $\delta$, and $\theta$ can be considered instead of the specific one in the objective of the dual.

We next present some special but important cases where the closed-form of $h(W)$ is known.

\begin{lemma}
\label{lemma:diagpqr}
When $W=ww^\intercal$ for $w\ge 0$ and $p=q=1$, the optimal value of \eqref{eq:transpqr} is $\sum_{i=1}^r w_{[i]}^2$.
\end{lemma}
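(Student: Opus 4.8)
The plan is to prove the identity by producing matching primal and dual feasible solutions, so that weak duality pins the optimal value down exactly. Since $W=ww^\intercal$, simultaneously permuting the rows and columns of $t$ leaves both the feasible set of \eqref{eq:transpqr} (with $p=q=1$) and the objective $\sum_{ij}W_{ij}t_{ij}$ unchanged, so I would first assume without loss of generality that $w_1\ge w_2\ge\cdots\ge w_n\ge 0$, so that $w_{[i]}=w_i$. For the lower bound, set $t_{ii}=1$ for $i\in\{1,\dots,r\}$ and $t_{ij}=0$ otherwise; this is feasible because each row and column sum is at most $1=q=p$, the total is exactly $r$, and $0\le t_{ij}\le 1$, and it gives objective value $\sum_{i=1}^r W_{ii}=\sum_{i=1}^r w_i^2$. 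Hence the optimal value of \eqref{eq:transpqr} is at least $\sum_{i=1}^r w_{[i]}^2$.

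For the upper bound I would exhibit a feasible solution of the dual \eqref{eq:dualtransportationpqr} of value $\sum_{i=1}^r w_i^2$. Take $\gamma=w_r^2$, $\alpha_i=\beta_i=\tfrac12\max\{w_i^2-w_r^2,\,0\}$ for all $i$, $\delta_{ij}=0$, and $\theta_{ij}=\alpha_i+\beta_j+\gamma-W_{ij}$. Then $\alpha,\beta,\gamma,\delta\ge 0$ by construction and the dual equality \eqref{dualmainconstpqr} holds by the definition of $\theta$; the only thing to check is $\theta\ge 0$, i.e.\ $\alpha_i+\beta_j+\gamma\ge w_iw_j$ for every $i,j$. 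This is a short three-case verification: if $i,j\le r$ the left-hand side equals $\tfrac12(w_i^2+w_j^2)\ge w_iw_j$ by AM--GM; if exactly one index exceeds $r$, say $j>r$ (so $\beta_j=0$ because $w_j\le w_r$), the left-hand side equals $\tfrac12(w_i^2+w_r^2)\ge w_iw_r\ge w_iw_j$; and if $i,j>r$ the left-hand side is $w_r^2\ge w_iw_j$. Using $\sum_i\max\{w_i^2-w_r^2,0\}=\sum_{i=1}^r(w_i^2-w_r^2)$ (ordering makes each $i\le r$ term equal to $w_i^2-w_r^2\ge 0$ and each $i>r$ term equal to $0$), the dual objective equals $\bigl(\sum_{i=1}^r w_i^2-rw_r^2\bigr)+rw_r^2+0=\sum_{i=1}^r w_i^2$. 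Combining with weak duality, the optimal value of \eqref{eq:transpqr} is sandwiched between $\sum_{i=1}^r w_{[i]}^2$ and itself, giving the claimed equality.

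I do not expect a genuine obstacle; the one delicate point is guessing the correct dual certificate, and the threshold choice $\gamma=w_r^2$ with the clipped potentials $\alpha_i=\beta_i=\tfrac12(w_i^2-w_r^2)_+$ is dictated by the combinatorial picture that the primal optimum is attained at the partial permutation matrix supported on the $r$ largest squared weights. As an alternative argument worth recording, one could instead observe that the vertices of the feasible polytope of \eqref{eq:transpqr} with $p=q=1$ are $0/1$ partial permutation matrices with at most $r$ ones, and then bound $\sum_{i\in R}w_iw_{\pi(i)}\le\tfrac12\bigl(\sum_{i\in R}w_i^2+\sum_{j\in\pi(R)}w_j^2\bigr)\le\sum_{i=1}^r w_{[i]}^2$ for any such matrix with support on a bijection $\pi$ between row set $R$ and column set $\pi(R)$ of size $r$; but the duality route above is shorter and sidesteps having to justify integrality of the doubly substochastic polytope intersected with a cardinality cut.
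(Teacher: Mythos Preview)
Your proof is correct and essentially identical to the paper's: both assume $w$ is sorted, exhibit the same primal solution $t_{ii}=1$ for $i\le r$, and the same dual certificate $\gamma=w_r^2$, $\alpha_i=\beta_i=\tfrac12(w_i^2-w_r^2)_+$, $\delta=0$, with the same three-case verification of feasibility. The only cosmetic difference is that the paper invokes complementary slackness while you compute the dual objective directly and use weak duality; your added remark about the partial-permutation-matrix vertex argument is a nice alternative the paper does not mention.
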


\begin{proof}{Proof.}
Without loss of generality, we assume that $w\in\Delta$ and prove that the optimal value is $\sum_{i=1}^r w_{i}^2$.
Define $t'$ as $t'_{ij}=1$ if $i=j\le r$ and 0 otherwise.
Then, $t'$ is feasible for the primal.
Its objective value is $\sum_{i=1}^r w_i^2$.
We next define $z':=(\alpha', \beta', \gamma', \delta')\in\Re^n\times\Re^n\times\Re\times \mathcal{M}^n$ as $\alpha'_i=\beta'_i =\max\left\{\frac{w_i^2-w_r^2}{2}, 0\right\}$ for $i=1,\dots,n$, $\gamma'=w_r^2$, and $\delta'_{ij}=0$ for $i,j\in\{1,\dots,n\}$ and prove that $z'$ is feasible for the dual.
The nonnegativity of $z'$ is clear. 
We next show that $z'$ satisfies \eqref{dualmainconstpqr}.
First, suppose $i\le r$ and $j\le r$.
Then, $\alpha'_i+\beta'_j+\gamma'+\delta'_{ij}
=\frac{w_i^2+w_j^2}{2}\ge w_iw_j$.
Next, consider the case where $i\le r$ and $j> r$. 
Then, 
$\alpha'_i+\beta'_j+\gamma'+\delta'_{ij}
=\frac{w_i^2+w_r^2}{2}\ge w_iw_r\ge w_iw_j$,
where 
the last inequality holds because $w\in\Delta$.
The case where $i>r$ and $j\le r$ is symmetrical, and the case where $i>r$ and $j>r$ is clear.
Since $t'$ and $z'$ satisfy complementarity-slackness conditions, they are optimal solutions to the primal and the dual, respectively. 
Their common objective value is $\sum_{i=1}^r w_i^2$.
\hfill \Halmos
\end{proof}

By Lemma~\ref{lemma:diagpqr}, when $p=q=1$, $h(W)$ is the sum of $r$ largest diagonal entries of $W$.
While $h(W)$ is nonlinear, $h(U)$ is linear because it is the sum of the first $r$ diagonal entries.
On the other hand, the inequalities $h(U)\ge h(X)$ for $r\in \{1,\dots,n-1\}$ are equivalent to the inequality parts of the majorization $\diag(U)\ge_m \diag(X)$. The equality part of the majorization is equivalent to the existing constraint \eqref{basicvalid_a}.
Therefore, $\diag(U)\ge_m \diag(X)$ is a special case of the aforementioned modeling technique.

\begin{lemma}
\label{lemma:transportation}
When $W=ww^\intercal$ for $w\ge 0$ and $r=pq$,
the optimal value of \eqref{eq:transpqr} is
$\sum_{i=1}^p\sum_{j=1}^q w_{[i]}w_{[j]}$.
\end{lemma}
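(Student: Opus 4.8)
The plan is to exhibit an explicit primal feasible solution and an explicit dual feasible solution to \eqref{eq:transpqr}–\eqref{eq:dualtransportationpqr} with matching objective values; optimality then follows from weak duality (or, equivalently, by verifying complementary slackness, exactly as in the proof of Lemma~\ref{lemma:diagpqr}). As in that lemma, by the rearrangement/relabeling argument we may assume without loss of generality that $w \in \Delta$, i.e.\ $w_1 \ge w_2 \ge \dots \ge w_n \ge 0$, so the claimed optimal value becomes $\sum_{i=1}^p\sum_{j=1}^q w_i w_j = \bigl(\sum_{i=1}^p w_i\bigr)\bigl(\sum_{j=1}^q w_j\bigr)$.

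For the primal, I would take $t'_{ij} = 1$ if $i \le p$ and $j \le q$, and $t'_{ij} = 0$ otherwise. This is the natural ``top-left $p \times q$ block'' solution. It is feasible: each row sum is at most $q$ (equal to $q$ for $i \le p$, $0$ otherwise), each column sum is at most $p$, the total is $pq = r$, and all entries lie in $[0,1]$. Its objective value is $\sum_{i\le p, j\le q} w_i w_j = \bigl(\sum_{i=1}^p w_i\bigr)\bigl(\sum_{j=1}^q w_j\bigr)$, the target value. For the dual, the guiding principle (from complementary slackness against $t'$) is: the constraints $\sum_j t_{ij}\le q$ should be tight for $i\le p$, the constraints $\sum_i t_{ij}\le p$ tight for $j\le q$, and $\sum t_{ij}\le r$ tight; moreover $\theta_{ij}$ should vanish on the block $\{i\le p, j\le q\}$ while $\delta_{ij}$ vanishes off it. A candidate is $\alpha_i = \max\{w_i - w_p, 0\}$ (so $\alpha_i = w_i - w_p$ for $i \le p$, $0$ for $i > p$), $\beta_j = \max\{w_j - w_q, 0\}$, and $\gamma = w_p w_q$, with $\delta_{ij} = \max\{(w_i - w_p)(w_j - w_q),\, 0\}$ chosen to close the gap on the off-block entries where $w_i < w_p$ and $w_j < w_q$ would otherwise make $\alpha_i + \beta_j + \gamma$ too small; and $\theta_{ij}$ the nonnegative slack $\alpha_i + \beta_j + \gamma + \delta_{ij} - w_i w_j$. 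I would then verify: (a) nonnegativity of all of $\alpha,\beta,\gamma,\delta$; (b) that $\theta_{ij} := \alpha_i + \beta_j + \gamma + \delta_{ij} - w_i w_j \ge 0$ for every $(i,j)$ — this is the case analysis on whether $i \le p$ or $i > p$ and $j \le q$ or $j > q$, using $w \in \Delta$; and (c) that the dual objective $q\sum_i \alpha_i + p\sum_j \beta_j + r\gamma + \sum_{ij}\delta_{ij}$ equals $\bigl(\sum_{i=1}^p w_i\bigr)\bigl(\sum_{j=1}^q w_j\bigr)$, or more directly that complementary slackness holds between $t'$ and $(\alpha,\beta,\gamma,\delta,\theta)$ so that the two objective values coincide automatically.

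The main obstacle I anticipate is getting the off-block dual variables exactly right so that both dual feasibility (the constraint $\alpha_i + \beta_j + \gamma + \delta_{ij} \ge w_iw_j$ with nonnegative $\theta$) and complementary slackness hold simultaneously; the four-way case split on $(i \lessgtr p, j \lessgtr q)$ has to be checked carefully, and the correct expression for $\delta_{ij}$ may need a small adjustment (for instance using $\delta_{ij} = \max\{(w_i-w_p)(w_j-w_q),0\}$ only when exactly neither index is in range, and $0$ otherwise). If a clean closed form for the dual proves awkward, the fallback is to argue optimality purely by providing the primal $t'$ together with \emph{any} dual feasible point of matching value, or to appeal to the integrality/total-unimodularity of the transportation polytope together with a direct combinatorial argument that no feasible $t$ can beat the top-left block when $w$ is sorted — but the explicit dual certificate, mirroring Lemma~\ref{lemma:diagpqr}, is the cleaner route and is what I would write up.
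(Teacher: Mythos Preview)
Your approach—exhibit the top-left $p\times q$ block as the primal solution and certify optimality with an explicit dual satisfying complementary slackness—is exactly the route the paper takes, and your primal $t'$ is identical to theirs. The gap is in the dual: with your choices $\alpha_i=\max\{w_i-w_p,0\}$, $\beta_j=\max\{w_j-w_q,0\}$, $\gamma=w_pw_q$, the block constraint $\alpha_i+\beta_j+\gamma+\delta_{ij}=w_iw_j$ cannot be met with $\delta_{ij}\ge 0$ in general (try $w_i=w_j=1$, $w_p=w_q=\tfrac12$: then $\alpha_i+\beta_j+\gamma=1.25>1=w_iw_j$, but also for $w_i=w_j=2$, $w_p=w_q=1$ you get $\alpha_i+\beta_j+\gamma=3$ and would need $\delta_{ij}=1$, whereas the dual objective then overshoots). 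The additive ansatz $w_i-w_p$ does not mesh with the multiplicative target $w_iw_j$.

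The paper's fix is to \emph{scale} the linear pieces: take $\alpha'_i=\max\{(w_i-w_p)w_q,0\}$, $\beta'_j=\max\{w_p(w_j-w_q),0\}$, $\gamma'=w_pw_q$, and $\delta'_{ij}=(w_i-w_p)(w_j-w_q)$ on the block (zero off it). Then on the block the four terms are precisely the expansion of $w_iw_j=\bigl[(w_i-w_p)+w_p\bigr]\bigl[(w_j-w_q)+w_q\bigr]$, giving equality; off the block a short case check using $w\in\Delta_+$ gives $\alpha'_i+\beta'_j+\gamma'\ge w_iw_j$ with $\delta'_{ij}=0$, and complementary slackness holds throughout. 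Also note your parenthetical about putting $\delta$ nonzero ``only when exactly neither index is in range'' is backwards: complementary slackness with $t'_{ij}\le 1$ forces $\delta_{ij}=0$ whenever $t'_{ij}<1$, i.e.\ $\delta$ must live \emph{on} the block, not off it.
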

\begin{proof}{Proof.}
Without loss of generality, we assume that $w\in\Delta_+$.
First, define $t'$ as $t'_{ij}=1$ if $i\le p$ and $j\le q$ and 0 otherwise. 
It is clear that $t'$ is feasible and that its objective function value is $(\sum_{i=1}^p w_i)(\sum_{j=1}^q w_j)$.
Next, we consider its dual \eqref{eq:dualtransportationpqr}
and define $z':=(\alpha', \beta', \gamma', \delta')\in\Re^n\times\Re^n\times\Re\times \mathcal{M}^n$ as follows: 
\[
\begin{array}{rl}
\alpha'_i &=\max\{(w_i-w_p)w_q, 0\},\quad\: i=1,\dots,n\\
\beta'_j &=\max\{w_p(w_j-w_q), 0\}, \quad j=1,\dots,n\\
\gamma'&=w_pw_q\\
\delta'_{ij}&=\left\{
\begin{array}{ll}
(w_i-w_p)(w_j-w_q)& \quad \textup{if $i\le p$ and $j\le q$}\\
0 & \quad \textup{otherwise.}
\end{array}
\right.
\end{array}
\]
We first show that $z'$ is feasible for the dual.
The nonnegativity of the variables is clear from their definition. To prove that they satisfy \eqref{dualmainconstpqr}, we first consider the case where $i\le p$ and $j\le q$. 
Then, we compute that 
$\alpha'_i+\beta'_j+\gamma'+\delta'_{ij} = w_i w_j$, 
showing the result. 
Next, consider the case where $i>p$ or $j>q$. 
Without loss of generality, we assume that $i>p$. 
When $j>q$, it holds that $\alpha'_i+\beta'_j+\gamma'+\delta'_{ij}=\gamma'=w_pw_q\ge w_iw_j$ where the inequality holds because $i>p$, $j>q$, and $w\in\Delta_+$.
When $j\le q$, it holds that $\alpha'_i+\beta'_j+\gamma'+\delta'_{ij}=\beta'_j+\gamma'=w_pw_j\ge w_iw_j$ where the inequality holds because $i>p$ and $w\in\Delta_+$.
Since $t'$ and $z'$ satisfy complementarity-slackness conditions, they are optimal solutions to the primal and dual, respectively.
Their common objective value is $\sum_{i=1}^p\sum_{j=1}^q w_iw_j$.
\hfill \Halmos
\end{proof}

Lemma~\ref{lemma:transportation} and \eqref{eq:transpqr} can be trivially extended to the case where $W=\bar{w}w^\intercal$, where $\bar{w}\in \Re^m$, $w\in \Re^n$, $\bar{w},w\ge 0$, and $r=pq$, in which case the optimal value is $\sum_{i=1}^p\sum_{j=1}^q\bar{w}_{[i]}w_{[j]}$. 
However, this is not of direct relevance for our sparse PCA relaxations.
While $h(W)$ is nonlinear because the order of $w$ is unknown, $h(U)$ is linear because it is the sum of the entries of the $p$-by-$q$ upper-left submatrix of $U$.
In particular, for $q=n$ and $p\in\{1,\dots,n\}$, the inequalities $h(U)\ge h(X)$ are equivalent to the inequality parts of $R^U\ge_m R^X$ where $R^U$ and $R^X$ are the vectors of the row sums of $U$ and $X$, respectively. 
The equality part of the majorization is equivalent to \eqref{basicvalid_b}.
Therefore, $R^U\ge_m R^X$ is a special case of the aforementioned modeling technique.

In the remainder of the section, we introduce various strengthened semidefinite programming relaxations for sparse PCA.

\subsection{An SDP relaxation for sparse PCA}
\label{subsec:sdpspca}

Principal Component Analysis (PCA) is a well-known dimension reduction technique in statistical analysis.
A principal component is a linear combination of independent variables.
It also typically stands for the coefficient vector of the linear combination. 
The first principal component is a unit principal component for which variance is maximized; it is the eigenvector corresponding to the largest eigenvalue of the covariance matrix.
Even though the first principal component explains the most variance of the data, it is often hard to interpret because most of its coefficients are nonzero.
Sparse PCA is a variant of the approach introduced to resolve this issue by finding linear combinations with few explanatory variables.

Formally, let $\Sigma\in\mathcal{S}^n$ be the covariance matrix of the data set.
The following optimization problem, which we refer to as \emph{sparse PCA} and as defined in Section~\ref{sec:intro},
where $x\in\Re^n$ is the coefficient vector of the principal component,
finds a unit sparse vector with at most $K$ nonzero entries that explains most of the variance of the data:
\[\begin{array}{ll}
\textup{max}	& x\tr\Sigma x\\
\textup{s.t.}		& \|x\|\le 1,\\
				& \textup{card}(x)\le K,
\end{array}
\tag{Sparse PCA}
\]
where $K$ is a positive integer satisfying $1<K<n$.

We studied the feasible set of sparse PCA in Section~\ref{sec:sparsity} where we denoted it as $N^K_{\|\cdot\|}$ assuming $\|\cdot\|$ is the $\ell_2$-norm.
The feasible region of sparse PCA is non-convex because of the
sparsity constraint. 
We established in Section~\ref{sec:sparsity}  that 
\begin{equation}
\textup{conv}(N^K_{\|\cdot\|})
=\left\{x\middle\vert
\begin{array}{l}
\|u\|\le 1,\\
u_1\ge\dots\ge u_K\ge 0,\\
u_{K+1}=\dots=u_n=0,\\
u\ge_{wm} |x|
\end{array}
\right\}.
\label{convhull}
\end{equation}

Because sparse PCA maximizes a convex function, we can replace the feasible set with its convex hull, thereby obtaining a new problem formulation. 
This formulation, however, remains difficult to solve as it is a convex maximization problem. 
Next, we introduce new positive semidefinite relaxations for sparse PCA.
The most commonly used (and, to the best of our knowledge, only) SDP relaxation for sparse PCA was introduced in \cite{d2007direct} as follows
\begin{equation}
\label{eq:dassdp}
\begin{array}{rll}
\max	&\quad& \trc(\Sigma X)\\
\st 		&& \trc(X)\le 1,\\
		&& \mathbbm{1}^\intercal|X|\mathbbm{1} \le K,\\
		&& X\succeq 0.
\end{array}
\end{equation}
We refer to this as the \emph{$D$-relaxation}.

We next present a strengthened SDP relaxation based on the convex hull description \eqref{convhull}.
First, we introduce the matrix variable $X$ to model the relationship $X=xx^\intercal$. 
Then, we introduce variables $y$ and $Y$ to represent $|x|$ and $|X|$, respectively. 
Further, we add the auxiliary variables $v$, $w$, $V$, and $W$ to model the absolute values. 
The variables and the constraints are
\begin{equation}
\label{eq:absx}
\left\{
\begin{array}{l}
    x = v-w,\:\: y = v+w\\
    v,  w\in\Re^n_+,\:\: x, y\in\Re^n
\end{array}
\right.
\end{equation}
and
\begin{equation}
\label{eq:absX}
\left\{
\begin{array}{l}
    X = V-W,\:\:
    Y = V+W\\
    V, W\in\mathcal{M}^n_{\ge 0},\:\:
    X, Y\in\mathcal{S}^n,
\end{array}
\right.
\end{equation}
where $\mathcal{M}^n_{\ge 0}$ is the set of $n$-by-$n$ (entry-wise) nonnegative matrices and $\mathcal{S}^n$ is the set of $n$-by-$n$ symmetric matrices.
Next, we introduce the vector $u$ majorizing $y (=|x|)$ and the matrix $U$ to model $uu^\intercal$. 
The constraint $u\in \Delta_+$ and the constraint \eqref{basicdescending} in the cardinality setting can be written as
\begin{equation}
\label{eq:strfwdconst}
\left\{
\begin{array}{lll} 
    u_1\ge\dots\ge u_K\\
    u_{i}=0, && i\ge K+1\\
    U_{i,1}\ge\dots\ge U_{i,K}, && i=1,\dots,K\\
    U_{i,j}=0, &&\textup{$i\ge K+1$ or $j\ge K+1$}\\
      u\in\Re^n_+, U\in\mathcal{S}^n_{\ge 0},
\end{array}
\right.
\end{equation}
where $\mathcal{S}^n_{\ge 0}$ is the set of $n$-by-$n$ (entry-wise) nonnegative symmetric matrices.
In the following construction, we use the relationship between $Y$ and $U$ that $Y=PUP^\intercal$ for some $P\in\mathcal{P}_n$. (That is, the entries of $Y$ and $U$ equal up to row permutations and the corresponding column permutations.)
We impose the constraints  \eqref{basicvalid} as follows:
\begin{subequations}
\label{eq:strfwdconst2}
\begin{empheq}[left={\empheqlbrace\,}]{align}
    &\textup{trace}(U)\le 1\label{eq:trUle1}\\
    &\mathbbm{1}^\intercal U\mathbbm{1} = \mathbbm{1}^\intercal Y\mathbbm{1}.\label{eq:sumUeqsumY}
\end{empheq}
\end{subequations}
The nonconvex relationships $X=xx^\intercal, Y=yy^\intercal$, and $U=uu^\intercal$ can be relaxed using Schur complements as
\begin{equation}
\label{eq:schurcomp}
    \begin{bmatrix}
    X&x \\x\tr&1
    \end{bmatrix}\succeq 0, \quad
    \begin{bmatrix}
    Y&y \\y\tr&1
    \end{bmatrix}\succeq 0, \quad
    \begin{bmatrix}
    U&u \\u\tr&1
    \end{bmatrix}\succeq 0.
\end{equation} 
By constraint $X\succeq xx^\intercal$, it holds that $X\succeq 0$; hence, $\diag(X)\ge 0$. 
Therefore, the constraint $\trc(U)=\trc(Y)$ 
can be replaced with
    \begin{equation}
        \label{eq:equalTr}
        \textup{trace}(U)=\textup{trace}(X).
    \end{equation}
We next present modeling details for the majorization constraints using the arguments presented earlier.
First, the majorization relationship $u\ge_m y$ is represented as
\begin{equation}
\label{eq:umajy}
    \left\{\begin{array}{lll}
    \sum_{i=1}^j u_i\ge jr_j + \sum_{j=1}^n t_{ij},&& j=1, \dots, n-1\\
    y_i\le t_{ij}+r_j, &&i=1,\dots,n, \:\:j=1,\dots,n-1\\
    r\in\mathbb{R}^{n-1},\:\: t\in\mathbb{R}^{n\times (n-1)}_+
    \end{array}
    \right.
\end{equation}
as mentioned in \eqref{plargestdual}.  
Even though the modeling techniques using \eqref{eq:transpqr} and \eqref{eq:dualtransportationpqr} can potentially derive several inequalities, we only present certain representative inequalities in the proposed SDP relaxation for the sake of exposition.
First, the row sum majorization $R^U\ge_m R^Y$ and the diagonal majorization $\textup{diag}(U)\ge_m\textup{diag}(Y)$ are represented as
\begin{equation}
    \label{eq:RUmajRY}
        \left\{\begin{array}{lll}
        R^U_i = \sum_{j=1}^n U_{ij}, \:\:
        R^Y_i = \sum_{j=1}^n        Y_{ij}&&i=1,\dots,n\\
        \sum_{i=1}^j R^U_i\ge jr^R_j + \sum_{j=1}^n t^R_{ij},&& j=1, \dots, n-1\\
        R^Y_{i}\le t^R_{ij}+r^R_j, &&i=1,\dots,n, \:\:j=1,\dots,n-1\\
        r^R\in\Re^{n-1},\:\: t^R\in\Re^{n\times (n-1)}_+,\:\:
        R^U, R^Y\in\Re^n
        \end{array}
    \right.
\end{equation}
    and
\begin{equation}
        \label{eq:diagUmajdiagY}
        \left\{\begin{array}{lll}
        \sum_{i=1}^j U_{ii}\ge jr^D_j + \sum_{j=1}^n t^D_{ij},&& j=1, \dots, n-1\\
        X_{ii}\le t^D_{ij}+r^D_j, &&i=1,\dots,n, \:\:j=1,\dots,n-1\\
        r^D\in\Re^{n-1},\:\: t^D\in\Re^{n\times (n-1)}_+,
        \end{array}
        \right.
    \end{equation}
respectively.
Lastly, we use \eqref{eq:dualtransportationpqr} as follows to underestimate $\mathbbm{1}^\intercal (U^{pq})\mathbbm{1}$
for $p\in\{1,\dots,n\}$ and $q\in\{1,\dots,n\}$,
where for a matrix $A\in\mathbb{R}^{m\times n}$ 
and indices $p\le m$ and $q\le n$, $A^{pq}$ is defined as the $p$-by-$q$ submatrix whose sum of entries is maximum:
 \begin{equation}    
 \label{eq:ULtransportation}
 \left\{
 \begin{array}{ll}
        \multicolumn{2}{l}{\displaystyle\sum_{i=1}^p\sum_{j=1}^q U_{ij}\ge  \bar{q}\sum_{i=1}^n\alpha^{pq}_i+\bar{p}\sum_{j=1}^q \beta^{pq}_j+\bar{p}\bar{q}\gamma^{pq}+\sum_{i=1}^n\sum_{j=1}^n \delta^{pq}_{ij},}\\
        &\qquad p\in\{1,\dots,K\},q\in\{p,\dots,K\}\vspace{2mm}\\
        \alpha^{pq}_i+\beta^{pq}_j+\gamma^{pq}+\delta^{pq}_{ij}\ge Y_{ij},&\qquad 
        p\in\{1,\dots,K\}, q\in\{p, \dots,K\}, i,j\in\{1,\dots,n\}\\
        \multicolumn{2}{l}{\alpha\in\Re^{\frac{K(K-1)}{2}\times n}_+, \beta\in\Re^{\frac{K(K-1)}{2}\times n}_+, \gamma\in\Re^{\frac{K(K-1)}{2}}_+, \delta\in\Re^{\frac{K(K-1)}{2}\times n\times n}_+}
\end{array}
\right.
\end{equation}
with $\bar{p}=\left\{\begin{array}{ll}p& \textup{if $p\le K-1$}\\n&\textup{if $p=K$}\end{array}\right.$ and $\bar{q}=\left\{\begin{array}{ll}q& \textup{if $q\le K-1$}\\n&\textup{if $q=K$}\end{array}\right.$.

The proposed SDP relaxation, which we refer to as the \emph{submatrix relaxation}, is
\begin{equation}
\label{eq:majsdp}
\begin{array}{rl}
\max & \trc (\Sigma X)\\ \st & \eqref{eq:absx}-\eqref{eq:diagUmajdiagY}, \eqref{eq:ULtransportation}.
\end{array}
\end{equation}

\begin{theorem}
All constraints in \eqref{eq:dassdp} are implied by  \eqref{eq:absX}, \eqref{eq:trUle1}, \eqref{eq:sumUeqsumY}, \eqref{eq:schurcomp}, and  \eqref{eq:equalTr}.
\label{thm:dAsimplied}
\end{theorem}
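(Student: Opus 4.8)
The plan is to show that each of the three constraints of the $D$-relaxation \eqref{eq:dassdp}, namely $\trc(X)\le 1$, $\mathbbm{1}^\intercal |X|\mathbbm{1}\le K$, and $X\succeq 0$, follows from the listed constraints of the submatrix relaxation. The constraint $X\succeq 0$ is immediate: the Schur complement condition $\begin{bmatrix}X&x\\x^\intercal&1\end{bmatrix}\succeq 0$ in \eqref{eq:schurcomp} implies $X\succeq xx^\intercal\succeq 0$.

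Next I would handle $\trc(X)\le 1$. From \eqref{eq:equalTr} we have $\trc(X)=\trc(U)$, and \eqref{eq:trUle1} gives $\trc(U)\le 1$, so $\trc(X)\le 1$. (One should note that \eqref{eq:equalTr} is stated in the excerpt as a legitimate substitute for $\trc(U)=\trc(Y)$, justified there by $\diag(X)\ge 0$; for the present theorem we may simply invoke \eqref{eq:equalTr} as one of the hypotheses.)

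The main work is the constraint $\mathbbm{1}^\intercal |X|\mathbbm{1}\le K$. Here I would first use \eqref{eq:absX}: since $X=V-W$ and $Y=V+W$ with $V,W$ entrywise nonnegative, we have $|X_{ij}|=|V_{ij}-W_{ij}|\le V_{ij}+W_{ij}=Y_{ij}$ entrywise, hence $\mathbbm{1}^\intercal |X|\mathbbm{1}\le \mathbbm{1}^\intercal Y\mathbbm{1}$. Then \eqref{eq:sumUeqsumY} gives $\mathbbm{1}^\intercal Y\mathbbm{1}=\mathbbm{1}^\intercal U\mathbbm{1}$, so it remains to show $\mathbbm{1}^\intercal U\mathbbm{1}\le K$. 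For this I would combine \eqref{eq:trUle1} and the last Schur complement in \eqref{eq:schurcomp}, which gives $U\succeq uu^\intercal$, hence $\mathbbm{1}^\intercal U\mathbbm{1}\ge \mathbbm{1}^\intercal uu^\intercal\mathbbm{1}=(\mathbbm{1}^\intercal u)^2$ — but that is the wrong direction, so instead I would exploit \eqref{eq:strfwdconst}, which forces $U$ to be supported on its top-left $K\times K$ block ($U_{ij}=0$ whenever $i>K$ or $j>K$) together with $U\succeq 0$ from the Schur complement. Writing $U_K$ for that $K\times K$ block, $\trc(U)=\trc(U_K)\le 1$ and $U_K\succeq 0$, so by the bound $\mathbbm{1}_K^\intercal U_K\mathbbm{1}_K\le K\,\trc(U_K)$ (valid for any $K\times K$ PSD matrix, since $\mathbbm{1}_K^\intercal U_K\mathbbm{1}_K = \langle U_K,\mathbbm{1}_{K\times K}\rangle\le \lambda_{\max}(\mathbbm{1}_{K\times K})\trc(U_K)=K\trc(U_K)$) we get $\mathbbm{1}^\intercal U\mathbbm{1}=\mathbbm{1}_K^\intercal U_K\mathbbm{1}_K\le K\trc(U_K)\le K$. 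Chaining these inequalities yields $\mathbbm{1}^\intercal |X|\mathbbm{1}\le K$.

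The only subtlety — and the step I would double-check most carefully — is making sure all the ingredients used ($\diag(X)\ge 0$ for \eqref{eq:equalTr}, the support structure of $U$ from \eqref{eq:strfwdconst}, entrywise $|X|\le Y$ from \eqref{eq:absX}, and $U\succeq 0$ from \eqref{eq:schurcomp}) are genuinely among the constraints named in the theorem statement; in particular the theorem cites \eqref{eq:absX}, \eqref{eq:trUle1}, \eqref{eq:sumUeqsumY}, \eqref{eq:schurcomp}, and \eqref{eq:equalTr}, and the support restriction on $U$ in \eqref{eq:strfwdconst} is part of the base model, so the argument stays within the permitted hypotheses. I expect no real obstacle here; the proof is a short chain of inequalities, the PSD trace bound $\mathbbm{1}^\intercal M\mathbbm{1}\le K\,\trc(M)$ for $K\times K$ PSD $M$ being the one nonroutine observation.
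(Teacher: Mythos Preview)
Your argument is correct and follows essentially the same route as the paper: both show $\trc(X)\le 1$ via \eqref{eq:equalTr} and \eqref{eq:trUle1}, both get $X\succeq 0$ from the Schur complement in \eqref{eq:schurcomp}, and both reduce $\mathbbm{1}^\intercal|X|\mathbbm{1}\le K$ to showing $\mathbbm{1}_K^\intercal U_K\mathbbm{1}_K\le K\,\trc(U_K)$ for the PSD top-left block $U_K$. The only difference is cosmetic: the paper proves this last inequality via Jensen applied to the convex quadratic $f(x)=x^\intercal U_K x$ (writing $\mathbbm{1}_K=K\cdot\tfrac{1}{K}\sum_i e_i$), whereas you use the equivalent eigenvalue bound $\langle U_K,\mathbbm{1}_{K\times K}\rangle\le \lambda_{\max}(\mathbbm{1}_{K\times K})\trc(U_K)=K\,\trc(U_K)$. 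Your observation that the support structure of $U$ from \eqref{eq:strfwdconst} is tacitly needed (and used in the paper's proof as well, despite not being listed in the theorem statement) is accurate.
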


\begin{proof}{Proof.}
First, $\trc(X)=\trc(U)\le 1$, where the equality and the inequality directly follow from \eqref{eq:equalTr} and  \eqref{eq:trUle1}, respectively.
We next show that $\one^\intercal|X|\one\le K$ is implied.
Let $U_{K}$ be the upper-left $K$-by-$K$ submatrix of $U$ and let $\mathbbm{1}_K$ be the $K$-dimensional vector of ones.
Define $f:\mathbb{R}^K\rightarrow\mathbb{R}$ as $f(x):=x^\intercal U_{K,K} x$.
Since $U\succeq 0$, we have $U_{K,K}\succeq 0$, showing that $f$ is convex.
Furthermore, $f(\alpha x)=\alpha^2f(x)$ for any scalar $\alpha$.
Therefore,
\begin{equation}\label{eq:sumUbound}\begin{array}{ll}
\one^\intercal U\one &=\one_K^\intercal U_{K,K}\one_K= f(\one_K)=f\left(\sum_{i=1}^K e_i\right)=f\left(K\sum_{i=1}^K \frac{1}{K}e_i\right)\\
&=K^2f\left(\sum_{i=1}^K \frac{1}{K}e_i\right)\le K^2\frac{1}{K}\sum_{i=1}^K f(e_i)=K\trc(U)\le K,
\end{array}
\end{equation}
where the inequalities follows from the convexity of $f$ and \eqref{eq:trUle1}.
Therefore,
\begin{equation}
    \label{eq:sumUleK}
\one^\intercal |X|\one
=\one^\intercal |V-W|\one
\le\one^\intercal (V+W)\one
=\one^\intercal Y\one
=\one^\intercal U\one\le K,
\end{equation}
where the first two equalities and the first inequality follow from \eqref{eq:absX}, the third equality from \eqref{eq:sumUeqsumY}, and the last inequality from \eqref{eq:sumUbound}.
The positive semidefiniteness of $X$ follows from the first Schur complement condition in \eqref{eq:schurcomp}.
\hfill \Halmos
\end{proof}

\begin{remark}
\label{rmk:2step}
We present another relaxation of \eqref{eq:ULtransportation} which improves computational efficiency compared to \eqref{eq:ULtransportation}, albeit this approach provides a weaker bound.
First, we introduce variables $(SR)_{iq}$ to define the sum of $q$-largest components of $i^{\textrm{\scriptsize th}}$ row of $Y$ as follows:
\[
\left\{ \begin{array}{ll}
\displaystyle(SR)_{iq} \ge qr^b_{qi}+\sum_{j=1}^n t^b_{qij} & i=1,\dots,n, q=1,\dots,K\\
Y_{ij}\le t^b_{qij}+r^b_{qi}&i=1,\dots,n, j=1,\dots,n, q=1,\dots,K\\
t^b\ge 0\\
t^b\in\Re^K\times\Re^n\times\Re^n, r^b\in\Re^K\times\Re^n.
\end{array}
\right.
\]
Using a specific feasible solution of \eqref{eq:transpqr}, we relax \eqref{eq:ULtransportation} as follows: \[
\left\{\begin{array}{ll}
\displaystyle\sum_{i=1}^p\sum_{j=1}^q U_{ij}\ge pr^B_{pq}+\sum_{i=1}^n t^B_{pqi} & p=1,\dots,K, q=1,\dots,K\\
(SR)_{iq}\le t^B_{pqi}+r^B_{pq}&p=1,\dots,K, q=1,\dots,K, i=1,\dots,n\\
t^B\ge 0\\
t^B\in\Re^K\times\Re^K\times\Re^n, r^B\in\Re^K\times\Re^K
\end{array}
\right.
\]
We refer to this relaxation as the \emph{2-Step relaxation}.
\end{remark}

\subsection{Computational experiments for sparse PCA}
\label{subsec:sparsePCA}

We next report our computational results on sparse PCA.
First, we summarize the following standard result. 
\begin{proposition}\label{prop:SPCAform}
The following is a correct formulation for sparse PCA:
\begin{subequations}\label{eq:sparsePCA}
\begin{alignat}{3}
    &\max &\quad& \trc(\Sigma X)\\
    &\st &&\trc(X)\le 1\\
    &            &&X\succeq 0\\
    &            && \diag(X)\le z\label{eq:SPCAzconstraint}\\
    &            && \one^\intercal z = K\label{eq:zSum}\\
    &            && z\in \{0,1\}^n.\label{eq:zbinary}
\end{alignat}
\end{subequations}
Let $(X^*,z^*)$ be an optimal solution to \eqref{eq:sparsePCA} and $\sum_{i=1}^n \lambda_i x^i{x^i}^\intercal$ be an eigenvalue decomposition of $X^*$ so that each $x^i$ is a unit eigenvector. 
Then, for any $i'$ such that $\lambda_{i'} > 0$, $x^{i'}$ is an optimal solution to sparse PCA.
\hfill \Halmos
\end{proposition}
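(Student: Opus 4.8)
\textbf{Plan for the proof of Proposition~\ref{prop:SPCAform}.} The goal is to show that the mixed-integer SDP~\eqref{eq:sparsePCA} is a correct formulation for sparse PCA, i.e., that its optimal value equals $\max\{x^\intercal\Sigma x\mid \|x\|\le 1, \card(x)\le K\}$, and, moreover, that any positive-eigenvalue eigenvector of an optimal $X^*$ recovers an optimal sparse principal component. The plan is to argue by two inequalities: $\mathrm{(i)}$ every feasible $x$ of sparse PCA lifts to a feasible $(X,z)$ of~\eqref{eq:sparsePCA} with matching objective, so the SDP value is at least the sparse PCA value; and $\mathrm{(ii)}$ every feasible $(X,z)$ of~\eqref{eq:sparsePCA} can be ``rounded'' to a feasible sparse PCA solution whose objective is at least $\trc(\Sigma X)$, so the SDP value is at most the sparse PCA value. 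Equality then follows, and the statement about the eigenvector is a byproduct of direction $\mathrm{(ii)}$.

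For direction $\mathrm{(i)}$, given $x$ with $\|x\|\le 1$ and $\card(x)\le K$, set $X = xx^\intercal$ and $z_i = 1$ for $i$ in some superset of $\mathrm{supp}(x)$ of size exactly $K$ (possible since $\card(x)\le K$). Then $\trc(X) = \|x\|^2 \le 1$, $X\succeq 0$, $\diag(X)_i = x_i^2$; for $i\notin \mathrm{supp}(x)$ we have $x_i^2 = 0 \le z_i$, and for $i\in\mathrm{supp}(x)$ we have $z_i = 1 \ge x_i^2$ because $x_i^2 \le \|x\|^2 \le 1$ — so~\eqref{eq:SPCAzconstraint} holds; $\one^\intercal z = K$ and $z\in\{0,1\}^n$ by construction. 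The objective is $\trc(\Sigma xx^\intercal) = x^\intercal\Sigma x$. This gives $\mathrm{val}\eqref{eq:sparsePCA} \ge \mathrm{val}(\text{sparse PCA})$.

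For direction $\mathrm{(ii)}$, take a feasible $(X,z)$ and write the eigenvalue decomposition $X = \sum_i \lambda_i x^i{x^i}^\intercal$ with $\lambda_i\ge 0$ (since $X\succeq 0$), $\sum_i\lambda_i = \trc(X)\le 1$, and each $x^i$ a unit vector. The constraints~\eqref{eq:SPCAzconstraint}--\eqref{eq:zbinary} force $\diag(X)$ to be supported on the index set $T := \{i\mid z_i = 1\}$, which has $|T| = K$; hence every eigenvector $x^i$ with $\lambda_i > 0$ is supported on $T$, so $\card(x^i)\le K$ and $\|x^i\| = 1$ — i.e., each such $x^i$ is feasible for sparse PCA. (The support claim uses that $\mathrm{diag}(X)_j = 0$ implies $X_{jj}=0$, and for a PSD matrix $X_{jj}=0$ forces the entire $j$-th row and column of $X$ to vanish, hence $x^i_j = 0$ for every eigenvector with nonzero eigenvalue.) Then $\trc(\Sigma X) = \sum_i \lambda_i\, {x^i}^\intercal\Sigma x^i \le \bigl(\sum_i\lambda_i\bigr)\max_i {x^i}^\intercal\Sigma x^i \le \max_{i:\lambda_i>0} {x^i}^\intercal\Sigma x^i$, where the last step uses $\sum_i\lambda_i\le 1$ together with the fact that $\Sigma\succeq 0$ makes each term ${x^i}^\intercal\Sigma x^i$ nonnegative (this monotonicity in the total weight is the one place to be slightly careful). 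Picking $i'$ attaining this max, $x^{i'}$ is feasible for sparse PCA with objective $\ge \trc(\Sigma X)$. Applying this to an optimal $(X^*,z^*)$ shows $\mathrm{val}(\text{sparse PCA})\ge \mathrm{val}\eqref{eq:sparsePCA}$ and that $x^{i'}$ attains the sparse PCA optimum, giving the final claim.

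The only real subtlety — the ``main obstacle,'' though it is minor — is the support argument in direction $\mathrm{(ii)}$: one must justify that $X_{jj}=0$ for a PSD matrix $X$ forces the $j$-th row/column to be zero (so that the eigenvectors inherit the sparsity pattern encoded by $z$), and one must handle the aggregation ${x^i}^\intercal\Sigma x^i \ge 0$ and $\sum\lambda_i\le 1$ correctly so that dropping to the single best eigenvector does not lose value. Everything else is a routine verification of constraints.
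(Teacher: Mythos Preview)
Your argument is correct for establishing that the optimal values coincide, and the support argument (that $X_{jj}=0$ with $X\succeq 0$ forces $x^i_j=0$ for every eigenvector with $\lambda_i>0$) is exactly the right idea. However, you have not fully proved the second assertion of the proposition: it claims that \emph{every} $i'$ with $\lambda_{i'}>0$ yields an optimal $x^{i'}$, whereas your argument only establishes this for the particular $i'$ maximizing ${x^i}^\intercal\Sigma x^i$. To close this, observe that once the optimal values are equal to some $v^*$, you have $v^*=\sum_{i:\lambda_i>0}\lambda_i\,{x^i}^\intercal\Sigma x^i$ with $\lambda_i>0$, $\sum_i\lambda_i\le 1$, and each ${x^i}^\intercal\Sigma x^i\in[0,v^*]$; the chain $v^*\le\bigl(\sum_i\lambda_i\bigr)v^*\le v^*$ forces equality throughout, hence $\sum_i\lambda_i(v^*-{x^i}^\intercal\Sigma x^i)=0$, and since each summand is nonnegative, ${x^i}^\intercal\Sigma x^i=v^*$ for all $i$ with $\lambda_i>0$.

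Apart from this, your route differs slightly from the paper's: the paper first argues (by scaling) that $\trc(X^*)=1$ at optimality, obtaining $\sum_i\lambda_i=1$ exactly, and then runs a contradiction argument to show every positive-eigenvalue eigenvector is optimal. You instead keep $\sum_i\lambda_i\le 1$ and invoke $\Sigma\succeq 0$ to ensure the convex-combination bound still goes through. Both work; yours is a bit more direct but leans on the positive-semidefiniteness of $\Sigma$, which is fine here since $\Sigma$ is a covariance matrix.
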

\long\def\proofSPCAformulation{
When $\Sigma = 0$ or $K=0$, the validity is trivial since $(X^*,z^*)=(0,\sum_{i=1}^K e_i)$ is a feasible solution with an objective value of $0$, which is also optimal. 

Therefore, assume that $\Sigma\ne 0$ and $K> 0$. 
Let $(X^*,z^*)$ be an optimal solution to \eqref{eq:sparsePCA}. 
We show that $\trc(X^*)=1$. 
In this case, there is a diagonal element $\Sigma_{ii} > 0$. Then, $e_i^\intercal \Sigma e_i = \Sigma_{ii} > 0$. 
Therefore, the optimal value is strictly positive and $\trc(X^*) > 0$. 
Assume by contradiction that $\trc(X^*) < 1$.
Then, consider $X' = \frac{1}{\trc(X^*)}X^*$.
Clearly, $X' \succeq 0$. 
If $z^*_i = 1$, then $X'_{ii}\le \trc(X') = z^*_i$. 
If $z^*_i = 0$, then $X'_{ii}=0$ since $X_{ii}=0$. 
Now, $\Sigma\circ X^* < \Sigma \circ X'$, a contradiction. 
Therefore, $\trc(X^*) = 1$. 

Now we use the eigenvalue decomposition to express $X^* = \sum_{i} \lambda_i {x^i}{x^i}^\intercal$ so that $\sum_i \lambda_i = 1$, $\lambda_i > 0$, and, for all $i$, $\|x^i\| = 1$. 
We show by contradiction that, for all $i$ such that $\lambda_i > 0$, we have $\Sigma\circ x^i{x^i}^\intercal = \Sigma\circ X^*$. 
Let $i$ be such that $\lambda_{i} > 0$. 
We first show that $(x^{i}{x^{i}}^\intercal,z^*)$ is feasible to \eqref{eq:sparsePCA}. 
We only need to check the feasibility to \eqref{eq:SPCAzconstraint} since the remaining constraints are simple to verify. 
In particular, if $z^*_{j} = 0$, we have $0\le \lambda_{i}{x^{i}_{j}}^2 = -\sum_{\bar{i}\ne i}\lambda_{\bar{i}} {x^{\bar{i}}_j}^2 \le 0$ since $X^*_{jj}=( \sum_{i} \lambda_i x^i {x^i}\tra)_{jj} = 0$, so that equality holds throughout and $x^{i}_{j}=0$. 
If $z^*_j = 1$, we have ${x^{i}_j}^2 = 1 - \sum_{j'\ne j} {x^{i}_{j'}}^2\le 1 = z^*_j$. 
Therefore, $(x^{i}{x^{i}}^\intercal, z^*)$ is feasible to \eqref{eq:sparsePCA}. 
Now, assume that $\Sigma\circ x^i{x^i}^\intercal < \Sigma\circ X^*$. 
Then, $\Sigma \circ X^* = \Sigma \circ (\sum_{\bar{i}}\lambda_{\bar{i}} {x^{\bar{i}}}{x^{\bar{i}}}^\intercal) 
=  \sum_{\bar{i}: \lambda_{\bar{i}} > 0}\lambda_{\bar{i}} \Sigma\circ {x^{\bar{i}}}{x^{\bar{i}}}^\intercal < \Sigma\circ X^*$, where the strict inequality is because 
$\sum_{\bar{i}:\lambda_{\bar{i}}>0}\lambda_{\bar{i}} = 1$, $\Sigma\circ {x^{\bar{i}}}{x^{\bar{i}}}^\intercal \le \Sigma \circ X^*$ and $\Sigma\circ {x^i}{x^i}^\intercal < \Sigma \circ X^*$. 
This yields a contradiction. 
Therefore, whenever $\lambda_i > 0$, $\card(x^i) \le K$, $\|x^i\|=1$, and $\Sigma\circ X^* = {x^i}^\intercal \Sigma x^i$,   proving that $x^i$ is an optimal solution to sparse PCA.
}

\begin{theorem}\label{thm:ipconstraint2}
The following constraints are valid for sparse PCA:
\begin{subequations}\label{eq:Tformulation}
\begin{alignat}{2}
    &Y_{ij}^2\le T_{ij}T_{ji}&\quad& i=1,\ldots,n, j=i+1,\ldots,n\label{eq:toneprime}\\
    &T_{ii} = Y_{ii} &&i=1,\ldots,n\label{eq:ttwo}\\
    &\sum_{j=1}^n T_{ij}= z_i && i=1,\ldots,n\label{eq:tthree}\\
    &\sum_{i=1}^n T_{ij} = KY_{jj} && j=1,\ldots,n\label{eq:tfour}\\
    & 0\le T_{ij}\le Y_{jj} &&i=1,\ldots,n, j=1,\ldots,n.\label{eq:tfive}
\end{alignat}
\end{subequations}
In particular, $T_{ij}$ may be regarded as a linearization of $z_iy_j^2$. Using that $Y_{ij}=Y_{ji}$ for all $(i,j)$, the above constraints imply 
\begin{enumerate}
    \item\label{itm:showtone} for all $(i,j)$, $T_{ij} + T_{ji}\ge 2Y_{ij}$;
    \item\label{itm:showYij} for all $i$, $Y_{ii}\le z_i$ and, for all $(i,j)$, with $i\ne j$, $2Y_{ij}\le z_i$;
    \item\label{itm:showipconstraint} for all $S\subseteq \{1,\ldots,n\}$, $\sum_{i,j\in S}Y_{ij}\le \sum_{i\in S}z_i$; 
    \item\label{itm:showBert} for all $i$, $\sum_{j=1}^n Y_{ij}^2 \le z_iY_{ii}$;
    \item \label{itm:showBertlin} for all $i$, $\sum_{j=1}^n Y_{ij}^2 \le T_{ii}$ as long as, for all $j'$, $Y_{ij'}^2\le Y_{ii}Y_{j'j'}$ or, more specifically, $Y\succeq 0$.
    \item \label{itm:showsumK} $\one^\intercal |X|\one\le K$, as long as \eqref{eq:absX} holds. 
    \item \label{itm:showsumx2x} $\sum_{j=1}^n X_{ij}^2\le z_iX_{ii}$ as long as \eqref{eq:absX}, and $\trc(Y)=\trc(X)$ hold. 
    \item \label{itm:showsumx2t} $\sum_{j=1}^n X_{ij}^2\le T_{ii}$ as long as \eqref{eq:absX} holds, $\trc(Y)=\trc(X)$, and $X\succeq 0$. 
\end{enumerate}
\end{theorem}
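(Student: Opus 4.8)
The plan is to first establish the validity of the defining system \eqref{eq:Tformulation} for sparse PCA by exhibiting the intended interpretation of the variables, and then to derive each of the eight consequences \ref{itm:showtone}--\ref{itm:showsumx2t} from the system, using only elementary manipulations together with the auxiliary hypotheses stated in each item. For validity, recall from Proposition~\ref{prop:SPCAform} that sparse PCA can be modelled with a binary vector $z$ satisfying $\one^\intercal z = K$ and $\diag(X)\le z$, where $X = xx^\intercal$ on an optimal solution; here $Y=|X|$ so that $Y_{ij} = |x_i||x_j|$ and $Y_{ii}=x_i^2$. I would set $T_{ij} := z_i y_j^2 = z_i|x_j|^2$; then \eqref{eq:ttwo} holds since $z_i X_{ii} = X_{ii}$ when $z_i\ge X_{ii}\ge 0$ and $z_i\in\{0,1\}$ (if $z_i=0$ then $X_{ii}=0$), \eqref{eq:tthree} holds since $\sum_j T_{ij} = z_i\sum_j x_j^2 = z_i$ using $\|x\|=1$ on the relevant rank-one pieces, \eqref{eq:tfour} holds since $\sum_i T_{ij} = (\sum_i z_i) x_j^2 = K Y_{jj}$, \eqref{eq:tfive} holds since $0\le z_i x_j^2 \le x_j^2 = Y_{jj}$, and \eqref{eq:toneprime} holds since $T_{ij}T_{ji} = z_iz_j x_j^2 x_i^2 \ge z_i z_j x_i^2 x_j^2 = (z_iz_j)(x_ix_j)^2\ge (x_ix_j)^2 = Y_{ij}^2$ because $z_i,z_j\in\{0,1\}$ are each $\ge$ the corresponding $Y$-entries they dominate; I would be careful to phrase this so it survives the SDP relaxation, i.e. phrase it purely as an algebraic consequence valid on the feasible set of sparse PCA.

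Next I would derive the consequences in order. For \ref{itm:showtone}: from \eqref{eq:toneprime}, $T_{ij}+T_{ji}\ge 2\sqrt{T_{ij}T_{ji}}\ge 2|Y_{ij}| = 2Y_{ij}$ by AM--GM and $T_{ij},T_{ji}\ge 0$. For \ref{itm:showYij}: $Y_{ii}=T_{ii}\le \sum_j T_{ij} = z_i$ from \eqref{eq:ttwo},\eqref{eq:tthree},\eqref{eq:tfive}; and for $i\ne j$, $2Y_{ij}\le T_{ij}+T_{ji}\le T_{ij} + (z_i - T_{ii}) \le \dots$ — more directly, $2Y_{ij}\le T_{ij}+T_{ji}$ and then bound $T_{ij}+T_{ji}$ above; I would instead observe $2Y_{ij}\le T_{ij}+T_{ji}$ and $T_{ji}\le Y_{ii}\le z_i$, $T_{ij}\le \dots$ — the clean route is $2Y_{ij}\le T_{ij}+T_{ji}\le z_j \cdot 0 + \dots$; I will need to pick the summation in \eqref{eq:tthree} that gives $\sum_j T_{ij}=z_i$ so that each $T_{ij}\le z_i$, hence $T_{ij}+T_{ji}$ need a touch more care — use that $T_{ij}\le Y_{jj}$ is not what's wanted; rather use $\sum_{j}T_{ij}=z_i$ giving $T_{ij}\le z_i$ for every single $j$, and likewise $T_{ji}\le z_j$, which only yields $2Y_{ij}\le z_i+z_j$; to get the sharper $2Y_{ij}\le z_i$ I would use item~\ref{itm:showipconstraint} with $S=\{i,j\}$, so I would reorder and prove \ref{itm:showipconstraint} first. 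For \ref{itm:showipconstraint}: for $S\subseteq\{1,\dots,n\}$, $\sum_{i,j\in S}Y_{ij}\le \tfrac12\sum_{i,j\in S}(T_{ij}+T_{ji}) = \sum_{i\in S}\sum_{j\in S}T_{ij}\le \sum_{i\in S}\sum_{j=1}^n T_{ij} = \sum_{i\in S}z_i$, using item~\ref{itm:showtone}, symmetry of the double sum, nonnegativity \eqref{eq:tfive}, and \eqref{eq:tthree}; then \ref{itm:showYij} follows with $S=\{i\}$ and $S=\{i,j\}$.

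For \ref{itm:showBert}: $\sum_j Y_{ij}^2 \le \sum_j T_{ij}T_{ji}\le \sum_j T_{ij} Y_{ii} = Y_{ii}\sum_j T_{ij} = z_iY_{ii}$ using \eqref{eq:toneprime}, then $T_{ji}\le Y_{ii}$ from \eqref{eq:tfive} (row index $j$, column index $i$), then \eqref{eq:tthree}. For \ref{itm:showBertlin}: under the stated hypothesis $Y_{ij'}^2\le Y_{ii}Y_{j'j'}$ (which holds whenever $Y\succeq 0$, by $2\times2$ minors), I would show $\sum_j Y_{ij}^2\le T_{ii}$; write $\sum_j Y_{ij}^2 = \sum_j Y_{ij}^2 \le \dots$ and combine with \eqref{eq:tfour} rewritten as $T_{ii}=KY_{ii}-\sum_{j\ne i}T_{ji}$ — actually the cleaner path uses \ref{itm:showBert} together with $z_iY_{ii}\le T_{ii}$; but $z_iY_{ii}\le T_{ii}$ would need $z_i\le$ something, so instead I expect to argue directly: $\sum_j Y_{ij}^2\le \sum_j T_{ij}T_{ji}$, and then bound $\sum_j T_{ij}T_{ji}\le T_{ii}$ using that $\sum_j T_{ij}=z_i\le 1$ forces a Cauchy--Schwarz/convexity estimate, which is exactly where the hypothesis $Y_{ij'}^2\le Y_{ii}Y_{j'j'}$ enters — this is the step I expect to be the main obstacle and I would allocate the most care here. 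Finally, for \ref{itm:showsumK}: $\one^\intercal|X|\one = \one^\intercal Y\one = \sum_{i,j}Y_{ij}\le \sum_i z_i = K$ by item~\ref{itm:showipconstraint} with $S=\{1,\dots,n\}$ and \eqref{eq:zSum} — here I use \eqref{eq:absX} only to get $|X| = Y$ in the sense $\one^\intercal|X|\one\le\one^\intercal Y\one$ via $|V-W|\le V+W$. For \ref{itm:showsumx2x}: $\sum_j X_{ij}^2 \le \sum_j Y_{ij}^2 \le z_i Y_{ii} = z_i X_{ii}$, where the first inequality is $|X_{ij}|\le Y_{ij}$ entrywise (from \eqref{eq:absX}), the second is item~\ref{itm:showBert}, and the last is $Y_{ii} = X_{ii}$, which follows from $\trc(Y)=\trc(X)$ combined with $Y_{ii}\ge |X_{ii}| = X_{ii}\ge 0$ forcing equality in every diagonal entry. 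For \ref{itm:showsumx2t}: identical, replacing the appeal to \ref{itm:showBert} by \ref{itm:showBertlin}, whose hypothesis $Y\succeq 0$ is supplied here by $X\succeq 0$ together with $Y$ being obtained from $X$ by sign flips on off-diagonal entries — I would note $Y = |X|$ is PSD whenever $X$ is PSD only for special sign patterns, so instead I would observe that \ref{itm:showBertlin} really only needs $Y_{ij}^2\le Y_{ii}Y_{jj}$ which equals $X_{ij}^2\le X_{ii}X_{jj}$, valid from $X\succeq 0$; I will phrase \ref{itm:showBertlin}'s hypothesis to match so the chaining is clean. The overall order I would carry out: validity; \ref{itm:showtone}; \ref{itm:showipconstraint}; \ref{itm:showYij}; \ref{itm:showBert}; \ref{itm:showBertlin}; \ref{itm:showsumK}; \ref{itm:showsumx2x}; \ref{itm:showsumx2t}.
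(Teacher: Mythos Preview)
Your overall architecture is sound and most items are handled correctly, but there are two genuine gaps.

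\textbf{Item~\ref{itm:showYij}, the $i\ne j$ case.} Your plan to deduce $2Y_{ij}\le z_i$ from item~\ref{itm:showipconstraint} with $S=\{i,j\}$ does not work: that choice only gives $Y_{ii}+2Y_{ij}+Y_{jj}\le z_i+z_j$, i.e.\ $2Y_{ij}\le (z_i-Y_{ii})+(z_j-Y_{jj})$, which cannot be sharpened to $2Y_{ij}\le z_i$ without further input. You were in fact one step away from the direct argument before you abandoned it: from \eqref{eq:tfive} with the roles of $i,j$ swapped you get $T_{ji}\le Y_{ii}$, and by \eqref{eq:ttwo} this equals $T_{ii}$; hence
\[
2Y_{ij}\;\le\;T_{ij}+T_{ji}\;\le\;T_{ij}+T_{ii}\;\le\;\sum_{k=1}^n T_{ik}\;=\;z_i,
\]
the last two steps using $i\ne j$, nonnegativity \eqref{eq:tfive}, and \eqref{eq:tthree}. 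This is exactly the route the paper takes.

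\textbf{Item~\ref{itm:showBertlin}.} You correctly identify this as the main obstacle, but the missing observation is simple: the constraints already force $\trc(Y)=1$. Sum \eqref{eq:tthree} over $i$ to get $\sum_{i,j}T_{ij}=\sum_i z_i=K$, and sum \eqref{eq:tfour} over $j$ to get $\sum_{i,j}T_{ij}=K\,\trc(Y)$; equating gives $\trc(Y)=1$. With this in hand the proof is immediate:
\[
\sum_{j=1}^n Y_{ij}^2\;\le\;\sum_{j=1}^n Y_{ii}Y_{jj}\;=\;Y_{ii}\,\trc(Y)\;=\;Y_{ii}\;=\;T_{ii},
\]
using the hypothesis $Y_{ij}^2\le Y_{ii}Y_{jj}$ and then \eqref{eq:ttwo}. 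No Cauchy--Schwarz or delicate estimate is needed. The same $\trc(X)=1$ observation (via $\trc(X)=\trc(Y)=1$) handles item~\ref{itm:showsumx2t} directly with $X$ in place of $Y$, resolving the worry you raised about whether $Y\succeq 0$ follows from $X\succeq 0$: you never need it, since the argument runs entirely in $X$.

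The remaining items (validity, \ref{itm:showtone}, \ref{itm:showipconstraint}, \ref{itm:showBert}, \ref{itm:showsumK}, \ref{itm:showsumx2x}) in your proposal match the paper's proof essentially verbatim.
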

\begin{proof}{Proof.}
To show that the constraints are valid, we only need to show that $T_{ij}$ can be chosen to be $z_iy_j^2$. 
We may assume that for all $(i,j)$, $Y_{ij}=y_iy_j$.
Constraint \eqref{eq:toneprime} follows since for $i'\in \{i,j\}$, $y_{i'} = z_{i'}y_{i'}$ implies that $(y_iy_j)^2 \le z_iy_i^2z_jy_j^2$.  
Constraint \eqref{eq:ttwo} follows since $z_iY_{ii} = Y_{ii}$. 
Constraint \eqref{eq:tthree} follows since $\trc(Y)=1$ implies $\sum_{j=1}^n z_iY_{jj} = z_i$. Constraint \eqref{eq:tfour} is valid because \eqref{eq:zSum} implies that $\sum_{i=1}^n z_iY_{jj} = KY_{jj}$. 
Finally, \eqref{eq:tfive} follows because it relaxes $y_i^2=z_iy_i^2 = Y_{ii}$.

We now show the implications claimed.
First, we show Statement~\ref{itm:showtone}. 
To see this, observe that we can write \eqref{eq:toneprime} as $(2Y_{ij})^2 + (T_{ij}-T_{ji})^2 \le (T_{ij}+T_{ji})^2$. 
Then,
\begin{equation}\label{eq:showtone}
    2Y_{ij}\le \sqrt{(2Y_{ij})^2 + (T_{ij}-T_{ji})^2} \le \sqrt{T_{ij}+T_{ji}} = T_{ij}+T_{ji},
\end{equation}
where the first inequality is because $(T_{ij}-T_{ji})^2\ge 0$, the second inequality is because of \eqref{eq:toneprime}, and the equality is because, by \eqref{eq:tfive}, $T_{ij}$ and $T_{ji}$ are non-negative.
Second, we show Statement~\ref{itm:showYij}. Clearly, $Y_{ii} = T_{ii}\le z_i$, where the first equality is by \eqref{eq:ttwo} and the inequality follows from \eqref{eq:tthree}.
If $i\ne j$, we have:
\begin{equation}\label{eq:Yandz}
 2Y_{ij}\le T_{ij} + T_{ji}\le T_{ij} + Y_{ii} = T_{ij} + T_{ii} \le z_i,  
\end{equation}
where the first inequality is by Statement~\ref{itm:showtone}, the second inequality is because of \eqref{eq:tfive}, the equality is because of \eqref{eq:ttwo}, and the inequality because $i\ne j$ and \eqref{eq:tthree}.
Third, we show Statement~\ref{itm:showipconstraint}. 
This is because
\begin{equation}\label{eq:ipconstraintimplied}
0\le \sum_{i,j\in S} (T_{ij} - Y_{ij}) \le \sum_{i\in S}\sum_{j=1}^n T_{ij} - \sum_{i,j \in S}Y_{ij} = \sum_{i\in S} z_i - \sum_{i,j\in S}Y_{ij},
\end{equation}
where the first inequality is because of Statement~\ref{itm:showtone} and $Y_{ij}=Y_{ji}$, the second inequality is because of \eqref{eq:tfive}, and the equality is by \eqref{eq:tthree}.
Now, we show Statement~\ref{itm:showBert} as follows:
\begin{equation}
    \sum_{j=1}^n Y_{ij}^2\le \sum_{j=1}^n T_{ji}T_{ij}\le Y_{ii}\sum_{j=1}^n T_{ij}= z_iY_{ii},
\end{equation}
where the first inequality follows from \eqref{eq:toneprime}, the second inequality from \eqref{eq:tfive}, and the equality from \eqref{eq:tthree}.
To see Statement~\ref{itm:showBertlin}, observe that 
\begin{equation}\label{eq:Bertlin}
\sum_{j=1}^n Y_{ij}^2\le Y_{ii}\sum_{j=1}^n Y_{jj} = Y_{ii} = T_{ii},
\end{equation} where the first inequality is because, for all $j'$ we have $Y_{ij'}^2\le Y_{ii}Y_{j'j'}$, the first equality is because $\trc(Y)=1$ and the third equality is by \eqref{eq:ttwo}.
Next, we show Statement~\ref{itm:showsumK}. 
We write
\begin{equation*}
    \one^\intercal |X|\one =  \one^\intercal |V-W|\one \le \one^\intercal (|V|+|W|)\one = \one^\intercal Y \one \le \one^\intercal z = K,
\end{equation*}
where the second equality is because $Y=V+W$, $V\ge 0$, and $W\ge 0$, and the first inequality is because of \eqref{eq:ipconstraintimplied} with $S=\{1,\ldots,n\}$, and the last equality is because of \eqref{eq:zSum}.
To show Statement~\ref{itm:showsumx2x}, we write
\begin{equation}
    \sum_{j=1}^n X_{ij}^2 = \sum_{j=1}^n (V_{ij} - W_{ij})^2 \le \sum_{j=1}^n (V_{ij}+W_{ij})^2 = \sum_{j=1}^n Y_{ij}^2 \le z_{i}Y_{ii} = z_iX_{ii},
\end{equation}
where the first two equalities and first inequality are by \eqref{eq:absX}, the second equality is because of Statement~\ref{itm:showBert}. 
The last equality is because \eqref{eq:absX} implies $\diag(Y-X)\ge 0$. 
Together with $\trc(Y-X) = 0$, this  implies that, for all $i$, $Y_{ii} = X_{ii}$. 
Finally, the proof of Statement~\ref{itm:showsumx2t}
is similar to that for Statement~\ref{itm:showBertlin} where $Y$ is replaced with $X$, where we also utilize that $\trc(Y-X)=0$, $\diag(Y-X)\ge 0$, and \eqref{eq:ttwo} imply that $X_{ii}=T_{ii}$ for all $i$. 
\hfill\Halmos
\end{proof}

We use the following formulation, which we refer to as the \emph{$T$-formulation}, to find the optimal solution for sparse PCA.
We chose to develop this formulation around the diagonal relaxation obtained using constraints \eqref{eq:diagUmajdiagY} as it is simple to implement but, as we will show later, it is also strong.
\begin{alignat*}{3}
(T): &\max &\quad&\text{trace}(\Sigma X)\\
&\st&&
\eqref{eq:absX}, 
\eqref{eq:strfwdconst},\eqref{eq:strfwdconst2}, 
\eqref{eq:diagUmajdiagY},\eqref{eq:zSum}, \eqref{eq:zbinary},\eqref{eq:Tformulation}\\
&&&\trc(U) = \trc(Y) = \trc(X) = 1\\
&&&Y_{ij}^2\le Y_{ii}Y_{jj}& i=1,\ldots,n, j=i+1,\ldots,n\\
&&&X\succeq 0, U \succeq 0.
\end{alignat*}
To prove that this formulation is correct, we only need to show that \eqref{eq:SPCAzconstraint} is satisfied. 
To see this observe that $X_{ii} \le Y_{ii} \le z_i$, where the first inequality follows from \eqref{eq:absX}, and the second inequality from \eqref{eq:Yandz}. Instead of requiring that $Y\succeq 0$, we relax it so that $Y_{ij}^2\le Y_{ii}Y_{jj}$ for all $(i,j)$.
Notice that the constraints $Y_{ij}^2\le T_{ij}T_{ji}$ and $Y_{ij}^2\le Y_{ii}Y_{jj}$ can be written as SOCP constraints $\|(2Y_{ij}, T_{ji}-T_{ij})\|_2\le T_{ji}+T_{ij}$ and $\|(2Y_{ij}, Y_{ii}-Y_{jj})\|_2\le Y_{ii}+Y_{jj}$, respectively.
We refer to the relaxation obtained by dropping \eqref{eq:zbinary} as the \emph{$T$-relaxation}.

After the initial draft of this paper \cite{kim2019convexification},  the following relaxation for sparse PCA was proposed in \cite{bertsimas2020solving}:
\begin{subequations}
\begin{alignat}{3}
    &\max&\quad &\text{trace}(\Sigma X)\nonumber\\
    &\mbox{s.t.}&&\eqref{eq:absX},\eqref{eq:zSum}, \eqref{eq:zbinary}\nonumber\\
    &&&\trc(X) = 1\nonumber\\
    &&&Y_{ii} \le z_i&&i=1,\ldots,n\label{eq:BertYii}\\
    &&&2Y_{ij}\le z_i &\quad& i=1,\ldots,n,\:\: j=i+1,\ldots,n\label{eq:BertYij}\\
    &&&\sum_{j=1}^nX_{ij}^2\le z_iX_{ii}  &&i=1,\ldots,n\label{eq:BertCone}\\ \qquad 
    &&&\one^\intercal Y \one = K\nonumber\\
    &&&X\succeq 0.\nonumber
\end{alignat}
\end{subequations}
We refer to the relaxation as the \emph{$B$-relaxation}. 
Theorem~\ref{thm:ipconstraint2} shows that $B$-relaxation is dominated by $T$-relaxation since \eqref{eq:BertYii}-\eqref{eq:BertCone} are implied in the $T$-relaxation.

We remark that the relaxations we develop here do not intrinsically depend on $\trc(X) = 1$. 
In particular, when the constraint \eqref{eq:tthree} is replaced with $\sum_{j=1}^n T_{ij}\le z_i\trc(T)$ and the constraint $\trc(U)=\trc(Y)=\trc(X)=1$ is replaced with $\trc(U)=\trc(Y)=\trc(X)$ our relaxations can be used more generally. More specifically, they are valid whenever we seek a rank-one matrix $X$, that is symmetric and positive-definite, and is such that only $K$ rows and columns have non-zero values.

We next define the notations used in the computational experiments.
We refer to the relaxation obtained by dropping constraints  \eqref{eq:diagUmajdiagY} and \eqref{eq:ULtransportation} from \eqref{eq:majsdp} as the \emph{rowsum relaxation}. 
Also, we refer to the relaxation obtained by dropping constraints  \eqref{eq:RUmajRY} and \eqref{eq:ULtransportation} from \eqref{eq:majsdp} as the \emph{diagonal relaxation}.
We denote the optimal value of the $D$-, $B$-, rowsum, diagonal, 2-step, submatrix, and $T$-relaxation by $z^*_D, z^*_B, z^*_{rowsum}, z^*_{diag}, z^*_{2step}, z^*_{submat}$, and $z^*_T$, respectively.
We report test results for these relaxations in Tables~\ref{table:pitpropstext} and \ref{table:random}.

We use \texttt{CVX} \cite{cvx, gb08} version 2.2 or \texttt{YALMIP} \cite{Lofberg2004} version R20210331 to solve SDPs in the experiments.
\texttt{MOSEK} \cite{aps2017mosek} version 9.2.47 was selected as the SDP solver in both cases.
To measure the relative tightness of a relaxation when compared to \eqref{eq:dassdp}, we calculate \emph{gap closed} as
\[
\left(\frac{z^*_{D}-z^*_{SDP}}{z^*_D-z^*}\right)\times 100.
\]
where $z^*_{SDP}$ is the target SDP relaxation on which we calculate the gap closed. 
Here, $z^*$ denotes the optimal value of sparse PCA and we obtain this value by solving the T-formulation of sparse PCA, $(T)$, to optimality using the \texttt{bnb} solver of \texttt{YALMIP} with \texttt{MOSEK} version 9.1.9. 

In addition, we conducted experiments with an alternate formulation. This formulation is based on a compact extended formulation of the permutahedron proposed by Goemans \cite{goemans2015smallest}, where the construction and the size of the reformulation depend on the choice of a sorting network. While the optimal Ajtai–Koml\'{o}s–Szemer\'{e}di sorting network gives an extended formulation for the permutahedron with $\Theta(n\log n)$ variables and inequalities, it has limited practical value because the constant hidden in the $\Theta(\cdot)$ notation is very large. Instead, we used Batcher's bitonic sorting network that gives an extended formulation with $\Theta(n\log^2n)$ variables and inequalities. 
When it comes to the time comparison between these two reformulations, we use the diagonal relaxation among the aforementioned SDP relaxations because, as we show later, this relaxation produces competitive bounds and is simple to implement.
Since the formulation based on a sorting network is equivalent to the duality-based formulation based on \eqref{plargestdual}, the quality of bounds is the same, and we only compare their computation times. The result is summarized in Tables~\ref{table:pitpropsgoemans} and \ref{table:randomsgoemans}.

The experiments are performed with an Intel Core i5-10400 machine containing a 2.90GHz CPU, 32GB RAM, running Windows 10.


\subsubsection{\texttt{pitprops} problem}
The \texttt{pitprops} problem \cite{jeffers1967two} is one of the most commonly used problems for sparse PCA algorithms. The instance has 13 variables and 180 observations. Table~\ref{table:pitpropstext} shows the test results for cardinality $K=3,\dots,10$.

\begin{table}[H]
\small{}
\centering
\setlength\tabcolsep{0.25em}
\begin{tabular}{|r|c|c|c|r|r|c|r|r|c|r|r|}
\hline
\multirow{3}{*}{$K$} &
\multicolumn{1}{c|}{$T$-formulation}
 &
\multicolumn{1}{c|}{$D$-relaxation} & 
\multicolumn{3}{c|}{$B$-relaxation} & 
\multicolumn{3}{c|}{Rowsum relaxation} & 
\multicolumn{3}{c|}{Diagonal relaxation}\\
\cline{2-12}
& \multirow{2}{*}{$z^*$}& 
\multirow{2}{*}{$z^*_D$}& 
\multirow{2}{*}{$z^*_{B}$} & \multicolumn{1}{c|}{Time}& \multicolumn{1}{c|}{Gap}&
\multirow{2}{*}{$z^*_{rowsum}$} & \multicolumn{1}{c|}{Time}& \multicolumn{1}{c|}{Gap}&\multirow{2}{*}{$z^*_{diag}$}& \multicolumn{1}{c|}{Time}& \multicolumn{1}{c|}{Gap}\\[-0.2em]
&
&
&& \multicolumn{1}{c|}{(sec)}&\multicolumn{1}{c|}{clsd}
&& \multicolumn{1}{c|}{(sec)}&\multicolumn{1}{c|}{clsd}
&& \multicolumn{1}{c|}{(sec)}&\multicolumn{1}{c|}{clsd}
\\
\hline
3&	2.4753
&	2.5218
&	2.4987&	0.20&	49.60&	2.5033&	0.26&	39.78&	2.4949&	0.24&	57.86\\
4&	2.9375
&	3.0172
&	2.9917&	0.22&	31.98&	2.9766&	0.27&	50.89&	2.9671&	0.25&	62.83\\
5&	3.4062
&	3.4581
&	3.4303&	0.22&	53.60&	3.4103&	0.27&	91.92&	3.4072&	0.26&	97.96\\
6&	3.7710
&	3.8137
&	3.7886&	0.22&	58.80&	3.7710&	0.26&	100.00&	3.7710&	0.27&	100.00\\
7&	3.9962
&	4.0316
&	3.9967&	0.22&	98.69&	3.9962&	0.24&	100.00&	3.9962&	0.26&	100.00\\
8&	4.0686
&	4.1448
&	4.0839&	0.22&	79.93&	4.0770&	0.24&	88.99&	4.0721&	0.29&	95.48\\
9&	4.1386
&	4.2063
&	4.1398&	0.20&	98.28&	4.1399&	0.26&	98.20&	4.1386&	0.28&	100.00\\
10&	4.1726
&	4.2186
&	4.1778&	0.23&	88.87&	4.1807&	0.26&	82.42&	4.1766&	0.26&	91.32\\
\hline
\multicolumn{3}{c|}{}
&Average&0.21
&69.97
&Average &0.26
& 81.53
&Average&0.26
&88.18
\\
\cline{4-12}
\multicolumn{12}{c}{}\\[-0.5em]
\cline{4-12}
\multicolumn{3}{c|}{} & 
\multicolumn{3}{c|}{2-step relaxation} & 
\multicolumn{3}{c|}{Submatrix relaxation} & 
\multicolumn{3}{c|}{$T$-relaxation}\\
\cline{4-12}
\multicolumn{3}{c|}{}&
\multirow{2}{*}{$z^*_{2step}$} & \multicolumn{1}{c|}{Time}& \multicolumn{1}{c|}{Gap}&
\multirow{2}{*}{$z^*_{submat}$} & \multicolumn{1}{c|}{Time}& \multicolumn{1}{c|}{Gap}&\multirow{2}{*}{$z^*_{T}$}& \multicolumn{1}{c|}{Time}& \multicolumn{1}{c|}{Gap}\\[-0.2em]
\multicolumn{3}{c|}{}
&& \multicolumn{1}{c|}{(sec)}&\multicolumn{1}{c|}{clsd}
&& \multicolumn{1}{c|}{(sec)}&\multicolumn{1}{c|}{clsd}
&& \multicolumn{1}{c|}{(sec)}&\multicolumn{1}{c|}{clsd}
\\
\cline{4-12}
\multicolumn{3}{c|}{}&	2.4753&	0.68&	100.00&	2.4753&	0.80&	100.00&	2.4753&	0.22&	100.00\\
\multicolumn{3}{c|}{}&	2.9477&	0.82&	87.15&	2.9477&	1.85&	87.15&	2.9375&	0.22&	100.00\\
\multicolumn{3}{c|}{}&	3.4062&	1.23&	100.00&	3.4062&	4.26&	100.00&	3.4062&	0.23&	100.00\\
\multicolumn{3}{c|}{}&	3.7710&	1.39&	100.00&	3.7710&	8.81&	100.00&	3.7710&	0.22&	100.00\\
\multicolumn{3}{c|}{}&	3.9962&	1.52&	100.00&	3.9962&	15.94&	100.00&	3.9962&	0.24&	100.00\\
\multicolumn{3}{c|}{}&	4.0721&	2.53&	95.48&	4.0721&	36.79&	95.47&	4.0686&	0.23&	99.99\\
\multicolumn{3}{c|}{}&	4.1386&	3.32&	100.00&	4.1386&	70.24&	100.00&	4.1386&	0.23&	100.00\\
\multicolumn{3}{c|}{}&	4.1766&	3.75&	91.32&	4.1766&	117.66&	91.32&	4.1733&	0.23&	98.49\\
\cline{4-12}
\multicolumn{3}{c|}{}&Average&1.91
&96.74
&Average &32.04
&96.74
&Average&0.23
&99.81
\\
\cline{4-12}
\end{tabular}
\caption{Optimal values and gaps closed for the test problem \texttt{pitprops}}
\label{table:pitpropstext}
\end{table}

Observe that the diagonal (resp. submatrix) relaxation reduces the gaps of \eqref{eq:dassdp} by more than 88\% (resp. 96\%), returning global optimal solutions for three (resp. five) problems. The $T$-relaxation attains the optima except for the instance $K=10$. On average, it reduces the gaps of \eqref{eq:dassdp} by 99.81\%.

For all computational times reported in Table~\ref{table:pitpropstext}, we used \eqref{eq:diagUmajdiagY} to model the majorization constraints.
However, as we had discussed earlier, these constraints can also be formulated using Batcher's bitonic sorting network, which only requires $\Theta(n\log^2 n)$ variables. 
Our primary intention here is to compare the formulation of the permutahedron based on  \eqref{eq:diagUmajdiagY} with that obtained via Batcher's bitonic sorting network. 
For this comparison, we use the diagonal relaxation and simplify it by dropping the requirement that $Y\succeq 0$. We refer to this simplified relaxation as \emph{Diagonal$'$-relaxation}. 
To differentiate the relaxation based on sorting-network, we will refer to it as \emph{Diagonal$'$-relaxation-sort}.
Clearly, both relaxations yield the same bound. 
Therefore, we only report on the solution times when comparing these relaxations. 
As $n$ and $K$ are small, no significant difference between the reformulations is observed. 

\begin{table}[H]
\small{}
\centering
\begin{tabular}{|c|c|c|c|c|c|c|c|c|c|}
\hline
$K$ & 3&4&5&6&7&8&9&10&Average\\
\hline
Diagonal$'$-relaxation
&0.23&0.22&0.23&0.22&0.23&0.23&0.23&0.22&0.23\\
\hline
Diagonal$'$-relaxation-sort
&0.20&0.23&0.23&0.22&0.22&0.21&0.22&0.23&0.22\\
\hline
\end{tabular}
\caption{Computation time (in seconds) comparison with the diagonal relaxation for pitprops problems}
\label{table:pitpropsgoemans}
\end{table}

\subsubsection{Experiments with randomly generated matrices}
\label{subsec:experiment_random}

We next report test results for randomly generated covariance matrices.
Random matrices are generated using the following procedure.
First, we choose a random integer $m\in\{1,\dots,n\}$ for the number of nonzero eigenvalues of the matrix by setting $m = \lceil nU\rceil$ where $U$ is randomly drawn from the uniform distribution $\mathcal{U}(0,1)$.
Second, we generate $m$ random vectors $v_i\in\mathbb{R}^n\sim \mathcal{N}(0,I_n)$, for $i=1,\dots,m$ for rank-1 matrices.
Third, we generate $m$ positive random eigenvalues $\lambda_i\sim \mathcal{U}(0,1)$, for $i=1,\dots,m$.
Finally, we construct the desired random covariance matrix as $\Sigma = \sum_{i=1}^m\lambda_i v_i v_i^\intercal$.


The tests are performed for problems with size $n\in\{5+5i\mid i=1,\dots,9\}$, and the  cardinality $K=\text{round}(n/6)$ is chosen to reflect the motivation of sparse principal components analysis to produce sparse vectors, where $\text{round}(x)$ represents the integer closest to $x$ ({\it i.e.}, $\text{round}(x) = \lfloor x \rfloor$ as long as the fractional part of $x$ is strictly less than $0.5$ and $\lceil x\rceil$ otherwise). 
For each $n$ and the associated $K$, 30 instances generated from random covariance matrices are tested. The reported computation times and gaps closed are the averages for the 30 instances.
In Table~\ref{table:random}, we present average gap closed for the computation times in seconds and gap closed of the relaxations. 
The computational results show that our SDP relaxations improve the gaps of the SDP relaxation \eqref{eq:dassdp} significantly.

\begin{table}[H]
\centering
\small{}
\begin{tabular}{|c|c|r|r|r|r|r|r|r|r|r|r|r|r|}
\hline
\multirow{3}{*}{$n$}&\multirow{3}{*}{$K$}
& \multicolumn{2}{c|}{$B$-relaxation}
& \multicolumn{2}{c|}{Rowsum rlxn.}
& \multicolumn{2}{c|}{Diag. rlxn.} 
& \multicolumn{2}{c|}{2-step rlxn.}
& \multicolumn{2}{c|}{Submat. rlxn.}
& \multicolumn{2}{c|}{$T$-relaxation} 
\\
\cline{3-14}
&  
& \multicolumn{1}{c|}{Gap}&\multicolumn{1}{c|}{Time}
& \multicolumn{1}{c|}{Gap}&\multicolumn{1}{c|}{Time}
& \multicolumn{1}{c|}{Gap}&\multicolumn{1}{c|}{Time}
& \multicolumn{1}{c|}{Gap}&\multicolumn{1}{c|}{Time}
& \multicolumn{1}{c|}{Gap}&\multicolumn{1}{c|}{Time}
& \multicolumn{1}{c|}{Gap}&\multicolumn{1}{c|}{Time}\\[-.2em]
&
&\multicolumn{1}{c|}{Clsd}&\multicolumn{1}{c|}{(sec)}
&\multicolumn{1}{c|}{Clsd}&\multicolumn{1}{c|}{(sec)}
&\multicolumn{1}{c|}{Clsd}&\multicolumn{1}{c|}{(sec)}
&\multicolumn{1}{c|}{Clsd}&\multicolumn{1}{c|}{(sec)}
&\multicolumn{1}{c|}{Clsd}&\multicolumn{1}{c|}{(sec)} 
&\multicolumn{1}{c|}{Clsd}&\multicolumn{1}{c|}{(sec)}\\
\hline
10&	2&	66.18&	0.37&	81.28&	0.34&	88.02&	0.35&	93.65&	0.35&	93.65&	0.34&	100.00&	0.36\\
15&	3&	58.27&	0.37&	71.98&	0.36&	81.98&	0.36&	91.15&	0.39&	91.15&	0.40&	99.80&	0.37\\
20&	3&	58.48&	0.36&	68.63&	0.44&	82.83&	0.41&	90.61&	0.53&	90.61&	0.55&	99.90&	0.39\\
25&	4&	51.28&	0.37&	68.01&	0.63&	79.40&	0.55&	86.43&	0.94&	86.44&	1.13&	98.95&	0.50\\
30&	5&	43.76&	0.39&	54.83&	1.11&	72.26&	0.85&	84.01&	1.83&	84.01&	3.02&	97.57&	0.75\\
35&	6&	44.18&	0.47&	55.36&	2.04&	69.19&	1.59&	81.31&	3.42&	81.30&	6.23&	97.98&	1.22\\
40&	7&  39.74&	0.52&	53.26&	3.58&	65.70&	2.62&	81.09&	6.79&	81.09&12.63&	98.05&	1.33\\
45&	8&  32.52&  0.65&	51.06&	5.63&	65.35&	4.44&	77.60&	11.23&	77.60&22.79&	94.81&	2.09\\
50&	8 & 25.92&	0.94&	40.61&	9.47&	54.94&	7.54&	70.93&	18.62&	70.93&	34.73&	94.29&	3.43\\
\hline
\multicolumn{2}{|c|}{Average}&46.70&	0.49&	60.56&	2.62&	73.30&	2.08&	84.09&	4.90&	84.09&	9.09&	97.93&	1.16\\
\hline
\end{tabular}
\caption{Test results for sparse PCA with $n\in\{10,15,\dots,40, 45, 50\}$ and $K=\text{round}(n/6)$}
\label{table:random}
\end{table}

Among the relaxations in Table~\ref{table:random}, the $T$-relaxation yields the tightest bound on our instances. 
We remark that the $T$-relaxation bound improves when the constraints for the 2-step relaxation and/or the submatrix relaxation are also imposed. 
However, we do not report on the performance of these relaxations since they are more complex and more computationally expensive to solve.
For larger dimensional instances, we choose the $B$-relaxation and the $T$-relaxation to compare. 
For $n\in\{5+5i\mid i=10,\dots,19\}$, we generate 30 random covariance matrices for each $n$ and summarize the results of the comparison in Table~\ref{table:B_vs_T} and Figure~\ref{fig:B_vs_T}, where the computation times are in minutes.

\begin{table}[H]
\centering
\begin{tabular}{|c|c|r|r|r|r|r|r|r|r|r|r|}
\hline
&$n$ & \multicolumn{1}{c|}{55}&\multicolumn{1}{c|}{60}&\multicolumn{1}{c|}{65}&\multicolumn{1}{c|}{70}&\multicolumn{1}{c|}{75}&\multicolumn{1}{c|}{80}&\multicolumn{1}{c|}{85}&\multicolumn{1}{c|}{90}&\multicolumn{1}{c|}{95}&\multicolumn{1}{c|}{100}\\
\cline{2-12}
&$K$ & \multicolumn{1}{c|}{9}&\multicolumn{1}{c|}{10}&\multicolumn{1}{c|}{11}&\multicolumn{1}{c|}{12}&\multicolumn{1}{c|}{13}&\multicolumn{1}{c|}{13}&\multicolumn{1}{c|}{14}&\multicolumn{1}{c|}{15}&\multicolumn{1}{c|}{16}&\multicolumn{1}{c|}{17}\\
\hline
\multirow{2}{*}{$B$-relaxation} 
& Gap Clsd&32.88&29.28&17.18&19.07&18.32&17.01&11.82&13.56&10.68&11.61\\
\cline{2-12}
& Time (min)&0.02&0.03&0.05&0.07&0.09&0.13&0.17&0.23&0.30&0.40\\
\hline
\multirow{2}{*}{$T$-relaxation} 
& Gap Clsd&96.19&91.93&87.11&90.97&84.79&87.12&85.44&79.70&79.97&76.65\\
\cline{2-12}
& Time (min)&0.09&0.14&0.21&0.31&0.45&0.66&0.90&1.17&1.60&2.14\\
\hline
\end{tabular}
\caption{Comparison between the $B$-relaxation and the $T$-relaxation with $n\in\{55,60,\dots,95,100\}$ and $K=\text{round}(n/6)$}
\label{table:B_vs_T}
\end{table}

\begin{figure}[H]
    \centering
    \begin{tikzpicture}[scale=0.7]
	\begin{axis}[
		height=8cm,
		width=15cm,
		xlabel=$n$,
		ylabel=Gap Reduced (\%),
		legend style={at={(0.08,0.25)},anchor=west,legend cell align=left}, 
	]
	\addplot[gray, thick, dashed, forget plot] coordinates {
	(10,0)
	(100,0)
	};
	
	\addplot[blue, thick, mark=*, line legend] coordinates {
(10, 100.00)
(15, 99.80)
(20, 99.90)
(25, 98.95)
(30, 97.57)
(35, 97.98)
(40, 98.05)
(45, 94.81)
(50, 94.29)
(55, 96.19)
(60, 91.93)
(65, 87.11)
(70, 90.97)
(75, 84.79)
(80, 87.12)
(85, 85.44)
(90, 79.70)
(95, 79.97)
(100, 76.65)
	};

	\addplot[red, thick, mark=square*, line legend] coordinates {
(10, 66.18)
(15, 58.27)
(20, 58.48)
(25, 51.28)
(30, 43.76)
(35, 44.18)
(40, 39.74)
(45, 32.52)
(50, 25.92)
(55, 32.88)
(60, 29.28)
(65, 17.18)
(70, 19.07)
(75, 18.32)
(80, 17.01)
(85, 11.82)
(90, 13.56)
(95, 10.68)
(100, 11.61)
	};		

	\legend{$T$-relaxation,$B$-relaxation};
	\end{axis}
\end{tikzpicture}
    \caption{Comparison between the $B$-relaxation and the $T$-relaxation with $n\in\{10,15,\dots,95,100\}$ and $K=\text{round}(n/6)$}
    \label{fig:B_vs_T}
\end{figure}
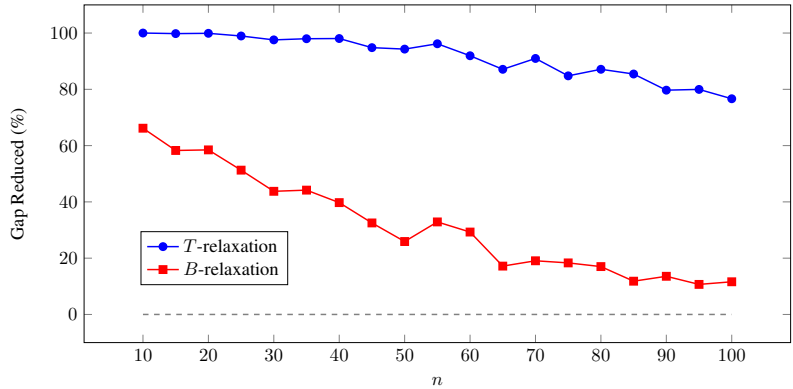

We next show that replacing \eqref{eq:diagUmajdiagY} with a sorting network helps reduce the solution time for the Diagonal relaxation by comparing the Diag$'$-relaxation and Diag$'$-relaxation-sort on the synthetic data sets.
For this demonstration, we consider dimensions $n\in\{10i\mid i=1,\dots,15\}$ and choose $K=\text{round}(n/6)$.
For each choice of $n$ and the associated $K$, $30$ randomly generated instances are tested;
see Table~\ref{table:randomsgoemans} and Figure~\ref{fig:dual_vs_sorting} for a comparison of the computation times (in minutes). Our results show that the sorting network based formulation reduces the computational time for the relaxation.

\begin{table}[ht]
\centering
\begin{tabular}{|c|c|r|r|r|r|r|r|r|r|}
\hline
&$n$ & \multicolumn{1}{c|}{10} & \multicolumn{1}{c|}{20} & \multicolumn{1}{c|}{30} & \multicolumn{1}{c|}{40} & \multicolumn{1}{c|}{50} & \multicolumn{1}{c|}{60} & \multicolumn{1}{c|}{70} & \multicolumn{1}{c|}{80}\\
\hline
\multirow{3}{*}{\begin{tabular}{c}Diagonal$'$-\\[-.2em]relaxation\end{tabular}  }
& \# vars &832&3261&7289&12917&20146&28974&39402&51431\\
\cline{2-10}
& \# cons &384&1469&3254&5739&8924&12809&17394&22679\\
\cline{2-10}
& Time (min) & 0.00&0.00&0.01&0.01 &0.04&0.10&0.22&0.45\\
\hline
\multirow{3}{*}{\begin{tabular}{c}Diagonal$'$-\\[-.2em]relaxation-\\[-.2em]sort\end{tabular}  }  
& \# vars &848&3087&6125&11451&16890&23528&34214&43253\\
\cline{2-10}
& \# cons &411&1452&2737&5226&7511&10296&15437&19222\\
\cline{2-10}
& Time (min) &0.00&0.00&0.01&0.01&0.03&0.07&0.17&0.36\\
\hline
\multicolumn{10}{c}{}\\
\cline{1-9}
&$n$ & \multicolumn{1}{c|}{90} & \multicolumn{1}{c|}{100} & \multicolumn{1}{c|}{110} & \multicolumn{1}{c|}{120} & \multicolumn{1}{c|}{130} & \multicolumn{1}{c|}{140} & \multicolumn{1}{c|}{150} & \multicolumn{1}{c}{}\\
\cline{1-9}
\multirow{3}{*}{\begin{tabular}{c}Diagonal$'$-\\[-.2em]relaxation\end{tabular} }  
& \# vars &65059&80287&97116&115544&135572&157201&180429& \multicolumn{1}{c}{}\\
\cline{2-9}
& \# cons &28664&35349&42734&50819&59604&69089&79274& \multicolumn{1}{c}{}\\
\cline{2-9}
& Time (min) &0.90&1.62&2.6184&4.30&6.48&11.33&13.93& \multicolumn{1}{c}{}\\
\cline{1-9}
\multirow{3}{*}{\begin{tabular}{c}Diagonal$'$-\\[-.2em]relaxation-\\[-.2em]sort\end{tabular}  }  
& \# vars &53491&64929&77568&91406&113676&129915&147353& \multicolumn{1}{c}{}\\
\cline{2-9}
& \# cons &23507&28292&33577&39362&50383&57168&64453& \multicolumn{1}{c}{}\\
\cline{2-9}
& Time (min) &0.69&1.24&2.06&3.31&5.12&8.21&11.06& \multicolumn{1}{c}{}\\
\cline{1-9}
\end{tabular}
\caption{Computation time comparison between the Diagonal$'$-relaxation and Diagonal$'$-relaxation-sort for sparse PCA with random covariance matrix, $n=10,20,\dots,150$ and $K= \text{round}(n/6)$}
\label{table:randomsgoemans}
\end{table}

\begin{figure}[ht]
    \centering
    \begin{tikzpicture}[scale=0.7]
	\begin{axis}[
		height=8cm,
		width=15cm,
		xlabel=$n$,
		ylabel=Time (min),
		legend style={at={(0.05,0.85)},anchor=west,legend cell align=left} 
	]
	\addplot[red, thick, mark=square*] coordinates {
(10, 0.0035)
(20, 0.0040)
(30, 0.0066)
(40, 0.0148)
(50, 0.0372)
(60, 0.0956)
(70, 0.2164)
(80, 0.4500)
(90, 0.8973)
(100, 1.6211)
(110, 2.6184)
(120, 4.0323)
(130, 6.4785)
(140, 11.3198)
(150, 13.9251)
	};		
	\addlegendentry{Diagonal$'$-relaxation}

	\addplot[blue, thick, mark=*] coordinates {
(10, 0.0035)
(20, 0.0039)
(30, 0.0055)
(40, 0.0123)
(50, 0.0275)
(60, 0.0653)
(70, 0.1681)
(80, 0.3555)
(90, 0.6938)
(100, 1.2431)
(110, 2.0609)
(120, 3.3095)
(130, 5.1209)
(140, 8.2164)
(150, 11.0626)
	};
	\addlegendentry{Diagonal$'$-relaxation-sort}
	\end{axis}
\end{tikzpicture}
\caption{Computation time comparison between the Diagonal$'$-relaxation and Diagonal$'$-relaxation-sort for sparse PCA with random covariance matrix, $n=10,20,\dots,150$ and $K= \text{round}(n/6)$}
\label{fig:dual_vs_sorting}
\end{figure}
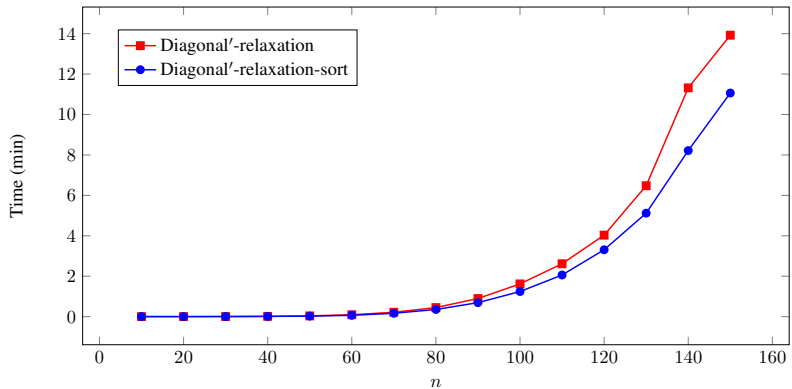


\section{Conclusion}
In this paper, we present an explicit convex hull description of permutation-invariant sets and applications of the results to various important sets/functions in optimization.
The construction of the convex hull is based on the fact that a permutation-invariant set is a union of permutahedra with generating vectors in $\Delta=\{x\in\mathbb{R}^n\mid x_1\ge\dots\ge x_n\}$ and the convex hull of this union can be described in closed-form.
We then applied this result to derive various convexification results.
First, we presented an extended formulation for the convex hull of permutation-invariant norm balls constrained by a cardinality requirement.
Second, we convexified sets of matrices that are characterized using functions of their singular values.
Third, we derived convex/concave envelopes of various nonlinear functions and convex hulls of sets defined using nonlinear functions when bounds for variables are congruent. 
Fourth, we studied sets of rank-one matrices whose generating vectors lie in a permutation-invariant set.
We use majorization inequalities in the space of generating vectors to construct valid inequalities for the convex hull in the matrix space.
As a motivating example, we construct tight semidefinite programming relaxations for sparse principal component analysis and report computational results that show that
our tightest relaxation reduces more than 99.8\% (resp. 97.9\%) of the gaps for the pitprops data set (resp. synthetic data sets with the dimensions up to $n=50$).
The concept of permutation-invariance can be used to study a variety of other sets, including those arising from logical requirements in 0-1 mixed integer programming; see \cite{kim2019convexification} for additional descriptions.

\section*{Acknowledgments.}
The second and third authors wish to acknowledge the support from NSF CMMI Award \#1727989 and \#1917323, respectively.

\bibliographystyle{plain}
\bibliography{majorization}

\newpage
\appendix

\section{Table of Notations}
The following table lists some notations used multiple times throughout the paper.
\begin{table}[H]
\centering
\small{}
\begin{tabular}{|c|l|l|}
\hline
Notation & \multicolumn{1}{c|}{Definition} & \multicolumn{1}{c|}{Defined in}\\
\hline
$\card(x)$&the number of nonzero components of $x$ & Section~\ref{sec:intro}\\
$\|\cdot\|_1$ & $l_1$-norm of vectors & Section~\ref{sec:intro}\\
$\mathcal{M}^{m,n}(\mathbb{R})$&the set of $m$-by-$n$ real matrices&Section~\ref{sec:intro}, Section~\ref{subsec:convexifysingular}\\
$\|\cdot\|_{sp}$&spectral norm of matrices&Section~\ref{sec:intro}, Section~\ref{subsec:convexifysingular}\\
$\Delta_\pi$&$\{x\in\mathbb{R}^n\mid x_{\pi(1)}\ge\dots\ge x_{\pi(n)}\}$&Section~\ref{sec:intro}\\
$\Delta^n (=\Delta)$&$\{x\in\mathbb{R}^n\mid x_1\ge\dots\ge x_n\}$&Section~\ref{sec:intro} \eqref{eq:simplex_n}\\
 $\conv(X)$ & the convex hull of the set $X$ & Section~\ref{sec:intro}\\
 $\mathcal{P}_k$ & $k$-by-$k$ permutation matrices & Section~\ref{sec:mainresults}\\
 $x_{[i]}$ & $i$th largest element of $x$ & Section~\ref{sec:mainresults} Definition~\ref{def:maj}\\
$x\ge_m y$ & $x$ majorizes $y$ & Section~\ref{sec:mainresults} Definition~\ref{def:maj}\\
$x\ge_{wm} y$ & $x$ weakly majorizes $y$ from below & Section~\ref{sec:mainresults} Definition~\ref{def:maj}\\
$\|\cdot\|_s$ & an arbitrary sign- and permutation-invariant norm of vectors & Section~\ref{sec:sparsity} \eqref{eq:sparsity}\\
$N^K_{\|\cdot\|_s}$ & $\{x\in\mathbb{R}^n\mid \|x\|_s\le 1,\card(x)\le K\}$ &Section~\ref{sec:sparsity} \eqref{eq:sparsity}\\
$B_s(r)$ & $\{x\in\mathbb{R}^n\mid \|x\|_s\le r\}$ & Section~\ref{sec:sparsity} \\
$\Delta^n_+(=\Delta_+)$ & $\Delta^n\cap\mathbb{R}^n_+$ & Section~\ref{sec:sparsity} \eqref{eq:delta_n_+}\\
$x_{\Delta^n} (=x_{\Delta})$ & $(x_{\Delta^n})_i:=x_{[i]}$ & Section~\ref{sec:sparsity} \eqref{eq:delta_n_+}\\
$x_{\Delta^n_+} (=x_{\Delta_+})$ & $(x_{\Delta^n_+})_i:=|x|_{[i]}$ & Section~\ref{sec:sparsity} \eqref{eq:delta_n_+}\\
$\text{vert}(P)$ & the set of vertices of the polyhedron $P$ & Section~\ref{sec:sparsity} Proposition~\ref{prop:cnorm_compact}\\
$\text{int}(X)$ & the set of interior points of $X$ & Section~\ref{sec:sparsity} Proposition~\ref{prop:cnorm_compact}\\
 $\|\cdot\|_c$ & the norm corresponding to the convex body $N^K_{\|\cdot\|_s}$& Section~\ref{sec:sparsity} \\
$s(x)$ & $s(x)_i=\frac{\sum_{j=i}^n|x|_{[j]}}{K-i+1},\:\: i=1,\dots,K, \:\: s(x)_0=s(x)_{K+1}=\infty$ & Section~\ref{sec:sparsity} \\
 $i_x$ & $\arg\min\{s(x)_i\mid i=1,\dots,K\}$ & Section~\ref{sec:sparsity}\\
$\delta(x)$ & $s(x)_{i_x}$ & Section~\ref{sec:sparsity} \\
$u(x)$ & $u(x)_i:=\left\{\begin{array}{ll}
|x|_{[i]}, & i\in\{1,\dots,i_x-1\}\\
\delta(x), & i\in\{i_x, \dots,K\}\\
0, & \textup{otherwise}.
\end{array}\right.$ & Section~\ref{sec:sparsity}\\
$\|\cdot\|_K^{sp}$ & $K$-support norm ($K$-overlap norm) & Section~\ref{sec:sparsity}  \\
$\sigma(M)$ & the vector of singular value of the matrix $M$ & Section~\ref{subsec:convexifysingular}  \\
$\|\cdot\|_*$ & nuclear norm of matrices& Section~\ref{subsec:convexifysingular} \\
$\text{diag}(v)$ & the diagonal matrix whose diagonal is $v$& Section~\ref{subsec:convexifysingular} \\
$\mathcal{S}^p$ & the set of $p$-by-$p$ symmetric matrices& Section~\ref{subsec:convexifysingular} \\
$\mathcal{S}^p_+$ & the set of $p$-by-$p$ positive semidefinite matrices& Section~\ref{subsec:convexifysingular} \\
$\lambda(M)$ & the vector of eigenvalues of the matrix $M$& Section~\ref{subsec:convexifysingular} \\
$\conv_C(\phi)$ & the convex envelope of $\phi$ over $C$ & Section~\ref{sec:nonlinear} \\
$\phi|_X$ & $\phi|_X(x)=\phi(x)$ for any $x\in X$ and $+\infty$ otherwise & Section~\ref{sec:nonlinear} \\
$S(Z,a,b)$ & $\{(x,z)\in[a,b]^n\times\mathbb{R}^m\mid (x,z)\in Z\}$& Section~\ref{sec:nonlinear} \\
$X(Z,a,b,\{F_i\}_{i=1}^r)$ & $\left\{(x,z)\in [a,b]^n\times\R^m \mid (x,z)\in Z, x\in \bigcup_{i=1}^r F_i\right\}$& Section~\ref{sec:nonlinear}  \\
$\mathcal{M}^n$ & $\mathcal{M}^{n,n}(\mathbb{R})$ & Section~\ref{sec:rankone} \\
$M_S$ & $\left\{ (x,X)\in\mathbb{R}^n\times\mathcal{M}^n\mid X=xx^\intercal,  x\in S\right\}$ & Section~\ref{sec:rankone} \\
$\mathbbm{1}_n (=\mathbbm{1})$ & $n$-dimensional vector of ones& Section~\ref{sec:rankone}\\
$\mathbbm{1}_{n\times n}$ & $k$-by-$k$ matrix of  ones& Section~\ref{sec:rankone}\\
$\text{trace}(M)$ & the trace of the matrix $M$ & Section~\ref{sec:rankone} \eqref{basicvalid_a}\\
$\text{diag}(M)$ & the diagonal vector of the matrix $M$ & Section~\ref{sec:rankone}\\
$R^M$ & the vector of row sums of the matrix $M$ & Section~\ref{sec:rankone} \\
$\mathcal{M}^n_{\ge 0}$ & the set of $n$-by-$n$ non-negative matrices & Section~\ref{subsec:sdpspca} \\
$\mathcal{S}^n_{\ge 0}$ & the set of $n$-by-$n$ non-negative symmetric matrices & Section~\ref{subsec:sdpspca} \\
\hline
\end{tabular}
\caption{Table of notations}
\label{tab:notation}
\end{table}


\end{document}